\documentclass[12pt]{amsart}
\usepackage{amssymb,amsmath,tabularx,mathrsfs,mathbbol,yfonts,upgreek}
\usepackage{amsthm,verbatim,comment}
\usepackage[bookmarks=true]{hyperref}
\usepackage{pstricks,pst-node,pst-plot}
\usepackage{geometry}
\usepackage{stmaryrd}
\usepackage{paralist}
\usepackage[all]{xy}
\usepackage{array}
\usepackage{url}
\usepackage{longtable}
\geometry{a4paper, top=4cm, left=3cm, right=3cm, bottom=4cm}
\numberwithin{equation}{section}

\DeclareSymbolFontAlphabet{\mathbb}{AMSb}
\DeclareSymbolFontAlphabet{\mathbbl}{bbold}

\newtheorem{thm}{Theorem}[section]

\newtheorem{lem}[thm]{Lemma}

\newtheorem{cor}[thm]{Corollary}

\theoremstyle{definition}

\newtheorem{nota}[thm]{Notation}
\newtheorem{eg}[thm]{Example}
\newtheorem{rem}[thm]{Remark}

\newtheorem*{rem*}{Remarks}

\newtheoremstyle{case}{}{}{}{}{}{:}{ }{}
\theoremstyle{case}

\newcommand{\F}{\mathbb{F}}

\title[On elementary abelian 2-hypergroups]{On elementary abelian 2-hypergroups}
\begin{document}
\author{Yu Jiang}
\address[Y. Jiang]{School of Mathematical Sciences, Anhui University (Qingyuan Campus), No. 111, Jiulong Road, Hefei, 230601, China}
\email[Y. Jiang]{jiangyu@ahu.edu.cn}
\begin{abstract}
A hypergroup is called an elementary abelian $2$-hypergroup if it is a constrained direct product of the closed subsets of two elements. In this paper, the elementary abelian 2-hypergroups are studied. All closed subsets and all strongly normal closed subsets of the elementary abelian 2-hypergroups are determined. The numbers of all closed subsets and all strongly normal closed subsets of the elementary abelian 2-hypergroups are given. A criterion for the isomorphic closed subsets of the elementary abelian 2-hypergroups is displayed. The automorphism groups of all closed subsets of the elementary abelian 2-hypergroups are presented.
\end{abstract}
\maketitle
\noindent
\textbf{Keywords.}{Commutative hypergroup; Closed subset; Strongly normal closed subset}\\
\textbf{Mathematics Subject Classification 2020.} 20N20 (primary); 05E30 (secondary\!)
\vspace{-1.5em}
\section{Introduction}
Hypergroups, introduced by Marty in \cite{M}, are known as natural generalizations of groups. In particular, many important objects from the group theory have already been generalized to the corresponding objects from the hypergroup theory (see \cite{Z3}).

The subgroups of groups are important objects in the structure theory of groups. The closed subsets of hypergroups generalize the subgroups of groups (see \cite{Z3}). In particular, the closed subsets of hypergroups can describe the quotient structures of hypergroups. In general, the closed subsets of a hypergroup are poorly understood.

The closed subsets of hypergroups have been researched by many authors. In \cite{FZ}, French and Zieschang studied the residually thin closed subsets of hypergroups. In \cite{GZ}, Guo and Zhang studied the nilpotent closed subsets of hypergroups. In \cite{Z1,Z2,Z3}, Zieschang studied the closed subsets of hypergroups generated by the involutions. In \cite{Z1,Z3}, Zieschang studied the isomorphisms and the automorphisms of hypergroups.

A hypergroup is called an elementary abelian $2$-hypergroup if it is a constrained direct product of the closed subsets of two elements. In this paper, we determine all closed subsets and all strongly normal closed subsets of the elementary abelian 2-hypergroups (see Theorems \ref{T;TheoremA} and \ref{T;TheoremB}, respectively). We give the numbers of all closed subsets and all strongly normal closed subsets of the elementary abelian 2-hypergroups (see Theorem \ref{T;TheoremC}). We display a criterion for the isomorphic closed subsets of the elementary abelian 2-hypergroups (see Theorem \ref{T;TheoremD}). Furthermore, we also present the automorphism groups of all closed subsets of the elementary abelian 2-hypergroups (see Theorem \ref{T;TheoremE}). These stated main results of this paper contribute to investigating the structure theory of finite commutative hypergroups.

The outline of this paper is as follows: In Section 2, we list the basic notation and the needed preliminaries. The first three main results of this paper are contained in Section 3. The remaining two main results of this paper are contained in Section 4.
\section{Basic notation and preliminaries}
For a general theory on hypergroups, the reader may refer to the monograph \cite{Z3}.
\subsection{Conventions}
Let $\mathbb{N}$ be the set of all natural numbers. Set $\mathbb{N}_0=\mathbb{N}\cup\{0\}$. If $p, q\in\mathbb{N}_0$, set $[p, q]=\{a: a\in\mathbb{N}_0, p\leq a\leq q\}$.
Let $\alpha^{-1}$ be the inverse of a bijection $\alpha$. If $\alpha$ is a map with the domain $\mathbb{H}$ and $p\in\mathbb{H}$, let $p^\alpha$ be the image of $p$ under $\alpha$ and
$\mathbb{G}^\alpha\!=\!\{a^\alpha: a\in\mathbb{G}\}$ for any $\mathbb{G}\subseteq\mathbb{H}$. The composition of maps is from left to right. If $p\in\mathbb{N}$,
let $\mathbb{S}_{p}$ and $\mathbb{GL}(p, q)$ be the symmetric group on $[1, p]$ and the general linear group of degree $p$ over a field of $q$ elements, respectively.
Let $\mathbb{S}_0$ and the general linear group of degree zero over a field equal the trivial group.
If $p$ is a prime and $q\in\mathbb{N}_0$, the automorphism group of an elementary abelian $p$-group is isomorphic to $\mathbb{GL}(q, p)$.
If $p$ is a prime and $q, r\in \mathbb{N}_0$, the number of all subgroups of $p$-rank $r$ of an elementary abelian $p$-group of $p$-rank $q$ equals exactly the Gauss coefficient $\binom{q}{r}_p$.
\subsection{Hypergroups}
Let $\mathbb{H}$ denote a nonempty set. Each binary operation from the cartesian product $\mathbb{H}\!\times\!\mathbb{H}$ to the power set of $\mathbb{H}$ is called a hypermultiplication
on $\mathbb{H}$. Fix a hypermultiplication $\circ$ on $\mathbb{H}$. For any $p, q\!\in\!\mathbb{H}$, the image of the pair $(p, q)$ under $\circ$ is denoted by $pq$.
For any $\mathbb{F}, \mathbb{G}\!\subseteq\!\mathbb{H}$, use $\mathbb{F}\mathbb{G}$ to denote $\{a:\exists\ b\in\mathbb{F}, \exists\ c\in\mathbb{G}, a\in bc\}$.
If $p\!\in\!\mathbb{N}\setminus\{1\}$ and $\mathbb{G}_1, \mathbb{G}_2, \ldots, \mathbb{G}_p\!\!\subseteq\!\!\mathbb{H}$, set
$\mathbb{G}_1\mathbb{G}_2\cdots\mathbb{G}_p\!=\!(\mathbb{G}_1\mathbb{G}_2\cdots \mathbb{G}_{p-1})\mathbb{G}_p$ inductively.
For any $p\in\mathbb{H}$ and $\mathbb{G}\subseteq\mathbb{H}$, let $p\mathbb{G}$ and $\mathbb{G}p$
be $\{p\}\mathbb{G}$ and $\mathbb{G}\{p\}$, respectively. Fix a map $*$ from $\mathbb{H}$ to $\mathbb{H}$. Call $\mathbb{H}$ a hypergroup if the following conditions hold together:
\begin{enumerate}[(H1)]
\item The hypermultiplication $\circ$ is associative, i.e., $(pq)r\!=\!p(qr)$ for any $p, q, r\in \mathbb{H}$;
\item There exists a unique $e\in\mathbb{H}$ such that $ep=\{p\}$ and $pe=\{p\}$ for any $p\in\mathbb{H}$;
\item The conditions $p\!\in\! qr$, $q\!\in\! pr^*$, $r\!\in\! q^*p$ are pairwise equivalent for any $p, q, r\!\in\!\mathbb{H}$.
\end{enumerate}

From now on, $\mathbb{H}$ denotes a fixed hypergroup. If $\mathbb{E}, \mathbb{F}, \mathbb{G}\!\subseteq\!\mathbb{H}$, then $(\mathbb{E}\mathbb{F})\mathbb{G}=\mathbb{E}(\mathbb{F}\mathbb{G})$ by (H1).
If $p\in\mathbb{N}$ and $\mathbb{G}\subseteq\mathbb{H}$, set $\mathbb{G}^0=\{e\}$ and $\mathbb{G}^p=\mathbb{G}^{p-1}\mathbb{G}$ inductively. So $p^q$ is defined for any $p\in\mathbb{H}$ and $q\in\mathbb{N}_0$.
If $p\in \mathbb{H}$, notice that $(p^*)^*\!=\!p$ by (H2) and (H3). For any $p\in \mathbb{H}$, call $p$ a symmetric element of $\mathbb{H}$ if $p^*\!=\!p$.
If $\mathbb{G}\subseteq\mathbb{H}$, let $\mathrm{Sym}(\mathbb{G})$ be the set of all symmetric elements of $\mathbb{H}$ in $\mathbb{G}$. Hence $e\in\mathrm{Sym}(\mathbb{H})$ by (H2) and (H3).

For any $p\in\mathbb{H}$, $e\!\in\! p^*p\cap pp^*$ by (H2) and (H3). By (H2), $pq\!\neq\!\varnothing$ for any $p, q\in\mathbb{H}$. For any $p\!\in\!\mathbb{H}$, call $p$ a thick element of $\mathbb{H}$ if $p^*p\!\neq\!\{e\}$.
For any $p\!\in\!\mathbb{H}$, call $p$ a thin element of $\mathbb{H}$ if $p^*p=\{e\}$. For any $\mathbb{G}\subseteq\mathbb{H}$, let $\mathrm{O}_\vartheta(\mathbb{G})$ be the set of all thin elements of $\mathbb{H}$ in $\mathbb{G}$.
Therefore $e\in \mathrm{O}_\vartheta(\mathbb{H})$ as (H2) holds. The following lemmas are necessary:
\begin{lem}\label{L;Lemma2.1}\cite[Lemma 1.4.3 (i)]{Z3}
Assume that $p\!\in\!\mathbb{H}$ and $q\!\in\!\mathrm{O}_\vartheta(\mathbb{H})$. Then $|pq|=1$.
\end{lem}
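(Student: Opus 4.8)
The plan is to exhibit one element of $pq$ explicitly and then to show, using only the hypergroup axioms together with the defining relation $q^{*}q=\{e\}$ of a thin element $q\in\mathrm{O}_\vartheta(\mathbb{H})$, that $pq$ cannot contain anything else.

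I would start from the fact, recorded just before the statement, that $pq\neq\varnothing$, and fix some $r\in pq$. The key step is to rewrite this membership by means of (H3): applied to the triple $(r,p,q)$ in place of $(p,q,r)$, the equivalence of $r\in pq$ with $p\in rq^{*}$ is exactly the equivalence of the first two conditions of (H3). Hence $p\in rq^{*}$.

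Now I would multiply on the right by $q$, using that the hypermultiplication of subsets is monotone and that it is associative (the identity $(\mathbb{E}\mathbb{F})\mathbb{G}=\mathbb{E}(\mathbb{F}\mathbb{G})$ noted after (H3) in the preliminaries). From $p\in rq^{*}$ we get $pq\subseteq(rq^{*})q=r(q^{*}q)$, and since $q\in\mathrm{O}_\vartheta(\mathbb{H})$ we have $q^{*}q=\{e\}$, whence $r(q^{*}q)=r\{e\}=\{r\}$ by (H2). Therefore $pq\subseteq\{r\}$; combined with $pq\neq\varnothing$ this forces $pq=\{r\}$, so $|pq|=1$.

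There is no genuine obstacle here — the argument is a two-line manipulation. The only points requiring care are the bookkeeping of the symmetry condition (H3) so as to land on $p\in rq^{*}$ rather than on an unhelpful rearrangement, and making sure the collapse $r(q^{*}q)=\{r\}$ is justified by associativity of products of subsets together with (H2).
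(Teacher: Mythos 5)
Your argument is correct: starting from some $r\in pq$ (nonempty by (H2)), the equivalence in (H3) gives $p\in rq^{*}$, and then monotonicity and associativity of subset products together with $q^{*}q=\{e\}$ yield $pq\subseteq r(q^{*}q)=\{r\}$, hence $|pq|=1$. Note that the paper itself offers no proof to compare against --- it imports this statement as \cite[Lemma 1.4.3 (i)]{Z3} --- but your two-line derivation is the standard argument and uses only facts recorded in the paper's preliminaries.
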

\begin{lem}\label{L;Lemma2.2}\cite[Lemma 1.4.3 (iii)]{Z3}
Assume that $p, q\in \mathrm{O}_\vartheta(\mathbb{H})$. Then $pq\subseteq \mathrm{O}_\vartheta(\mathbb{H})$.
\end{lem}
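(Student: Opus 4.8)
The plan is to reduce the statement to the single set identity $(pq)^*(pq)=\{e\}$, where for $\mathbb{G}\subseteq\mathbb{H}$ I write $\mathbb{G}^*=\{a^*:a\in\mathbb{G}\}$. Once this is known, every element $r$ of $pq$ satisfies $r^*\in(pq)^*$ by definition, hence $r^*r\subseteq(pq)^*(pq)=\{e\}$, and since $e\in r^*r$ always (as noted before Lemma \ref{L;Lemma2.1}), this forces $r^*r=\{e\}$, i.e.\ $r\in\mathrm{O}_\vartheta(\mathbb{H})$. Thus $pq\subseteq\mathrm{O}_\vartheta(\mathbb{H})$. (By Lemma \ref{L;Lemma2.1} the set $pq$ is in fact a singleton, but this will not be needed.)

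To prove the identity I would first record the reversal rule $(\mathbb{F}\mathbb{G})^*=\mathbb{G}^*\mathbb{F}^*$ for $\mathbb{F},\mathbb{G}\subseteq\mathbb{H}$. For elements $f,g,x\in\mathbb{H}$, three successive applications of the pairwise equivalences in (H3) turn $x^*\in fg$ into $g\in f^*x^*$, then into $f^*\in gx$, then into $x\in g^*f^*$; since each step is reversible, $x^*\in fg$ holds iff $x\in g^*f^*$, and taking unions over $f\in\mathbb{F}$, $g\in\mathbb{G}$ yields the rule. Applying it with $\mathbb{F}=\{p\}$ and $\mathbb{G}=\{q\}$ gives $(pq)^*=q^*p^*$, and then, using associativity of subset multiplication together with (H2) and the hypotheses $p^*p=\{e\}$ and $q^*q=\{e\}$,
\[
(pq)^*(pq)=(q^*p^*)(pq)=q^*\bigl((p^*p)q\bigr)=q^*q=\{e\}.
\]

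I do not expect a real obstacle here: the argument is a short telescoping computation resting only on the hypergroup axioms. The one place that needs a little care is the derivation of the reversal rule, namely matching each of the three rewrites above to the correct instance of (H3), with the right triple of elements in the right roles; alternatively one can skip the general rule and run exactly these three (H3)-steps on a single fixed $r\in pq$ to obtain $r^*\in q^*p^*$ directly, after which the displayed computation closes the proof verbatim.
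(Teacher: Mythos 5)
Your proof is correct. The paper itself gives no argument for this lemma — it is quoted directly from \cite[Lemma 1.4.3 (iii)]{Z3} — but your derivation is the standard one: the reversal rule $(\mathbb{F}\mathbb{G})^*=\mathbb{G}^*\mathbb{F}^*$ is exactly the consequence of (H3) that the paper records immediately before defining closed subsets, and the telescoping computation $(pq)^*(pq)=q^*\bigl((p^*p)q\bigr)=q^*q=\{e\}$ together with $e\in r^*r$ for each $r\in pq$ closes the argument with no gaps.
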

For any $\mathbb{F}, \mathbb{G}\!\subseteq\!\mathbb{H}$, $(\mathbb{F}\mathbb{G})^*\!=\!\mathbb{G}^*\mathbb{F}^*$ by (H3).
If $\mathbb{G}\!\subseteq\!\mathbb{H}$ and $\mathbb{G}\neq\varnothing$, call $\mathbb{G}$ a closed subset of $\mathbb{H}$ if
$\mathbb{G}^*\mathbb{G}\subseteq\mathbb{G}$. So $\{e\}$ and $\mathbb{H}$ are always closed subsets of $\mathbb{H}$. So
$e\!\in\!\mathbb{G}$, $\mathbb{G}^*\!=\!\mathbb{G}$, $\mathbb{G}^2\subseteq\mathbb{G}$ for any a closed subset $\mathbb{G}$ of $\mathbb{H}$. The following lemmas are necessary:
\begin{lem}\label{L;Lemma2.3}\cite[Lemma 2.1.5]{Z3}
Assume that $\mathbb{F}$ and $\mathbb{G}$ are closed subsets of $\mathbb{H}$. Then $\mathbb{F}\mathbb{G}$ is a closed subset of $\mathbb{H}$ if and only if $\mathbb{F}\mathbb{G}=\mathbb{G}\mathbb{F}$.
\end{lem}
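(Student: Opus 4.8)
The plan is to reduce the statement to three elementary facts recorded just above it: for any closed subset $\mathbb{K}$ of $\mathbb{H}$ one has $e\in\mathbb{K}$, $\mathbb{K}^*=\mathbb{K}$ and $\mathbb{K}^2\subseteq\mathbb{K}$; the multiplication of subsets of $\mathbb{H}$ is associative by (H1); and $(\mathbb{E}\mathbb{F})^*=\mathbb{F}^*\mathbb{E}^*$ for all $\mathbb{E},\mathbb{F}\subseteq\mathbb{H}$ by (H3). I shall also use the evident monotonicity of set-multiplication (if $\mathbb{A}\subseteq\mathbb{A}'$ and $\mathbb{B}\subseteq\mathbb{B}'$ then $\mathbb{A}\mathbb{B}\subseteq\mathbb{A}'\mathbb{B}'$).

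For the forward implication, I would assume that $\mathbb{F}\mathbb{G}$ is a closed subset of $\mathbb{H}$, so that $(\mathbb{F}\mathbb{G})^*=\mathbb{F}\mathbb{G}$. Since $\mathbb{F}$ and $\mathbb{G}$ are closed, $(\mathbb{F}\mathbb{G})^*=\mathbb{G}^*\mathbb{F}^*=\mathbb{G}\mathbb{F}$; comparing the two descriptions of $(\mathbb{F}\mathbb{G})^*$ gives $\mathbb{F}\mathbb{G}=\mathbb{G}\mathbb{F}$.

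For the converse, I would assume $\mathbb{F}\mathbb{G}=\mathbb{G}\mathbb{F}$ and check the two defining requirements of a closed subset. Nonemptiness is immediate: $ee=\{e\}$ by (H2) and $e\in\mathbb{F}\cap\mathbb{G}$, hence $e\in\mathbb{F}\mathbb{G}$. For the closure condition, note first that $(\mathbb{F}\mathbb{G})^*=\mathbb{G}^*\mathbb{F}^*=\mathbb{G}\mathbb{F}=\mathbb{F}\mathbb{G}$, so that
\[
(\mathbb{F}\mathbb{G})^*(\mathbb{F}\mathbb{G})=(\mathbb{F}\mathbb{G})(\mathbb{F}\mathbb{G})=\mathbb{F}(\mathbb{G}\mathbb{F})\mathbb{G}=\mathbb{F}(\mathbb{F}\mathbb{G})\mathbb{G}=\mathbb{F}^2\mathbb{G}^2\subseteq\mathbb{F}\mathbb{G},
\]
using associativity, the hypothesis $\mathbb{G}\mathbb{F}=\mathbb{F}\mathbb{G}$, the inclusions $\mathbb{F}^2\subseteq\mathbb{F}$ and $\mathbb{G}^2\subseteq\mathbb{G}$, and monotonicity. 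Thus $\mathbb{F}\mathbb{G}$ is a closed subset of $\mathbb{H}$.

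There is no real obstacle here; the only thing to be careful about is the bookkeeping of set-products, i.e.\ that every rearrangement above is a legitimate application of (H1), (H3), or the idempotency $\mathbb{K}^2\subseteq\mathbb{K}$ of closed subsets, together with not overlooking the nonemptiness clause in the definition of a closed subset.
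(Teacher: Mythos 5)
Your proof is correct. The paper does not prove this lemma itself --- it is quoted from \cite[Lemma 2.1.5]{Z3} --- and your argument is the standard one: the forward direction via $(\mathbb{F}\mathbb{G})^*=\mathbb{G}^*\mathbb{F}^*=\mathbb{G}\mathbb{F}$, and the converse via $(\mathbb{F}\mathbb{G})^*(\mathbb{F}\mathbb{G})=\mathbb{F}^2\mathbb{G}^2\subseteq\mathbb{F}\mathbb{G}$ together with $e\in\mathbb{F}\mathbb{G}$, all of which rests only on facts the paper records before stating the lemma.
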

\begin{lem}\label{L;Lemma2.4}\cite[Lemma 2.1.6 (ii)]{Z3}
Assume that $\mathbb{F}$ and $\mathbb{G}$ are closed subsets of $\mathbb{H}$. Then $\mathbb{F}\cap\mathbb{G}\!=\!\{e\}$ if and only if, for any $p\in\mathbb{F}\mathbb{G}$, $p\in qr$ for unique $q\in\mathbb{F}$ and $r\in\mathbb{G}$.
\end{lem}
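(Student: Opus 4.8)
The statement is an equivalence, so I would establish the two implications separately.

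The direction that assumes the uniqueness of factorizations is essentially immediate. Given $p\in\mathbb{F}\cap\mathbb{G}$, condition (H2) shows $p\in pe$ with $p\in\mathbb{F}$ and $e\in\mathbb{G}$, and also $p\in ep$ with $e\in\mathbb{F}$ and $p\in\mathbb{G}$ (using that $e$ lies in every closed subset of $\mathbb{H}$). Hence $p\in\mathbb{F}\mathbb{G}$ admits the factorizations $(p,e)$ and $(e,p)$, both with first coordinate in $\mathbb{F}$ and second coordinate in $\mathbb{G}$; by the assumed uniqueness these coincide, so $p=e$, and therefore $\mathbb{F}\cap\mathbb{G}=\{e\}$.

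For the converse, assume $\mathbb{F}\cap\mathbb{G}=\{e\}$ and suppose $p\in q_1r_1\cap q_2r_2$ with $q_1,q_2\in\mathbb{F}$ and $r_1,r_2\in\mathbb{G}$; the aim is to deduce $q_1=q_2$ and $r_1=r_2$. The plan is to manufacture a single element that is forced to lie in $\mathbb{F}\cap\mathbb{G}$. From $p\in q_1r_1$ and (H1) I get $q_2^*p\subseteq q_2^*q_1r_1$, while from $p\in q_2r_2$ and (H3) I get $r_2\in q_2^*p$. Combining these, $r_2\in q_2^*q_1r_1$, so there is $t\in q_2^*q_1$ with $r_2\in tr_1$. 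On the one hand $t\in q_2^*q_1\subseteq\mathbb{F}^2\subseteq\mathbb{F}$, using $\mathbb{F}^*=\mathbb{F}$ and closure of $\mathbb{F}$ under the hypermultiplication; on the other hand $r_2\in tr_1$ together with (H3) gives $t\in r_2r_1^*\subseteq\mathbb{G}^2\subseteq\mathbb{G}$. Hence $t\in\mathbb{F}\cap\mathbb{G}=\{e\}$, that is, $t=e$. Then $r_2\in tr_1=er_1=\{r_1\}$ by (H2), so $r_2=r_1$, and $e\in q_2^*q_1$ gives, via (H3) and (H2), $q_1\in q_2e=\{q_2\}$, so $q_1=q_2$. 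This proves the uniqueness.

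The argument is almost entirely bookkeeping: the one point that needs care is the repeated use of (H3), where the three mutually equivalent membership conditions must be invoked in exactly the right form at each step, together with checking that the auxiliary products $q_2^*q_1$ and $r_2r_1^*$ really do land inside $\mathbb{F}$ and $\mathbb{G}$ respectively (which uses that closed subsets are stable under $*$ and under the hypermultiplication). I do not expect any genuine obstacle beyond keeping this accounting straight.
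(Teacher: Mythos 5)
Your proof is correct. Note that the paper itself offers no proof of this lemma---it is quoted directly from \cite[Lemma 2.1.6 (ii)]{Z3}---so there is nothing internal to compare against; your argument (extracting an element $t\in q_2^*q_1\cap r_2r_1^*$ that is forced into $\mathbb{F}\cap\mathbb{G}$ by (H1), (H3) and closure, and the immediate converse via the two factorizations $p\in pe$ and $p\in ep$) is the standard one and all uses of (H2), (H3) and of $\mathbb{F}^*=\mathbb{F}$, $\mathbb{G}^*=\mathbb{G}$ check out.
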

For any $p\in\mathbb{H}$, call $p$ an involution of $\mathbb{H}$ if $p\neq e$ and $\{e, p\}$ is a closed subset of $\mathbb{H}$.
Hence $p\in\mathrm{Sym}(\mathbb{H})$ for any an involution $p$ of $\mathbb{H}$. The following lemma is necessary:
\begin{lem}\label{L;Lemma2.5}
Assume that $p$ and $q$ are distinct involutions of $\mathbb{H}$. Assume that $p$ or $q$ is a thick element of $\mathbb{H}$. Assume that $pq=\{r\}$. Then $r$ is not an involution of $\mathbb{H}$.
\end{lem}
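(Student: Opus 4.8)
The plan is to argue by contradiction. Assume that $r$ is an involution of $\mathbb{H}$, so that $r^*=r$ and $rr\subseteq\{e,r\}$. Recall that an involution $s$ always satisfies $ss\subseteq\{e,s\}$, and that $s$ is thick exactly when $ss=\{e,s\}$ and thin exactly when $ss=\{e\}$. I will derive a contradiction using only (H1)--(H3), Lemma \ref{L;Lemma2.1}, and the hypothesis that at least one of $p,q$ is thick; in particular neither finiteness nor commutativity of $\mathbb{H}$ will be used.

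First I would record the separation facts $r\notin\{e,p,q\}$, each of which follows quickly from (H3) together with the fact that involutions are symmetric. For instance, if $r=e$ then $e\in pq$ forces $p\in eq=\{q\}$, contradicting $p\neq q$; if $r=p$ then $p\in pq$ forces $q\in p^*p=pp\subseteq\{e,p\}$, which is impossible; the case $r=q$ is handled the same way. Next, applying (H3) to $r\in pq$ and using $p^*=p$ and $q^*=q$ yields the companion memberships $q\in pr$ and $p\in rq$; applying $*$ to these gives also $q\in rp$ and $p\in qr$.

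The technical core is to compute $pr$ and $rr$ and then run a short case analysis. From $r\in pq$ and associativity, $pr=p\{r\}\subseteq p(pq)=(pp)q\subseteq\{e,p\}q=\{q,r\}$; combined with $q\in pr$ this says $pr$ is either $\{q\}$ or $\{q,r\}$, and $pr=\{q\}$ is forced when $p$ is thin. Passing to $*$ gives the same dichotomy for $rp$, and then $rr=r(pq)=(rp)q$ equals $qq$ in the first case and $qq\cup rq$ in the second. If $pr=\{q\}$: then $rr=qq\subseteq\{e,r\}$ while also $qq\subseteq\{e,q\}$, so $q\neq r$ forces $qq=rr=\{e\}$; hence $q$ and $r$ are thin, hence $p$ is thick, hence by Lemma \ref{L;Lemma2.1} the set $qr$ is a singleton, which must be $\{p\}$ since $p\in qr$, and then $pp=p(qr)=(pq)r=rr=\{e\}$, contradicting thickness of $p$. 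If $pr=\{q,r\}$: then $p$ must be thick, and $rr=qq\cup rq\subseteq\{e,r\}$ forces $qq\subseteq\{e,q\}\cap\{e,r\}=\{e\}$ together with $p\in rq\subseteq\{e,r\}$, the latter contradicting $p\notin\{e,r\}$. Either branch contradicts the assumption that $r$ is an involution, which completes the proof.

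The main obstacle is bookkeeping rather than depth: one must check that every rewriting of products is a legitimate consequence of $pq=\{r\}$ and (H1)--(H3), and, crucially, that the hypothesis ``$p$ or $q$ is thick'' is invoked in exactly the right place — it is forced onto $p$ precisely in the branches where $q$ turns out to be thin, and without it the conclusion genuinely fails.
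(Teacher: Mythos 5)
Your proof is correct, but it follows a genuinely different route from the paper's. The paper's argument is shorter and more global: from $r^*=r$ it deduces $(pq)^*=qp=pq$, hence $r^2=pqpq=p^2q^2$ by (H1); thickness of (say) $p$ gives $p\in p^2q^2=r^2\subseteq\{e,r\}$, so $r\in\{p,q\}$, and then (H3) forces $p=q$, a contradiction. You instead first rule out $r\in\{e,p,q\}$, then localize the computation to the single product $pr=p^2q\subseteq\{e,p\}q=\{q,r\}$ and run a two-branch case analysis on whether $pr$ is $\{q\}$ or $\{q,r\}$, invoking Lemma \ref{L;Lemma2.1} in the first branch to pin down $qr=\{p\}$. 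Both arguments are legitimate consequences of (H1)--(H3) and use the thickness hypothesis in an essential way (and your closing remark that the statement fails without it is right: in a thin elementary abelian $2$-group the product of two distinct involutions is again an involution). What the paper's approach buys is brevity -- the commutativity trick $pq=qp$ collapses your two branches into the single identity $r^2=p^2q^2$ and avoids Lemma \ref{L;Lemma2.1} entirely; what yours buys is that each step is a bare product computation needing no preliminary observation about $(pq)^*$, at the cost of more bookkeeping. No gap either way.
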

\begin{proof}
Assume that $r$ is an involution of $\mathbb{H}$. So $pq=qp$ and $r^2=pqpq=p^2q^2$ by (H1).
So $r\in \{p, q\}$ by (H1). Hence $p\in pq$ or $q\in pq$. Therefore $p=q$ by (H3). This is a contradiction. The desired lemma thus follows from the above discussion.
\end{proof}
For any $p\in\mathbb{N}$ and closed subsets $\mathbb{G}_1, \mathbb{G}_2, \ldots, \mathbb{G}_p$ of $\mathbb{H}$,
notice that the intersection of $\mathbb{G}_1, \mathbb{G}_2, \ldots, \mathbb{G}_p$ is also a closed subset of $\mathbb{H}$.
For any distinct closed subsets $\mathbb{F}, \mathbb{G}$ of $\mathbb{H}$, call $\mathbb{F}$ a maximal closed subset of $\mathbb{G}$ if $\mathbb{F}\subseteq\mathbb{G}$
and there is not a closed subset $\mathbb{E}$ of $\mathbb{H}$ such that $\mathbb{F}\subseteq\mathbb{E}\subseteq\mathbb{G}$ and $\mathbb{E}\notin\{\mathbb{G}, \mathbb{F}\}$.
The intersection of all maximal closed subsets of a closed subset $\mathbb{G}$ of $\mathbb{H}$ is called the Frattini closed subset of $\mathbb{G}$ (see \cite{Z1}).

For any $\mathbb{G}\subseteq\mathbb{H}$, the intersection of all closed subsets of $\mathbb{H}$ containing $\mathbb{G}$ is denoted by $\langle\mathbb{G}\rangle$. Notice that $\langle\mathbb{G}\rangle$ is a closed subset of $\mathbb{H}$ for any $\mathbb{G}\subseteq\mathbb{H}$. For any $p\in\mathbb{N}$ and $\{q_1, q_2, \ldots, q_p\}\!\subseteq\!\mathbb{H}$,
define $\langle q_1, q_2, \ldots, q_p\rangle\!=\!\langle \{q_1, q_2, \ldots, q_p\}\rangle$. For any an involution $p$ of $\mathbb{H}$,
it is obvious to notice that $\langle p\rangle=\{e, p\}$. The following lemmas are necessary:
\begin{lem}\label{L;Lemma2.6}\cite[Theorem 1.4.4 (i)]{Z1}
Assume that $\mathbb{G}$ is a closed subset of $\mathbb{H}$ and $\mathbb{F}$ is a subset of the Frattini closed subset of $\mathbb{G}$. If $\mathbb{E}\!\subseteq\!\mathbb{G}$ and $\mathbb{G}\!=\!\langle \mathbb{E}\cup\mathbb{F}\rangle$, then $\mathbb{G}=\langle\mathbb{E}\rangle$.
\end{lem}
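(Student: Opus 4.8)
The plan is to argue by contradiction, imitating the classical fact that the members of the Frattini subgroup of a finite group are non-generators. Suppose that $\mathbb{G}\neq\langle\mathbb{E}\rangle$. Since $\mathbb{E}\subseteq\mathbb{G}$ and $\mathbb{G}$ is a closed subset of $\mathbb{H}$ containing $\mathbb{E}$, minimality of $\langle\mathbb{E}\rangle$ gives $\langle\mathbb{E}\rangle\subseteq\mathbb{G}$, so $\langle\mathbb{E}\rangle$ is a \emph{proper} closed subset of $\mathbb{G}$.

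The first key step is to locate a maximal closed subset $\mathbb{M}$ of $\mathbb{G}$ with $\langle\mathbb{E}\rangle\subseteq\mathbb{M}$. In the finite setting relevant to this paper one takes, among all closed subsets $\mathbb{D}$ of $\mathbb{H}$ with $\langle\mathbb{E}\rangle\subseteq\mathbb{D}$ and $\mathbb{D}\neq\mathbb{G}$, one of maximal cardinality; any closed subset of $\mathbb{H}$ strictly containing such a $\mathbb{D}$ and contained in $\mathbb{G}$ must then equal $\mathbb{G}$, so $\mathbb{M}:=\mathbb{D}$ is a maximal closed subset of $\mathbb{G}$. (In full generality one would instead invoke Zorn's lemma on the poset of proper closed subsets of $\mathbb{G}$ containing $\langle\mathbb{E}\rangle$, after verifying that the union of such a chain is again a proper closed subset of $\mathbb{G}$; this is where finiteness, which holds throughout the paper, is convenient.)

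Next, since $\mathbb{F}$ is contained in the Frattini closed subset of $\mathbb{G}$, which by definition is the intersection of all maximal closed subsets of $\mathbb{G}$, we obtain $\mathbb{F}\subseteq\mathbb{M}$. Combining this with $\mathbb{E}\subseteq\langle\mathbb{E}\rangle\subseteq\mathbb{M}$ yields $\mathbb{E}\cup\mathbb{F}\subseteq\mathbb{M}$. As $\mathbb{M}$ is a closed subset of $\mathbb{H}$ containing $\mathbb{E}\cup\mathbb{F}$, minimality of $\langle\mathbb{E}\cup\mathbb{F}\rangle$ forces $\mathbb{G}=\langle\mathbb{E}\cup\mathbb{F}\rangle\subseteq\mathbb{M}$, contradicting the fact that $\mathbb{M}$ is a proper subset of $\mathbb{G}$. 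Hence $\mathbb{G}=\langle\mathbb{E}\rangle$, as desired.

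The only genuine obstacle is the production of the maximal closed subset $\mathbb{M}$ lying above $\langle\mathbb{E}\rangle$; once this is in hand, the remainder is a direct unwinding of the definitions of $\langle\,\cdot\,\rangle$ and of the Frattini closed subset. Because the hypergroups considered here are finite, the existence of $\mathbb{M}$ is immediate from a largest-cardinality argument, so the proof sketched above goes through without complication.
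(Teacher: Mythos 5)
Your proof is correct. Note that the paper offers no proof of this lemma at all: it is imported verbatim from \cite{Z1}, Theorem 1.4.4(i), so there is nothing to compare against except the classical argument, which is exactly what you give --- assume $\langle\mathbb{E}\rangle\neq\mathbb{G}$, place the proper closed subset $\langle\mathbb{E}\rangle$ under a maximal closed subset $\mathbb{M}$ of $\mathbb{G}$, observe that $\mathbb{F}$ lies in $\mathbb{M}$ because the Frattini closed subset is the intersection of all maximal closed subsets, and derive the contradiction $\mathbb{G}=\langle\mathbb{E}\cup\mathbb{F}\rangle\subseteq\mathbb{M}\subsetneq\mathbb{G}$. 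Your caveat about finiteness is the right one: the existence of $\mathbb{M}$ is the only delicate point, and it is secured here because the lemma is applied only to closed subsets of the finite hypergroup of Section 3 (and in the cited source the ambient objects are finite as well). One small wording slip: in the largest-cardinality selection you should take $\mathbb{D}$ ranging over proper closed subsets of $\mathbb{G}$ \emph{contained in} $\mathbb{G}$ and containing $\langle\mathbb{E}\rangle$, as you do in the Zorn parenthetical; as literally written ("all closed subsets $\mathbb{D}$ of $\mathbb{H}$ with $\langle\mathbb{E}\rangle\subseteq\mathbb{D}$ and $\mathbb{D}\neq\mathbb{G}$") the maximizer need not lie inside $\mathbb{G}$ and hence need not be a maximal closed subset of $\mathbb{G}$ in the paper's sense. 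This is trivially repaired and does not affect the validity of the argument.
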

\begin{lem}\label{L;Lemma2.7}\cite[Lemma 2.3.4 (i)]{Z3}
Assume that $\mathbb{G}\subseteq\mathbb{H}$ and $\mathbb{G}^*=\mathbb{G}$. Then
$$\langle\mathbb{G}\rangle=\bigcup_{p\in\mathbb{N}_0}\mathbb{G}^p.$$
\end{lem}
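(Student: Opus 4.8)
Write $\mathbb{K}=\bigcup_{p\in\mathbb{N}_0}\mathbb{G}^p$. The plan is to prove the two inclusions $\langle\mathbb{G}\rangle\subseteq\mathbb{K}$ and $\mathbb{K}\subseteq\langle\mathbb{G}\rangle$ separately. Since $\langle\mathbb{G}\rangle$ is by definition the intersection of all closed subsets of $\mathbb{H}$ containing $\mathbb{G}$, hence the smallest such closed subset, the first inclusion will follow once I show that $\mathbb{K}$ is a closed subset of $\mathbb{H}$ with $\mathbb{G}\subseteq\mathbb{K}$; and the second inclusion amounts to checking that $\mathbb{G}^p\subseteq\langle\mathbb{G}\rangle$ for every $p\in\mathbb{N}_0$.

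For $\mathbb{K}\subseteq\langle\mathbb{G}\rangle$ I would induct on $p$. As $\langle\mathbb{G}\rangle$ is a closed subset, it contains $e$, so $\mathbb{G}^0=\{e\}\subseteq\langle\mathbb{G}\rangle$; and $\mathbb{G}^1=\mathbb{G}\subseteq\langle\mathbb{G}\rangle$ by the definition of $\langle\mathbb{G}\rangle$. If $\mathbb{G}^{p-1}\subseteq\langle\mathbb{G}\rangle$, then $\mathbb{G}^p=\mathbb{G}^{p-1}\mathbb{G}\subseteq\langle\mathbb{G}\rangle\langle\mathbb{G}\rangle=\langle\mathbb{G}\rangle^2\subseteq\langle\mathbb{G}\rangle$, using that $\mathbb{L}^2\subseteq\mathbb{L}$ for any closed subset $\mathbb{L}$ of $\mathbb{H}$. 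Taking the union over $p\in\mathbb{N}_0$ gives $\mathbb{K}\subseteq\langle\mathbb{G}\rangle$.

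For the other inclusion I would verify that $\mathbb{K}$ satisfies $\mathbb{K}^*\mathbb{K}\subseteq\mathbb{K}$ (it is nonempty since $e\in\mathbb{G}^0$, and it contains $\mathbb{G}=\mathbb{G}^1$). Two preliminary facts are convenient to record first. (i) $(\mathbb{G}^p)^*=\mathbb{G}^p$ for every $p$: this follows by induction from $(\mathbb{E}\mathbb{F})^*=\mathbb{F}^*\mathbb{E}^*$, the hypothesis $\mathbb{G}^*=\mathbb{G}$, and $e^*=e$. (ii) $\mathbb{G}^p\mathbb{G}^q=\mathbb{G}^{p+q}$ for all $p,q\in\mathbb{N}_0$: this follows by induction on $q$ from associativity (H1), together with $\mathbb{G}^0=\{e\}$ and the fact that $e$ is the identity. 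From (i), $\mathbb{K}^*=\bigcup_{p}(\mathbb{G}^p)^*=\mathbb{K}$; from (ii), $\mathbb{K}^*\mathbb{K}=\mathbb{K}\mathbb{K}=\bigcup_{p,q}\mathbb{G}^p\mathbb{G}^q=\bigcup_{p,q}\mathbb{G}^{p+q}=\bigcup_{n\in\mathbb{N}_0}\mathbb{G}^n=\mathbb{K}$, so $\mathbb{K}^*\mathbb{K}\subseteq\mathbb{K}$ and $\mathbb{K}$ is a closed subset of $\mathbb{H}$. Hence $\langle\mathbb{G}\rangle\subseteq\mathbb{K}$, and combining with the previous paragraph, $\langle\mathbb{G}\rangle=\mathbb{K}$.

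There is no substantial obstacle here: the argument is purely formal and uses only associativity, the behaviour of $*$ on products, and the identity element. The one point requiring a little care — and the reason the hypothesis $\mathbb{G}^*=\mathbb{G}$ is imposed — is the verification $\mathbb{K}^*=\mathbb{K}$; without it, $\mathbb{K}=\bigcup_p\mathbb{G}^p$ need not be $*$-invariant and hence need not be a closed subset, so the stated formula would fail in general.
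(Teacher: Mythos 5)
Your proof is correct. The paper itself gives no proof of this lemma---it is quoted from Zieschang's book \cite[Lemma~2.3.4~(i)]{Z3}---and your argument is the standard one that the cited source uses: show $\bigcup_{p}\mathbb{G}^p$ is itself a closed subset containing $\mathbb{G}$, and that every closed subset containing $\mathbb{G}$ contains all powers $\mathbb{G}^p$. The only point deserving a remark is that in your fact (i) the induction step yields $(\mathbb{G}^p)^*=\mathbb{G}^*(\mathbb{G}^{p-1})^*=\mathbb{G}\,\mathbb{G}^{p-1}$ with the factors reversed, so you need the commutation $\mathbb{G}\,\mathbb{G}^{p-1}=\mathbb{G}^{p-1}\mathbb{G}$ (a special case of your fact (ii)) already in hand; since you record (ii) as well, this is only a matter of establishing (ii) before, or simultaneously with, (i).
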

For any a closed subset $\mathbb{G}$ of $\mathbb{H}$ and $\mathbb{F}\!\!\subseteq\!\!\mathbb{G}$, call $\mathbb{F}$ a generating subset of $\mathbb{G}$ if $\mathbb{G}=\langle\mathbb{F}\rangle$. For any a closed subset $\mathbb{G}$ of $\mathbb{H}$, notice that $\mathbb{G}$
itself is a generating subset of $\mathbb{G}$. For any a closed subset $\mathbb{G}$ of $\mathbb{H}$, call $\mathbb{G}$ a finitely generated closed subset of $\mathbb{H}$ if a generating subset of $\mathbb{G}$ is a finite subset of $\mathbb{G}$. For any a closed subset $\mathbb{G}$ of $\mathbb{H}$, notice that $\mathbb{G}$ is a finitely generated closed subset of $\mathbb{H}$ if $\mathbb{G}$ is a finite subset of $\mathbb{H}$.

For any a closed subset $\mathbb{G}$ of $\mathbb{H}$ and $\mathbb{F}\subseteq\mathbb{G}$, call $\mathbb{F}$ a minimal generating subset of $\mathbb{G}$ if $\mathbb{G}=\langle\mathbb{F}\rangle$ and $\mathbb{G}\!\neq\!\langle\mathbb{F}\setminus\{p\}\rangle$ for any $p\in \mathbb{F}$. For any a finitely generated closed subset $\mathbb{G}$ of $\mathbb{H}$, notice that $\mathbb{G}$ has a finite minimal generating subset of $\mathbb{G}$. For any a finitely generated closed subset $\mathbb{G}$ of $\mathbb{H}$ and $\mathbb{F}\subseteq\mathbb{G}$, call $\mathbb{F}$ a basis of $\mathbb{G}$ if $\mathbb{F}$ is a minimal generating subset of $\mathbb{G}$ with the smallest cardinality. Then the cardinality of a basis of a finitely generated closed subset $\mathbb{G}$ of $\mathbb{H}$ is called the dimension of $\mathbb{G}$.

For any a closed subset $\mathbb{G}$ of $\mathbb{H}$, call $\mathbb{G}$ a commutative closed subset of $\mathbb{H}$ if $pq=qp$ for any $p, q\in\mathbb{G}$. For any a closed subset $\mathbb{G}$ of $\mathbb{H}$, notice that $\mathbb{G}$ is a commutative closed subset of $\mathbb{H}$ if $\mathbb{G}=\mathrm{Sym}(\mathbb{G})$. Then $\{e\}$ is a commutative closed subset of $\mathbb{H}$. Notice that $\langle p\rangle$ is a commutative closed subset of $\mathbb{H}$ for any an involution $p$ of $\mathbb{H}$.
Call $\mathbb{H}$ a commutative hypergroup if $\mathbb{H}$ itself is a commutative closed subset of $\mathbb{H}$.

For any closed subsets $\mathbb{F}, \mathbb{G}$ of $\mathbb{H}$, call $\mathbb{F}$ a normal closed subset of $\mathbb{G}$ if $\mathbb{F}\subseteq\mathbb{G}$ and $\mathbb{F}p=p\mathbb{F}$ for any $p\in\mathbb{G}$. By (H2), $\{e\}$ and $\mathbb{H}$ are always normal closed subsets of $\mathbb{H}$. Notice that $\mathbb{F}\mathbb{G}=\mathbb{G}\mathbb{F}$ for any normal closed subsets $\mathbb{F}, \mathbb{G}$ of $\mathbb{H}$. All closed subsets of $\mathbb{H}$ are precisely all normal closed subsets of $\mathbb{H}$ if $\mathbb{H}$ is a commutative hypergroup.

For any closed subsets $\mathbb{F}, \mathbb{G}$ of $\mathbb{H}$, call $\mathbb{F}$ a strongly normal closed subset of $\mathbb{G}$ if
$p^*\mathbb{F}p\subseteq\mathbb{F}\subseteq\mathbb{G}$ for any $p\in \mathbb{G}$. For any a closed subset $\mathbb{G}$ of $\mathbb{H}$,
notice that $\mathbb{G}$ itself is always a strongly normal closed subset of $\mathbb{G}$. If $p\in\mathbb{N}$ and $\mathbb{F}_1, \mathbb{F}_2, \ldots, \mathbb{F}_p$ are strongly normal closed subsets of a closed subset $\mathbb{G}$ of $\mathbb{H}$, the intersection of $\mathbb{F}_1, \mathbb{F}_2, \ldots, \mathbb{F}_p$ is a strongly normal closed subset of $\mathbb{G}$. For any $\mathbb{G}\subseteq\mathbb{H}$, use $\mathrm{O}^\vartheta(\mathbb{G})$ to denote the intersection of all strongly normal closed subsets of $\langle\mathbb{G}\rangle$. For any $\mathbb{G}\subseteq\mathbb{H}$, $\mathrm{O}^\vartheta(\mathbb{G})$ is always a strongly normal closed subset of $\langle\mathbb{G}\rangle$. The following lemmas are necessary:
\begin{lem}\label{L;Lemma2.8}\cite[Lemma 3.3.1 (iii)]{Z3}
Assume that $\mathbb{G}$ is a closed subset of $\mathbb{H}$. Assume that $\mathbb{F}$ is a normal closed subset of $\mathbb{G}$. Then $\mathbb{F}$ is a strongly normal closed subset of $\mathbb{G}$ if and only if $\mathrm{O}^\vartheta(\mathbb{G})\subseteq\mathbb{F}$.
\end{lem}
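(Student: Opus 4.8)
The plan is to establish the two implications separately. The forward implication is immediate from the definition of $\mathrm{O}^\vartheta(\mathbb{G})$, and the reverse implication carries the real content, resting on the fact---already recorded in the excerpt---that $\mathrm{O}^\vartheta(\mathbb{G})$ is itself a strongly normal closed subset of $\langle\mathbb{G}\rangle$.

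For the forward implication, suppose $\mathbb{F}$ is a strongly normal closed subset of $\mathbb{G}$. Because $\mathbb{G}$ is a closed subset of $\mathbb{H}$, it is the smallest closed subset containing itself, so $\langle\mathbb{G}\rangle=\mathbb{G}$; hence $\mathbb{F}$ is one of the strongly normal closed subsets of $\langle\mathbb{G}\rangle$ whose intersection is $\mathrm{O}^\vartheta(\mathbb{G})$, and therefore $\mathrm{O}^\vartheta(\mathbb{G})\subseteq\mathbb{F}$. (This half does not use that $\mathbb{F}$ is normal.)

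For the reverse implication, assume $\mathrm{O}^\vartheta(\mathbb{G})\subseteq\mathbb{F}$ and write $\mathbb{D}=\mathrm{O}^\vartheta(\mathbb{G})$. The first step is to show that $p^*p\subseteq\mathbb{D}$ for every $p\in\mathbb{G}$: since $\mathbb{D}$ is a strongly normal closed subset of $\langle\mathbb{G}\rangle=\mathbb{G}$, we have $p^*\mathbb{D}p\subseteq\mathbb{D}$ for each $p\in\mathbb{G}$, and since $e\in\mathbb{D}$ (every closed subset contains $e$) this yields $p^*p=p^*\{e\}p\subseteq p^*\mathbb{D}p\subseteq\mathbb{D}$. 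The second step invokes the normality of $\mathbb{F}$ in $\mathbb{G}$: for any $p\in\mathbb{G}$ one computes
\[
p^*\mathbb{F}p=p^*(\mathbb{F}p)=p^*(p\mathbb{F})=(p^*p)\mathbb{F}\subseteq\mathbb{D}\mathbb{F}\subseteq\mathbb{F}\mathbb{F}\subseteq\mathbb{F},
\]
where the middle equalities use $\mathbb{F}p=p\mathbb{F}$ and the associativity of the set hypermultiplication from (H1), and the final inclusions use $\mathbb{D}\subseteq\mathbb{F}$ together with $\mathbb{F}^2\subseteq\mathbb{F}$. Combined with $\mathbb{F}\subseteq\mathbb{G}$, this says precisely that $\mathbb{F}$ is a strongly normal closed subset of $\mathbb{G}$.

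I do not anticipate a serious obstacle: once the first move is spotted---feeding $e\in\mathbb{D}$ into $p^*\mathbb{D}p\subseteq\mathbb{D}$ to extract $p^*p\subseteq\mathbb{D}$---everything is a short set-theoretic computation with the basic closure axioms. The one genuinely substantive input is the already stated fact that $\mathrm{O}^\vartheta(\mathbb{G})$ is strongly normal in $\langle\mathbb{G}\rangle$; absent that, one would instead have to prove it from scratch, e.g.\ by identifying $\mathrm{O}^\vartheta(\mathbb{G})$ with the closed subset generated by $\bigcup_{h\in\mathbb{G}}h^*h$ and verifying that this closed subset is strongly normal in $\mathbb{G}$, which is where the real work would lie.
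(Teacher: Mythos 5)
Your proof is correct, but note that the paper offers no proof of this lemma to compare against: it is quoted directly from Zieschang's monograph (\cite[Lemma 3.3.1 (iii)]{Z3}), so your argument is a reconstruction of an external result rather than an alternative to an in-paper one. As a standalone argument it is sound. The forward direction is indeed immediate from the paper's definition of $\mathrm{O}^\vartheta(\mathbb{G})$ as the intersection of all strongly normal closed subsets of $\langle\mathbb{G}\rangle=\mathbb{G}$, of which $\mathbb{F}$ is one. In the reverse direction, your extraction of $p^*p\subseteq\mathrm{O}^\vartheta(\mathbb{G})$ by feeding $e\in\mathrm{O}^\vartheta(\mathbb{G})$ into $p^*\mathrm{O}^\vartheta(\mathbb{G})p\subseteq\mathrm{O}^\vartheta(\mathbb{G})$ is valid (and could equally be read off from Lemma \ref{L;Lemma2.9}, which records $\mathrm{O}^\vartheta(\mathbb{G})=\langle\bigcup_{p\in\mathbb{G}}p^*p\rangle$), and the chain $p^*\mathbb{F}p=p^*(p\mathbb{F})=(p^*p)\mathbb{F}\subseteq\mathrm{O}^\vartheta(\mathbb{G})\mathbb{F}\subseteq\mathbb{F}^2\subseteq\mathbb{F}$ uses exactly the hypotheses available: normality of $\mathbb{F}$ for $\mathbb{F}p=p\mathbb{F}$, (H1) for associativity, the assumed inclusion $\mathrm{O}^\vartheta(\mathbb{G})\subseteq\mathbb{F}$, and closedness of $\mathbb{F}$ for $\mathbb{F}^2\subseteq\mathbb{F}$. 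You also correctly flag that the only nontrivial input is the (stated, not proved) fact that $\mathrm{O}^\vartheta(\mathbb{G})$ is itself strongly normal in $\langle\mathbb{G}\rangle$; granting that, nothing is missing.
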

\begin{lem}\label{L;Lemma2.9}\cite[Lemma 4.4.2]{Z3}
Assume that $\mathbb{G}$ is a closed subset of $\mathbb{H}$. Then $$\mathrm{O}^\vartheta(\mathbb{G})=\langle\bigcup_{p\in\mathbb{G}}p^*p\rangle.$$
\end{lem}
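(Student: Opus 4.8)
Write $\mathbb{S}=\bigcup_{p\in\mathbb{G}}p^*p$ and $\mathbb{K}=\langle\mathbb{S}\rangle$. The plan is to establish the two inclusions $\mathbb{K}\subseteq\mathrm{O}^\vartheta(\mathbb{G})$ and $\mathrm{O}^\vartheta(\mathbb{G})\subseteq\mathbb{K}$ separately, the second by showing that $\mathbb{K}$ is itself a strongly normal closed subset of $\mathbb{G}=\langle\mathbb{G}\rangle$ and invoking the definition of $\mathrm{O}^\vartheta(\mathbb{G})$ as the intersection of all such. First I would record the routine facts: since $(p^*p)^*=p^*p$ one has $\mathbb{S}^*=\mathbb{S}$, so $\mathbb{K}=\bigcup_{n\in\mathbb{N}_0}\mathbb{S}^n$ by Lemma \ref{L;Lemma2.7}; since $e\in p^*p$ for each $p$ we get $e\in\mathbb{S}$, hence $\mathbb{S}^n\subseteq\mathbb{S}^{n+1}$ for all $n\in\mathbb{N}_0$; and each $p^*p\subseteq\mathbb{G}^*\mathbb{G}=\mathbb{G}^2\subseteq\mathbb{G}$, so $\mathbb{S}\subseteq\mathbb{G}$ and therefore $\mathbb{K}\subseteq\mathbb{G}$ because $\mathbb{G}$ is closed.

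For $\mathbb{K}\subseteq\mathrm{O}^\vartheta(\mathbb{G})$: let $\mathbb{F}$ be an arbitrary strongly normal closed subset of $\mathbb{G}$. Then $e\in\mathbb{F}$, so for every $p\in\mathbb{G}$ we have $p^*p=p^*\{e\}p\subseteq p^*\mathbb{F}p\subseteq\mathbb{F}$; hence $\mathbb{S}\subseteq\mathbb{F}$ and $\mathbb{K}=\langle\mathbb{S}\rangle\subseteq\mathbb{F}$. Intersecting over all such $\mathbb{F}$ gives $\mathbb{K}\subseteq\mathrm{O}^\vartheta(\mathbb{G})$.

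For the reverse inclusion it suffices to prove $q^*\mathbb{K}q\subseteq\mathbb{K}$ for every $q\in\mathbb{G}$. I would first reduce this to the generators. Using $e\in qq^*$ one obtains $\mathbb{S}^n\subseteq\mathbb{S}(qq^*)\mathbb{S}(qq^*)\cdots\mathbb{S}$ with $n-1$ inserted factors $qq^*$, whence $q^*\mathbb{S}^nq\subseteq(q^*\mathbb{S}q)^n$ for $n\in\mathbb{N}$; so once $q^*\mathbb{S}q\subseteq\mathbb{K}$ is known, $q^*\mathbb{K}q=\bigcup_{n\in\mathbb{N}}q^*\mathbb{S}^nq\subseteq\bigcup_{n\in\mathbb{N}}(q^*\mathbb{S}q)^n\subseteq\bigcup_{n\in\mathbb{N}}\mathbb{K}^n\subseteq\mathbb{K}$. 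To prove $q^*\mathbb{S}q\subseteq\mathbb{K}$, fix $p,q\in\mathbb{G}$; then $q^*p^*pq=(pq)^*(pq)$ with $pq\subseteq\mathbb{G}$, so it is enough to show $r^*s\subseteq\mathbb{K}$ for all $r,s\in pq$. When $r=s$ this is $r^*r\subseteq\mathbb{S}\subseteq\mathbb{K}$. For the cross terms I would use (H3): from $r\in pq$ we get $p\in rq^*$, hence $s\in pq\subseteq rq^*q$, and therefore $r^*s\subseteq(r^*r)(q^*q)\subseteq\mathbb{S}^2\subseteq\mathbb{K}$. Thus $\mathbb{K}$ is a strongly normal closed subset of $\mathbb{G}$, so $\mathrm{O}^\vartheta(\mathbb{G})\subseteq\mathbb{K}$, and equality follows.

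The main obstacle is the cross-term estimate in the last step: $\mathbb{S}$ need not itself be closed or conjugation-invariant, so one must pass to $\langle\mathbb{S}\rangle$ and exploit the coset translation $s\in rq^*q$ to absorb the products $r^*s$ with $r\neq s$ in $pq$. Everything else is bookkeeping with Lemma \ref{L;Lemma2.7}, associativity, and the closure axioms.
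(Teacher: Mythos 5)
Your argument is correct and complete: the containment $\langle\bigcup_{p\in\mathbb{G}}p^*p\rangle\subseteq\mathrm{O}^\vartheta(\mathbb{G})$ via an arbitrary strongly normal $\mathbb{F}$, and the reverse containment by verifying strong normality of $\langle\bigcup_{p\in\mathbb{G}}p^*p\rangle$ through the reduction to generators and the cross-term estimate $r^*s\subseteq(r^*r)(q^*q)$ for $r,s\in pq$, all check out against axioms (H1)--(H3) and Lemma \ref{L;Lemma2.7}. The paper gives no proof of its own here (it cites \cite[Lemma 4.4.2]{Z3}), and your argument is the standard one for this characterization of the thin residue, so there is nothing further to compare.
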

For any a closed subset $\mathbb{G}$ of $\mathbb{H}$, call $\mathbb{G}$ a thin closed subset of $\mathbb{H}$ if
$\mathbb{G}=\mathrm{O}_\vartheta(\mathbb{G})$. Let $\gamma$ be the injective map from $\mathbb{H}$ to the power set of $\mathbb{H}$ that sends $p$ to $\{p\}$ for any $p\in \mathbb{H}$. By Lemmas \ref{L;Lemma2.1} and \ref{L;Lemma2.2}, let $\bullet$ be the associative binary operation on ${\mathrm{O}_\vartheta(\mathbb{H})}^\gamma$ that sends the elements $\{p\}, \{q\}$ to $pq$
for any $p, q\in\mathrm{O}_\vartheta(\mathbb{H})$. For any $\mathbb{G}\subseteq\mathbb{H}$, notice that $\mathbb{G}$ is a thin closed subset of $\mathbb{H}$ if and only if $\mathbb{G}^\gamma$ is a group with respect to $\bullet$ and the identity $\{e\}$. From now on, $\mathbb{G}^\gamma$ denotes the group with respect to $\bullet$ and the identity $\{e\}$ for any a thin closed subset $\mathbb{G}$ of $\mathbb{H}$. For any a prime $p$ and a finite thin closed subset $\mathbb{G}$ of $\mathbb{H}$, the $p$-rank of $\mathbb{G}$ is defined to be the $p$-rank of $\mathbb{G}^\gamma$. For any a prime $p$ and a finite closed subset $\mathbb{G}$ of $\mathbb{H}$, use $\mathrm{r}_p(\mathbb{G})$ to denote the largest $p$-rank of a thin closed subset of $\mathbb{H}$ that is contained in $\mathbb{G}$. The following lemmas are necessary:
\begin{lem}\label{L;Lemma2.10}
Assume that $\mathbb{G}\subseteq\mathbb{H}$. Then $\mathbb{G}^\gamma$ is an elementary abelian 2-group if and only if $\mathbb{G}$ is a thin closed subset of $\mathbb{H}$ and $\mathbb{G}=\mathrm{Sym}(\mathbb{G})$.
\end{lem}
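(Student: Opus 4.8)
The plan is to derive both implications from the characterization recalled immediately before the statement: for $\mathbb{G}\subseteq\mathbb{H}$, the set $\mathbb{G}^\gamma$ is a group with respect to $\bullet$ and the identity $\{e\}$ if and only if $\mathbb{G}$ is a thin closed subset of $\mathbb{H}$. With this in hand, the clause ``$\mathbb{G}$ is a thin closed subset'' matches the clause ``$\mathbb{G}^\gamma$ is a group'' on both sides of the desired equivalence, so all that remains is to match the extra group-theoretic property ``elementary abelian $2$-group'' with the symmetry condition $\mathbb{G}=\mathrm{Sym}(\mathbb{G})$.

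For the forward direction, I would assume $\mathbb{G}^\gamma$ is an elementary abelian $2$-group. Since it is in particular a group under $\bullet$ with identity $\{e\}$, the recalled characterization shows $\mathbb{G}$ is a thin closed subset of $\mathbb{H}$. Every element of an elementary abelian $2$-group has order dividing $2$, so $\{p\}\bullet\{p\}=\{e\}$ for each $p\in\mathbb{G}$; unwinding the definition of $\bullet$, this reads $pp=\{e\}$, and in particular $e\in pp$. Applying (H3) with $q=r=p$ shows that $e\in pp$ is equivalent to $p\in ep^*=\{p^*\}$, i.e.\ to $p=p^*$; hence $p\in\mathrm{Sym}(\mathbb{G})$. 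Therefore $\mathbb{G}\subseteq\mathrm{Sym}(\mathbb{G})\subseteq\mathbb{G}$, so $\mathbb{G}=\mathrm{Sym}(\mathbb{G})$.

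For the converse, I would assume $\mathbb{G}$ is a thin closed subset of $\mathbb{H}$ with $\mathbb{G}=\mathrm{Sym}(\mathbb{G})$. Then $\mathbb{G}^\gamma$ is a group under $\bullet$ with identity $\{e\}$ by the characterization. For each $p\in\mathbb{G}$ we have $p^*=p$, and since $p$ is a thin element of $\mathbb{H}$ we have $p^*p=\{e\}$; hence $\{p\}\bullet\{p\}=pp=p^*p=\{e\}$. So every element of $\mathbb{G}^\gamma$ has order dividing $2$; such a group is necessarily abelian and hence an elementary abelian $2$-group, completing the proof.

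I do not anticipate a genuine obstacle: the argument is essentially a dictionary translation between the hypergroup language and the group $\mathbb{G}^\gamma$, and the only mildly delicate point is the short (H3) computation identifying $e\in pp$ with $p$ being symmetric. One could instead avoid that computation by observing that the inverse of $\{p\}$ in $\mathbb{G}^\gamma$ is $\{p^*\}$ (using $e\in pp^*$ together with Lemma \ref{L;Lemma2.1} to get $pp^*=\{e\}$) and then invoking that $\{p\}$ has order at most $2$; but the (H3) route is more self-contained.
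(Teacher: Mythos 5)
Your proposal is correct and follows essentially the same route as the paper: the paper's own (very terse) proof likewise reduces the statement to the observation that, for $p\in\mathbb{G}$, the condition $p^2=\{e\}$ is equivalent via (H2) and (H3) to $p$ being both symmetric and thin, which is exactly the dictionary you establish in both directions. No gaps.
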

\begin{proof}
If $p\!\in\!\mathbb{G}$, (H3) and (H2) imply that $p\in\mathrm{Sym}(\mathbb{G})\cap\mathrm{O}_\vartheta(\mathbb{G})$ if and only if $p^2=\{e\}$. The desired lemma thus follows from this statement and the above hypotheses.
\end{proof}
\begin{lem}\label{L;Lemma2.11}
Assume that $\mathbb{G}$ is a finite thin closed subset of $\mathbb{H}$ and $\mathbb{G}=\mathrm{Sym}(\mathbb{G})$. Then the dimension of $\mathbb{G}$ is equal to $\mathrm{r}_2(\mathbb{G})$ and $|\mathbb{G}|=2^{\mathrm{r}_2(\mathbb{G})}$.
\end{lem}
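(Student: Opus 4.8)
The plan is to transport the whole statement, via the bijection $\gamma$, to the finite group $\mathbb{G}^\gamma$ and then invoke elementary linear algebra over the field of two elements.

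Since $\mathbb{G}=\mathrm{Sym}(\mathbb{G})$ and $\mathbb{G}$ is a thin closed subset, Lemma~\ref{L;Lemma2.10} tells us that $\mathbb{G}^\gamma$ is an elementary abelian $2$-group, and it is finite because $\mathbb{G}$ is. Write $|\mathbb{G}^\gamma|=2^k$, where $k$ is the $2$-rank of $\mathbb{G}^\gamma$, equivalently its dimension as a vector space over the field of two elements. As $\gamma$ restricts to a bijection $\mathbb{G}\to\mathbb{G}^\gamma$, we get $|\mathbb{G}|=2^k$ at once. To see that $k=\mathrm{r}_2(\mathbb{G})$: on the one hand $\mathbb{G}$ is a thin closed subset of $\mathbb{H}$ contained in $\mathbb{G}$ with $2$-rank $k$, so $\mathrm{r}_2(\mathbb{G})\ge k$; on the other hand, if $\mathbb{K}$ is any thin closed subset of $\mathbb{H}$ with $\mathbb{K}\subseteq\mathbb{G}$, then every element of $\mathbb{K}$ lies in $\mathrm{Sym}(\mathbb{G})$, so $\mathbb{K}=\mathrm{Sym}(\mathbb{K})$ and, by Lemma~\ref{L;Lemma2.10} again, $\mathbb{K}^\gamma$ is an elementary abelian $2$-subgroup of $\mathbb{G}^\gamma$, whence its $2$-rank is at most $k$. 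Thus $\mathrm{r}_2(\mathbb{G})=k$, and the second assertion $|\mathbb{G}|=2^{\mathrm{r}_2(\mathbb{G})}$ follows.

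For the claim about the dimension, the main point is to identify generating subsets of $\mathbb{G}$ (in the closed-subset sense) with generating subsets of the group $\mathbb{G}^\gamma$. Let $\mathbb{F}\subseteq\mathbb{G}$. Every element of $\mathbb{F}$ is symmetric, so $\mathbb{F}^*=\mathbb{F}$, and Lemma~\ref{L;Lemma2.7} gives $\langle\mathbb{F}\rangle=\bigcup_{p\in\mathbb{N}_0}\mathbb{F}^p$. Because $\mathbb{G}$ is thin, $\gamma$ restricted to $\mathbb{G}$ carries hypermultiplication of subsets to the product $\bullet$ (here Lemmas~\ref{L;Lemma2.1} and \ref{L;Lemma2.2} guarantee that one stays inside $\mathrm{O}_\vartheta(\mathbb{H})$ and that all products are singletons), so $(\mathbb{F}^p)^\gamma=(\mathbb{F}^\gamma)^{\bullet p}$ for every $p$, and therefore $\langle\mathbb{F}\rangle^\gamma=\bigcup_{p\in\mathbb{N}_0}(\mathbb{F}^\gamma)^{\bullet p}$. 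Since each element of $\mathbb{F}^\gamma$ is its own inverse in $\mathbb{G}^\gamma$, the right-hand side is precisely the subgroup of $\mathbb{G}^\gamma$ generated by $\mathbb{F}^\gamma$. Hence $\mathbb{F}$ is a generating subset of $\mathbb{G}$ if and only if $\mathbb{F}^\gamma$ is a generating subset of $\mathbb{G}^\gamma$, and this correspondence preserves cardinalities and inclusions, so it matches the minimal generating subsets of $\mathbb{G}$ with the minimal generating subsets of $\mathbb{G}^\gamma$.

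Finally I would invoke the standard fact that the least cardinality of a generating set of a finite elementary abelian $2$-group equals its $2$-rank (a generating set is a spanning set over the field of two elements, and a minimal spanning set is a basis). Combined with the previous paragraph, this shows that the dimension of $\mathbb{G}$ equals $k=\mathrm{r}_2(\mathbb{G})$, which finishes the proof. The only mildly delicate step is the identification $\langle\mathbb{F}\rangle^\gamma=\langle\mathbb{F}^\gamma\rangle$ in the third paragraph, but it is routine once one recalls that $\gamma|_{\mathbb{G}}$ is a group isomorphism onto $\mathbb{G}^\gamma$; everything else is bookkeeping or elementary linear algebra.
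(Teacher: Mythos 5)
Your proof is correct, and it follows the same route as the paper: the paper's proof of Lemma~\ref{L;Lemma2.11} is a single sentence deferring everything to the hypotheses and Lemma~\ref{L;Lemma2.10}, i.e.\ precisely the transport-via-$\gamma$ argument you have written out. Your version simply supplies the details (the identification $\langle\mathbb{F}\rangle^\gamma=\langle\mathbb{F}^\gamma\rangle$ via Lemma~\ref{L;Lemma2.7}, and the comparison of $\mathrm{r}_2(\mathbb{G})$ with the rank of $\mathbb{G}^\gamma$) that the paper leaves implicit.
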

\begin{proof}
The desired lemma follows from the above hypotheses and Lemma \ref{L;Lemma2.10}.
\end{proof}
For any a closed subset $\mathbb{G}$ of $\mathbb{H}$, call $\mathbb{G}$ a residually thin closed subset of $\mathbb{H}$ if there are
$p\in\mathbb{N}$ and pairwise distinct closed subsets $\mathbb{F}_1, \mathbb{F}_2, \ldots, \mathbb{F}_p$ of $\mathbb{H}$
such that $\mathbb{F}_1=\{e\}$, $\mathbb{F}_p=\mathbb{G}$, and $\mathbb{F}_q$ is a strongly normal closed subset of $\mathbb{F}_{q+1}$ for any $q\in[1, p-1]$. Hence a thin closed subset of $\mathbb{H}$ is always a residually thin closed subset of $\mathbb{H}$. Call $\mathbb{H}$ a residually thin hypergroup if $\mathbb{H}$ itself is a residually thin closed subset of $\mathbb{H}$.

For any $\mathbb{F}, \mathbb{G}\subseteq\mathbb{H}$, set $[\mathbb{F}, \mathbb{G}]=\langle\{a: b\in\mathbb{F}, c\in\mathbb{G}, a\in b^*c^*bc\}\rangle$. For any $p\in\mathbb{N}\setminus\{1\}$ and $\mathbb{G}\!\subseteq\!\mathbb{H}$, set $\mathbb{G}^{(1)}=\mathbb{G}$ and $\mathbb{G}^{(p)}=[\mathbb{G}^{(p-1)}, \mathbb{G}]$ inductively. For any a closed subset $\mathbb{G}$ of $\mathbb{H}$, call $\mathbb{G}$ a nilpotent closed subset of $\mathbb{H}$ if there is $p\in\mathbb{N}$ such that $\mathbb{G}^{(p)}\!=\!\{e\}$. So a commutative thin closed subset of $\mathbb{H}$ is always a nilpotent closed subset of $\mathbb{H}$. Call $\mathbb{H}$ a nilpotent hypergroup if $\mathbb{H}$ itself is a nilpotent closed subset of $\mathbb{H}$ (see \cite{GZ}).

For any closed subsets $\mathbb{F}, \mathbb{G}$ of $\mathbb{H}$ and a map $\alpha$ from $\mathbb{F}$ to $\mathbb{G}$, call $\alpha$ a homomorphism from $\mathbb{F}$ to $\mathbb{G}$ if $e^\alpha=e$ and $(pq)^\alpha=p^\alpha q^\alpha$ for any $p, q\in \mathbb{F}$.
For any closed subsets $\mathbb{F}, \mathbb{G}$ of $\mathbb{H}$ and a homomorphism $\alpha$ from $\mathbb{F}$ to $\mathbb{G}$,
(H3) and (H2) imply that $(p^*)^\alpha\!=\!(p^\alpha)^*$ and $\mathrm{O}_\vartheta(\mathbb{F})^\alpha\subseteq\mathrm{O}_\vartheta(\mathbb{G})$ for any $p\in\mathbb{F}$.
For any closed subsets $\mathbb{F}, \mathbb{G}$ of $\mathbb{H}$, each bijective homomorphism from $\mathbb{F}$ to $\mathbb{G}$ is called an isomorphism from $\mathbb{F}$ to $\mathbb{G}$. For any closed subsets $\mathbb{F}$, $\mathbb{G}$ of $\mathbb{H}$, let $\mathrm{Iso}(\mathbb{F}, \mathbb{G})$ be the set of all isomorphisms from $\mathbb{F}$ to $\mathbb{G}$ and put
$\mathrm{Aut}(\mathbb{G})=\mathrm{Iso}(\mathbb{G}, \mathbb{G})$. For any closed subsets $\mathbb{F}, \mathbb{G}$ of $\mathbb{H}$, write $\mathbb{F}\simeq\mathbb{G}$ if $\mathrm{Iso}(\mathbb{F}, \mathbb{G})\neq \varnothing$.

For any a closed subset $\mathbb{G}$ of $\mathbb{H}$, $\mathrm{Aut}(\mathbb{G})$ contains the identity map $\epsilon_\mathbb{G}$.
For any closed subsets $\mathbb{F}, \mathbb{G}$ of $\mathbb{H}$, notice that $\alpha\!\in\!\mathrm{Iso}(\mathbb{F}, \mathbb{G})$ if and only if $\alpha^{-1}\!\in\!\mathrm{Iso}(\mathbb{G}, \mathbb{F})$. For any closed subsets $\mathbb{E}, \mathbb{F}, \mathbb{G}$ of $\mathbb{H}$, notice that $\alpha\in\mathrm{Iso}(\mathbb{E}, \mathbb{F})$ and $\beta\in\mathrm{Iso}(\mathbb{F}, \mathbb{G})$ imply that $\alpha\beta\in\mathrm{Iso}(\mathbb{E}, \mathbb{G})$. So $\simeq$ is an equivalence relation on the set of all closed subsets of $\mathbb{H}$. If $\mathbb{F}$ and $\mathbb{G}$ are closed subsets of $\mathbb{H}$, $\mathbb{F}$ is said to be isomorphic to $\mathbb{G}$ if $\mathbb{F}\simeq\mathbb{G}$.

For any a closed subset $\mathbb{G}$ of $\mathbb{H}$, $\mathrm{Aut}(\mathbb{G})$ is a group with respect to the composition of maps
and the identity $\epsilon_\mathbb{G}$. From now on, $\mathrm{Aut}(\mathbb{G})$ denotes the group with respect to the composition of maps and the identity $\epsilon_\mathbb{G}$ if $\mathbb{G}$ is a closed subset of $\mathbb{H}$. Call $\mathrm{Aut}(\mathbb{G})$ the automorphism group of $\mathbb{G}$ if $\mathbb{G}$ is a closed subset of $\mathbb{H}$. For any closed subsets $\mathbb{F}, \mathbb{G}$ of $\mathbb{H}$, notice that $\mathrm{Aut}(\mathbb{F})\cong\mathrm{Aut}(\mathbb{G})$ if $\mathbb{F}\simeq\mathbb{G}$. The automorphism group of a group $\mathbb{G}$ is $\mathrm{Aut}(\mathbb{G})$ if there is no confusion. The following lemma is necessary:
\begin{lem}\label{L;Lemma2.12}
Assume that $\mathbb{G}$ is a thin closed subset of $\mathbb{H}$. Then $\mathrm{Aut}(\mathbb{G})\!\cong\!\mathrm{Aut}(\mathbb{G}^\gamma)$.
\end{lem}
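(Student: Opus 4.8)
The plan is to transport each hypergroup automorphism of $\mathbb{G}$ to a group automorphism of $\mathbb{G}^\gamma$ by conjugation along $\gamma$. Since $\gamma$ is injective, its restriction to $\mathbb{G}$ is a bijection $\mathbb{G}\to\mathbb{G}^\gamma$, so conjugation by this restriction is a group isomorphism from the symmetric group on $\mathbb{G}$ onto the symmetric group on $\mathbb{G}^\gamma$. I would therefore define $\Phi\colon\mathrm{Aut}(\mathbb{G})\to\mathrm{Aut}(\mathbb{G}^\gamma)$ by $\alpha^\Phi=\gamma^{-1}\alpha\gamma$ (composition read from left to right, $\gamma$ restricted to $\mathbb{G}$), equivalently by $\{p\}^{\alpha^\Phi}=\{p^\alpha\}$ for all $p\in\mathbb{G}$. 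The whole content of the lemma is then that this conjugation map carries $\mathrm{Aut}(\mathbb{G})$ \emph{onto} $\mathrm{Aut}(\mathbb{G}^\gamma)$, i.e.\ that $\alpha$ is a homomorphism of closed subsets if and only if $\gamma^{-1}\alpha\gamma$ is a homomorphism of groups.

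First I would check that $\alpha^\Phi\in\mathrm{Aut}(\mathbb{G}^\gamma)$ whenever $\alpha\in\mathrm{Aut}(\mathbb{G})$. As $\mathbb{G}$ is thin, Lemma \ref{L;Lemma2.1} gives $|pq|=1$ for all $p,q\in\mathbb{G}$; writing $pq=\{r\}$ (with $r\in\mathbb{G}$ since $\mathbb{G}^2\subseteq\mathbb{G}$), we have $\{p\}\bullet\{q\}=\{r\}$ by the definition of $\bullet$. The homomorphism property $(pq)^\alpha=p^\alpha q^\alpha$ is then exactly the equality of singletons $\{r^\alpha\}=\{p^\alpha\}\bullet\{q^\alpha\}$, which says precisely that $\alpha^\Phi$ respects $\bullet$; together with $\{e\}^{\alpha^\Phi}=\{e^\alpha\}=\{e\}$ and the bijectivity of $\alpha^\Phi$, this puts $\alpha^\Phi$ in $\mathrm{Aut}(\mathbb{G}^\gamma)$.

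Next I would note that $\Phi$ is a group homomorphism, since $(\alpha\beta)^\Phi=\gamma^{-1}\alpha\beta\gamma=(\gamma^{-1}\alpha\gamma)(\gamma^{-1}\beta\gamma)=\alpha^\Phi\beta^\Phi$ and $\epsilon_\mathbb{G}^{\,\Phi}=\epsilon_{\mathbb{G}^\gamma}$, and that $\Phi$ is injective because $\alpha^\Phi=\beta^\Phi$ forces $\{p^\alpha\}=\{p^\beta\}$, hence $p^\alpha=p^\beta$, for every $p\in\mathbb{G}$. For surjectivity I would take $\psi\in\mathrm{Aut}(\mathbb{G}^\gamma)$ and set $\alpha=\gamma\psi\gamma^{-1}$, a bijection of $\mathbb{G}$; then $e^\alpha=e$ because $\psi$ fixes the group identity $\{e\}$, and running the singleton computation of the previous paragraph backwards (using that $\psi$ respects $\bullet$) gives $(pq)^\alpha=p^\alpha q^\alpha$, so $\alpha\in\mathrm{Aut}(\mathbb{G})$ with $\alpha^\Phi=\psi$. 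Thus $\Phi$ is the required isomorphism.

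I do not anticipate a genuine obstacle: the argument is a bookkeeping exercise, and the only point needing care is the systematic identification — legitimized precisely by Lemma \ref{L;Lemma2.1} and the definition of $\gamma$ — of an element $p^\alpha$ of $\mathbb{G}$, the singleton $\{p^\alpha\}$, the set-image $(pq)^\alpha$, and the group product $\bullet$ on $\mathbb{G}^\gamma$.
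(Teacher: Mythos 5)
Your proposal is correct and is essentially the paper's own proof: the paper defines the same map $\alpha\mapsto\alpha_\gamma$ sending $\{p\}$ to $\{p^\alpha\}$ and asserts it is a group isomorphism onto $\mathrm{Aut}(\mathbb{G}^\gamma)$, leaving the verification implicit. You have simply written out the routine checks (well-definedness via Lemma \ref{L;Lemma2.1}, homomorphy, injectivity, surjectivity) that the paper omits.
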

\begin{proof}
If $\alpha\in\mathrm{Aut}(\mathbb{G})$, let $\alpha_\gamma$ be the group automorphism in $\mathrm{Aut}(\mathbb{G}^\gamma)$ that sends
$\{p\}$ to $\{p^\alpha\}$ for any $p\in\mathbb{G}$. Notice that the map that sends $\alpha$ to $\alpha_\gamma$ for any $\alpha\in\mathrm{Aut}(\mathbb{G})$ is a group isomorphism from $\mathrm{Aut}(\mathbb{G})$ to $\mathrm{Aut}(\mathbb{G}^\gamma)$.
The desired lemma thus follows.
\end{proof}
\subsection{Elementary abelian 2-hypergroups}
Assume that $p\in\mathbb{N}$ and $\mathbb{H}$ has normal closed subsets $\mathbb{G}_1, \mathbb{G}_2, \ldots, \mathbb{G}_p$.
Recall that $\mathbb{G}_q\mathbb{G}_r=\mathbb{G}_r\mathbb{G}_q$ for any $q, r\in [1,p]$. Define
\[\widehat{\mathbb{G}_q}=\begin{cases}
\{e\}, &\ \text{if $p=1$},\\
\prod_{r\in [1,p]\setminus\{q\}}\mathbb{G}_r, &\ \text{if $p>1$}
\end{cases}\]
for any $q\in [1, p]$. If $\mathbb{H}=\mathbb{G}_q\widehat{\mathbb{G}_q}$ and $\mathbb{G}_q\cap\widehat{\mathbb{G}_q}=\{e\}$ for any $q\in [1, p]$, call $\mathbb{H}$ the direct product of its closed subsets $\mathbb{G}_1, \mathbb{G}_2, \ldots, \mathbb{G}_p$. The following lemmas are necessary:
\begin{lem}\label{L;Lemma2.13}\cite[Lemma 3.1.9]{Z3}
Assume that $p\!\in\!\mathbb{N}$ and $\mathbb{H}$ is the direct product of its closed subsets $\mathbb{G}_1, \mathbb{G}_2, \ldots, \mathbb{G}_p$. Then $qr\!=\!rq$ for any distinct $s, t\!\in\![1, p], q\!\in\!\mathbb{G}_s, r\!\in\!\mathbb{G}_t$.
\end{lem}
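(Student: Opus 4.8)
The plan is to fix distinct $s, t \in [1, p]$ together with $q \in \mathbb{G}_s$ and $r \in \mathbb{G}_t$, and to show $qr = rq$ by exploiting the defining conditions of a direct product, namely $\mathbb{H} = \mathbb{G}_u \widehat{\mathbb{G}_u}$ and $\mathbb{G}_u \cap \widehat{\mathbb{G}_u} = \{e\}$ for every $u \in [1, p]$. First I would observe that since $p > 1$ (as $s \neq t$ forces at least two indices), each $\mathbb{G}_u$ is a normal closed subset of $\mathbb{H}$, so $\mathbb{G}_u x = x \mathbb{G}_u$ for all $x \in \mathbb{H}$; in particular the products $\mathbb{G}_s \mathbb{G}_t$ and $\mathbb{G}_t \mathbb{G}_s$ coincide by the remark preceding the definition (normal closed subsets permute). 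Thus both $qr$ and $rq$ are nonempty subsets of the closed subset $\mathbb{G}_s \mathbb{G}_t = \mathbb{G}_t \mathbb{G}_s$.

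The key step is to pin down elements of these products using uniqueness of factorization. Note $t \in [1, p] \setminus \{s\}$, so $\mathbb{G}_t \subseteq \widehat{\mathbb{G}_s}$, hence $\mathbb{G}_s \mathbb{G}_t \subseteq \mathbb{G}_s \widehat{\mathbb{G}_s}$; symmetrically $\mathbb{G}_t \mathbb{G}_s \subseteq \mathbb{G}_t \widehat{\mathbb{G}_t}$. Since $\mathbb{G}_s \cap \widehat{\mathbb{G}_s} = \{e\}$, Lemma \ref{L;Lemma2.4} tells us that every element of $\mathbb{G}_s \widehat{\mathbb{G}_s}$ lies in $a b$ for a \emph{unique} pair $a \in \mathbb{G}_s$, $b \in \widehat{\mathbb{G}_s}$. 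Now take any $x \in qr$. Then $x \in \mathbb{G}_s \mathbb{G}_t \subseteq \mathbb{G}_s \widehat{\mathbb{G}_s}$, and since $r \in \mathbb{G}_t \subseteq \widehat{\mathbb{G}_s}$, the pair $(q, r)$ witnesses $x \in q r$; by uniqueness this is the only such factorization. On the other hand, $x \in \mathbb{G}_t \mathbb{G}_s \subseteq \mathbb{G}_t \widehat{\mathbb{G}_t}$ as well, so applying Lemma \ref{L;Lemma2.4} to the pair $\mathbb{G}_t, \widehat{\mathbb{G}_t}$ gives $x \in r' q'$ for unique $r' \in \mathbb{G}_t$, $q' \in \widehat{\mathbb{G}_t}$.

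To close the argument I would compare these two factorizations of $x$. The point is that $x \in \mathbb{G}_s \mathbb{G}_t$ can be written as a product of one element of $\mathbb{G}_s$ and one of $\mathbb{G}_t$ in essentially one way once we track both constraints simultaneously: writing $\widehat{\mathbb{G}_s}$ and $\widehat{\mathbb{G}_t}$ out, the factor of $x$ in $\mathbb{G}_s$ is forced to be $q$ and the factor in $\mathbb{G}_t$ is forced to be $r$, regardless of the order in which we multiply. Concretely, from $x \in \mathbb{G}_t \mathbb{G}_s$ and the normality permitting $\mathbb{G}_t \mathbb{G}_s = \mathbb{G}_s \mathbb{G}_t$, we get $x \in q'' r''$ with $q'' \in \mathbb{G}_s$, $r'' \in \mathbb{G}_t$, and by the uniqueness in $\mathbb{G}_s \widehat{\mathbb{G}_s}$ we conclude $q'' = q$, $r'' = r$; hence $x \in \mathbb{G}_t \mathbb{G}_s$ forces $x$ to be compatible with the single pair $(q, r)$, so $qr \subseteq rq$ and by symmetry $rq \subseteq qr$, giving $qr = rq$. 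The main obstacle I anticipate is bookkeeping: making precise how an element of $\mathbb{G}_s \mathbb{G}_t$ acquires a canonical pair of factors in $\mathbb{G}_s \times \mathbb{G}_t$ by intersecting the uniqueness statements for the two complemented decompositions, and verifying that the hypergroup products (which are set-valued) do not introduce spurious extra factorizations — this is where one must use Lemma \ref{L;Lemma2.4} carefully rather than arguing as in group theory.
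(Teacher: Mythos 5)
The paper offers no proof of this lemma --- it is quoted directly from \cite[Lemma 3.1.9]{Z3} --- so your attempt can only be judged on its own terms. Your overall strategy (unique factorization via Lemma \ref{L;Lemma2.4} for the complemented decompositions $\mathbb{G}_s\widehat{\mathbb{G}_s}$ and $\mathbb{G}_t\widehat{\mathbb{G}_t}$, combined with normality) is the right one, and the preparatory steps are sound: $\mathbb{G}_t\subseteq\widehat{\mathbb{G}_s}$, so the pair $(q,r)$ is the unique $(\mathbb{G}_s,\widehat{\mathbb{G}_s})$-ordered factorization of any $x\in qr$, and likewise $x$ has a unique $(\mathbb{G}_t,\widehat{\mathbb{G}_t})$-ordered factorization $(r',q')$. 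The gap is in the closing step. You write that since $x\in\mathbb{G}_t\mathbb{G}_s=\mathbb{G}_s\mathbb{G}_t$ and the $(\mathbb{G}_s,\mathbb{G}_t)$-ordered factorization of $x$ is forced to be $(q,r)$, it follows that $x\in rq$. This is a non sequitur: $x\in\mathbb{G}_t\mathbb{G}_s$ only says $x\in cd$ for \emph{some} $c\in\mathbb{G}_t$, $d\in\mathbb{G}_s$, and the uniqueness of the factorization in the \emph{other} order gives no information about which pair $(c,d)$ occurs. Set-level commutation $\mathbb{G}_s\mathbb{G}_t=\mathbb{G}_t\mathbb{G}_s$ is strictly weaker than the element-level identity $qr=rq$ you are trying to prove, and nothing in your final paragraph bridges that distance.

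The missing ingredient is the element-level normality you stated at the outset but never deployed where it matters. Fix $x\in qr$. Since $\mathbb{G}_t$ is normal, $qr\subseteq q\mathbb{G}_t=\mathbb{G}_tq$, so $x\in aq$ for some $a\in\mathbb{G}_t$; since $\mathbb{G}_s$ is normal, $qr\subseteq\mathbb{G}_sr=r\mathbb{G}_s$, so $x\in rb$ for some $b\in\mathbb{G}_s$. Both $x\in aq$ and $x\in rb$ are factorizations of $x$ with first factor in $\mathbb{G}_t$ and second factor in $\mathbb{G}_s\subseteq\widehat{\mathbb{G}_t}$, so Lemma \ref{L;Lemma2.4} applied to $\mathbb{G}_t$ and $\widehat{\mathbb{G}_t}$ forces $a=r$ and $b=q$, whence $x\in rq$. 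This yields $qr\subseteq rq$, and the reverse inclusion follows by symmetry. With this repair (and a remark that $\widehat{\mathbb{G}_u}$ is a closed subset, via Lemma \ref{L;Lemma2.3} and normality, so that Lemma \ref{L;Lemma2.4} applies) your proof goes through.
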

\begin{lem}\label{L;Lemma2.14}
Assume that $p\in\mathbb{N}$ and $\mathbb{H}$ is the direct product of its closed subsets $\mathbb{G}_1, \mathbb{G}_2, \ldots, \mathbb{G}_p$. Assume that $q\in\mathbb{H}$. Then there are unique $r_1, r_2, \ldots, r_p\in\mathbb{H}$ such that $q\in r_1r_2\cdots r_p$ and $r_s\in\mathbb{G}_s$ for any $s\in [1, p]$.
\end{lem}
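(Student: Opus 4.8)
The plan is to argue by induction on $p$ and to split the existence claim from the uniqueness claim. For $p=1$ the statement is vacuous: $\mathbb{H}=\mathbb{G}_1$ and the unique choice is $r_1=q$. For the inductive step, write $\widehat{\mathbb{G}_1}=\mathbb{G}_2\mathbb{G}_3\cdots\mathbb{G}_p$, which is a closed subset of $\mathbb{H}$ because the $\mathbb{G}_r$ are pairwise permuting normal closed subsets (so the product is associative and closed by repeated use of Lemma \ref{L;Lemma2.3}). By hypothesis $\mathbb{H}=\mathbb{G}_1\widehat{\mathbb{G}_1}$ and $\mathbb{G}_1\cap\widehat{\mathbb{G}_1}=\{e\}$, so Lemma \ref{L;Lemma2.4} applies: each $q\in\mathbb{H}$ lies in $r_1 s$ for a \emph{unique} $r_1\in\mathbb{G}_1$ and a \emph{unique} $s\in\widehat{\mathbb{G}_1}$.

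First I would establish existence. Having extracted $r_1\in\mathbb{G}_1$ and $s\in\widehat{\mathbb{G}_1}$ with $q\in r_1 s$, I want to apply the inductive hypothesis to $\widehat{\mathbb{G}_1}$ viewed as the direct product of $\mathbb{G}_2,\ldots,\mathbb{G}_p$. To do this I must check that $\widehat{\mathbb{G}_1}$ really is such a direct product, i.e.\ that for each $j\in[2,p]$ one has $\widehat{\mathbb{G}_1}=\mathbb{G}_j\bigl(\prod_{r\in[2,p]\setminus\{j\}}\mathbb{G}_r\bigr)$ and that the corresponding intersection is $\{e\}$. The factorization of $\widehat{\mathbb{G}_1}$ is immediate from its definition and commutativity of the factors; the intersection condition follows from $\mathbb{G}_j\cap\widehat{\mathbb{G}_j}=\{e\}$ in $\mathbb{H}$ together with the containment $\prod_{r\in[2,p]\setminus\{j\}}\mathbb{G}_r\subseteq\widehat{\mathbb{G}_j}$. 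Then the inductive hypothesis gives unique $r_2,\ldots,r_p$ with $r_j\in\mathbb{G}_j$ and $s\in r_2 r_3\cdots r_p$; combining with $q\in r_1 s$ and associativity (H1) yields $q\in r_1 r_2\cdots r_p$, proving existence.

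For uniqueness, suppose $q\in r_1 r_2\cdots r_p$ and $q\in r_1' r_2'\cdots r_p'$ with $r_j,r_j'\in\mathbb{G}_j$. Set $s=$ any element of $r_2\cdots r_p$ with $q\in r_1 s$ and $s'$ similarly; more carefully, since $r_2\cdots r_p\subseteq\widehat{\mathbb{G}_1}$ and $q\in r_1(r_2\cdots r_p)$, there is $s\in\widehat{\mathbb{G}_1}$ with $q\in r_1 s$, and likewise $s'\in\widehat{\mathbb{G}_1}$ with $q\in r_1' s'$. The uniqueness clause of Lemma \ref{L;Lemma2.4} forces $r_1=r_1'$ and $s=s'$. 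Now $s=s'$ lies in both $r_2\cdots r_p$ and $r_2'\cdots r_p'$ inside $\widehat{\mathbb{G}_1}$, so the inductive hypothesis applied to $\widehat{\mathbb{G}_1}$ gives $r_j=r_j'$ for all $j\in[2,p]$. This completes the induction.

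The main obstacle I anticipate is bookkeeping around the claim ``$q\in r_1(r_2\cdots r_p)$ implies there exists $s\in\widehat{\mathbb{G}_1}$ with $q\in r_1 s$ and $s\in r_2\cdots r_p$'': this is just the definition of the set product $r_1(r_2\cdots r_p)=\bigcup_{s\in r_2\cdots r_p} r_1 s$, but one must be careful that the $s$ produced by Lemma \ref{L;Lemma2.4} (which is a priori only \emph{some} element of $\widehat{\mathbb{G}_1}$) is the \emph{same} $s$ that lies in $r_2\cdots r_p$ — and it is, precisely because Lemma \ref{L;Lemma2.4} says the decomposition $q\in r_1 s$ with $r_1\in\mathbb{G}_1,\ s\in\widehat{\mathbb{G}_1}$ is unique, so any $s\in r_2\cdots r_p\subseteq\widehat{\mathbb{G}_1}$ satisfying $q\in r_1 s$ must be that unique one. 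A secondary, purely routine point is verifying that $\widehat{\mathbb{G}_1}$ is itself a direct product in the sense of the definition, which reduces to set-theoretic manipulation of the factorizations together with Lemma \ref{L;Lemma2.13} for commutativity.
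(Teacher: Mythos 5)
Your proposal is correct and is essentially the paper's own argument spelled out in full: the paper's proof simply says to combine Lemmas \ref{L;Lemma2.3}, \ref{L;Lemma2.4}, and \ref{L;Lemma2.13}, which amounts to exactly your induction peeling off one factor at a time via the unique factorization of Lemma \ref{L;Lemma2.4} applied to $\mathbb{G}_1$ and $\widehat{\mathbb{G}_1}$, with Lemma \ref{L;Lemma2.3} (plus normality/Lemma \ref{L;Lemma2.13}) guaranteeing that $\widehat{\mathbb{G}_1}$ is a closed subset. Your care about identifying the $s$ from Lemma \ref{L;Lemma2.4} with an element of $r_2\cdots r_p$ and about verifying that $\widehat{\mathbb{G}_1}$ inherits the direct-product structure is exactly the bookkeeping the paper leaves implicit.
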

\begin{proof}
The desired lemma follows from combining Lemmas \ref{L;Lemma2.3}, \ref{L;Lemma2.4}, and \ref{L;Lemma2.13}.
\end{proof}
Assume that $\mathbb{H}$ is the direct product of its closed subsets $\mathbb{G}_1, \mathbb{G}_2, \ldots, \mathbb{G}_p$. For any $q\in\mathbb{H}$, call the unique set $\{r_1, r_2, \ldots, r_p\}\setminus\{e\}$ occurred in
Lemma \ref{L;Lemma2.14} the support of $q$. For any $q\in\mathbb{H}$, notice that $q=e$ if and only if the support of $q$ is the empty set. For any $q\in\mathbb{H}$,
the number of all thick elements of $\mathbb{H}$ in the support of $q$ is denoted by $\mathrm{s}(q)$.
For any a closed subset $\mathbb{G}$ of $\mathbb{H}$, let $\mathrm{s}(\mathbb{G})$ be the number $\max\{\mathrm{s}(a): a\in\mathbb{G}\}$.

For any $q\in\mathbb{H}$, call $q$ an externally thick element of $\mathbb{H}$ if $q=e$ or all elements in the support of $q$ are the thick elements of $\mathbb{H}$. For any $q\in\mathbb{H}$, call $q$ an externally thin element of $\mathbb{H}$ if $q=e$ or all elements in the support of $q$ are the thin elements of $\mathbb{H}$. If $\mathbb{F}\subseteq\mathbb{H}$, let $\mathrm{Thick}(\mathbb{F})$ be the set of all externally thick elements of $\mathbb{H}$ in $\mathbb{F}$. If $\mathbb{F}\subseteq\mathbb{H}$, let $\mathrm{Thin}(\mathbb{F})$ be the set of all externally thin elements of $\mathbb{H}$ in $\mathbb{F}$. Notice that
$\mathbb{H}\!=\!\mathrm{Thick}(\mathbb{H})\mathrm{Thin}(\mathbb{H})$ by Lemmas \ref{L;Lemma2.13} and \ref{L;Lemma2.14}.
If $q\!\in\![1, p]$, Lemmas \ref{L;Lemma2.1} and \ref{L;Lemma2.2} give $\mathrm{Thin}(\mathbb{H})\!\subseteq\!\mathrm{O}_\vartheta(\mathbb{H})$ and $\mathrm{Thin}(\mathbb{G}_q)\!=\!\mathrm{O}_\vartheta(\mathbb{G}_q)$. The following lemmas are necessary:
\begin{lem}\label{L;Lemma2.15}
Assume that $p\in\mathbb{N}$ and $\mathbb{H}$ is the direct product of its closed subsets $\mathbb{G}_1, \mathbb{G}_2, \ldots, \mathbb{G}_p$. Assume that $\{e\}\subseteq\mathbb{F}\subseteq\mathbb{H}$.  Then $\mathrm{Thick}(\mathbb{F})\cap\mathrm{Thin}(\mathbb{F})=\{e\}$.
\end{lem}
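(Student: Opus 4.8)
The plan is to prove the two inclusions separately. For $\{e\}\subseteq\mathrm{Thick}(\mathbb{F})\cap\mathrm{Thin}(\mathbb{F})$, I would simply note that $e\in\mathbb{F}$ by hypothesis, and that $e$ is by definition both an externally thick and an externally thin element of $\mathbb{H}$ (the defining clause ``$q=e$'' applies in each case). Hence $e\in\mathrm{Thick}(\mathbb{F})$ and $e\in\mathrm{Thin}(\mathbb{F})$, so $e$ lies in the intersection.

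For the reverse inclusion I would take an arbitrary $q\in\mathrm{Thick}(\mathbb{F})\cap\mathrm{Thin}(\mathbb{F})$ and argue by contradiction, assuming $q\neq e$. By the observation recorded just before the lemma, the support of $q$ is then a nonempty subset of $\mathbb{H}$, so I can fix an element $r$ lying in it. Since $q$ is an externally thick element of $\mathbb{H}$ with $q\neq e$, every element of the support of $q$ is a thick element of $\mathbb{H}$, so $r^*r\neq\{e\}$. Since $q$ is also an externally thin element of $\mathbb{H}$ with $q\neq e$, every element of the support of $q$ is a thin element of $\mathbb{H}$, so $r^*r=\{e\}$. These two conclusions are incompatible, which forces $q=e$, and therefore $\mathrm{Thick}(\mathbb{F})\cap\mathrm{Thin}(\mathbb{F})=\{e\}$.

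The whole argument is bookkeeping with the definitions of ``externally thick'', ``externally thin'', and ``support'', combined with the dichotomy that each element of $\mathbb{H}$ is either thick or thin but not both. The only point needing any care is that the support of $q$ is genuinely nonempty when $q\neq e$, which is precisely the observation stated right before the lemma; once that is available, the single-element thick/thin dichotomy closes the proof, so I do not anticipate any real obstacle.
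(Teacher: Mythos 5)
Your proof is correct and matches the paper's (one-line) argument: the paper simply invokes Lemma \ref{L;Lemma2.14}, which is exactly what makes the support well-defined and nonempty for $q\neq e$, after which the thick/thin dichotomy on a single support element gives the contradiction you describe. You have just written out the bookkeeping the paper leaves implicit.
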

\begin{proof}
The desired lemma follows from the above hypotheses and Lemma \ref{L;Lemma2.14}.
\end{proof}
\begin{lem}\label{L;Lemma2.16}
Assume that $p\in\mathbb{N}$ and $\mathbb{H}$ is the direct product of its closed subsets $\mathbb{G}_1, \mathbb{G}_2, \ldots, \mathbb{G}_p$. Then $\mathrm{O}_\vartheta(\mathbb{G}_1), \mathrm{O}_\vartheta(\mathbb{G}_2), \ldots, \mathrm{O}_\vartheta(\mathbb{G}_p)$ are thin closed subsets of $\mathbb{H}$ if and only if $\mathrm{Thin}(\mathbb{F})$ is a thin closed subset of $\mathbb{H}$ for any a closed subset $\F$ of $\mathbb{H}$.
\end{lem}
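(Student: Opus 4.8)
The plan is to prove the two implications separately; the reverse one is almost immediate from a remark made just before the lemma, and the forward one needs only a short induction over the direct factors. For the reverse implication, suppose $\mathrm{Thin}(\mathbb{F})$ is a thin closed subset of $\mathbb{H}$ for every closed subset $\mathbb{F}$ of $\mathbb{H}$. Since $\mathbb{G}_q$ is a closed subset of $\mathbb{H}$ for each $q\in[1,p]$, the set $\mathrm{Thin}(\mathbb{G}_q)$ is then a thin closed subset of $\mathbb{H}$; but $\mathrm{Thin}(\mathbb{G}_q)=\mathrm{O}_\vartheta(\mathbb{G}_q)$ was observed above, so each $\mathrm{O}_\vartheta(\mathbb{G}_q)$ is a thin closed subset of $\mathbb{H}$, as wanted.

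For the forward implication, suppose $\mathrm{O}_\vartheta(\mathbb{G}_q)$ is a thin closed subset of $\mathbb{H}$ for every $q\in[1,p]$. The first key point is the set identity
\[\mathrm{Thin}(\mathbb{H})=\mathrm{O}_\vartheta(\mathbb{G}_1)\mathrm{O}_\vartheta(\mathbb{G}_2)\cdots\mathrm{O}_\vartheta(\mathbb{G}_p),\]
which follows at once from Lemma \ref{L;Lemma2.14} and the definitions of the support of an element and of an externally thin element: each element of $\mathbb{H}$ lies in $r_1r_2\cdots r_p$ for unique $r_s\in\mathbb{G}_s$ ($s\in[1,p]$), and it is externally thin exactly when every $r_s$ is a thin element, i.e.\ $r_s\in\mathrm{O}_\vartheta(\mathbb{G}_s)$ for all $s$. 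By Lemma \ref{L;Lemma2.13} elements lying in distinct factors commute, so the closed subsets $\mathrm{O}_\vartheta(\mathbb{G}_1),\ldots,\mathrm{O}_\vartheta(\mathbb{G}_p)$ commute pairwise; hence an induction on the number of factors, applying Lemma \ref{L;Lemma2.3} at each step, shows that $\mathrm{Thin}(\mathbb{H})$ is a closed subset of $\mathbb{H}$. Since $\mathrm{Thin}(\mathbb{H})\subseteq\mathrm{O}_\vartheta(\mathbb{H})$ (noted before the lemma), all elements of $\mathrm{Thin}(\mathbb{H})$ are thin, so $\mathrm{Thin}(\mathbb{H})$ is a thin closed subset of $\mathbb{H}$.

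It then remains to pass from $\mathbb{H}$ to an arbitrary closed subset $\mathbb{F}$ of $\mathbb{H}$. The second key point is that being an externally thin element of $\mathbb{H}$ is a property of the element itself and does not involve $\mathbb{F}$, so $\mathrm{Thin}(\mathbb{F})=\mathbb{F}\cap\mathrm{Thin}(\mathbb{H})$. As the intersection of two closed subsets of $\mathbb{H}$, this is a closed subset of $\mathbb{H}$, and being contained in $\mathrm{Thin}(\mathbb{H})\subseteq\mathrm{O}_\vartheta(\mathbb{H})$ it consists entirely of thin elements. Therefore $\mathrm{Thin}(\mathbb{F})$ is a thin closed subset of $\mathbb{H}$, which finishes the forward implication. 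I do not expect a real obstacle here; the only point needing a little care is the induction that turns the pairwise commutativity of the $\mathrm{O}_\vartheta(\mathbb{G}_q)$ into closedness of their full product, and that is routine given Lemmas \ref{L;Lemma2.3} and \ref{L;Lemma2.13}.
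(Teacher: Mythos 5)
Your proof is correct and follows essentially the route the paper intends: the paper's own proof is just the citation ``combine Lemmas \ref{L;Lemma2.1}, \ref{L;Lemma2.2}, and \ref{L;Lemma2.13},'' and your argument fleshes this out using the pre-lemma identities $\mathrm{Thin}(\mathbb{G}_q)=\mathrm{O}_\vartheta(\mathbb{G}_q)$ and $\mathrm{Thin}(\mathbb{H})\subseteq\mathrm{O}_\vartheta(\mathbb{H})$ (which rest on those two lemmas), together with Lemma \ref{L;Lemma2.13} and the product criterion of Lemma \ref{L;Lemma2.3}. The reduction $\mathrm{Thin}(\mathbb{F})=\mathbb{F}\cap\mathrm{Thin}(\mathbb{H})$ and the induction turning pairwise commutativity into closedness of the full product are both sound.
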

\begin{proof}
The desired lemma follows from combining Lemmas \ref{L;Lemma2.1}, \ref{L;Lemma2.2}, and \ref{L;Lemma2.13}.
\end{proof}
For any $q\in\mathbb{H}$, call the unique subset $r_1r_2\cdots r_p$ of $\mathbb{H}$ occurred in Lemma \ref{L;Lemma2.14} the cover of $q$. For any $q\in\mathbb{H}$, Lemma \ref{L;Lemma2.13} and (H2) thus imply that the cover of $q$ and the support of $q$ can be mutually determined. For any $q\in\mathbb{H}$, call $q$ a constrained element of $\mathbb{H}$ if the cover of $q$ equals precisely $\{q\}$.
The following lemma is necessary:
\begin{lem}\label{L;Lemma2.17}
Assume that $p\in\mathbb{N}$ and $\mathbb{H}$ is the direct product of its closed subsets $\mathbb{G}_1, \mathbb{G}_2, \ldots, \mathbb{G}_p$. Assume that $\mathbb{F}\subseteq\mathbb{H}$ and all elements in $\mathrm{O}_\vartheta(\mathbb{F})$ are the constrained elements of $\mathbb{H}$. Then $\mathrm{Thin}(\mathbb{F})=\mathrm{O}_\vartheta(\mathbb{F})$.
\end{lem}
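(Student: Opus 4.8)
The plan is to establish the two inclusions of the claimed equality separately, noting that only one of them uses the hypothesis that the elements of $\mathrm{O}_\vartheta(\mathbb{F})$ are constrained. For $\mathrm{Thin}(\mathbb{F})\subseteq\mathrm{O}_\vartheta(\mathbb{F})$ I would argue straight from the definitions: $\mathrm{Thin}(\mathbb{F})=\mathbb{F}\cap\mathrm{Thin}(\mathbb{H})$ and $\mathrm{O}_\vartheta(\mathbb{F})=\mathbb{F}\cap\mathrm{O}_\vartheta(\mathbb{H})$, so the inclusion is immediate from the remark recorded just before this lemma (obtained via Lemmas \ref{L;Lemma2.1} and \ref{L;Lemma2.2}) that $\mathrm{Thin}(\mathbb{H})\subseteq\mathrm{O}_\vartheta(\mathbb{H})$; no hypothesis on $\mathbb{F}$ is used here.

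The substance is the reverse inclusion $\mathrm{O}_\vartheta(\mathbb{F})\subseteq\mathrm{Thin}(\mathbb{F})$. Fix $q\in\mathrm{O}_\vartheta(\mathbb{F})$, so that $q\in\mathbb{F}$ and $q^*q=\{e\}$; by hypothesis $q$ is constrained, so its cover equals $\{q\}$. Choosing $r_1,r_2,\ldots,r_p$ with $r_s\in\mathbb{G}_s$ and $q\in r_1r_2\cdots r_p$ as in Lemma \ref{L;Lemma2.14}, constrainedness upgrades this to the equality of subsets $\{q\}=r_1r_2\cdots r_p$. Applying $*$, and using both that $*$ reverses products of subsets (a consequence of (H3)) and that $r_s^*\in\mathbb{G}_s$, gives $\{q^*\}=\{q\}^*=r_p^*r_{p-1}^*\cdots r_1^*$, whence $\{e\}=q^*q=(r_p^*\cdots r_1^*)(r_1\cdots r_p)$.

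The one genuinely technical step is now to rearrange this last product: using associativity of hypermultiplication on subsets together with Lemma \ref{L;Lemma2.13} (elements, hence subsets, of $\mathbb{G}_s$ and $\mathbb{G}_t$ commute for $s\neq t$), one collects the two factors coming from each $\mathbb{G}_s$ and obtains $(r_p^*\cdots r_1^*)(r_1\cdots r_p)=(r_1^*r_1)(r_2^*r_2)\cdots(r_p^*r_p)$, where the $s$-th factor $r_s^*r_s$ lies in $\mathbb{G}_s$. Since $e\in r_s^*r_s$ for every $s$ (by (H2) and (H3)), and since a product of subsets each containing $e$ contains each of its factors, the equality of this product with $\{e\}$ forces $r_s^*r_s=\{e\}$, i.e.\ $r_s$ to be a thin element of $\mathbb{H}$, for every $s\in[1,p]$. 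In particular every element of the support of $q$ is a thin element, so $q$ is an externally thin element of $\mathbb{H}$; as $q\in\mathbb{F}$, this yields $q\in\mathrm{Thin}(\mathbb{F})$ and completes the proof.
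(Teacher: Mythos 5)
Your proof is correct and follows the route the paper intends: the paper's own proof is only the one-line citation of Lemma \ref{L;Lemma2.14}, and your argument---using constrainedness to write $\{q\}=r_1r_2\cdots r_p$, rearranging $q^*q=\{e\}$ into $(r_1^*r_1)(r_2^*r_2)\cdots(r_p^*r_p)=\{e\}$ via Lemma \ref{L;Lemma2.13}, and then forcing each $r_s^*r_s=\{e\}$ because every factor contains $e$---is exactly the expansion of that citation. The easy inclusion $\mathrm{Thin}(\mathbb{F})\subseteq\mathrm{O}_\vartheta(\mathbb{F})$ via $\mathrm{Thin}(\mathbb{H})\subseteq\mathrm{O}_\vartheta(\mathbb{H})$ is likewise just the remark recorded in the paper immediately before the lemma, so no further comment is needed.
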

\begin{proof}
The desired lemma follows from the above hypotheses and Lemma \ref{L;Lemma2.14}.
\end{proof}
For any $q, r\in\mathbb{H}$, set $q\preceq r$ if the support of $q$ is a subset of the support of $r$. So $\preceq$ is a preorder on $\mathbb{H}$. So $\preceq$ is a partial order on $\mathbb{H}$ if and only if all elements in $\mathbb{H}$ are the constrained elements of $\mathbb{H}$. If all elements in $\mathbb{H}$ are the constrained elements of $\mathbb{H}$ and $q\!\in\!\mathbb{H}$, Lemmas \ref{L;Lemma2.13} and \ref{L;Lemma2.14} give $q^+q^-=\{q\}$ for unique $q^+\!\in\!\mathrm{Thick}(\mathbb{H})$ and $q^-\!\in\!\mathrm{Thin}(\mathbb{H})$. If all elements in $\mathbb{H}$ are the constrained elements of $\mathbb{H}$, then $\mathbb{H}$ is called the constrained direct product of its closed subsets $\mathbb{G}_1, \mathbb{G}_2, \ldots, \mathbb{G}_p$. Moreover, call $\mathbb{H}$ an elementary abelian $2$-hypergroup if the following conditions hold together:
\begin{enumerate}[(E1)]
\item $\mathbb{H}$ is the constrained direct product of its closed subsets $\langle q_1\rangle, \langle q_2\rangle,\ldots, \langle q_p\rangle$;
\item These elements $q_1, q_2, \ldots, q_p$ of $\mathbb{H}$ occurred in (E1) are all involutions of $\mathbb{H}$.
\end{enumerate}

From now on, $\mathbb{H}$ denotes a fixed elementary abelian $2$-hypergroup, where $q_r$ is a fixed involution of $\mathbb{H}$ for any $r\in [1, p]$. Let $p^\sharp$ be the number of all thick elements of $\mathbb{H}$ contained in $\{q_1, q_2,\ldots, q_p\}$. According to (E1) and (E2), $\{q_1, q_2, \ldots, q_p\}$ is also a constrained subset of $\mathbb{H}$ whose definition is from \cite{Z3}. Notice that $\mathbb{H}$ is a commutative hypergroup by (E2) and Lemma \ref{L;Lemma2.13}. Hence (E1) and (E2) imply that $\mathbb{H}=\mathrm{Sym}(\mathbb{H})$. Furthermore, the combination of (E1), (E2), and Lemma \ref{L;Lemma2.14} implies that $|\mathbb{H}|=2^p$.

We are now ready to finish this section by simplifying the following presentation.

We shall quote the fact that $\preceq$ is a partial order on $\mathbb{H}$ without citation. We shall quote the fact that
$\mathbb{H}$ is a commutative hypergroup without citation. We shall quote the facts that $\mathbb{H}=\mathrm{Sym}(\mathbb{H})$
and $\mathbb{H}$ is a hypergroup of $2^p$ elements without citation.
\section{Closed subsets and strongly normal closed subsets}
In this present section, we determine all closed subsets and all strongly normal closed subsets of $\mathbb{H}$. We also give the numbers of all closed subsets and all strongly normal closed subsets of $\mathbb{H}$. As a preparation, we first display the following lemmas:
\begin{lem}\label{L;Lemma3.1}
Assume that $r\!\in\!\mathbb{H}$. Then $r^{2s+2}\!=\!\{a: a\in\mathbb{H}, a\preceq r^+\}$ for any $s\in\mathbb{N}_0$. Moreover, if $r\in\mathrm{Thick}(\mathbb{H})$, then $r=r^+$, $\langle r\rangle=\{a: a\in\mathbb{H}, a\preceq r\}$, and $|\langle r\rangle|=2^{\mathrm{s}(r)}$.
\end{lem}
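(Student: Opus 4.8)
The plan is to compute $r^{2s+2}$ by induction on $s$ and then read off the consequences for $r \in \mathrm{Thick}(\mathbb{H})$. Write $r^+ \in \mathrm{Thick}(\mathbb{H})$ and $r^- \in \mathrm{Thin}(\mathbb{H})$ for the unique elements with $r^+r^- = \{r\}$; these exist because $\mathbb{H}$ is a constrained direct product. Since $\mathbb{H}$ is commutative, $r^2 = (r^+r^-)^2 = (r^+)^2(r^-)^2$, and as $r^-$ is thin with $\mathbb{H} = \mathrm{Sym}(\mathbb{H})$ we get $(r^-)^2 = \{e\}$ by Lemma~\ref{L;Lemma2.10} (or directly from (H3)). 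So $r^{2} = (r^+)^2$, and more generally $r^{2s+2} = (r^+)^{2s+2}$ for all $s \in \mathbb{N}_0$. This reduces the whole statement to the case $r \in \mathrm{Thick}(\mathbb{H})$, i.e. to proving $r^{2s+2} = \{a : a \preceq r\}$ for a thick element $r$ together with $\langle r\rangle = \{a : a \preceq r\}$ and $|\langle r\rangle| = 2^{\mathrm{s}(r)}$.

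Next I would induct on the number $\mathrm{s}(r)$ of thick involutions in the support of $r$. Say the support of $r$ is $\{q_{i_1}, \dots, q_{i_k}\}$ with all $q_{i_j}$ thick, so $r \in q_{i_1} \cdots q_{i_k}$ and, by the constrained direct product structure (Lemma~\ref{L;Lemma2.13}, Lemma~\ref{L;Lemma2.14}), this product is the singleton $\{r\}$. Each $q_{i_j}$ is a thick involution, so $q_{i_j}^2 = q_{i_j} q_{i_j}^* = \{e, q_{i_j}\}$ because $\{e, q_{i_j}\}$ is a closed subset of two elements and the product must contain $e$ and be contained in $\{e,q_{i_j}\}$ but not equal $\{e\}$ by thickness. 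Hence $r^2 \in q_{i_1}^2 \cdots q_{i_k}^2 = \{e,q_{i_1}\}\cdots\{e,q_{i_k}\}$, and using Lemma~\ref{L;Lemma2.14} (unique factorization across the direct factors) this product equals exactly the set of all $a \in \mathbb{H}$ whose support is a subset of $\{q_{i_1},\dots,q_{i_k}\}$, i.e. $\{a : a \preceq r\}$. This also shows $r^4 = r^2 \cdot r^2 = \{a : a \preceq r\}$ (the set is closed under the hypermultiplication, being $\langle q_{i_1},\dots,q_{i_k}\rangle$), and an easy induction on $s$ gives $r^{2s+2} = \{a : a \preceq r\}$ for all $s$; for general $r$ combine with the first paragraph to get $r^{2s+2} = \{a : a \preceq r^+\}$. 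For the "moreover" clause, $r = r^+$ is immediate when $r$ is thick; by Lemma~\ref{L;Lemma2.7} applied to $\mathbb{G} = \{e,r\}$ (which satisfies $\mathbb{G}^* = \mathbb{G}$), $\langle r\rangle = \bigcup_{n\in\mathbb{N}_0} \{e,r\}^n = \{e\} \cup \{r\} \cup r^2 \cup r^3 \cup \cdots$; since $r^{2} = \{a : a\preceq r\} \supseteq \{e, r\}$ and this set is multiplicatively closed, the union stabilizes at $\{a : a \preceq r\}$, so $\langle r\rangle = \{a : a \preceq r\}$. Finally $|\langle r\rangle| = |\{a : a\preceq r\}| = 2^{\mathrm{s}(r)}$ by counting subsets of a $\mathrm{s}(r)$-element support via Lemma~\ref{L;Lemma2.14}.

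The main obstacle is the bookkeeping that identifies $\{e,q_{i_1}\}\cdots\{e,q_{i_k}\}$ with the down-set $\{a : a \preceq r\}$ and shows it is a closed subset: one must invoke the uniqueness in Lemma~\ref{L;Lemma2.14} carefully to see that every element with support contained in $\{q_{i_1},\dots,q_{i_k}\}$ arises, and only those, and that the resulting set is closed under $\circ$ (equivalently, that it equals $\langle q_{i_1},\dots,q_{i_k}\rangle$). Everything else — the reduction to the thick case, the computation $q_{i_j}^2 = \{e,q_{i_j}\}$, the induction on $s$, and the cardinality count — is routine given the preliminaries.
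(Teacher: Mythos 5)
Your proposal is correct and follows essentially the same route as the paper: the paper likewise reduces everything to the unique decomposition coming from (E1)--(E2) (your explicit computation $r^{2s+2}=(r^+)^{2s+2}$ and the identification of $q_{i_1}^2\cdots q_{i_k}^2$ with the down-set just fills in what the paper dismisses with ``follows from (E1) and (E2)''), and your argument that $\langle r\rangle=\{a: a\preceq r\}$ via stabilization of powers in Lemma~\ref{L;Lemma2.7} is the same idea as the paper's observation that $r^2r^2=r^4=r^2$ makes $\{a: a\preceq r\}$ a closed subset squeezed between $\{r\}$ and $\langle r\rangle$. No gaps.
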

\begin{proof}
The first statement follows from (E1) and (E2). For the second statement, notice that $r=r^+$ by (H2). So $r^2=r^4\!=\!\{a: a\in\mathbb{H}, a\preceq r\}$ by the first statement. Therefore $\{r\}\subseteq\{a: a\in\mathbb{H}, a\preceq r\}\subseteq\langle r\rangle$. As $r^2r^2=r^4=r^2$ by (H1), notice that $\{a: a\in\mathbb{H}, a\preceq r\}$ is a closed subset of $\mathbb{H}$. So $\langle r\rangle\subseteq\{a: a\in\mathbb{H}, a\preceq r\}$. The desired lemma thus follows from combining the above discussion, (E1), and (E2).
\end{proof}
\begin{lem}\label{L;Lemma3.2}
Assume that $r, s\in\mathrm{Thick}(\mathbb{H})$. Then $r=s$ if and only if $\langle r\rangle=\langle s\rangle$.
\end{lem}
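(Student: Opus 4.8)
The plan is to reduce everything to Lemma \ref{L;Lemma3.1} together with the standing fact that $\preceq$ is a partial order on $\mathbb{H}$. The forward implication is immediate: if $r=s$, then trivially $\langle r\rangle=\langle s\rangle$, so no work is needed there.

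For the reverse implication, assume $\langle r\rangle=\langle s\rangle$. Since $r,s\in\mathrm{Thick}(\mathbb{H})$, the second statement of Lemma \ref{L;Lemma3.1} applies to each of them, giving $\langle r\rangle=\{a:a\in\mathbb{H},\ a\preceq r\}$ and $\langle s\rangle=\{a:a\in\mathbb{H},\ a\preceq s\}$ (and also $r=r^+$, $s=s^+$, though that is not strictly needed). I would then note that $r\in\langle r\rangle=\langle s\rangle$, so $r\preceq s$; symmetrically $s\in\langle s\rangle=\langle r\rangle$, so $s\preceq r$. Antisymmetry of the partial order $\preceq$ on $\mathbb{H}$ then forces $r=s$, which completes the proof.

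There is essentially no obstacle here: the content of the lemma is entirely carried by the description of $\langle r\rangle$ as a $\preceq$-downset established in Lemma \ref{L;Lemma3.1}, and the only subtlety is remembering that $\preceq$ being a genuine partial order (rather than merely a preorder) is exactly the consequence of all elements of $\mathbb{H}$ being constrained, which holds by (E1). One could alternatively phrase the same argument via cardinalities and inclusions — $\langle r\rangle\subseteq\langle s\rangle$ with $|\langle r\rangle|=2^{\mathrm{s}(r)}$ and so on — but the $\preceq$-argument is the cleanest and I would present it that way.
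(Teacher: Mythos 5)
Your proof is correct and matches the paper's approach: the paper simply cites Lemma \ref{L;Lemma3.1} (together with the standing fact that $\preceq$ is a partial order), and your argument via $r\preceq s$, $s\preceq r$, and antisymmetry is exactly the intended fleshing-out of that citation. No issues.
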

\begin{proof}
The desired lemma follows from the above hypotheses and Lemma \ref{L;Lemma3.1}.
\end{proof}
\begin{lem}\label{L;Lemma3.3}
Assume that $\mathbb{G}$ is a closed subset of $\mathbb{H}$ and $r\!\in\!\mathbb{G}$. Then $r^+\!\in\!\mathrm{Thick}(\mathbb{G})$, $r^-\in\mathrm{Thin}(\mathbb{G})$, and $\mathbb{G}=\mathrm{Thick}(\mathbb{G})\mathrm{Thin}(\mathbb{G})$.
\end{lem}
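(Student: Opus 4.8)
The plan is to work with the decomposition furnished by the definition of a constrained direct product. Since every element of $\mathbb{H}$ is a constrained element, for the given $r\in\mathbb{G}$ we have $r^+r^-=\{r\}$ with $r^+\in\mathrm{Thick}(\mathbb{H})$ and $r^-\in\mathrm{Thin}(\mathbb{H})$, where (by Lemma \ref{L;Lemma2.13} and commutativity) $r^-$ is thin and hence $r^-\in\mathrm{O}_\vartheta(\mathbb{H})$. First I would show $r^+\in\mathbb{G}$: by Lemma \ref{L;Lemma3.1}, $r^{2s+2}=\{a:a\preceq r^+\}$ for any $s\in\mathbb{N}_0$, and since $\mathbb{G}$ is closed we have $r^{2s+2}\subseteq\mathbb{G}$, so in particular $r^+\in\mathbb{G}$ (taking the element of the set whose support equals the support of $r^+$). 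Having $r^+\in\mathbb{G}$, and $r^+$ being an externally thick element of $\mathbb{H}$ by construction, gives $r^+\in\mathrm{Thick}(\mathbb{G})$.

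Next I would deduce $r^-\in\mathbb{G}$. Since $r^+$ is a thick element satisfying $(r^+)^2=r^{4}=\{a:a\preceq r^+\}\ni r^+$, one has $r^+\in (r^+)^*r^+$; more to the point, from $r\in r^+r^-$ and $r^+\in\mathbb{G}$ closed, we get $r^-\in (r^+)^*r=(r^+)^*r$ (using condition (H3): $r\in r^+r^-$ is equivalent to $r^-\in (r^+)^*r$, and $(r^+)^*=r^+$ since $r^+\in\mathbb{G}^*=\mathbb{G}$ and $\mathbb{H}=\mathrm{Sym}(\mathbb{H})$). Because $r^+\in\mathbb{G}$ and $r\in\mathbb{G}$ and $\mathbb{G}$ is closed, $(r^+)^*r\subseteq\mathbb{G}$, so $r^-\in\mathbb{G}$. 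As $r^-$ is an externally thin element of $\mathbb{H}$, this yields $r^-\in\mathrm{Thin}(\mathbb{G})$.

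Finally, the equality $\mathbb{G}=\mathrm{Thick}(\mathbb{G})\mathrm{Thin}(\mathbb{G})$: the inclusion $\supseteq$ is immediate since $\mathrm{Thick}(\mathbb{G}),\mathrm{Thin}(\mathbb{G})\subseteq\mathbb{G}$ and $\mathbb{G}$ is closed (so $\mathbb{G}^2\subseteq\mathbb{G}$); for $\subseteq$, the first two parts show that every $r\in\mathbb{G}$ lies in $r^+r^-$ with $r^+\in\mathrm{Thick}(\mathbb{G})$ and $r^-\in\mathrm{Thin}(\mathbb{G})$, whence $r\in\mathrm{Thick}(\mathbb{G})\mathrm{Thin}(\mathbb{G})$.

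I expect the only mildly delicate point to be extracting $r^+$ and $r^-$ as \emph{individual elements of $\mathbb{G}$} (rather than merely as labels in a product decomposition inside $\mathbb{H}$): this is where one must invoke Lemma \ref{L;Lemma3.1} to realize $r^+$ concretely inside $\mathbb{G}$ via the power $r^{4}$, and then reverse-engineer $r^-$ from $(r^+)^*r$ using (H3); the bookkeeping with supports and the preorder $\preceq$ should be routine once these two membership facts are in place.
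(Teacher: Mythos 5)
Your proposal is correct and follows essentially the same route as the paper's proof: obtain $r^+\in\mathbb{G}$ from $r^+\in r^2=\{a: a\preceq r^+\}$ (Lemma \ref{L;Lemma3.1}) together with closedness of $\mathbb{G}$, obtain $r^-\in\mathbb{G}$ from $r^-\in (r^+)^*r=r^+r\subseteq\mathbb{G}$ via (H3), and conclude the product decomposition since $r$ was arbitrary. The only cosmetic difference is that you invoke the power $r^{2s+2}$ (e.g.\ $r^4$) where the paper simply takes $s=0$ and uses $r^2$.
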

\begin{proof}
Notice that $\mathrm{Thick}(\mathbb{G})\mathrm{Thin}(\mathbb{G})\!\subseteq\!\mathbb{G}$. By Lemma \ref{L;Lemma3.1}, notice that
$r^+\!\in\!\mathrm{Thick}(\mathbb{H})$, $r^-\in\mathrm{Thin}(\mathbb{H})$, $r^+r^-=\{r\}$, and $r^2=\{a: a\preceq r^+\}$. This implies that
$r^+\in r^2$ and $r^+\in\mathrm{Thick}(\mathbb{G})$. Notice that $r^-\in\mathrm{Thin}(\mathbb{G})$ since $r^-\in r^+r$ by (H3). As $r$ is chosen from $\mathbb{G}$ arbitrarily, the desired lemma thus follows from the above discussion.
\end{proof}
\begin{lem}\label{L;Lemma3.4}
Assume that $\mathbb{G}$ is a closed subset of $\mathbb{H}$. Then $\mathrm{Thick}(\mathbb{G})$ and $\mathrm{Thin}(\mathbb{G})$ are closed subsets of $\mathbb{H}$. Moreover, $|\mathbb{G}|\!=\!|\mathrm{Thick}(\mathbb{G})\mathrm{Thin}(\mathbb{G})|\!=\!|\mathrm{Thick}(\mathbb{G})||\mathrm{Thin}(\mathbb{G})|$.
\end{lem}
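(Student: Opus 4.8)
The plan is to treat the two ``closed subset'' claims by separate arguments and then derive the cardinality identity from a short counting argument based on Lemmas \ref{L;Lemma2.4} and \ref{L;Lemma2.15}.

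First I would show that $\mathrm{Thin}(\mathbb{G})$ is a closed subset of $\mathbb{H}$ by appealing to Lemma \ref{L;Lemma2.16}. The direct factors of $\mathbb{H}$ are the closed subsets $\langle q_1\rangle,\dots,\langle q_p\rangle$, and for each $r$ the set $\mathrm{O}_\vartheta(\langle q_r\rangle)$ equals $\{e\}$ when $q_r$ is a thick element of $\mathbb{H}$ and equals $\langle q_r\rangle$ when $q_r$ is a thin element of $\mathbb{H}$; in either case it is a thin closed subset of $\mathbb{H}$. Lemma \ref{L;Lemma2.16} then gives that $\mathrm{Thin}(\mathbb{F})$ is a thin closed subset of $\mathbb{H}$ for every closed subset $\mathbb{F}$ of $\mathbb{H}$, and in particular $\mathrm{Thin}(\mathbb{G})$ is a closed subset of $\mathbb{H}$.

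Next I would show that $\mathrm{Thick}(\mathbb{H})$ itself is a closed subset and then pass to $\mathbb{G}$ by intersection. By Lemma \ref{L;Lemma2.14} there is a unique $m\in\mathbb{H}$ whose support is exactly the set of thick elements among $q_1,\dots,q_p$; this $m$ is externally thick, and for $a\in\mathbb{H}$ one has $a\preceq m$ precisely when every element of the support of $a$ is a thick element, that is, precisely when $a\in\mathrm{Thick}(\mathbb{H})$. Hence Lemma \ref{L;Lemma3.1} yields $\mathrm{Thick}(\mathbb{H})=\{a\in\mathbb{H}:a\preceq m\}=\langle m\rangle$, which is a closed subset of $\mathbb{H}$. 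Since $\mathbb{G}$ is a closed subset and the intersection of closed subsets is a closed subset, $\mathrm{Thick}(\mathbb{G})=\mathrm{Thick}(\mathbb{H})\cap\mathbb{G}$ is a closed subset of $\mathbb{H}$. (Alternatively one can argue directly from Lemma \ref{L;Lemma2.14} that a product of two externally thick elements consists of externally thick elements, since the support of any element of such a product lies inside the union of the two supports; in a product-based variant this amounts to checking, with Lemmas \ref{L;Lemma2.3} and \ref{L;Lemma2.13}, that a product of the closed subsets $\langle q_r\rangle$ is again a closed subset.)

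Finally, for the cardinality identity, Lemma \ref{L;Lemma3.3} gives $\mathbb{G}=\mathrm{Thick}(\mathbb{G})\mathrm{Thin}(\mathbb{G})$, which is the first equality. By Lemma \ref{L;Lemma2.15} we have $\mathrm{Thick}(\mathbb{G})\cap\mathrm{Thin}(\mathbb{G})=\{e\}$, so, using the two previous steps and Lemma \ref{L;Lemma2.4}, every element of $\mathrm{Thick}(\mathbb{G})\mathrm{Thin}(\mathbb{G})$ lies in $ab$ for a unique pair $(a,b)$ with $a\in\mathrm{Thick}(\mathbb{G})$ and $b\in\mathrm{Thin}(\mathbb{G})$; moreover $b\in\mathrm{Thin}(\mathbb{G})\subseteq\mathrm{O}_\vartheta(\mathbb{H})$, so Lemma \ref{L;Lemma2.1} forces $|ab|=1$. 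Thus $(a,b)\mapsto ab$ is a bijection from $\mathrm{Thick}(\mathbb{G})\times\mathrm{Thin}(\mathbb{G})$ onto $\mathrm{Thick}(\mathbb{G})\mathrm{Thin}(\mathbb{G})$, whence $|\mathrm{Thick}(\mathbb{G})\mathrm{Thin}(\mathbb{G})|=|\mathrm{Thick}(\mathbb{G})|\,|\mathrm{Thin}(\mathbb{G})|$. I expect the identification of $\mathrm{Thick}(\mathbb{H})$ as a closed subset to be the only step requiring real care; everything else is a matter of quoting the lemmas above.
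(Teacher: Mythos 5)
Your proof is correct, and its overall architecture coincides with the paper's: $\mathrm{Thin}(\mathbb{G})$ is handled via Lemma \ref{L;Lemma2.16} after checking that each $\mathrm{O}_\vartheta(\langle q_r\rangle)$ is a thin closed subset, and the cardinality identity comes from Lemma \ref{L;Lemma3.3} together with Lemmas \ref{L;Lemma2.15}, \ref{L;Lemma2.4}, and \ref{L;Lemma2.1}, exactly as in the paper. The one place you genuinely diverge is the claim that $\mathrm{Thick}(\mathbb{G})$ is closed: the paper disposes of this tersely by citing Lemmas \ref{L;Lemma2.15}, \ref{L;Lemma2.16}, (E1), and (E2) (in effect the direct support argument you sketch parenthetically), whereas you first identify $\mathrm{Thick}(\mathbb{H})=\{a:a\preceq m\}=\langle m\rangle$ via Lemma \ref{L;Lemma3.1}, where $m$ is the element supported on all thick $q_r$, and then intersect with $\mathbb{G}$. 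That route is legitimate --- Lemma \ref{L;Lemma3.1} precedes this lemma, so there is no circularity, and $\mathrm{Thick}(\mathbb{G})=\mathrm{Thick}(\mathbb{H})\cap\mathbb{G}$ holds by definition --- and it has the merit of being fully explicit and of anticipating the later Lemmas \ref{L;Lemma3.5} and \ref{L;Lemma3.6}, which show $\mathrm{Thick}(\mathbb{G})=\langle r\rangle$ in general; the paper's version avoids introducing the auxiliary element $m$ but leaves more to the reader.
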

\begin{proof}
If $r\in[1, p]$, notice that $\mathrm{O}_\vartheta(\langle q_r\rangle)$ is a closed subset of $\mathbb{H}$. The first statement follows from combining Lemmas \ref{L;Lemma2.15}, \ref{L;Lemma2.16}, (E1), and (E2). The desired lemma thus follows from combining the first statement, Lemmas \ref{L;Lemma3.3}, \ref{L;Lemma2.15}, \ref{L;Lemma2.4}, and (E1).
\end{proof}
\begin{lem}\label{L;Lemma3.5}
Assume that $\mathbb{G}$ is a closed subset of $\mathbb{H}$. Then there is a unique $r\in\mathbb{G}$ such that $\mathrm{Thick}(\mathbb{G})=\langle r\rangle$ and $\mathrm{s}(\mathbb{G})=\mathrm{s}(r)$.
\end{lem}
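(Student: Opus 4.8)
The plan is to produce $r$ as a thick element of $\mathbb{G}$ of maximal ``$\mathrm{s}$-value''. By Lemma \ref{L;Lemma3.4}, $\mathrm{Thick}(\mathbb{G})$ is a closed subset of $\mathbb{H}$, and by definition $\mathrm{Thick}(\mathbb{G})\subseteq\mathrm{Thick}(\mathbb{H})$; since $\mathbb{H}$ is finite, so is $\mathrm{Thick}(\mathbb{G})$. Pick $r\in\mathrm{Thick}(\mathbb{G})$ with $\mathrm{s}(r)$ as large as possible. I claim this $r$ satisfies both asserted conditions and is the unique element of $\mathbb{G}$ doing so. Throughout I write $\mathrm{supp}(x)$ for the support of $x\in\mathbb{H}$, and I use that for externally thick $x$ one has $x=x^+$, $\mathrm{s}(x)=|\mathrm{supp}(x)|$, and $\langle x\rangle=\{y\in\mathbb{H}:y\preceq x\}$ by the second part of Lemma \ref{L;Lemma3.1}.

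The heart of the matter is the equality $\mathrm{Thick}(\mathbb{G})=\langle r\rangle$. The inclusion $\langle r\rangle\subseteq\mathrm{Thick}(\mathbb{G})$ is immediate, as $\mathrm{Thick}(\mathbb{G})$ is a closed subset containing $r$. For the reverse inclusion, fix $a\in\mathrm{Thick}(\mathbb{G})$; by the displayed description of $\langle r\rangle$ it suffices to show $a\preceq r$. The crucial input is a description of the product $ar$ drawn from (E1) and (E2): using Lemmas \ref{L;Lemma2.13} and \ref{L;Lemma2.14} to rearrange $ar$ as a product over the direct factors $\langle q_1\rangle,\dots,\langle q_p\rangle$, and evaluating $q_i^{\,2}=\langle q_i\rangle$ when $q_i$ is thick and $q_iq_j$ equal to the singleton consisting of the unique constrained element of support $\{q_i,q_j\}$ when $i\neq j$, one finds that $ar$ consists exactly of the elements whose support lies between the symmetric difference and the union of $\mathrm{supp}(a)$ and $\mathrm{supp}(r)$. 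In particular $ar\subseteq\mathrm{Thick}(\mathbb{H})$, and $ar$ contains the element $b$ with $\mathrm{supp}(b)=\mathrm{supp}(a)\cup\mathrm{supp}(r)$. Since $\mathbb{G}$ is closed, $ar\subseteq\mathbb{G}$, hence $b\in\mathrm{Thick}(\mathbb{G})$ with $\mathrm{s}(b)=|\mathrm{supp}(a)\cup\mathrm{supp}(r)|$. If $a\not\preceq r$, then $\mathrm{supp}(a)\cup\mathrm{supp}(r)\supsetneq\mathrm{supp}(r)$, so $\mathrm{s}(b)>|\mathrm{supp}(r)|=\mathrm{s}(r)$, contradicting the maximality of $\mathrm{s}(r)$. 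Thus $a\preceq r$, and $\mathrm{Thick}(\mathbb{G})=\langle r\rangle$.

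It remains to check $\mathrm{s}(\mathbb{G})=\mathrm{s}(r)$ and uniqueness. For any $a\in\mathbb{G}$, Lemma \ref{L;Lemma3.3} gives $a^+\in\mathrm{Thick}(\mathbb{G})=\langle r\rangle$, so $a^+\preceq r$ and $\mathrm{s}(a)=\mathrm{s}(a^+)\le\mathrm{s}(r)$; since $r\in\mathbb{G}$, this gives $\mathrm{s}(\mathbb{G})=\mathrm{s}(r)$. For uniqueness, if $r'\in\mathbb{G}$ also satisfies $\mathrm{Thick}(\mathbb{G})=\langle r'\rangle$, then $r,r'\in\mathrm{Thick}(\mathbb{G})\subseteq\mathrm{Thick}(\mathbb{H})$ with $\langle r\rangle=\langle r'\rangle$, whence $r=r'$ by Lemma \ref{L;Lemma3.2}.

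The main obstacle is the product computation invoked in the second paragraph: extracting from (E1) and (E2) the precise form of $ar$ for externally thick $a,r$ — or at least the two facts that $ar\subseteq\mathrm{Thick}(\mathbb{H})$ and that $ar$ contains the element of support $\mathrm{supp}(a)\cup\mathrm{supp}(r)$. I would isolate this as a short sublemma, proved in the same style as Lemma \ref{L;Lemma3.1}: reduce to the factorwise products $q_i^{\,2}$ and $q_iq_j$, then reassemble using that distinct direct factors commute (Lemma \ref{L;Lemma2.13}). Equivalently, one may phrase the input as $\langle a\rangle\langle r\rangle=\langle b\rangle$ with $\mathrm{supp}(b)=\mathrm{supp}(a)\cup\mathrm{supp}(r)$. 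Everything else is routine bookkeeping with $\preceq$, $\mathrm{s}(\cdot)$, and the already-established Lemmas \ref{L;Lemma3.1}--\ref{L;Lemma3.4}.
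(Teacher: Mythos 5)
Your proposal is correct and follows essentially the same route as the paper: pick $r$ of maximal $\mathrm{s}$-value, derive a contradiction from any element of $\mathrm{Thick}(\mathbb{G})\setminus\langle r\rangle$ by producing a thick element of $\mathbb{G}$ with strictly larger $\mathrm{s}$-value, and get uniqueness from Lemma \ref{L;Lemma3.2}. The only cosmetic difference is that the paper avoids your flagged sublemma on the full product $ar$ by first extracting from $s^2$ (via Lemma \ref{L;Lemma3.1}) an element $t$ whose support is disjoint from that of $r$, so that $rt$ is a singleton directly by (E1).
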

\begin{proof}
Pick $r\in\mathbb{G}$. Then $r^+\in \mathrm{Thick}(\mathbb{G})$ and $\mathrm{s}(r)=\mathrm{s}(r^+)$ by Lemmas \ref{L;Lemma3.3} and \ref{L;Lemma3.1}. There is no loss to assume further that $r\in\mathrm{Thick}(\mathbb{G})$ and $\mathrm{s}(\mathbb{G})=\mathrm{s}(r)$. Lemma \ref{L;Lemma3.4} implies that $\langle r\rangle\subseteq\mathrm{Thick}(\mathbb{G})$. Assume that $s\in\mathrm{Thick}(\mathbb{G})\setminus\langle r\rangle$. By Lemma \ref{L;Lemma3.1}, there is $t\in s^2\setminus\{e\}$ such
that the intersection of the supports of $t$ and $r$ is the empty set. Set $rt=\{u\}$ by (E1). Hence $u\in\mathrm{Thick}(\mathbb{G})$ and $\mathrm{s}(\mathbb{G})=\mathrm{s}(r)<\mathrm{s}(r)+\mathrm{s}(t)=\mathrm{s}(u)$ by Lemma \ref{L;Lemma3.4}. This is absurd. The desired lemma thus follows from Lemma \ref{L;Lemma3.2}.
\end{proof}
\begin{lem}\label{L;Lemma3.6}
Assume that $\mathbb{G}$ is a closed subset of $\mathbb{H}$. Then $\mathrm{Thick}(\mathbb{G})=\mathrm{O}^\vartheta(\mathbb{G})$ and $|\mathrm{Thick}(\mathbb{G})|=|\mathrm{O}^\vartheta(\mathbb{G})|=2^{\mathrm{s}(\mathbb{G})}$.
\end{lem}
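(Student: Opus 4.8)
The plan is to compute $\mathrm{O}^\vartheta(\mathbb{G})$ via Lemma \ref{L;Lemma2.9} and to match it termwise against $\mathrm{Thick}(\mathbb{G})$ using Lemma \ref{L;Lemma3.1}. Since $\mathbb{H}=\mathrm{Sym}(\mathbb{H})$, every $a\in\mathbb{H}$ satisfies $a^*=a$, so $a^*a=a^2$ and Lemma \ref{L;Lemma2.9} reads $\mathrm{O}^\vartheta(\mathbb{G})=\langle\bigcup_{a\in\mathbb{G}}a^2\rangle$. The one computational observation I would record first is that $a^2=\langle a^+\rangle$ for every $a\in\mathbb{G}$: the first statement of Lemma \ref{L;Lemma3.1} with $s=0$ gives $a^2=\{b:b\in\mathbb{H},\ b\preceq a^+\}$, and since $a^+\in\mathrm{Thick}(\mathbb{H})$ the ``Moreover'' part of Lemma \ref{L;Lemma3.1} applied to $a^+$ gives $\langle a^+\rangle=\{b:b\in\mathbb{H},\ b\preceq a^+\}$.

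For the inclusion $\mathrm{O}^\vartheta(\mathbb{G})\subseteq\mathrm{Thick}(\mathbb{G})$ I would argue as follows: for each $a\in\mathbb{G}$, Lemma \ref{L;Lemma3.3} gives $a^+\in\mathrm{Thick}(\mathbb{G})$, and Lemma \ref{L;Lemma3.4} says $\mathrm{Thick}(\mathbb{G})$ is a closed subset of $\mathbb{H}$, so $a^2=\langle a^+\rangle\subseteq\mathrm{Thick}(\mathbb{G})$. Taking the union over $a\in\mathbb{G}$ and then the closed subset it generates yields $\mathrm{O}^\vartheta(\mathbb{G})=\langle\bigcup_{a\in\mathbb{G}}a^2\rangle\subseteq\mathrm{Thick}(\mathbb{G})$. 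For the reverse inclusion, pick $t\in\mathrm{Thick}(\mathbb{G})\subseteq\mathbb{G}$; then $t=t^+$ by Lemma \ref{L;Lemma3.1}, hence $t\in\langle t\rangle=\langle t^+\rangle=t^2\subseteq\bigcup_{a\in\mathbb{G}}a^2\subseteq\mathrm{O}^\vartheta(\mathbb{G})$. This gives $\mathrm{Thick}(\mathbb{G})\subseteq\mathrm{O}^\vartheta(\mathbb{G})$, so $\mathrm{Thick}(\mathbb{G})=\mathrm{O}^\vartheta(\mathbb{G})$.

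For the cardinality assertion I would use Lemma \ref{L;Lemma3.5} to obtain the unique $r\in\mathbb{G}$ with $\mathrm{Thick}(\mathbb{G})=\langle r\rangle$ and $\mathrm{s}(\mathbb{G})=\mathrm{s}(r)$; since $r\in\mathrm{Thick}(\mathbb{H})$, Lemma \ref{L;Lemma3.1} gives $|\langle r\rangle|=2^{\mathrm{s}(r)}$, whence $|\mathrm{Thick}(\mathbb{G})|=2^{\mathrm{s}(\mathbb{G})}$, and the equality $\mathrm{Thick}(\mathbb{G})=\mathrm{O}^\vartheta(\mathbb{G})$ established above then yields $|\mathrm{O}^\vartheta(\mathbb{G})|=2^{\mathrm{s}(\mathbb{G})}$ as well.

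I do not expect a serious obstacle: the statement is essentially a bookkeeping synthesis of Lemmas \ref{L;Lemma2.9}, \ref{L;Lemma3.1}, \ref{L;Lemma3.3}, \ref{L;Lemma3.4}, and \ref{L;Lemma3.5}. The only points needing care are invoking the two separate conclusions of Lemma \ref{L;Lemma3.1} correctly (the description of $a^2$ versus that of $\langle a^+\rangle$), and making sure the passage from $\bigcup_{a\in\mathbb{G}}a^2\subseteq\mathrm{Thick}(\mathbb{G})$ to $\mathrm{O}^\vartheta(\mathbb{G})\subseteq\mathrm{Thick}(\mathbb{G})$ is legitimate, which it is because $\mathrm{Thick}(\mathbb{G})$ is closed.
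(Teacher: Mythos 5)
Your proposal is correct and follows essentially the same route as the paper: both directions rest on Lemma \ref{L;Lemma2.9} combined with the description of squares in Lemma \ref{L;Lemma3.1} and the closedness of $\mathrm{Thick}(\mathbb{G})$ from Lemma \ref{L;Lemma3.4}, with Lemma \ref{L;Lemma3.5} supplying the cardinality. The only (immaterial) difference is that for the inclusion $\mathrm{Thick}(\mathbb{G})\subseteq\mathrm{O}^\vartheta(\mathbb{G})$ you argue elementwise via $t=t^+\in t^2$, whereas the paper first invokes Lemma \ref{L;Lemma3.5} to reduce to the single generator $r$ and notes $r\in r^2$.
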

\begin{proof}
Notice that $\mathrm{O}^\vartheta(\mathbb{G})\!\subseteq\!\mathrm{Thick}(\mathbb{G})$ by combining Lemmas \ref{L;Lemma3.1}, \ref{L;Lemma3.4}, and \ref{L;Lemma2.9}. By Lemma \ref{L;Lemma3.5}, there is $r\!\in\!\mathrm{Thick}(\mathbb{G})$ such that $\mathrm{Thick}(\mathbb{G})\!=\!\langle r\rangle$. As $r\in r^2$ by Lemma \ref{L;Lemma3.1}, Lemma \ref{L;Lemma2.9} implies that
$\mathrm{Thick}(\mathbb{G})\!\subseteq\!\mathrm{O}^\vartheta(\mathbb{G})$. So $\mathrm{Thick}(\mathbb{G})\!=\!\mathrm{O}^\vartheta(\mathbb{G})$. The desired lemma thus follows from combining the above discussion, Lemmas \ref{L;Lemma3.5}, and \ref{L;Lemma3.1}.
\end{proof}
\begin{lem}\label{L;Lemma3.7}
Assume that $\mathbb{G}$ is a closed subset of $\mathbb{H}$. Then $\mathrm{Thin}(\mathbb{G})\!=\!\mathrm{O}_\vartheta(\mathbb{G})$ and
$\mathrm{O}_\vartheta(\mathbb{G})^\gamma$ is an elementary abelian 2-group of 2-rank $\mathrm{r}_2(\mathbb{G})$. Moreover, the dimension of $\mathrm{O}_\vartheta(\mathbb{G})$ is equal to $\mathrm{r}_2(\mathbb{G})$ and $|\mathrm{Thin}(\mathbb{G})|=|\mathrm{O}_\vartheta(\mathbb{G})|=2^{\mathrm{r}_2(\mathbb{G})}$.
In particular, $\mathrm{O}_\vartheta(\mathbb{H})^\gamma$ is an elementary abelian 2-group of 2-rank $\mathrm{r}_2(\mathbb{H})$ and
$|\mathrm{Thin}(\mathbb{H})|=|\mathrm{O}_\vartheta(\mathbb{H})|=2^{\mathrm{r}_2(\mathbb{H})}$.
\end{lem}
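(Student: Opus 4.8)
The plan is to first pin down the set-theoretic identity $\mathrm{Thin}(\mathbb{G})=\mathrm{O}_\vartheta(\mathbb{G})$, then pass to group theory through the map $\gamma$, and finally do the bookkeeping with $\mathrm{r}_2$. For the first identity I would invoke Lemma \ref{L;Lemma2.17} with $\mathbb{F}=\mathbb{G}$: since $\mathbb{H}$ is a constrained direct product, every element of $\mathbb{H}$, and in particular every element of $\mathrm{O}_\vartheta(\mathbb{G})$, is a constrained element of $\mathbb{H}$, so the hypothesis of that lemma is met and $\mathrm{Thin}(\mathbb{G})=\mathrm{O}_\vartheta(\mathbb{G})$. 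By Lemma \ref{L;Lemma3.4} this common set is a closed subset of $\mathbb{H}$, and since it consists only of thin elements of $\mathbb{H}$ it equals $\mathrm{O}_\vartheta$ of itself, i.e.\ it is a thin closed subset of $\mathbb{H}$; it is finite because $|\mathbb{H}|=2^p$, and $\mathrm{O}_\vartheta(\mathbb{G})=\mathrm{Sym}(\mathrm{O}_\vartheta(\mathbb{G}))$ because $\mathbb{H}=\mathrm{Sym}(\mathbb{H})$. Lemma \ref{L;Lemma2.10} then gives that $\mathrm{O}_\vartheta(\mathbb{G})^\gamma$ is an elementary abelian $2$-group.

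Next I would identify the $2$-rank of $\mathrm{O}_\vartheta(\mathbb{G})^\gamma$ with $\mathrm{r}_2(\mathbb{G})$. The key point is that $\mathrm{O}_\vartheta(\mathbb{G})$ is the largest thin closed subset of $\mathbb{H}$ contained in $\mathbb{G}$: it is one such subset by the previous paragraph, and if $\mathbb{K}$ is any thin closed subset of $\mathbb{H}$ with $\mathbb{K}\subseteq\mathbb{G}$, then each element of $\mathbb{K}$ is a thin element of $\mathbb{H}$ lying in $\mathbb{G}$, hence lies in $\mathrm{O}_\vartheta(\mathbb{G})$, so $\mathbb{K}^\gamma$ is a subgroup of $\mathrm{O}_\vartheta(\mathbb{G})^\gamma$ and its $2$-rank does not exceed that of $\mathrm{O}_\vartheta(\mathbb{G})^\gamma$. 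Taking the maximum over all such $\mathbb{K}$ and using $\mathrm{O}_\vartheta(\mathbb{G})$ itself as a competitor shows that $\mathrm{r}_2(\mathbb{G})$ equals the $2$-rank of $\mathrm{O}_\vartheta(\mathbb{G})^\gamma$; the same argument with $\mathbb{G}$ replaced by $\mathrm{O}_\vartheta(\mathbb{G})$ gives $\mathrm{r}_2(\mathrm{O}_\vartheta(\mathbb{G}))=\mathrm{r}_2(\mathbb{G})$.

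Finally, applying Lemma \ref{L;Lemma2.11} to the finite thin closed subset $\mathrm{O}_\vartheta(\mathbb{G})$ (which satisfies $\mathrm{O}_\vartheta(\mathbb{G})=\mathrm{Sym}(\mathrm{O}_\vartheta(\mathbb{G}))$) shows that the dimension of $\mathrm{O}_\vartheta(\mathbb{G})$ equals $\mathrm{r}_2(\mathrm{O}_\vartheta(\mathbb{G}))=\mathrm{r}_2(\mathbb{G})$ and that $|\mathrm{O}_\vartheta(\mathbb{G})|=2^{\mathrm{r}_2(\mathbb{G})}$; combined with $\mathrm{Thin}(\mathbb{G})=\mathrm{O}_\vartheta(\mathbb{G})$ this settles the ``moreover'' clause, and the ``in particular'' statement is the special case $\mathbb{G}=\mathbb{H}$. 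I do not expect a genuine obstacle; the only point needing care is the twofold use of the definition of $\mathrm{r}_2$ — once to pin it to $\mathrm{O}_\vartheta(\mathbb{G})^\gamma$, and once to check that it is unchanged on passing from $\mathbb{G}$ to $\mathrm{O}_\vartheta(\mathbb{G})$ — so that Lemma \ref{L;Lemma2.11} can be applied cleanly.
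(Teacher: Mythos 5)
Your proposal is correct and follows essentially the same route as the paper, which cites exactly the same ingredients — (E1) with Lemma \ref{L;Lemma2.17} for $\mathrm{Thin}(\mathbb{G})=\mathrm{O}_\vartheta(\mathbb{G})$, Lemmas \ref{L;Lemma3.4} and \ref{L;Lemma2.10} for the elementary abelian $2$-group statement, and Lemma \ref{L;Lemma2.11} for the dimension and cardinality. You merely spell out the step the paper leaves implicit, namely that $\mathrm{O}_\vartheta(\mathbb{G})$ is the largest thin closed subset of $\mathbb{H}$ contained in $\mathbb{G}$, which pins its $2$-rank to $\mathrm{r}_2(\mathbb{G})$.
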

\begin{proof}
The first statement is from combining (E1), Lemmas \ref{L;Lemma2.17}, \ref{L;Lemma3.4}, \ref{L;Lemma2.10}. The desired lemma follows from combining the first statement, Lemmas \ref{L;Lemma3.4}, and \ref{L;Lemma2.11}.
\end{proof}
\begin{lem}\label{L;Lemma3.8}
Assume that $\mathbb{G}$ is a closed subset of $\mathbb{H}$. Then $\mathbb{G}\!=\!\mathrm{O}^\vartheta(\mathbb{G})\mathrm{O}_\vartheta(\mathbb{G})$ and $\mathrm{O}^\vartheta(\mathbb{G})\cap\mathrm{O}_\vartheta(\mathbb{G})\!=\!\{e\}$. Moreover, $|\mathbb{G}|\!=\!|\mathrm{O}^\vartheta(\mathbb{G})\mathrm{O}_\vartheta(\mathbb{G})|
\!=\!|\mathrm{O}^\vartheta(\mathbb{G})||\mathrm{O}_\vartheta(\mathbb{G})|\!=\!2^{\mathrm{s}(\mathbb{G})+\mathrm{r}_2(\mathbb{G})}$.
In particular, $\mathbb{H}=\mathrm{O}^\vartheta(\mathbb{H})\mathrm{O}_\vartheta(\mathbb{H})$, $\mathrm{O}^\vartheta(\mathbb{H})\cap\mathrm{O}_\vartheta(\mathbb{H})=\{e\}$, $\mathrm{s}(\mathbb{H})=p^\sharp$, $\mathrm{r}_2(\mathbb{H})=p-p^\sharp$.
\end{lem}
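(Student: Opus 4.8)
The plan is to reduce everything to the identifications already established in Lemmas \ref{L;Lemma3.6} and \ref{L;Lemma3.7}, namely $\mathrm{Thick}(\mathbb{G})=\mathrm{O}^\vartheta(\mathbb{G})$ and $\mathrm{Thin}(\mathbb{G})=\mathrm{O}_\vartheta(\mathbb{G})$. First I would substitute these two equalities throughout. Then the decomposition $\mathbb{G}=\mathrm{O}^\vartheta(\mathbb{G})\mathrm{O}_\vartheta(\mathbb{G})$ is exactly the last assertion of Lemma \ref{L;Lemma3.3} after this rewriting, and the disjointness $\mathrm{O}^\vartheta(\mathbb{G})\cap\mathrm{O}_\vartheta(\mathbb{G})=\{e\}$ is Lemma \ref{L;Lemma2.15} applied with $\mathbb{F}=\mathbb{G}$ (legitimate since $\{e\}\subseteq\mathbb{G}\subseteq\mathbb{H}$) after the same rewriting.

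For the cardinality count I would apply Lemma \ref{L;Lemma3.4}, which gives $|\mathbb{G}|=|\mathrm{Thick}(\mathbb{G})\mathrm{Thin}(\mathbb{G})|=|\mathrm{Thick}(\mathbb{G})||\mathrm{Thin}(\mathbb{G})|$; substituting the two identifications turns this into $|\mathbb{G}|=|\mathrm{O}^\vartheta(\mathbb{G})\mathrm{O}_\vartheta(\mathbb{G})|=|\mathrm{O}^\vartheta(\mathbb{G})||\mathrm{O}_\vartheta(\mathbb{G})|$. Then the explicit values $|\mathrm{O}^\vartheta(\mathbb{G})|=2^{\mathrm{s}(\mathbb{G})}$ from Lemma \ref{L;Lemma3.6} and $|\mathrm{O}_\vartheta(\mathbb{G})|=2^{\mathrm{r}_2(\mathbb{G})}$ from Lemma \ref{L;Lemma3.7} multiply to give $|\mathbb{G}|=2^{\mathrm{s}(\mathbb{G})+\mathrm{r}_2(\mathbb{G})}$.

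For the ``in particular'' clause I would specialize $\mathbb{G}=\mathbb{H}$ in everything above, which immediately yields $\mathbb{H}=\mathrm{O}^\vartheta(\mathbb{H})\mathrm{O}_\vartheta(\mathbb{H})$, $\mathrm{O}^\vartheta(\mathbb{H})\cap\mathrm{O}_\vartheta(\mathbb{H})=\{e\}$, and $|\mathbb{H}|=2^{\mathrm{s}(\mathbb{H})+\mathrm{r}_2(\mathbb{H})}$. It remains to evaluate $\mathrm{s}(\mathbb{H})$ and $\mathrm{r}_2(\mathbb{H})$. Using (E1) and Lemma \ref{L;Lemma2.14}, there is an element of $\mathbb{H}$ whose support is exactly $\{q_1,q_2,\ldots,q_p\}$; its $\mathrm{s}$-value is the number of thick $q_r$, namely $p^\sharp$, and no support can contain more thick elements than those among the $q_r$, so $\mathrm{s}(\mathbb{H})=p^\sharp$. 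Finally, combining $|\mathbb{H}|=2^p$ with the formula $|\mathbb{H}|=2^{\mathrm{s}(\mathbb{H})+\mathrm{r}_2(\mathbb{H})}$ just obtained forces $\mathrm{r}_2(\mathbb{H})=p-\mathrm{s}(\mathbb{H})=p-p^\sharp$.

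I do not anticipate a genuine obstacle: the lemma is essentially a bookkeeping assembly of Lemmas \ref{L;Lemma3.3}, \ref{L;Lemma3.4}, \ref{L;Lemma3.6}, \ref{L;Lemma3.7}, and \ref{L;Lemma2.15}. The only point requiring a short independent argument is the identification $\mathrm{s}(\mathbb{H})=p^\sharp$, where one must exhibit an element attaining the maximum and check that no element does better; both are immediate from the constrained direct product structure (E1)--(E2).
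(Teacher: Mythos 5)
Your proposal is correct and follows essentially the same route as the paper: the paper's proof likewise assembles the first statement from Lemmas \ref{L;Lemma2.15}, \ref{L;Lemma3.3}, \ref{L;Lemma3.6}, \ref{L;Lemma3.7}, and then derives the cardinality formula and the specialization to $\mathbb{H}$ from Lemmas \ref{L;Lemma3.4}, \ref{L;Lemma3.6}, and \ref{L;Lemma3.7}. Your explicit verification that $\mathrm{s}(\mathbb{H})=p^\sharp$ via an element with full support, and the deduction $\mathrm{r}_2(\mathbb{H})=p-p^\sharp$ from $|\mathbb{H}|=2^p$, simply spells out a detail the paper leaves implicit.
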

\begin{proof}
The first statement is from combining Lemmas \ref{L;Lemma2.15}, \ref{L;Lemma3.3}, \ref{L;Lemma3.6}, \ref{L;Lemma3.7}.
The desired lemma follows from combining the first statement, Lemmas \ref{L;Lemma3.4}, \ref{L;Lemma3.6}, and \ref{L;Lemma3.7}.
\end{proof}
\begin{lem}\label{L;Lemma3.9}
Assume that $\mathbb{G}$ is a closed subset of $\mathbb{H}$ and $r\!\in\!\mathrm{Thick}(\mathbb{G})$. Assume that
$\mathbb{F}\subseteq\mathbb{G}$ and $\mathbb{F}^\gamma$ is a subgroup of the elementary abelian 2-group $\mathrm{O}_\vartheta(\mathbb{G})^\gamma$. Then $\langle r\rangle\mathbb{F}$ is a closed subset of $\mathbb{H}$, where
$\langle r\rangle\mathbb{F}\subseteq\mathbb{G}$, $\mathrm{O}^\vartheta(\langle r\rangle\mathbb{F})=\langle r\rangle$, and
$\mathrm{O}_\vartheta(\langle r\rangle\mathbb{F})=\mathbb{F}$.
\end{lem}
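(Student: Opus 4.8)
The plan is to build $\langle r\rangle\mathbb{F}$ from the two factors $\langle r\rangle$ and $\mathbb{F}$ and identify its thick and thin parts using the structural results already established. First I would record that $\langle r\rangle$ is a closed subset of $\mathbb{H}$ with $\langle r\rangle=\{a:a\preceq r\}$ and $\langle r\rangle=\mathrm{Thick}(\langle r\rangle)$ by Lemma \ref{L;Lemma3.1} (since $r\in\mathrm{Thick}(\mathbb{H})$ forces $r=r^+$), while $\mathbb{F}$ is a thin closed subset of $\mathbb{H}$ with $\mathbb{F}=\mathrm{Sym}(\mathbb{F})$: indeed $\mathbb{F}^\gamma\leq\mathrm{O}_\vartheta(\mathbb{G})^\gamma$ is a subgroup, so by Lemma \ref{L;Lemma2.10} (applied in the direction ``$\mathbb{F}^\gamma$ an elementary abelian $2$-group $\Rightarrow$ $\mathbb{F}$ thin and $\mathbb{F}=\mathrm{Sym}(\mathbb{F})$'') $\mathbb{F}$ is a closed subset of $\mathbb{H}$, and $\mathbb{F}\subseteq\mathrm{O}_\vartheta(\mathbb{G})=\mathrm{Thin}(\mathbb{G})$ by Lemma \ref{L;Lemma3.7}.

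Next I would show $\langle r\rangle\mathbb{F}$ is a closed subset of $\mathbb{H}$. Since $\mathbb{H}$ is a commutative hypergroup, $\langle r\rangle\mathbb{F}=\mathbb{F}\langle r\rangle$ automatically, so by Lemma \ref{L;Lemma2.3} the product of the two closed subsets $\langle r\rangle$ and $\mathbb{F}$ is again a closed subset of $\mathbb{H}$. For the inclusion $\langle r\rangle\mathbb{F}\subseteq\mathbb{G}$: $r\in\mathrm{Thick}(\mathbb{G})$, and by Lemma \ref{L;Lemma3.4} $\mathrm{Thick}(\mathbb{G})$ is a closed subset of $\mathbb{H}$, so $\langle r\rangle\subseteq\mathrm{Thick}(\mathbb{G})\subseteq\mathbb{G}$; also $\mathbb{F}\subseteq\mathbb{G}$ by hypothesis; hence $\langle r\rangle\mathbb{F}\subseteq\mathbb{G}\mathbb{G}=\mathbb{G}^2\subseteq\mathbb{G}$ as $\mathbb{G}$ is closed.

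It remains to compute $\mathrm{O}^\vartheta$ and $\mathrm{O}_\vartheta$ of $\langle r\rangle\mathbb{F}$. Set $\mathbb{K}=\langle r\rangle\mathbb{F}$. By Lemma \ref{L;Lemma3.6}, $\mathrm{O}^\vartheta(\mathbb{K})=\mathrm{Thick}(\mathbb{K})$, and by Lemma \ref{L;Lemma3.7}, $\mathrm{O}_\vartheta(\mathbb{K})=\mathrm{Thin}(\mathbb{K})$; so I must show $\mathrm{Thick}(\mathbb{K})=\langle r\rangle$ and $\mathrm{Thin}(\mathbb{K})=\mathbb{F}$. The supports of elements of $\langle r\rangle$ consist only of thick involutions among the $q_i$ (as $\langle r\rangle=\{a:a\preceq r\}$ and $r$ is externally thick), and the supports of elements of $\mathbb{F}$ consist only of thin involutions (as $\mathbb{F}\subseteq\mathrm{Thin}(\mathbb{H})$). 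Using Lemma \ref{L;Lemma2.14} together with (E1), every element of $\mathbb{K}=\langle r\rangle\mathbb{F}$ has a unique expression coming from one element of $\langle r\rangle$ and one of $\mathbb{F}$, with disjoint supports; hence $\mathrm{Thick}(\mathbb{K})$ is exactly the set of products with trivial $\mathbb{F}$-part, i.e. $\langle r\rangle$, and $\mathrm{Thin}(\mathbb{K})$ the set with trivial $\langle r\rangle$-part, i.e. $\mathbb{F}$. The cleanest way to phrase this: $\langle r\rangle\subseteq\mathrm{Thick}(\mathbb{K})$ and $\mathbb{F}\subseteq\mathrm{Thin}(\mathbb{K})$ are immediate; conversely Lemma \ref{L;Lemma3.4} gives $|\mathbb{K}|=|\mathrm{Thick}(\mathbb{K})||\mathrm{Thin}(\mathbb{K})|$, and Lemma \ref{L;Lemma2.4} applied to $\langle r\rangle\cap\mathbb{F}=\{e\}$ (which holds since $\langle r\rangle$ is externally thick, $\mathbb{F}$ externally thin, and Lemma \ref{L;Lemma2.15}) gives $|\mathbb{K}|=|\langle r\rangle||\mathbb{F}|$, forcing equality $\mathrm{Thick}(\mathbb{K})=\langle r\rangle$ and $\mathrm{Thin}(\mathbb{K})=\mathbb{F}$.

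The main obstacle I anticipate is the bookkeeping in the last step: making rigorous that ``thick part of a product of an externally-thick closed subset with an externally-thin closed subset is the first factor'' without circular use of the very identification being proved. The counting argument via Lemmas \ref{L;Lemma3.4}, \ref{L;Lemma2.4}, and \ref{L;Lemma2.15} sidesteps this, so I would lean on cardinalities rather than on a direct element-chase through supports.
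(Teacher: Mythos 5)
Your proposal is correct and follows essentially the same route as the paper: establish that $\mathbb{F}$ is a thin closed subset (Lemma \ref{L;Lemma2.10}), that $\langle r\rangle\mathbb{F}$ is a closed subset contained in $\mathbb{G}$ (Lemma \ref{L;Lemma2.3} plus commutativity), obtain the containments $\langle r\rangle\subseteq\mathrm{O}^\vartheta(\langle r\rangle\mathbb{F})$ and $\mathbb{F}\subseteq\mathrm{O}_\vartheta(\langle r\rangle\mathbb{F})$, and force equality by the cardinality sandwich $|\langle r\rangle\mathbb{F}|=|\langle r\rangle||\mathbb{F}|\leq|\mathrm{O}^\vartheta(\langle r\rangle\mathbb{F})||\mathrm{O}_\vartheta(\langle r\rangle\mathbb{F})|=|\langle r\rangle\mathbb{F}|$. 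Your decision to lean on the counting argument via Lemmas \ref{L;Lemma3.4}, \ref{L;Lemma2.4}, and \ref{L;Lemma3.8} rather than an element chase through supports is exactly what the paper does.
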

\begin{proof}
Lemma \ref{L;Lemma2.10} shows that $\mathbb{F}$ is a thin closed subset of $\mathbb{H}$. Lemma \ref{L;Lemma2.3} shows that
$\langle r\rangle\mathbb{F}$ is a closed subset of $\mathbb{H}$ and $\langle r\rangle\mathbb{F}\subseteq\mathbb{G}$. Hence $\langle r\rangle\subseteq\mathrm{O}^\vartheta(\langle r\rangle\mathbb{F})$ and $\mathbb{F}\subseteq\mathrm{O}_\vartheta(\langle r\rangle\mathbb{F})$ by Lemmas \ref{L;Lemma3.4} and \ref{L;Lemma3.6}. Hence $|\langle r\rangle\mathbb{F}|\!\leq\!|\langle r\rangle||\mathbb{F}|\leq |\mathrm{O}^\vartheta(\langle r\rangle\mathbb{F})||\mathrm{O}_\vartheta(\langle r\rangle\mathbb{F})|\!=\!|\langle r\rangle\mathbb{F}|$ by combining Lemmas \ref{L;Lemma2.1}, \ref{L;Lemma2.2}, \ref{L;Lemma3.8}. The desired lemma follows from this discussion.
\end{proof}
\begin{lem}\label{L;Lemma3.10}
Assume that $\mathbb{G}$ is a closed subset of $\mathbb{H}$ and $r$ is the unique element in $\mathbb{G}$ that satisfies  the equalities $\mathrm{Thick}(\mathbb{G})=\langle r\rangle$ and $\mathrm{s}(\mathbb{G})=\mathrm{s}(r)$. Assume that $\mathbb{F}\!\subseteq\!\mathbb{G}$. Then $\mathbb{F}$ is a strongly normal closed subset of $\mathbb{G}$ if and only if
$\mathrm{O}^\vartheta(\mathbb{F})=\mathrm{O}^\vartheta(\mathbb{G})=\langle r\rangle$ and there exists $\mathbb{E}\subseteq\mathbb{G}$ such that $\mathbb{E}^\gamma$ is a subgroup of the elementary abelian 2-group $\mathrm{O}_\vartheta(\mathbb{G})^\gamma$, $\mathbb{F}=\langle r\rangle\mathbb{E}$, $\langle r\rangle\cap \mathbb{E}=\{e\}$, $\mathrm{O}_\vartheta(\mathbb{F})=\mathbb{E}$.
\end{lem}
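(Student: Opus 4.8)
The plan is to prove the two implications separately, in each case reducing to the structural decompositions of Lemmas \ref{L;Lemma3.4}--\ref{L;Lemma3.9} together with the criterion for strong normality in Lemma \ref{L;Lemma2.8}. Two facts will be used repeatedly: since $\mathbb{H}$ is commutative, every closed subset of $\mathbb{H}$ contained in $\mathbb{G}$ is automatically a normal closed subset of $\mathbb{G}$; and $\mathrm{O}^\vartheta(\mathbb{G})=\mathrm{Thick}(\mathbb{G})=\langle r\rangle$ by Lemmas \ref{L;Lemma3.5} and \ref{L;Lemma3.6}, so Lemma \ref{L;Lemma2.8} says that a normal closed subset $\mathbb{F}$ of $\mathbb{G}$ is strongly normal in $\mathbb{G}$ precisely when $\langle r\rangle\subseteq\mathbb{F}$.

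For the backward implication I would assume the stated conditions on $\mathbb{E}$ and $\mathbb{F}$. Since $\mathbb{E}^\gamma$ is a subgroup of the elementary abelian $2$-group $\mathrm{O}_\vartheta(\mathbb{G})^\gamma$ and $r\in\mathrm{Thick}(\mathbb{G})$, Lemma \ref{L;Lemma3.9} (applied with $\mathbb{E}$ in the role of its ``$\mathbb{F}$'') gives that $\mathbb{F}=\langle r\rangle\mathbb{E}$ is a closed subset of $\mathbb{H}$ with $\mathbb{F}\subseteq\mathbb{G}$. Then $\mathbb{F}$ is a normal closed subset of $\mathbb{G}$ by commutativity, and $\langle r\rangle\subseteq\langle r\rangle\mathbb{E}=\mathbb{F}$ since $e\in\mathbb{E}$, so Lemma \ref{L;Lemma2.8} yields that $\mathbb{F}$ is a strongly normal closed subset of $\mathbb{G}$.

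For the forward implication I would assume $\mathbb{F}$ is a strongly normal closed subset of $\mathbb{G}$; in particular it is a closed subset of $\mathbb{H}$. Lemma \ref{L;Lemma2.8} gives $\langle r\rangle=\mathrm{O}^\vartheta(\mathbb{G})\subseteq\mathbb{F}$, so $r\in\mathbb{F}$; as $r\in\mathrm{Thick}(\mathbb{H})$ this puts $r$ in $\mathrm{Thick}(\mathbb{F})=\mathrm{O}^\vartheta(\mathbb{F})$ (Lemma \ref{L;Lemma3.6}), hence $\langle r\rangle\subseteq\mathrm{O}^\vartheta(\mathbb{F})$; combining with $\mathrm{Thick}(\mathbb{F})\subseteq\mathrm{Thick}(\mathbb{G})=\langle r\rangle$ I get $\mathrm{O}^\vartheta(\mathbb{F})=\langle r\rangle=\mathrm{O}^\vartheta(\mathbb{G})$. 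I would then set $\mathbb{E}=\mathrm{O}_\vartheta(\mathbb{F})$: Lemma \ref{L;Lemma3.7} makes $\mathbb{E}^\gamma$ an elementary abelian $2$-group, and $\mathbb{E}=\mathrm{Thin}(\mathbb{F})\subseteq\mathrm{Thin}(\mathbb{G})=\mathrm{O}_\vartheta(\mathbb{G})$ makes it a subgroup of $\mathrm{O}_\vartheta(\mathbb{G})^\gamma$; Lemma \ref{L;Lemma3.8} applied to $\mathbb{F}$ gives $\mathbb{F}=\mathrm{O}^\vartheta(\mathbb{F})\mathrm{O}_\vartheta(\mathbb{F})=\langle r\rangle\mathbb{E}$ and $\langle r\rangle\cap\mathbb{E}=\mathrm{O}^\vartheta(\mathbb{F})\cap\mathrm{O}_\vartheta(\mathbb{F})=\{e\}$, while $\mathrm{O}_\vartheta(\mathbb{F})=\mathbb{E}\subseteq\mathbb{G}$ is immediate. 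This exhausts the required conditions.

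I do not expect a genuine obstacle: the argument is bookkeeping built on the $\mathrm{O}^\vartheta$/$\mathrm{O}_\vartheta$-decomposition of a closed subset (Lemma \ref{L;Lemma3.8}), the identifications $\mathrm{O}^\vartheta(\cdot)=\mathrm{Thick}(\cdot)$ and $\mathrm{O}_\vartheta(\cdot)=\mathrm{Thin}(\cdot)$ (Lemmas \ref{L;Lemma3.6} and \ref{L;Lemma3.7}), and Lemma \ref{L;Lemma2.8}. The one spot needing a little care is the forward direction's claim that strong normality of $\mathbb{F}$ forces $\mathrm{O}^\vartheta(\mathbb{F})$ to equal $\langle r\rangle$ on the nose --- one inclusion comes from $\langle r\rangle\subseteq\mathbb{F}$ via Lemma \ref{L;Lemma2.8}, the other from $\mathrm{Thick}(\mathbb{F})\subseteq\mathrm{Thick}(\mathbb{G})$ --- and, in the backward direction, remembering that commutativity of $\mathbb{H}$ is what makes $\mathbb{F}$ normal in $\mathbb{G}$ so that Lemma \ref{L;Lemma2.8} applies at all.
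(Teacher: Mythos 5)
Your proof is correct and follows essentially the same route as the paper's: both directions rest on Lemma \ref{L;Lemma2.8} (reducing strong normality to the containment $\mathrm{O}^\vartheta(\mathbb{G})\subseteq\mathbb{F}$, with commutativity supplying normality), the identifications $\mathrm{O}^\vartheta=\mathrm{Thick}$ and $\mathrm{O}_\vartheta=\mathrm{Thin}$ from Lemmas \ref{L;Lemma3.5}--\ref{L;Lemma3.7}, the decomposition of Lemma \ref{L;Lemma3.8}, and Lemma \ref{L;Lemma3.9} for the converse construction. The paper's forward direction is exactly your two-inclusion chain $\mathrm{O}^\vartheta(\mathbb{F})\subseteq\langle r\rangle\subseteq\mathrm{Thick}(\mathbb{F})=\mathrm{O}^\vartheta(\mathbb{F})$, just written as a single string of containments.
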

\begin{proof}
For one direction, the combination of Lemmas \ref{L;Lemma2.8}, \ref{L;Lemma2.9}, \ref{L;Lemma3.5}, \ref{L;Lemma3.6} implies that $\mathrm{O}^\vartheta(\mathbb{F})\subseteq\mathrm{O}^\vartheta(\mathbb{G})=\mathrm{Thick}(\mathbb{G})=\langle r\rangle\subseteq\mathrm{Thick}(\mathbb{F})=\mathrm{O}^\vartheta(\mathbb{F})$. Hence the combination of Lemmas \ref{L;Lemma3.8}, \ref{L;Lemma3.5}, and \ref{L;Lemma3.6} implies that $\mathbb{F}=\langle r\rangle\mathrm{O}_\vartheta(\mathbb{F})$ and $\langle r\rangle\cap\mathrm{O}_\vartheta(\mathbb{F})=\{e\}$. The desired lemma thus follows from combining Lemmas \ref{L;Lemma3.4},  \ref{L;Lemma3.7}, \ref{L;Lemma2.10}, \ref{L;Lemma3.9}, and \ref{L;Lemma2.8}.
\end{proof}
\begin{lem}\label{L;Lemma3.11}
Assume that $\mathbb{G}$ is a closed subset of $\mathbb{H}$. Then the number of all closed subsets of $\mathbb{H}$ contained in $\mathbb{G}$ is equal to
$$2^{\mathrm{s}(\mathbb{G})}\sum_{r=0}^{\mathrm{r}_2(\mathbb{G})}\binom{\mathrm{r}_2(\mathbb{G})}{r}_2.$$
On the other hand, the number of all strongly normal closed subsets of $\mathbb{G}$ is equal to
$$\sum_{r=0}^{\mathrm{r}_2(\mathbb{G})}\binom{\mathrm{r}_2(\mathbb{G})}{r}_2.$$
\end{lem}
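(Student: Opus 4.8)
The plan is to reduce the count to two independent counts by showing that a closed subset of $\mathbb{H}$ inside $\mathbb{G}$ is determined by, and can be freely assembled from, its thick part and its thin part. Let $r\in\mathbb{G}$ be the unique element with $\mathrm{Thick}(\mathbb{G})=\mathrm{O}^\vartheta(\mathbb{G})=\langle r\rangle$ and $\mathrm{s}(\mathbb{G})=\mathrm{s}(r)$ (Lemmas \ref{L;Lemma3.5} and \ref{L;Lemma3.6}). Consider the map $\Phi$ sending a closed subset $\mathbb{K}$ of $\mathbb{H}$ with $\mathbb{K}\subseteq\mathbb{G}$ to the pair $\bigl(\mathrm{O}^\vartheta(\mathbb{K}),\mathrm{O}_\vartheta(\mathbb{K})\bigr)=\bigl(\mathrm{Thick}(\mathbb{K}),\mathrm{Thin}(\mathbb{K})\bigr)$ (Lemmas \ref{L;Lemma3.6} and \ref{L;Lemma3.7}). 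By Lemma \ref{L;Lemma3.8}, $\mathbb{K}=\mathrm{O}^\vartheta(\mathbb{K})\mathrm{O}_\vartheta(\mathbb{K})$, so $\Phi$ is injective. Its target is the set of pairs $(\mathbb{T},\mathbb{F})$ where $\mathbb{T}$ is a closed subset of $\mathbb{H}$ contained in $\langle r\rangle$ and $\mathbb{F}$ is a thin closed subset of $\mathbb{H}$ with $\mathbb{F}^\gamma$ a subgroup of the elementary abelian $2$-group $\mathrm{O}_\vartheta(\mathbb{G})^\gamma$ (for $\mathbb{T}\subseteq\langle r\rangle$ use $\mathrm{Thick}(\mathbb{K})\subseteq\mathrm{Thick}(\mathbb{G})$; for $\mathbb{F}$, Lemma \ref{L;Lemma3.7} shows $\mathrm{O}_\vartheta(\mathbb{K})^\gamma$ is an elementary abelian $2$-group contained in $\mathrm{O}_\vartheta(\mathbb{G})^\gamma$). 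Surjectivity of $\Phi$ onto this target is exactly Lemma \ref{L;Lemma3.9}: writing $\mathbb{T}=\langle s\rangle$ with $s\in\mathrm{Thick}(\mathbb{G})$ (Lemma \ref{L;Lemma3.5}), the set $\langle s\rangle\mathbb{F}$ is a closed subset of $\mathbb{H}$ inside $\mathbb{G}$ with $\mathrm{O}^\vartheta(\langle s\rangle\mathbb{F})=\langle s\rangle$ and $\mathrm{O}_\vartheta(\langle s\rangle\mathbb{F})=\mathbb{F}$.

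It remains to count the target. By Lemma \ref{L;Lemma3.1}, $\langle r\rangle=\{a:a\in\mathbb{H},\ a\preceq r\}\subseteq\mathrm{Thick}(\mathbb{H})$, so every subset of $\langle r\rangle$ consists of externally thick elements; hence by Lemma \ref{L;Lemma3.5} the closed subsets of $\mathbb{H}$ contained in $\langle r\rangle$ are precisely the $\langle s\rangle$ with $s\in\langle r\rangle$, and by Lemma \ref{L;Lemma3.2} these are pairwise distinct, so there are $|\langle r\rangle|=2^{\mathrm{s}(r)}=2^{\mathrm{s}(\mathbb{G})}$ of them (Lemma \ref{L;Lemma3.1}). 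On the other hand $\mathrm{O}_\vartheta(\mathbb{G})^\gamma$ is an elementary abelian $2$-group of $2$-rank $\mathrm{r}_2(\mathbb{G})$ (Lemma \ref{L;Lemma3.7}), so the number of its subgroups is $\sum_{j=0}^{\mathrm{r}_2(\mathbb{G})}\binom{\mathrm{r}_2(\mathbb{G})}{j}_2$ by the count of subgroups of each $2$-rank recalled in Section 2. Multiplying the two factors yields $2^{\mathrm{s}(\mathbb{G})}\sum_{j=0}^{\mathrm{r}_2(\mathbb{G})}\binom{\mathrm{r}_2(\mathbb{G})}{j}_2$, which is the first displayed formula.

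For the second formula I would invoke Lemma \ref{L;Lemma3.10}, which asserts that a subset $\mathbb{F}$ of $\mathbb{G}$ is a strongly normal closed subset of $\mathbb{G}$ if and only if $\mathrm{O}^\vartheta(\mathbb{F})=\langle r\rangle$ and $\mathbb{F}=\langle r\rangle\mathbb{E}$ with $\mathbb{E}=\mathrm{O}_\vartheta(\mathbb{F})$, $\langle r\rangle\cap\mathbb{E}=\{e\}$, and $\mathbb{E}^\gamma$ a subgroup of $\mathrm{O}_\vartheta(\mathbb{G})^\gamma$. Thus $\Phi$ restricts to a bijection from the strongly normal closed subsets of $\mathbb{G}$ onto the pairs whose first coordinate is forced to be $\langle r\rangle$ and whose second coordinate is an arbitrary such $\mathbb{E}$; hence their number equals the number of subgroups of $\mathrm{O}_\vartheta(\mathbb{G})^\gamma$, namely $\sum_{j=0}^{\mathrm{r}_2(\mathbb{G})}\binom{\mathrm{r}_2(\mathbb{G})}{j}_2$, as claimed.

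The step I expect to require the most care is surjectivity of $\Phi$: for an arbitrary closed subset $\langle s\rangle\subseteq\langle r\rangle$ and an arbitrary thin closed subset $\mathbb{F}$ with $\mathbb{F}^\gamma$ a subgroup of $\mathrm{O}_\vartheta(\mathbb{G})^\gamma$, one must confirm that $\langle s\rangle\mathbb{F}$ is closed, lies in $\mathbb{G}$, and has thick part exactly $\langle s\rangle$ and thin part exactly $\mathbb{F}$. This is precisely what Lemma \ref{L;Lemma3.9} supplies, so once that lemma is in hand the remainder is bookkeeping with the decomposition $\mathbb{G}=\mathrm{O}^\vartheta(\mathbb{G})\mathrm{O}_\vartheta(\mathbb{G})$ of Lemma \ref{L;Lemma3.8} and the Gauss-coefficient count of subgroups of an elementary abelian $2$-group recalled in Section 2.
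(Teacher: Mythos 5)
Your proposal is correct and follows essentially the same route as the paper: injectivity of the pair map via the decomposition $\mathbb{K}=\mathrm{O}^\vartheta(\mathbb{K})\mathrm{O}_\vartheta(\mathbb{K})$ of Lemma \ref{L;Lemma3.8}, surjectivity via Lemma \ref{L;Lemma3.9}, the identification of the admissible first coordinates with the $2^{\mathrm{s}(\mathbb{G})}$ elements of $\langle r\rangle$ via Lemmas \ref{L;Lemma3.5} and \ref{L;Lemma3.2}, the Gauss-coefficient count of subgroups of $\mathrm{O}_\vartheta(\mathbb{G})^\gamma$ via Lemma \ref{L;Lemma3.7}, and Lemma \ref{L;Lemma3.10} for the strongly normal case. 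The paper compresses all of this into a citation list plus ``a direct computation''; your write-up just makes the same argument explicit.
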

\begin{proof}
If $\mathbb{E}, \mathbb{F}\subseteq\mathbb{G}$ and $\mathbb{E}, \mathbb{F}$ are closed subsets of $\mathbb{H}$, Lemma \ref{L;Lemma3.8}
implies that $\mathbb{E}=\mathbb{F}$ if and only if $\mathrm{O}^\vartheta(\mathbb{E})=\mathrm{O}^\vartheta(\mathbb{F})$ and
$\mathrm{O}_\vartheta(\mathbb{E})=\mathrm{O}_\vartheta(\mathbb{F})$. The first statement follows from combining Lemmas \ref{L;Lemma3.5}, \ref{L;Lemma3.6}, \ref{L;Lemma3.2}, \ref{L;Lemma3.7}, \ref{L;Lemma3.9}, and a direct computation. So the desired lemma follows from combining Lemmas \ref{L;Lemma3.7}, \ref{L;Lemma3.9}, \ref{L;Lemma3.10}, and a direct computation.
\end{proof}
We are now ready to list the main results of this section as the following theorems:
\begin{thm}\label{T;TheoremA}
Assume that $\mathbb{G}\subseteq\mathbb{H}$. Then $\mathbb{G}$ is a closed subset of $\mathbb{H}$ if and only if there exist $r\in\mathrm{Thick}(\mathbb{H})$ and $\mathbb{F}\subseteq\mathbb{H}$ such that $\mathrm{O}^\vartheta(\mathbb{G})=\langle r\rangle$,
$\mathbb{F}^\gamma$ is a subgroup of the elementary abelian 2-group $\mathrm{O}_\vartheta(\mathbb{H})^\gamma$, $\mathbb{G}=\langle r\rangle\mathbb{F}$, $\langle r\rangle\cap\mathbb{F}=\{e\}$, and $\mathrm{O}_\vartheta(\mathbb{G})=\mathbb{F}$.
\end{thm}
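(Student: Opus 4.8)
The plan is to deduce both implications directly from the structural lemmas already assembled in this section, so the argument is essentially one of bookkeeping: the forward implication unpacks Lemmas \ref{L;Lemma3.5}--\ref{L;Lemma3.8}, and the reverse implication is a single application of Lemma \ref{L;Lemma3.9}.

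For the ``only if'' direction, I would start with a closed subset $\mathbb{G}$ of $\mathbb{H}$ and apply Lemma \ref{L;Lemma3.5} to get the unique $r\in\mathbb{G}$ with $\mathrm{Thick}(\mathbb{G})=\langle r\rangle$. By definition $\mathrm{Thick}(\mathbb{G})\subseteq\mathrm{Thick}(\mathbb{H})$, so $r\in\mathrm{Thick}(\mathbb{H})$, and Lemma \ref{L;Lemma3.6} identifies $\mathrm{Thick}(\mathbb{G})$ with $\mathrm{O}^\vartheta(\mathbb{G})$, giving $\mathrm{O}^\vartheta(\mathbb{G})=\langle r\rangle$. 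Next I would put $\mathbb{F}=\mathrm{O}_\vartheta(\mathbb{G})$; Lemma \ref{L;Lemma3.7} shows this coincides with $\mathrm{Thin}(\mathbb{G})$, that $\mathrm{O}_\vartheta(\mathbb{H})^\gamma$ is an elementary abelian $2$-group, and that $\mathbb{F}^\gamma=\mathrm{O}_\vartheta(\mathbb{G})^\gamma$ is an elementary abelian $2$-group as well; since $\mathbb{F}\subseteq\mathrm{O}_\vartheta(\mathbb{H})$ (by the very meaning of $\mathrm{O}_\vartheta$), $\mathbb{F}^\gamma$ is then a subgroup of $\mathrm{O}_\vartheta(\mathbb{H})^\gamma$. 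Finally Lemma \ref{L;Lemma3.8} delivers $\mathbb{G}=\mathrm{O}^\vartheta(\mathbb{G})\mathrm{O}_\vartheta(\mathbb{G})=\langle r\rangle\mathbb{F}$ together with $\langle r\rangle\cap\mathbb{F}=\mathrm{O}^\vartheta(\mathbb{G})\cap\mathrm{O}_\vartheta(\mathbb{G})=\{e\}$, while $\mathrm{O}_\vartheta(\mathbb{G})=\mathbb{F}$ holds by the choice of $\mathbb{F}$. This verifies all the listed conditions.

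For the ``if'' direction, assume $r\in\mathrm{Thick}(\mathbb{H})$, $\mathbb{F}\subseteq\mathbb{H}$, $\mathbb{F}^\gamma$ is a subgroup of the elementary abelian $2$-group $\mathrm{O}_\vartheta(\mathbb{H})^\gamma$, and $\mathbb{G}=\langle r\rangle\mathbb{F}$. I would apply Lemma \ref{L;Lemma3.9} with the closed subset appearing there taken to be $\mathbb{H}$ itself (using Lemma \ref{L;Lemma3.7} to see that $\mathrm{O}_\vartheta(\mathbb{H})^\gamma$ is indeed an elementary abelian $2$-group): the hypotheses of that lemma are met, so $\langle r\rangle\mathbb{F}=\mathbb{G}$ is a closed subset of $\mathbb{H}$. (Lemma \ref{L;Lemma3.9} even returns $\mathrm{O}^\vartheta(\langle r\rangle\mathbb{F})=\langle r\rangle$ and $\mathrm{O}_\vartheta(\langle r\rangle\mathbb{F})=\mathbb{F}$, confirming that the other stipulated equalities are compatible, though only closedness is being asserted here.)

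I do not anticipate a genuine obstacle; the theorem is a consolidation of Lemmas \ref{L;Lemma3.3}--\ref{L;Lemma3.9}. The only point deserving a moment's care is ensuring, in the forward direction, that the single set $\mathbb{F}=\mathrm{O}_\vartheta(\mathbb{G})$ simultaneously carries all three roles demanded by the statement (equal to $\mathrm{O}_\vartheta(\mathbb{G})$, a thin closed subset of $\mathbb{H}$ via Lemma \ref{L;Lemma2.10}, and a subgroup of $\mathrm{O}_\vartheta(\mathbb{H})^\gamma$), and recognising the disjointness clause $\langle r\rangle\cap\mathbb{F}=\{e\}$ as precisely $\mathrm{O}^\vartheta(\mathbb{G})\cap\mathrm{O}_\vartheta(\mathbb{G})=\{e\}$ from Lemma \ref{L;Lemma3.8}; everything else is a direct citation.
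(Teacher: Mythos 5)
Your proposal is correct and follows essentially the same route as the paper, which likewise proves the theorem by consolidating the structural lemmas of this section (the paper cites Lemmas \ref{L;Lemma3.3}, \ref{L;Lemma3.5}, \ref{L;Lemma3.7}, \ref{L;Lemma3.9}; your use of Lemmas \ref{L;Lemma3.6} and \ref{L;Lemma3.8} in their place is immaterial, since those are themselves derived from the same ingredients). Both directions are handled exactly as the paper intends: the forward direction via the $\mathrm{Thick}$/$\mathrm{Thin}$ decomposition and the reverse direction by applying Lemma \ref{L;Lemma3.9} with the ambient closed subset taken to be $\mathbb{H}$.
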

\begin{proof}
The desired theorem follows from combining Lemmas \ref{L;Lemma3.3}, \ref{L;Lemma3.5}, \ref{L;Lemma3.7}, \ref{L;Lemma3.9}.
\end{proof}
\begin{thm}\label{T;TheoremB}
Assume that $\mathbb{G}\subseteq\mathbb{H}$ and $r$ denotes the unique element in $\mathbb{H}$ whose support contains precisely all thick elements of $\mathbb{H}$ contained in $\{q_1, q_2, \ldots, q_p\}$. Then $\mathbb{G}$ is a strongly normal closed subset of $\mathbb{H}$ if and only if $\mathrm{O}^\vartheta(\mathbb{G})=\mathrm{O}^\vartheta(\mathbb{H})=\langle r\rangle$ and there exists $\mathbb{F}\subseteq\mathbb{H}$ such that $\mathbb{F}^\gamma$ is a subgroup of the elementary abelian 2-group $\mathrm{O}_\vartheta(\mathbb{H})^\gamma$, $\mathbb{G}=\langle r\rangle\mathbb{F}$, $\langle r\rangle\cap\mathbb{F}=\{e\}$,
$\mathrm{O}_\vartheta(\mathbb{G})=\mathbb{F}$.
\end{thm}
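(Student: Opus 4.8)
The plan is to deduce this theorem directly from Lemma \ref{L;Lemma3.10} by taking the ambient closed subset there to be $\mathbb{H}$ itself; the only genuine work is to verify that the element $r$ named in the present statement coincides with the unique element supplied by Lemma \ref{L;Lemma3.5} applied to $\mathbb{G}=\mathbb{H}$, and to rewrite $\mathrm{O}^\vartheta(\mathbb{H})$ and $\mathrm{O}_\vartheta(\mathbb{H})$ in the form appearing in that lemma.

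First I would record what $r$ looks like. Since the support of $r$ consists precisely of the thick involutions among $q_1,\dots,q_p$, each of which is a thick element of $\mathbb{H}$, the element $r$ is externally thick, i.e.\ $r\in\mathrm{Thick}(\mathbb{H})$, and $\mathrm{s}(r)$ equals the number $p^\sharp$ of thick elements of $\mathbb{H}$ in $\{q_1,\dots,q_p\}$. By Lemma \ref{L;Lemma3.8} we have $\mathrm{s}(\mathbb{H})=p^\sharp$, hence $\mathrm{s}(r)=\mathrm{s}(\mathbb{H})$. Next, Lemma \ref{L;Lemma3.1} gives $|\langle r\rangle|=2^{\mathrm{s}(r)}=2^{p^\sharp}$, while Lemma \ref{L;Lemma3.6} gives $|\mathrm{Thick}(\mathbb{H})|=2^{\mathrm{s}(\mathbb{H})}=2^{p^\sharp}$. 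Since $\mathrm{Thick}(\mathbb{H})$ is a closed subset of $\mathbb{H}$ containing $r$ (Lemma \ref{L;Lemma3.4}), we get $\langle r\rangle\subseteq\mathrm{Thick}(\mathbb{H})$, and the matching cardinalities force $\langle r\rangle=\mathrm{Thick}(\mathbb{H})$. Together with $\mathrm{s}(r)=\mathrm{s}(\mathbb{H})$ and the uniqueness in Lemma \ref{L;Lemma3.5}, this identifies $r$ as exactly the unique element occurring in Lemma \ref{L;Lemma3.5} for $\mathbb{G}=\mathbb{H}$ (Lemma \ref{L;Lemma3.2} also guarantees there is no ambiguity among thick elements); moreover $\mathrm{O}^\vartheta(\mathbb{H})=\mathrm{Thick}(\mathbb{H})=\langle r\rangle$ by Lemma \ref{L;Lemma3.6}.

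With this identification in hand, I would simply invoke Lemma \ref{L;Lemma3.10} with $\mathbb{G}$ there replaced by $\mathbb{H}$: a subset $\mathbb{G}$ of $\mathbb{H}$ is a strongly normal closed subset of $\mathbb{H}$ if and only if $\mathrm{O}^\vartheta(\mathbb{G})=\mathrm{O}^\vartheta(\mathbb{H})=\langle r\rangle$ and there exists $\mathbb{F}\subseteq\mathbb{H}$ with $\mathbb{F}^\gamma$ a subgroup of the elementary abelian $2$-group $\mathrm{O}_\vartheta(\mathbb{H})^\gamma$, $\mathbb{G}=\langle r\rangle\mathbb{F}$, $\langle r\rangle\cap\mathbb{F}=\{e\}$, and $\mathrm{O}_\vartheta(\mathbb{G})=\mathbb{F}$. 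Here the set called $\mathbb{E}$ in Lemma \ref{L;Lemma3.10} is renamed $\mathbb{F}$, and it automatically lies in $\mathbb{H}$ because $\mathbb{G}\subseteq\mathbb{H}$; thus the statement of Lemma \ref{L;Lemma3.10} for the ambient hypergroup $\mathbb{H}$ is precisely the asserted equivalence.

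I do not expect a real obstacle: the argument is a bookkeeping reduction to Lemma \ref{L;Lemma3.10}, and the one point that needs attention is pinning down $r$ through the cardinality count $2^{p^\sharp}$ coming from Lemmas \ref{L;Lemma3.1}, \ref{L;Lemma3.6}, and \ref{L;Lemma3.8}, in combination with the uniqueness statements of Lemmas \ref{L;Lemma3.2} and \ref{L;Lemma3.5}.
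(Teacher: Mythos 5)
Your proposal is correct and follows essentially the same route as the paper, whose proof of Theorem \ref{T;TheoremB} is exactly ``a direct computation and Lemma \ref{L;Lemma3.10}''; your careful identification of $r$ with the unique element of Lemma \ref{L;Lemma3.5} for $\mathbb{H}$ (via $\mathrm{s}(r)=p^\sharp=\mathrm{s}(\mathbb{H})$ and the cardinality count $|\langle r\rangle|=2^{p^\sharp}=|\mathrm{Thick}(\mathbb{H})|$) is precisely the ``direct computation'' the paper leaves implicit.
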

\begin{proof}
The desired theorem follows from a direct computation and Lemma \ref{L;Lemma3.10}.
\end{proof}
\begin{thm}\label{T;TheoremC}
The number of all closed subsets of $\mathbb{H}$ is equal to $$2^{p^\sharp}\sum_{r=0}^{p-p^\sharp}\binom{p-p^\sharp}{r}_2.$$
On the other hand, the number of all strongly normal closed subsets of $\mathbb{H}$ is equal to
$$\sum_{r=0}^{p-p^\sharp}\binom{p-p^\sharp}{r}_2.$$
\end{thm}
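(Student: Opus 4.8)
The plan is to deduce Theorem~\ref{T;TheoremC} as the special case $\mathbb{G}=\mathbb{H}$ of Lemma~\ref{L;Lemma3.11}. Since $\mathbb{H}$ is itself a closed subset of $\mathbb{H}$, Lemma~\ref{L;Lemma3.11} applies verbatim and says that the number of all closed subsets of $\mathbb{H}$ contained in $\mathbb{H}$ — that is, the number of all closed subsets of $\mathbb{H}$ — equals $2^{\mathrm{s}(\mathbb{H})}\sum_{r=0}^{\mathrm{r}_2(\mathbb{H})}\binom{\mathrm{r}_2(\mathbb{H})}{r}_2$, and that the number of all strongly normal closed subsets of $\mathbb{H}$ equals $\sum_{r=0}^{\mathrm{r}_2(\mathbb{H})}\binom{\mathrm{r}_2(\mathbb{H})}{r}_2$.

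It then remains only to substitute the values of $\mathrm{s}(\mathbb{H})$ and $\mathrm{r}_2(\mathbb{H})$ for the ambient hypergroup. These are exactly the quantities computed in the ``in particular'' clause of Lemma~\ref{L;Lemma3.8}, namely $\mathrm{s}(\mathbb{H})=p^\sharp$ and $\mathrm{r}_2(\mathbb{H})=p-p^\sharp$, where $p^\sharp$ is the number of thick elements of $\mathbb{H}$ among $q_1,\dots,q_p$. Plugging these in turns $2^{\mathrm{s}(\mathbb{H})}\sum_{r=0}^{\mathrm{r}_2(\mathbb{H})}\binom{\mathrm{r}_2(\mathbb{H})}{r}_2$ into $2^{p^\sharp}\sum_{r=0}^{p-p^\sharp}\binom{p-p^\sharp}{r}_2$ and $\sum_{r=0}^{\mathrm{r}_2(\mathbb{H})}\binom{\mathrm{r}_2(\mathbb{H})}{r}_2$ into $\sum_{r=0}^{p-p^\sharp}\binom{p-p^\sharp}{r}_2$, which are precisely the two claimed formulas.

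There is essentially no obstacle here: the theorem is a direct corollary, and the only thing to be careful about is matching notation — confirming that the closed subsets ``of $\mathbb{H}$'' in the statement of Lemma~\ref{L;Lemma3.11} with $\mathbb{G}=\mathbb{H}$ coincide with the closed subsets ``of $\mathbb{H}$'' in the sense used throughout (which they do, since every closed subset of $\mathbb{H}$ is contained in $\mathbb{H}$), and likewise that strongly normal closed subsets of $\mathbb{H}$ in the two statements agree. So the proof reduces to a single sentence: apply Lemma~\ref{L;Lemma3.11} with $\mathbb{G}=\mathbb{H}$ and use the identities $\mathrm{s}(\mathbb{H})=p^\sharp$ and $\mathrm{r}_2(\mathbb{H})=p-p^\sharp$ from Lemma~\ref{L;Lemma3.8}.

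\begin{proof}
The desired theorem follows from combining Lemmas \ref{L;Lemma3.11}, \ref{L;Lemma3.8}, and the equality $\mathbb{H}=\langle\mathbb{H}\rangle$.
\end{proof}
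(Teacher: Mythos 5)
Your proposal is correct and is exactly the paper's argument: the paper also proves Theorem \ref{T;TheoremC} by specializing Lemma \ref{L;Lemma3.11} to $\mathbb{G}=\mathbb{H}$ and substituting $\mathrm{s}(\mathbb{H})=p^\sharp$ and $\mathrm{r}_2(\mathbb{H})=p-p^\sharp$ from Lemma \ref{L;Lemma3.8}. No gaps to report.
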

\begin{proof}
The desired theorem follows from an application of Lemmas \ref{L;Lemma3.11} and \ref{L;Lemma3.8}.
\end{proof}
For a corollary of Theorem \ref{T;TheoremA}, it is necessary to introduce the following lemmas:
\begin{lem}\label{L;Lemma3.15}
Assume that $\mathbb{G}$ is a closed subset of $\mathbb{H}$ and $r$ is the unique element in $\mathbb{G}$ that satisfies the
equalities $\mathrm{Thick}(\mathbb{G})\!=\!\langle r\rangle$ and $\mathrm{s}(\mathbb{G})\!=\!\mathrm{s}(r)$. Assume that $\mathrm{r}_2(\mathbb{G})\!\in\!\mathbb{N}$. Assume that $\mathbb{F}\subseteq\mathbb{G}$ and $\mathbb{F}^\gamma$ is a subgroup of the elementary abelian 2-group $\mathrm{O}_\vartheta(\mathbb{G})^\gamma$. If the 2-rank of $\mathbb{F}^\gamma$ is equal to $\mathrm{r}_2(\mathbb{G})-1$, then $\langle r\rangle\mathbb{F}$ is a maximal closed subset of $\mathbb{G}$.
\end{lem}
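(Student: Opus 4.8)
The plan is to describe $\langle r\rangle\mathbb{F}$ structurally, check it is a proper closed subset of $\mathbb{G}$, and then reduce the non-existence of a strictly intermediate closed subset to an elementary fact about the subgroup lattice of the elementary abelian $2$-group $\mathrm{O}_\vartheta(\mathbb{G})^\gamma$.

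First I would apply Lemma~\ref{L;Lemma3.9}, whose hypotheses hold since $r\in\langle r\rangle=\mathrm{Thick}(\mathbb{G})$: this yields that $\langle r\rangle\mathbb{F}$ is a closed subset of $\mathbb{H}$ with $\langle r\rangle\mathbb{F}\subseteq\mathbb{G}$, $\mathrm{O}^\vartheta(\langle r\rangle\mathbb{F})=\langle r\rangle$, and $\mathrm{O}_\vartheta(\langle r\rangle\mathbb{F})=\mathbb{F}$. Combining Lemmas~\ref{L;Lemma3.6}, \ref{L;Lemma3.7}, and \ref{L;Lemma3.8} then gives $|\langle r\rangle\mathbb{F}|=|\langle r\rangle||\mathbb{F}|=2^{\mathrm{s}(\mathbb{G})}\cdot 2^{\mathrm{r}_2(\mathbb{G})-1}<2^{\mathrm{s}(\mathbb{G})+\mathrm{r}_2(\mathbb{G})}=|\mathbb{G}|$, where the strict inequality uses $\mathrm{r}_2(\mathbb{G})\in\mathbb{N}$; so $\langle r\rangle\mathbb{F}$ and $\mathbb{G}$ are distinct closed subsets with $\langle r\rangle\mathbb{F}\subseteq\mathbb{G}$.

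Next I would fix an arbitrary closed subset $\mathbb{E}$ of $\mathbb{H}$ with $\langle r\rangle\mathbb{F}\subseteq\mathbb{E}\subseteq\mathbb{G}$ and prove $\mathbb{E}\in\{\langle r\rangle\mathbb{F},\mathbb{G}\}$. On the thick side, $\langle r\rangle\subseteq\mathbb{E}$ together with $\langle r\rangle\subseteq\mathrm{Thick}(\mathbb{H})$ forces $\langle r\rangle\subseteq\mathrm{Thick}(\mathbb{E})$, while $\mathrm{Thick}(\mathbb{E})\subseteq\mathrm{Thick}(\mathbb{G})=\langle r\rangle$ by Lemma~\ref{L;Lemma3.5}; hence $\mathrm{O}^\vartheta(\mathbb{E})=\mathrm{Thick}(\mathbb{E})=\langle r\rangle$ by Lemma~\ref{L;Lemma3.6}. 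On the thin side, $\mathbb{F}\subseteq\mathbb{E}$ and $\mathbb{F}\subseteq\mathrm{O}_\vartheta(\mathbb{H})$ give $\mathbb{F}\subseteq\mathrm{O}_\vartheta(\mathbb{E})\subseteq\mathrm{O}_\vartheta(\mathbb{G})$, and Lemmas~\ref{L;Lemma3.7} and \ref{L;Lemma2.10} turn this into a chain of subgroups $\mathbb{F}^\gamma\le\mathrm{O}_\vartheta(\mathbb{E})^\gamma\le\mathrm{O}_\vartheta(\mathbb{G})^\gamma$ of the elementary abelian $2$-group $\mathrm{O}_\vartheta(\mathbb{G})^\gamma$, in which $\mathbb{F}^\gamma$ has order $2^{\mathrm{r}_2(\mathbb{G})-1}$ and $\mathrm{O}_\vartheta(\mathbb{G})^\gamma$ has order $2^{\mathrm{r}_2(\mathbb{G})}$, so $\mathbb{F}^\gamma$ has index $2$. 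Since an index-two subgroup is a maximal subgroup, $\mathrm{O}_\vartheta(\mathbb{E})^\gamma$ is $\mathbb{F}^\gamma$ or $\mathrm{O}_\vartheta(\mathbb{G})^\gamma$, i.e. $\mathrm{O}_\vartheta(\mathbb{E})\in\{\mathbb{F},\mathrm{O}_\vartheta(\mathbb{G})\}$. Finally, Lemma~\ref{L;Lemma3.8} applied to $\mathbb{E}$ and to $\mathbb{G}$ gives $\mathbb{E}=\mathrm{O}^\vartheta(\mathbb{E})\mathrm{O}_\vartheta(\mathbb{E})=\langle r\rangle\mathrm{O}_\vartheta(\mathbb{E})$ and $\mathbb{G}=\langle r\rangle\mathrm{O}_\vartheta(\mathbb{G})$, whence $\mathbb{E}=\langle r\rangle\mathbb{F}$ or $\mathbb{E}=\mathbb{G}$; this completes the argument.

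I expect the only substantive point to be the middle step: recognising that each intermediate closed subset $\mathbb{E}$ is canonically recovered from the pair $(\mathrm{O}^\vartheta(\mathbb{E}),\mathrm{O}_\vartheta(\mathbb{E}))=(\langle r\rangle,\mathrm{O}_\vartheta(\mathbb{E}))$, so that the interval of closed subsets between $\langle r\rangle\mathbb{F}$ and $\mathbb{G}$ is order-isomorphic to the interval between $\mathbb{F}^\gamma$ and $\mathrm{O}_\vartheta(\mathbb{G})^\gamma$ in the subgroup lattice; once that translation is set up, maximality reduces to the triviality that a subgroup of index two is maximal. The remaining work is just tracking which earlier lemma justifies each inclusion, which is routine, so no genuine obstacle is anticipated.
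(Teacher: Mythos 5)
Your proof is correct and follows essentially the same route as the paper, whose own argument is simply ``apply Lemmas \ref{L;Lemma3.9} and \ref{L;Lemma3.8}'': Lemma \ref{L;Lemma3.9} to identify $\mathrm{O}^\vartheta(\langle r\rangle\mathbb{F})$ and $\mathrm{O}_\vartheta(\langle r\rangle\mathbb{F})$, and Lemma \ref{L;Lemma3.8} to recover any intermediate closed subset $\mathbb{E}$ from the pair $(\mathrm{O}^\vartheta(\mathbb{E}),\mathrm{O}_\vartheta(\mathbb{E}))$ and reduce maximality to the index-two statement in $\mathrm{O}_\vartheta(\mathbb{G})^\gamma$. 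You have merely written out in full the details the paper leaves implicit.
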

\begin{proof}
The desired lemma follows from an application of Lemmas \ref{L;Lemma3.9} and \ref{L;Lemma3.8}.
\end{proof}
\begin{lem}\label{L;Lemma3.16}
Assume that $\mathbb{G}$ is a closed subset of $\mathbb{H}$ and $\mathbb{F}$ is the Frattini closed subset of $\mathbb{G}$.
Then $\mathbb{F}\cap\mathrm{O}_\vartheta(\mathbb{G})=\{e\}$.
\end{lem}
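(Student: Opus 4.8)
The plan is to exhibit a sufficiently rich family of maximal closed subsets of $\mathbb{G}$, namely those obtained from the corank-one subgroups of $\mathrm{O}_\vartheta(\mathbb{G})^\gamma$, observe that $\mathbb{F}$ lies in all of them, and then read off the conclusion from the (standard) fact that the Frattini subgroup of an elementary abelian $2$-group is trivial.

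First I would dispose of the degenerate case $\mathrm{r}_2(\mathbb{G})=0$: here Lemma~\ref{L;Lemma3.7} gives $|\mathrm{O}_\vartheta(\mathbb{G})|=2^{\mathrm{r}_2(\mathbb{G})}=1$, so $\mathrm{O}_\vartheta(\mathbb{G})=\{e\}$ and the claim is immediate (recall $e\in\mathbb{F}$ since $\mathbb{F}$ is a closed subset). So from now on I may assume $\mathrm{r}_2(\mathbb{G})\in\mathbb{N}$.

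Next, let $r$ be the unique element of $\mathbb{G}$ with $\mathrm{Thick}(\mathbb{G})=\langle r\rangle$ and $\mathrm{s}(\mathbb{G})=\mathrm{s}(r)$ furnished by Lemma~\ref{L;Lemma3.5}. For every $\mathbb{E}\subseteq\mathbb{G}$ such that $\mathbb{E}^\gamma$ is a subgroup of the elementary abelian $2$-group $\mathrm{O}_\vartheta(\mathbb{G})^\gamma$ of $2$-rank $\mathrm{r}_2(\mathbb{G})-1$, Lemma~\ref{L;Lemma3.15} shows that $\langle r\rangle\mathbb{E}$ is a maximal closed subset of $\mathbb{G}$, hence $\mathbb{F}\subseteq\langle r\rangle\mathbb{E}$ by the definition of the Frattini closed subset. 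Since $\langle r\rangle\mathbb{E}\subseteq\mathbb{G}$, the thin elements of $\mathbb{H}$ lying in $\langle r\rangle\mathbb{E}$ are exactly the elements of $\langle r\rangle\mathbb{E}\cap\mathrm{O}_\vartheta(\mathbb{G})$, so Lemma~\ref{L;Lemma3.9} yields $\langle r\rangle\mathbb{E}\cap\mathrm{O}_\vartheta(\mathbb{G})=\mathrm{O}_\vartheta(\langle r\rangle\mathbb{E})=\mathbb{E}$. Therefore
$$\mathbb{F}\cap\mathrm{O}_\vartheta(\mathbb{G})\subseteq\bigcap_{\mathbb{E}}\bigl(\langle r\rangle\mathbb{E}\cap\mathrm{O}_\vartheta(\mathbb{G})\bigr)=\bigcap_{\mathbb{E}}\mathbb{E},$$
the intersections ranging over all such $\mathbb{E}$.

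Applying $\gamma$, the right-hand side is the intersection of all subgroups of $2$-rank $\mathrm{r}_2(\mathbb{G})-1$ of $\mathrm{O}_\vartheta(\mathbb{G})^\gamma$, i.e. the intersection of all maximal subgroups of this elementary abelian $2$-group, which is its Frattini subgroup; and the Frattini subgroup of an elementary abelian $p$-group is trivial. Hence $(\mathbb{F}\cap\mathrm{O}_\vartheta(\mathbb{G}))^\gamma$ is the trivial subgroup, so $\mathbb{F}\cap\mathrm{O}_\vartheta(\mathbb{G})=\{e\}$. I do not expect a genuine obstacle here: the only points requiring care are the identification $\langle r\rangle\mathbb{E}\cap\mathrm{O}_\vartheta(\mathbb{G})=\mathrm{O}_\vartheta(\langle r\rangle\mathbb{E})$ directly from the definition of $\mathrm{O}_\vartheta$, and the clean citation of the elementary abelian group fact at the end; everything else is a direct appeal to Lemmas~\ref{L;Lemma3.5}, \ref{L;Lemma3.7}, \ref{L;Lemma3.9}, and \ref{L;Lemma3.15}.
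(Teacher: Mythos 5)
Your proof is correct and is essentially the argument the paper intends: its own proof is just the one-line citation of Lemmas \ref{L;Lemma3.15}, \ref{L;Lemma3.9}, and \ref{L;Lemma3.7}, and your write-up fills in exactly that route (corank-one subgroups of $\mathrm{O}_\vartheta(\mathbb{G})^\gamma$ give maximal closed subsets via Lemma \ref{L;Lemma3.15}, the Frattini closed subset lies in all of them, and the intersection of the maximal subgroups of an elementary abelian $2$-group is trivial).
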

\begin{proof}
The desired lemma follows from combining Lemmas \ref{L;Lemma3.15}, \ref{L;Lemma3.9}, and \ref{L;Lemma3.7}.
\end{proof}
\begin{cor}\label{C;Corollary3.17}
Assume that $\mathbb{G}$ is a closed subset of $\mathbb{H}$ and $\mathbb{F}$ is the Frattini closed subset of $\mathbb{G}$.
Then $\mathbb{F}=\{e\}$. In particular, the Frattini closed subset of $\mathbb{H}$ equals $\{e\}$.
\end{cor}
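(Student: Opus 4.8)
The plan is to take Lemma \ref{L;Lemma3.16} as the starting point and then eliminate the thick part of the Frattini closed subset, using the non-generator property recorded in Lemma \ref{L;Lemma2.6}. Write $\mathbb{F}$ for the Frattini closed subset of $\mathbb{G}$. Since $\mathbb{F}$ is itself a closed subset of $\mathbb{G}$, Lemma \ref{L;Lemma3.7} gives $\mathrm{O}_\vartheta(\mathbb{F})=\mathrm{Thin}(\mathbb{F})\subseteq\mathrm{Thin}(\mathbb{G})=\mathrm{O}_\vartheta(\mathbb{G})$, whence $\mathrm{O}_\vartheta(\mathbb{F})\subseteq\mathbb{F}\cap\mathrm{O}_\vartheta(\mathbb{G})=\{e\}$ by Lemma \ref{L;Lemma3.16}; then Lemma \ref{L;Lemma3.3} forces $\mathbb{F}=\mathrm{Thick}(\mathbb{F})\mathrm{Thin}(\mathbb{F})=\mathrm{Thick}(\mathbb{F})$. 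So it remains to show $\mathrm{Thick}(\mathbb{F})=\{e\}$. Assume for contradiction $\mathbb{F}\neq\{e\}$ and pick a thick $s\in\mathbb{F}$ with $s\neq e$. Let $r$ be the element supplied by Lemma \ref{L;Lemma3.5}, so $\mathrm{Thick}(\mathbb{G})=\langle r\rangle$; since $s\in\mathrm{Thick}(\mathbb{F})\subseteq\mathrm{Thick}(\mathbb{G})=\langle r\rangle$, Lemma \ref{L;Lemma3.1} gives $s\preceq r$, and we may fix some $q_{i_1}$ in the (nonempty) support of $s$, hence also in the support of $r$.

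Next I would construct a proper closed subset of $\mathbb{G}$ together with which $s$ generates everything. Let $t\in\mathbb{H}$ be the element whose support is the support of $r$ with $q_{i_1}$ deleted; it is externally thick and $t\preceq r$, so $t\in\langle r\rangle=\mathrm{Thick}(\mathbb{G})$. Applying Lemma \ref{L;Lemma3.9} with the thin closed subset $\mathrm{O}_\vartheta(\mathbb{G})$ in the role of $\mathbb{F}$ (its $\gamma$-image is a subgroup of itself by Lemma \ref{L;Lemma3.7}) and with $t$ in the role of $r$, the set $\mathbb{M}:=\langle t\rangle\mathrm{O}_\vartheta(\mathbb{G})$ is a closed subset of $\mathbb{H}$ with $\mathbb{M}\subseteq\mathbb{G}$, $\mathrm{O}^\vartheta(\mathbb{M})=\langle t\rangle$, and $\mathrm{O}_\vartheta(\mathbb{M})=\mathrm{O}_\vartheta(\mathbb{G})$; by Lemma \ref{L;Lemma3.8} and Lemma \ref{L;Lemma3.1}, $|\mathbb{M}|=2^{\mathrm{s}(t)+\mathrm{r}_2(\mathbb{G})}<2^{\mathrm{s}(r)+\mathrm{r}_2(\mathbb{G})}=|\mathbb{G}|$ since $\mathrm{s}(t)=\mathrm{s}(r)-1$, so $\mathbb{M}\subsetneq\mathbb{G}$ and $\langle\mathbb{M}\rangle=\mathbb{M}\neq\mathbb{G}$.

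Then I would prove $\langle\mathbb{M}\cup\{s\}\rangle=\mathbb{G}$. The set $\langle t,s\rangle$ is a closed subset contained in $\mathbb{G}$ (as $t,s\in\mathbb{G}$) and contains the externally thick elements $t$ and $s$; by Lemma \ref{L;Lemma3.5} its thick part is $\langle u\rangle$ for some $u$, and then $t,s\in\langle u\rangle$ forces (Lemma \ref{L;Lemma3.1}) the support of $u$ to contain the supports of both $t$ and $s$, that is, all of the support of $r$; on the other hand $\langle t,s\rangle\subseteq\mathbb{G}$ gives $\langle u\rangle\subseteq\mathrm{Thick}(\mathbb{G})=\langle r\rangle$, so the support of $u$ is contained in that of $r$. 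Hence the supports of $u$ and $r$ coincide, and $u=r$ by Lemma \ref{L;Lemma3.2}, so $\langle r\rangle=\langle u\rangle\subseteq\langle t,s\rangle\subseteq\langle\mathbb{M}\cup\{s\}\rangle$; since also $\mathrm{O}_\vartheta(\mathbb{G})\subseteq\mathbb{M}\subseteq\langle\mathbb{M}\cup\{s\}\rangle$, we obtain $\mathbb{G}=\langle r\rangle\mathrm{O}_\vartheta(\mathbb{G})\subseteq\langle\mathbb{M}\cup\{s\}\rangle\subseteq\mathbb{G}$.

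Finally, since $\{s\}$ is a subset of the Frattini closed subset $\mathbb{F}$ of $\mathbb{G}$, $\mathbb{M}\subseteq\mathbb{G}$, and $\mathbb{G}=\langle\mathbb{M}\cup\{s\}\rangle$, Lemma \ref{L;Lemma2.6} yields $\mathbb{G}=\langle\mathbb{M}\rangle=\mathbb{M}$, contradicting $\mathbb{M}\subsetneq\mathbb{G}$. Therefore $\mathbb{F}=\{e\}$, and taking $\mathbb{G}=\mathbb{H}$ gives the ``in particular'' clause. I expect the only nontrivial point to be the identity $\langle\mathbb{M}\cup\{s\}\rangle=\mathbb{G}$ — specifically checking that $\langle t,s\rangle$ really recovers $r$ even when the supports of $t$ and $s$ overlap, and that the degenerate cases $t=e$ (when $\mathrm{s}(r)=1$) or $\mathrm{r}_2(\mathbb{G})=0$ cause no trouble; the rest is a straightforward assembly of the lemmas of Section 3.
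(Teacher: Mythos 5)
Your proof is correct and follows essentially the same route as the paper's: first use Lemma \ref{L;Lemma3.16} to show the Frattini closed subset $\mathbb{F}$ is purely externally thick, then apply the non-generator property of Lemma \ref{L;Lemma2.6} to delete one thick involution from a generating set of $\mathbb{G}$ of the form (thick part)$\cdot\mathrm{O}_\vartheta(\mathbb{G})$ and reach a contradiction by a support/cardinality comparison via Lemmas \ref{L;Lemma3.1}, \ref{L;Lemma2.7}, and \ref{L;Lemma2.14}. The only cosmetic difference is that the paper first normalizes so that the offending element of $\mathbb{F}$ is itself one of the involutions $q_i$ in the support of $r$ (which lies in $\mathbb{F}$ because $\mathbb{F}$ is closed and Lemma \ref{L;Lemma3.1} applies), whereas you keep a general thick $s\in\mathbb{F}$ and verify directly that $\langle t,s\rangle$ recovers $r$.
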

\begin{proof}
Theorem \ref{T;TheoremA} shows that $\mathbb{G}=\langle r\rangle\mathrm{O}_\vartheta(\mathbb{G})$ for some $r\in\mathrm{Thick}(\mathbb{G})$. By Lemma \ref{L;Lemma3.16} and (H2), there is no loss to require that $\mathrm{s}(r)\in\mathbb{N}$ and $\{q_1, q_2,\ldots, q_{\mathrm{s}(r)}\}$ is the support of $r$. So $\langle r\rangle=\langle q_1, q_2,\ldots, q_{\mathrm{s}(r)}\rangle$ by Lemma \ref{L;Lemma3.1}. Assume that $\mathbb{F}\setminus\mathrm{O}_\vartheta(\mathbb{G})\neq\varnothing$.

According to Lemma \ref{L;Lemma3.1}, there is also no loss to require that $q_1\in\mathbb{F}$. If $\mathrm{s}(r)=1$, notice that $q_1\in\mathrm{O}_\vartheta(\mathbb{G})$ by Lemma \ref{L;Lemma2.6}. This is a contradiction. Assume further that $\mathrm{s}(r)\!\in\!\mathbb{N}\setminus\{1\}$. Then $q_1\!\in\!\langle q_2, q_3,\ldots, q_{\mathrm{s}(r)}\rangle\mathrm{O}_\vartheta(\mathbb{G})$ by Lemma \ref{L;Lemma2.6}. Lemmas \ref{L;Lemma2.7} and \ref{L;Lemma2.14} also imply that $q_1\not\in\langle q_2, q_3,\ldots, q_{\mathrm{s}(r)}\rangle\mathrm{O}_\vartheta(\mathbb{G})$.
This is also a contradiction. The desired corollary thus follows from the above contradictions and Lemma \ref{L;Lemma3.16}.
\end{proof}
For another corollary of Theorem \ref{T;TheoremA}, it is necessary to give the following lemmas:
\begin{lem}\label{L;Lemma3.18}
Assume that $\mathbb{G}\!=\!\langle r_1, r_2,\ldots, r_s\rangle$ for some $s\in\mathbb{N}$ and $r_1, r_2,\ldots, r_s\!\in\!\mathbb{H}$. Then $\mathbb{G}=\langle r_1^+, r_2^+,\ldots, r_s^+\rangle\langle r_1^-, r_2^-,\ldots, r_s^-\rangle$. Moreover, $\mathrm{O}^\vartheta(\mathbb{G})\!=\!\langle r_1^+, r_2^+,\ldots, r_s^+\rangle$ and
$\mathrm{O}_\vartheta(\mathbb{G})=\langle r_1^-, r_2^-,\ldots, r_s^-\rangle$.
\end{lem}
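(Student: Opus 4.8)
The plan is to introduce the two closed subsets $\mathbb{A}=\langle r_1^+, r_2^+,\ldots, r_s^+\rangle$ and $\mathbb{B}=\langle r_1^-, r_2^-,\ldots, r_s^-\rangle$, to prove $\mathbb{G}=\mathbb{A}\mathbb{B}$, and then to read off $\mathrm{O}^\vartheta(\mathbb{G})$ and $\mathrm{O}_\vartheta(\mathbb{G})$ from an application of Lemma \ref{L;Lemma3.9}.

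First I would pin down the structure of $\mathbb{A}$ and $\mathbb{B}$. Since each $r_i^+\in\mathrm{Thick}(\mathbb{H})$ and $\mathrm{Thick}(\mathbb{H})$ is a closed subset of $\mathbb{H}$ by Lemma \ref{L;Lemma3.4}, we get $\mathbb{A}\subseteq\mathrm{Thick}(\mathbb{H})$, so every element of $\mathbb{A}$ is externally thick and hence $\mathrm{Thick}(\mathbb{A})=\mathbb{A}$; applying Lemma \ref{L;Lemma3.5} to the closed subset $\mathbb{A}$ then produces a (unique) thick element $r$ of $\mathbb{H}$ with $\mathbb{A}=\langle r\rangle$. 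Likewise, since each $r_i^-\in\mathrm{Thin}(\mathbb{H})=\mathrm{O}_\vartheta(\mathbb{H})$, Lemma \ref{L;Lemma2.10} (together with Lemma \ref{L;Lemma3.7}) shows that $\mathbb{B}$ is a thin closed subset of $\mathbb{H}$ with $\mathbb{B}^\gamma$ a subgroup of the elementary abelian $2$-group $\mathrm{O}_\vartheta(\mathbb{H})^\gamma$. Now Lemma \ref{L;Lemma3.9}, applied with $\mathbb{G}=\mathbb{H}$ and the pair $(r,\mathbb{B})$, tells us that $\mathbb{A}\mathbb{B}=\langle r\rangle\mathbb{B}$ is a closed subset of $\mathbb{H}$ satisfying $\mathrm{O}^\vartheta(\mathbb{A}\mathbb{B})=\langle r\rangle=\mathbb{A}$ and $\mathrm{O}_\vartheta(\mathbb{A}\mathbb{B})=\mathbb{B}$.

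Next I would establish the two inclusions $\mathbb{A}\mathbb{B}\subseteq\mathbb{G}$ and $\mathbb{G}\subseteq\mathbb{A}\mathbb{B}$. For the first: each $r_i\in\mathbb{G}$, so Lemma \ref{L;Lemma3.3} gives $r_i^+\in\mathrm{Thick}(\mathbb{G})\subseteq\mathbb{G}$ and $r_i^-\in\mathrm{Thin}(\mathbb{G})\subseteq\mathbb{G}$; hence $\mathbb{A}\subseteq\mathbb{G}$ and $\mathbb{B}\subseteq\mathbb{G}$, and therefore $\mathbb{A}\mathbb{B}\subseteq\mathbb{G}^2\subseteq\mathbb{G}$. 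For the second: $\mathbb{A}\mathbb{B}$ is a closed subset containing every $r_i^+$ and every $r_i^-$, and $r_i^+r_i^-=\{r_i\}$, so $r_i\in\mathbb{A}\mathbb{B}$ for each $i$; consequently $\mathbb{G}=\langle r_1, r_2,\ldots, r_s\rangle\subseteq\mathbb{A}\mathbb{B}$. Thus $\mathbb{G}=\mathbb{A}\mathbb{B}=\langle r_1^+,\ldots, r_s^+\rangle\langle r_1^-,\ldots, r_s^-\rangle$, and combining this with the previous paragraph yields $\mathrm{O}^\vartheta(\mathbb{G})=\mathrm{O}^\vartheta(\mathbb{A}\mathbb{B})=\langle r_1^+,\ldots, r_s^+\rangle$ and $\mathrm{O}_\vartheta(\mathbb{G})=\mathrm{O}_\vartheta(\mathbb{A}\mathbb{B})=\langle r_1^-,\ldots, r_s^-\rangle$.

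The argument is essentially an assembly of the structural lemmas already in place, and I do not expect a serious obstacle. The one point requiring a little care is the reduction of $\mathbb{A}$ to a singly generated closed subset $\langle r\rangle$ (via Lemmas \ref{L;Lemma3.4} and \ref{L;Lemma3.5}), so that Lemma \ref{L;Lemma3.9} is literally applicable; apart from that, the only thing to watch is the bookkeeping with the operators $(\,\cdot\,)^+$ and $(\,\cdot\,)^-$ and the identities $r_i^+r_i^-=\{r_i\}$.
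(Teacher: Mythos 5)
Your proof is correct, but it takes a noticeably different route from the paper's. The paper first asserts $\mathbb{G}=\langle r_1^+,\ldots,r_s^+\rangle\langle r_1^-,\ldots,r_s^-\rangle$ as a consequence of Lemma \ref{L;Lemma3.3}, then establishes only the inclusions $\langle r_1^+,\ldots,r_s^+\rangle\subseteq\mathrm{O}^\vartheta(\mathbb{G})$ and $\langle r_1^-,\ldots,r_s^-\rangle\subseteq\mathrm{O}_\vartheta(\mathbb{G})$, and upgrades them to equalities by the cardinality sandwich $|\mathbb{G}|\le|\langle r_1^+,\ldots,r_s^+\rangle||\langle r_1^-,\ldots,r_s^-\rangle|\le|\mathrm{O}^\vartheta(\mathbb{G})||\mathrm{O}_\vartheta(\mathbb{G})|=|\mathbb{G}|$ (via Lemmas \ref{L;Lemma2.1}, \ref{L;Lemma2.2}, and \ref{L;Lemma3.8}). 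You instead normalize $\mathbb{A}$ to a singly generated closed subset $\langle r\rangle$ through Lemmas \ref{L;Lemma3.4} and \ref{L;Lemma3.5}, invoke Lemma \ref{L;Lemma3.9} to get that $\mathbb{A}\mathbb{B}$ is closed with $\mathrm{O}^\vartheta(\mathbb{A}\mathbb{B})=\mathbb{A}$ and $\mathrm{O}_\vartheta(\mathbb{A}\mathbb{B})=\mathbb{B}$, and then prove $\mathbb{G}=\mathbb{A}\mathbb{B}$ by double inclusion using $r_i^+r_i^-=\{r_i\}$. The trade-off: your double-inclusion argument makes the first statement fully explicit, where the paper's bare citation of Lemma \ref{L;Lemma3.3} leaves the reader to supply essentially the closure argument you wrote out; on the other hand, you pay for this with the detour through Lemma \ref{L;Lemma3.5} to put $\mathbb{A}$ in the form $\langle r\rangle$, and the counting you avoid has merely been displaced into Lemma \ref{L;Lemma3.9}, whose proof is the same sandwich the paper repeats inline. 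All the lemma applications you make are legitimate under their stated hypotheses, so I see no gap.
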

\begin{proof}
The first statement follows from Lemma \ref{L;Lemma3.3}. By combining Lemmas \ref{L;Lemma2.9}, \ref{L;Lemma3.1}, \ref{L;Lemma3.4}, \ref{L;Lemma3.7}, notice that $\langle r_1^+, r_2^+,\ldots, r_s^+\rangle\subseteq\mathrm{O}^\vartheta(\mathbb{G})$ and
$\langle r_1^-, r_2^-,\ldots, r_s^-\rangle\subseteq\mathrm{O}_\vartheta(\mathbb{G})$. Notice that
$|\mathbb{G}|\!\leq\!|\langle r_1^+, r_2^+,\ldots, r_s^+\rangle||\langle r_1^-, r_2^-,\ldots, r_s^-\rangle|\!\leq\!|\mathrm{O}^\vartheta(\mathbb{G})||\mathrm{O}_\vartheta(\mathbb{G})|\!=\!|\mathbb{G}|$
by combining Lemmas \ref{L;Lemma2.1}, \ref{L;Lemma2.2}, \ref{L;Lemma3.8}. The desired lemma thus follows from the above discussion.
\end{proof}
\begin{lem}\label{L;Lemma3.19}
Assume that $\mathbb{G}$ is a closed subset of $\mathbb{H}$. Then the dimension of $\mathbb{G}$ is no less than $\mathrm{r}_2(\mathbb{G})$.
\end{lem}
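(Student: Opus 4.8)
The plan is to exploit the decomposition $\mathbb{G}=\mathrm{O}^\vartheta(\mathbb{G})\mathrm{O}_\vartheta(\mathbb{G})$ from Lemma~\ref{L;Lemma3.8} together with the fact, recorded in Lemma~\ref{L;Lemma3.7}, that the thin part $\mathrm{O}_\vartheta(\mathbb{G})$ already has dimension $\mathrm{r}_2(\mathbb{G})$. Since $\mathbb{G}$ is a finite subset of $\mathbb{H}$, it is a finitely generated closed subset of $\mathbb{H}$ and hence has a basis; so I would begin by letting $s$ denote the dimension of $\mathbb{G}$ and fixing a basis $\{r_1,r_2,\ldots,r_s\}$ of $\mathbb{G}$, that is, a minimal generating subset of smallest cardinality, so that $\mathbb{G}=\langle r_1,r_2,\ldots,r_s\rangle$. (If $s=0$ then $\mathbb{G}=\{e\}$ and the inequality is trivial, so one may assume $s\ge 1$.)

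The main step will be to pass from this basis of $\mathbb{G}$ to a generating subset of $\mathrm{O}_\vartheta(\mathbb{G})$ of no larger cardinality, and this is precisely what Lemma~\ref{L;Lemma3.18} supplies: applied to the generating set $\{r_1,r_2,\ldots,r_s\}$ of $\mathbb{G}$, it gives $\mathrm{O}_\vartheta(\mathbb{G})=\langle r_1^-,r_2^-,\ldots,r_s^-\rangle$. Hence $\{r_1^-,r_2^-,\ldots,r_s^-\}$, discarding $e$ where it occurs, is a generating subset of $\mathrm{O}_\vartheta(\mathbb{G})$ of cardinality at most $s$. Since $\mathrm{O}_\vartheta(\mathbb{G})$ is finite, any of its generating subsets contains a minimal generating subset of no larger cardinality, so the dimension of $\mathrm{O}_\vartheta(\mathbb{G})$ is at most $s$. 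By Lemma~\ref{L;Lemma3.7} this dimension equals $\mathrm{r}_2(\mathbb{G})$, whence $\mathrm{r}_2(\mathbb{G})\le s$, which is exactly the claim that the dimension of $\mathbb{G}$ is no less than $\mathrm{r}_2(\mathbb{G})$.

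I do not expect a genuine obstacle. The one point worth flagging, though it is routine, is the monotonicity remark that a generating subset of cardinality $n$ forces the dimension to be at most $n$; this follows at once from the finiteness of $\mathbb{H}$, which makes every closed subset in sight finitely generated, and from the definitions of minimal generating subset and of basis given in Section~2. The real content is carried entirely by the identity $\mathrm{O}_\vartheta(\mathbb{G})=\langle r_1^-,\ldots,r_s^-\rangle$ of Lemma~\ref{L;Lemma3.18} and the dimension computation of Lemma~\ref{L;Lemma3.7}.
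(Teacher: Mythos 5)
Your proof is correct and follows essentially the same route as the paper: both arguments hinge on Lemma~\ref{L;Lemma3.18}'s identity $\mathrm{O}_\vartheta(\mathbb{G})=\langle r_1^-,\ldots,r_s^-\rangle$ for a basis $\{r_1,\ldots,r_s\}$ of $\mathbb{G}$, combined with the dimension/rank computation of Lemma~\ref{L;Lemma3.7}. The only (cosmetic) difference is that the paper phrases the final bound as ``the $2$-rank of $\mathrm{O}_\vartheta(\mathbb{G})^\gamma$ is at most $s$'' via Lemma~\ref{L;Lemma2.10} and derives a contradiction, whereas you bound the dimension of the closed subset $\mathrm{O}_\vartheta(\mathbb{G})$ directly; these are interchangeable by Lemma~\ref{L;Lemma2.11}.
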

\begin{proof}
The basis of $\{e\}$ is the empty set. So $\mathrm{r}_2(\{e\})=0$ and the dimension of $\mathbb{G}$ is zero if and only if $\mathbb{G}=\{e\}$. There is no loss to assume that $\mathbb{G}\neq\{e\}$. Assume that $s\in\mathbb{N}$ and $\{r_1, r_2,\ldots, r_s\}$ is a basis of $\mathbb{G}$. Assume that $\mathrm{r}_2(\mathbb{G})\in\mathbb{N}\setminus[1, s]$. Notice that $\mathrm{O}_\vartheta(\mathbb{G})\!=\!\langle r_1^-, r_2^-,\ldots, r_s^-\rangle$ by Lemma \ref{L;Lemma3.18}. By Lemma \ref{L;Lemma2.10}, the 2-rank of $\mathrm{O}_\vartheta(\mathbb{G})^\gamma$ is no more than $s$. This contradicts Lemma \ref{L;Lemma3.7}. The desired lemma thus follows.
\end{proof}
\begin{lem}\label{L;Lemma3.20}
Assume that $\mathbb{G}$ is a closed subset of $\mathbb{H}$ and $\mathrm{O}_\vartheta(\mathbb{G})\neq\{e\}$. Assume that $\{r_1, r_2, \ldots, r_s\}\subseteq\mathbb{G}$ for some $s\in\mathbb{N}$. Then $\{r_1, r_2, \ldots, r_s\}$ is a basis of $\mathbb{G}$ if and only if
$s=\mathrm{r}_2(\mathbb{G})$, the union of the supports of $r_1^+, r_2^+, \ldots, r_s^+$ is a set of $\mathrm{s}(\mathbb{G})$ elements,
and $\{r_1^-, r_2^-, \ldots, r_s^-\}$ is a basis of the thin closed subset $\mathrm{O}_\vartheta(\mathbb{G})$ of $\mathbb{H}$.
\end{lem}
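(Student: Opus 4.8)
The plan is to prove the two implications separately, after first establishing the auxiliary fact that the dimension of $\mathbb{G}$ equals $\mathrm{r}_2(\mathbb{G})$. Lemma~\ref{L;Lemma3.19} gives that the dimension is at least $\mathrm{r}_2(\mathbb{G})$, so it suffices to produce a generating subset of $\mathbb{G}$ of cardinality $\mathrm{r}_2(\mathbb{G})$. I would take the unique $w$ with $\mathrm{O}^\vartheta(\mathbb{G})=\mathrm{Thick}(\mathbb{G})=\langle w\rangle$ (Lemmas~\ref{L;Lemma3.5} and \ref{L;Lemma3.6}) and a basis $\{t_1,t_2,\ldots,t_m\}$ of the thin closed subset $\mathrm{O}_\vartheta(\mathbb{G})$ (Lemma~\ref{L;Lemma3.7}), where $m=\mathrm{r}_2(\mathbb{G})\geq1$ since $\mathrm{O}_\vartheta(\mathbb{G})\neq\{e\}$. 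Writing $wt_1=\{v\}$ (Lemma~\ref{L;Lemma2.1}), the uniqueness of the thick--thin decomposition forces $v^+=w$ and $v^-=t_1$ (Lemmas~\ref{L;Lemma2.13} and \ref{L;Lemma2.14}), so Lemma~\ref{L;Lemma3.18} applied to $\langle v,t_2,\ldots,t_m\rangle$ shows it equals $\langle w\rangle\langle t_1,t_2,\ldots,t_m\rangle=\mathrm{O}^\vartheta(\mathbb{G})\mathrm{O}_\vartheta(\mathbb{G})=\mathbb{G}$ (Lemma~\ref{L;Lemma3.8}). Hence $\mathbb{G}$ is generated by $m$ elements, so its dimension is $m=\mathrm{r}_2(\mathbb{G})$.

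For the forward implication, I suppose $\{r_1,r_2,\ldots,r_s\}$ is a basis of $\mathbb{G}$. Then $s$ is the dimension of $\mathbb{G}$, hence $s=\mathrm{r}_2(\mathbb{G})$ by the preceding paragraph, which is the first asserted equality. Lemma~\ref{L;Lemma3.18} gives $\mathrm{O}^\vartheta(\mathbb{G})=\langle r_1^+,\ldots,r_s^+\rangle$ and $\mathrm{O}_\vartheta(\mathbb{G})=\langle r_1^-,\ldots,r_s^-\rangle$. Since $\mathrm{O}^\vartheta(\mathbb{G})=\mathrm{Thick}(\mathbb{G})=\langle w\rangle=\{a:a\preceq w\}$, a comparison of supports via Lemma~\ref{L;Lemma3.1} identifies the union of the supports of $r_1^+,\ldots,r_s^+$ with the support of $w$, which is a set of $\mathrm{s}(w)=\mathrm{s}(\mathbb{G})$ elements --- the second condition. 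Finally $\{r_1^-,\ldots,r_s^-\}$ generates $\mathrm{O}_\vartheta(\mathbb{G})$, a thin closed subset of dimension $\mathrm{r}_2(\mathbb{G})=s$ (Lemma~\ref{L;Lemma3.7}); a generating subset of cardinality at most $s$ of such a closed subset is a basis of it, so $\{r_1^-,\ldots,r_s^-\}$ is a basis of $\mathrm{O}_\vartheta(\mathbb{G})$ --- the third condition.

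For the converse, I assume the three conditions and set $\mathbb{G}'=\langle r_1,\ldots,r_s\rangle\subseteq\mathbb{G}$. By Lemma~\ref{L;Lemma3.18}, $\mathrm{O}_\vartheta(\mathbb{G}')=\langle r_1^-,\ldots,r_s^-\rangle=\mathrm{O}_\vartheta(\mathbb{G})$, while $\mathrm{O}^\vartheta(\mathbb{G}')=\langle r_1^+,\ldots,r_s^+\rangle$ lies in $\mathrm{Thick}(\mathbb{G})=\mathrm{O}^\vartheta(\mathbb{G})$ (Lemmas~\ref{L;Lemma3.3} and \ref{L;Lemma3.6}) and, by the union-of-supports hypothesis together with the order formula of Lemma~\ref{L;Lemma3.1}, has order $2^{\mathrm{s}(\mathbb{G})}=|\mathrm{O}^\vartheta(\mathbb{G})|$ (Lemma~\ref{L;Lemma3.6}); hence $\mathrm{O}^\vartheta(\mathbb{G}')=\mathrm{O}^\vartheta(\mathbb{G})$, and then $|\mathbb{G}'|=|\mathbb{G}|$ by Lemma~\ref{L;Lemma3.8}, so $\mathbb{G}'=\mathbb{G}$ and $\{r_1,\ldots,r_s\}$ generates $\mathbb{G}$. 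Since $\{r_1^-,\ldots,r_s^-\}$ is a basis of $\mathrm{O}_\vartheta(\mathbb{G})\neq\{e\}$, the $r_i^-$, and hence the $r_i$, are pairwise distinct; and if $\mathbb{G}=\langle r_i:i\neq j\rangle$ for some $j$, then $\mathrm{O}_\vartheta(\mathbb{G})=\langle r_i^-:i\neq j\rangle$ by Lemma~\ref{L;Lemma3.18}, contradicting the minimality of the basis $\{r_1^-,\ldots,r_s^-\}$. So $\{r_1,\ldots,r_s\}$ is a minimal generating subset of cardinality $s$; as the dimension of $\mathbb{G}$ is at least $\mathrm{r}_2(\mathbb{G})=s$ (Lemma~\ref{L;Lemma3.19}), it equals $s$, and $\{r_1,\ldots,r_s\}$ is a basis of $\mathbb{G}$.

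The main obstacle is the auxiliary fact, and within it the bound that the dimension of $\mathbb{G}$ is at most $\mathrm{r}_2(\mathbb{G})$: the point is that $\mathrm{Thick}(\mathbb{G})$, though possibly of large order, is singly generated, so its whole content can be absorbed into one basis element of $\mathrm{O}_\vartheta(\mathbb{G})$. Everything else is routine bookkeeping with the splitting $\mathbb{G}=\mathrm{O}^\vartheta(\mathbb{G})\mathrm{O}_\vartheta(\mathbb{G})$ and the order and support formulas of Lemmas~\ref{L;Lemma3.1}, \ref{L;Lemma3.6}, \ref{L;Lemma3.7}, and \ref{L;Lemma3.8}.
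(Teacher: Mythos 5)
Your proof is correct and takes essentially the same route as the paper's: both directions rest on the splitting of Lemma \ref{L;Lemma3.18}, the lower bound of Lemma \ref{L;Lemma3.19}, and the structure lemmas \ref{L;Lemma3.1}, \ref{L;Lemma3.5}--\ref{L;Lemma3.8}. Your explicit auxiliary construction of a generating subset of cardinality $\mathrm{r}_2(\mathbb{G})$ (absorbing the single generator of $\mathrm{Thick}(\mathbb{G})$ into one basis element of $\mathrm{O}_\vartheta(\mathbb{G})$) is precisely the step the paper leaves implicit in its terse citation of Theorem \ref{T;TheoremA} and Lemmas \ref{L;Lemma3.18}, \ref{L;Lemma3.19}, so it is a welcome fleshing-out rather than a different argument.
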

\begin{proof}
As $\mathrm{O}_\vartheta(\mathbb{G})\neq\{e\}$, the combination of Lemmas \ref{L;Lemma3.7}, \ref{L;Lemma3.4}, \ref{L;Lemma2.11} shows that each basis of $\mathrm{O}_\vartheta(\mathbb{G})$ is not the empty set. For one direction, $\langle r_1^+, r_2^+, \ldots, r_s^+\rangle\subseteq\mathrm{Thick}(\mathbb{G})$ by Lemmas \ref{L;Lemma3.3} and \ref{L;Lemma3.4}.
As the union of the supports of $r_1^+, r_2^+, \ldots, r_s^+$ is a set of $\mathrm{s}(\mathbb{G})$ elements,
notice that $\mathrm{O}^\vartheta(\mathbb{G})\!=\!\langle r_1^+, r_2^+, \ldots, r_s^+\rangle$ by combining Lemmas \ref{L;Lemma3.6}, \ref{L;Lemma3.5}, \ref{L;Lemma2.7}. By combining Theorem \ref{T;TheoremA}, Lemmas \ref{L;Lemma3.7}, \ref{L;Lemma3.18}, \ref{L;Lemma3.19}, $\{r_1, r_2, \ldots, r_s\}$ is a basis of $\mathbb{G}$. For the other direction,
$s=\mathrm{r}_2(\mathbb{G})$ by the above discussion and Lemma \ref{L;Lemma3.19}. The desired lemma follows from combining Lemmas \ref{L;Lemma3.18}, \ref{L;Lemma3.6}, \ref{L;Lemma3.5}, \ref{L;Lemma2.7}, and \ref{L;Lemma3.7}.
\end{proof}
\begin{lem}\label{L;Lemma3.21}
Assume that $\mathbb{G}$ is a closed subset of $\mathbb{H}$ and $\mathrm{O}^\vartheta(\mathbb{G})\neq\mathrm{O}_\vartheta(\mathbb{G})=\{e\}$.
Then there is a unique $r\in \mathbb{G}$ such that $\mathrm{s}(r)=\mathrm{s}(\mathbb{G})$ and $\mathbb{G}$ has the unique basis $\{r\}$.
\end{lem}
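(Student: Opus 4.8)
The plan is to use the hypothesis $\mathrm{O}_\vartheta(\mathbb{G})=\{e\}$ to collapse $\mathbb{G}$ onto its thick part, and then to let the uniqueness assertions of Lemmas \ref{L;Lemma3.5} and \ref{L;Lemma3.2} finish the job.

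First I would note that, since $\mathrm{O}_\vartheta(\mathbb{G})=\{e\}$, Lemma \ref{L;Lemma3.7} gives $\mathrm{Thin}(\mathbb{G})=\{e\}$, so by Lemma \ref{L;Lemma3.3} we have $\mathbb{G}=\mathrm{Thick}(\mathbb{G})\mathrm{Thin}(\mathbb{G})=\mathrm{Thick}(\mathbb{G})$, and this is nontrivial because $\mathrm{Thick}(\mathbb{G})=\mathrm{O}^\vartheta(\mathbb{G})\neq\{e\}$ by Lemma \ref{L;Lemma3.6}. Applying Lemma \ref{L;Lemma3.5}, there is a unique $r\in\mathbb{G}$ with $\mathrm{Thick}(\mathbb{G})=\langle r\rangle$ and $\mathrm{s}(\mathbb{G})=\mathrm{s}(r)$; for this $r$ we get $\mathbb{G}=\langle r\rangle$, so $\{r\}$ is a generating subset of $\mathbb{G}$. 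Since $\mathbb{G}\neq\{e\}=\langle\varnothing\rangle$, the empty set does not generate $\mathbb{G}$, so $\{r\}$ is a minimal generating subset of the smallest possible cardinality, i.e. a basis, and the dimension of $\mathbb{G}$ equals $1$.

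For uniqueness of the basis I would argue as follows. Any basis of $\mathbb{G}$ is a minimal generating subset whose cardinality equals the dimension, hence a singleton $\{s\}$ with $\langle s\rangle=\mathbb{G}$. Then $s\in\mathbb{G}=\mathrm{Thick}(\mathbb{G})\subseteq\mathrm{Thick}(\mathbb{H})$ and likewise $r\in\mathrm{Thick}(\mathbb{H})$, so $\langle s\rangle=\langle r\rangle$ forces $s=r$ by Lemma \ref{L;Lemma3.2}; thus $\{r\}$ is the unique basis. It remains to see that $r$ is also the unique element of $\mathbb{G}$ with $\mathrm{s}(r)=\mathrm{s}(\mathbb{G})$: by Lemma \ref{L;Lemma3.1} the elements of $\mathbb{G}=\langle r\rangle$ are exactly those $a$ with $a\preceq r$, and since every involution in the support of $r$ is thick, $\mathrm{s}(a)=\mathrm{s}(r)$ holds precisely when the supports of $a$ and $r$ coincide, which by the partial-order property of $\preceq$ means $a=r$.

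I do not expect a genuine obstacle here: once $\mathrm{O}_\vartheta(\mathbb{G})=\{e\}$ is seen to force $\mathbb{G}=\mathrm{Thick}(\mathbb{G})$, Lemmas \ref{L;Lemma3.5} and \ref{L;Lemma3.2} deliver both uniqueness statements almost immediately. The only point requiring a little care is the bookkeeping around the definition of ``basis'' — verifying that a singleton generating set is automatically a minimal generating set of least cardinality when $\mathbb{G}\neq\{e\}$, and hence that every basis must be a singleton — but this is routine from the Section 2 definitions.
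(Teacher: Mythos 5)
Your proof is correct and follows essentially the same route as the paper, which simply combines Lemmas \ref{L;Lemma3.8}, \ref{L;Lemma3.6}, \ref{L;Lemma3.5}, and \ref{L;Lemma3.2}: collapse $\mathbb{G}$ to $\mathrm{Thick}(\mathbb{G})=\mathrm{O}^\vartheta(\mathbb{G})$ using $\mathrm{O}_\vartheta(\mathbb{G})=\{e\}$, invoke Lemma \ref{L;Lemma3.5} for the unique generator $r$, and use Lemma \ref{L;Lemma3.2} for uniqueness. Your substitution of Lemmas \ref{L;Lemma3.7} and \ref{L;Lemma3.3} for Lemma \ref{L;Lemma3.8} in the decomposition step is immaterial, and your extra bookkeeping on bases and on the maximality of $\mathrm{s}(r)$ within $\langle r\rangle$ just makes explicit what the paper leaves implicit.
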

\begin{proof}
The desired lemma follows from combining Lemmas \ref{L;Lemma3.8}, \ref{L;Lemma3.6}, \ref{L;Lemma3.5}, and \ref{L;Lemma3.2}.
\end{proof}
\begin{cor}\label{C;Corollary3.22}
Assume that $\mathbb{G}$ is a closed subset of $\mathbb{H}$. Then the dimension of $\mathbb{G}$ is equal to
\[\begin{cases}
1, &\ \text{if}\ \mathrm{O}^\vartheta(\mathbb{G})\neq\mathrm{O}_\vartheta(\mathbb{G})=\{e\},\\
\mathrm{r}_2(\mathbb{G}), &\ \text{otherwise}.
\end{cases}\]
As a particular case of the above displayed formula, the dimension of $\mathbb{H}$ is equal to
\[\begin{cases}
1, &\ \text{if}\ \mathrm{O}^\vartheta(\mathbb{H})\neq\mathrm{O}_\vartheta(\mathbb{H})=\{e\},\\
p-p^\sharp, &\ \text{otherwise}.
\end{cases}\]
\end{cor}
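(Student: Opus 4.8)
The plan is to run a short case analysis on the two canonical closed subsets $\mathrm{O}^\vartheta(\mathbb{G})$ and $\mathrm{O}_\vartheta(\mathbb{G})$, since the branching in the asserted formula is governed precisely by whether $\mathrm{O}_\vartheta(\mathbb{G})$ is trivial, with the degenerate closed subset $\{e\}$ deserving a moment's care. The heavy lifting has already been done in the preceding lemmas, so the argument should be mostly bookkeeping.

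First I would treat the case $\mathrm{O}_\vartheta(\mathbb{G})\neq\{e\}$. Here $\mathbb{G}$ is a finite, hence finitely generated, closed subset of $\mathbb{H}$, so $\mathbb{G}$ possesses a basis; moreover that basis is nonempty, since $\mathbb{G}\neq\{e\}$ (otherwise $\mathrm{O}_\vartheta(\mathbb{G})=\{e\}$) and $\langle\varnothing\rangle=\{e\}$. Applying the ``only if'' direction of Lemma~\ref{L;Lemma3.20} to any basis $\{r_1,\dots,r_s\}$ of $\mathbb{G}$ forces $s=\mathrm{r}_2(\mathbb{G})$, so by definition the dimension of $\mathbb{G}$ equals $\mathrm{r}_2(\mathbb{G})$, which is the value prescribed by the ``otherwise'' clause.

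Next I would treat the case $\mathrm{O}_\vartheta(\mathbb{G})=\{e\}$. By Lemma~\ref{L;Lemma3.8} this forces $\mathbb{G}=\mathrm{O}^\vartheta(\mathbb{G})\mathrm{O}_\vartheta(\mathbb{G})=\mathrm{O}^\vartheta(\mathbb{G})$. If in addition $\mathrm{O}^\vartheta(\mathbb{G})=\{e\}$, then $\mathbb{G}=\{e\}$, whose basis is the empty set, so the dimension of $\mathbb{G}$ is $0$; since $\mathrm{r}_2(\{e\})=0$ this again agrees with the ``otherwise'' clause. If instead $\mathrm{O}^\vartheta(\mathbb{G})\neq\{e\}$, then we are exactly in the hypothesis $\mathrm{O}^\vartheta(\mathbb{G})\neq\mathrm{O}_\vartheta(\mathbb{G})=\{e\}$ of Lemma~\ref{L;Lemma3.21}, which provides a (unique) basis $\{r\}$, so the dimension of $\mathbb{G}$ is $1$. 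The two cases are exhaustive, which establishes the first formula.

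Finally, the statement about $\mathbb{H}$ is just the instance $\mathbb{G}=\mathbb{H}$ of the first formula together with the equality $\mathrm{r}_2(\mathbb{H})=p-p^\sharp$ recorded in Lemma~\ref{L;Lemma3.8}. I do not anticipate a genuine obstacle: the substantive work lies in Lemmas~\ref{L;Lemma3.20} and~\ref{L;Lemma3.21}, and the only point requiring attention is to recognize that $\mathbb{G}=\{e\}$ belongs to the \emph{otherwise} branch rather than to the first alternative (there $\mathrm{O}^\vartheta(\mathbb{G})=\mathrm{O}_\vartheta(\mathbb{G})$), and that $\mathrm{r}_2(\{e\})=0$ makes the ``otherwise'' formula return the correct dimension $0$ in that case.
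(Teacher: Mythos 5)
Your proposal is correct and follows essentially the same route as the paper: the case $\mathrm{O}_\vartheta(\mathbb{G})\neq\{e\}$ is handled by the forward direction of Lemma~\ref{L;Lemma3.20}, the case $\mathrm{O}^\vartheta(\mathbb{G})\neq\mathrm{O}_\vartheta(\mathbb{G})=\{e\}$ by Lemma~\ref{L;Lemma3.21}, the degenerate case $\mathbb{G}=\{e\}$ by inspection, and the specialization to $\mathbb{H}$ by Lemma~\ref{L;Lemma3.8}. Your explicit attention to the nonemptiness of the basis and to the placement of $\{e\}$ in the ``otherwise'' branch is exactly the bookkeeping the paper leaves implicit.
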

\begin{proof}
The first statement follows from combining Lemmas \ref{L;Lemma3.20}, \ref{L;Lemma3.21}, and Theorem \ref{T;TheoremA}. The desired corollary thus follows from the first statement and Lemma \ref{L;Lemma3.8}.
\end{proof}
We display the remaining applications of Theorem \ref{T;TheoremA} as the following corollaries:
\begin{cor}\label{C;Corollary3.23}
Assume that $\mathbb{G}$ is a closed subset of $\mathbb{H}$. Then $\mathbb{G}$ is a residually thin closed subset of $\mathbb{H}$
if and only if $\mathbb{G}=\mathrm{O}_\vartheta(\mathbb{G})$. In particular, $\mathbb{H}$ is a residually thin hypergroup if and only if $\mathbb{H}=\mathrm{O}_\vartheta(\mathbb{H})$.
\end{cor}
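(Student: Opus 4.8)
The plan is to establish both implications and then read off the ``in particular'' clause as the special case $\mathbb{G}=\mathbb{H}$. The key inputs are Lemma \ref{L;Lemma3.10}, which forces the thick part $\mathrm{O}^\vartheta$ to be preserved under passage to a strongly normal closed subset, and Lemma \ref{L;Lemma3.8}, which recovers a closed subset from its thick and thin parts.

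Suppose first that $\mathbb{G}$ is a residually thin closed subset of $\mathbb{H}$. By definition there are pairwise distinct closed subsets $\mathbb{F}_1=\{e\},\mathbb{F}_2,\ldots,\mathbb{F}_n=\mathbb{G}$ of $\mathbb{H}$ (I use $n$ here since $p$ is already fixed) such that $\mathbb{F}_q$ is a strongly normal closed subset of $\mathbb{F}_{q+1}$ for each $q\in[1,n-1]$. I would apply Lemma \ref{L;Lemma3.10} to the pair $(\mathbb{F}_q,\mathbb{F}_{q+1})$, i.e.\ with $\mathbb{F}_{q+1}$ in the role of $\mathbb{G}$; its ``only if'' direction yields $\mathrm{O}^\vartheta(\mathbb{F}_q)=\mathrm{O}^\vartheta(\mathbb{F}_{q+1})$. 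Iterating from $q=1$ to $q=n-1$ gives $\mathrm{O}^\vartheta(\mathbb{G})=\mathrm{O}^\vartheta(\{e\})$, and $\mathrm{O}^\vartheta(\{e\})=\{e\}$ because $\langle\{e\}\rangle=\{e\}$ has no strongly normal closed subset other than $\{e\}$ (alternatively, $\mathrm{O}^\vartheta(\{e\})=\mathrm{Thick}(\{e\})=\{e\}$ by Lemma \ref{L;Lemma3.6}). Now Lemma \ref{L;Lemma3.8} gives $|\mathbb{G}|=|\mathrm{O}^\vartheta(\mathbb{G})|\,|\mathrm{O}_\vartheta(\mathbb{G})|=|\mathrm{O}_\vartheta(\mathbb{G})|$, and since $\mathrm{O}_\vartheta(\mathbb{G})\subseteq\mathbb{G}$ this forces $\mathbb{G}=\mathrm{O}_\vartheta(\mathbb{G})$.

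Conversely, if $\mathbb{G}=\mathrm{O}_\vartheta(\mathbb{G})$ then $\mathbb{G}$ is by definition a thin closed subset of $\mathbb{H}$, and a thin closed subset of $\mathbb{H}$ is always residually thin, as already noted in Section 2 immediately after the definition of residual thinness. Finally, taking $\mathbb{G}=\mathbb{H}$ in the equivalence just proved, and recalling that $\mathbb{H}$ is a residually thin hypergroup exactly when $\mathbb{H}$ is a residually thin closed subset of itself, yields the ``in particular'' statement.

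I do not expect a real obstacle here: the proof is essentially a telescoping argument. The only points needing a line of care are verifying that Lemma \ref{L;Lemma3.10} applies to every $\mathbb{F}_{q+1}$ (the element $r$ it refers to exists and is unique for any closed subset by Lemma \ref{L;Lemma3.5}) and the trivial boundary case $n=1$, where $\mathbb{G}=\{e\}=\mathrm{O}_\vartheta(\{e\})$ and the statement is immediate.
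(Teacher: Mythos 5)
Your proof is correct. It diverges from the paper's argument in how the forward direction is handled: the paper argues by contradiction at a single critical link of the chain, taking the largest index $t$ with $\mathbb{F}_t$ thin, producing (after relabelling, via Theorem \ref{T;TheoremA} and Lemma \ref{L;Lemma3.1}) a thick involution $q_1\in\mathrm{Thick}(\mathbb{F}_{t+1})$, and then using the raw definition of strong normality to get $q_1\in q_1\mathbb{F}_tq_1\subseteq\mathbb{F}_t$, contradicting the thinness of $\mathbb{F}_t$. You instead telescope the already-packaged Lemma \ref{L;Lemma3.10} along the entire chain to conclude $\mathrm{O}^\vartheta(\mathbb{G})=\mathrm{O}^\vartheta(\{e\})=\{e\}$ and then invoke Lemma \ref{L;Lemma3.8} to identify $\mathbb{G}$ with $\mathrm{O}_\vartheta(\mathbb{G})$. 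Both arguments rest on the same structural fact --- a strongly normal closed subset must contain the ambient thick part $\mathrm{O}^\vartheta$ (Lemma \ref{L;Lemma2.8} combined with Lemma \ref{L;Lemma3.6}) --- but your version is direct rather than by contradiction, needs no choice of a critical index or relabelling of the $q_i$, and makes visible the stronger intermediate conclusion that $\mathrm{O}^\vartheta$ is constant along any residually thin chain; the paper's version is more self-contained at the element level and does not depend on the full strength of Lemma \ref{L;Lemma3.10}. Your treatment of the converse and of the ``in particular'' clause matches the paper's (both rely on the Section 2 remark that thin closed subsets are residually thin), and your attention to the existence of the element $r$ in Lemma \ref{L;Lemma3.10} via Lemma \ref{L;Lemma3.5} and to the degenerate case $n=1$ is sound.
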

\begin{proof}
For one direction, assume that $\mathbb{G}$ is a residually thin closed subset of $\mathbb{H}$ and $\mathbb{G}\neq \mathrm{O}_\vartheta(\mathbb{G})$. So there are $r\in \mathbb{N}$ and pairwise distinct closed subsets $\mathbb{F}_1, \mathbb{F}_2,\ldots, \mathbb{F}_r$ of $\mathbb{H}$ such that $\mathbb{F}_1=\{e\}$, $\mathbb{F}_r=\mathbb{G}$, and $\mathbb{F}_s$ is a strongly normal closed subset of $\mathbb{F}_{s+1}$ for any $s\!\in\! [1, r\!-\!1]$. As $\mathbb{F}_1\!\!=\!\!\{e\}$, let $t$ be the largest subscript such that $\mathbb{F}_t$ is a thin closed subset of $\mathbb{H}$. So $t\!\in\![1, r\!-\!1]$ as $\mathbb{G}\!\neq\!\mathrm{O}_\vartheta(\mathbb{G})$. By combining the choice of $t$, Theorem \ref{T;TheoremA}, Lemma \ref{L;Lemma3.1}, there is no loss to let $q_1\!\in\!\mathrm{Thick}(\mathbb{F}_{t+1})$. So $\{q_1\}\subseteq q_1\mathbb{F}_tq_1\subseteq\mathbb{F}_t$
by (H2). This indeed contradicts the choice of $t$. The desired corollary thus follows.
\end{proof}
\begin{cor}\label{C;Corollary3.24}
Assume that $\mathbb{G}$ is a closed subset of $\mathbb{H}$. Then $\mathbb{G}^{(r)}=\mathrm{O}^\vartheta(\mathbb{G})$ for any $r\in\mathbb{N}\setminus\{1\}$. Moreover, $\mathbb{G}$ is a nilpotent closed subset of $\mathbb{H}$ if and only if $\mathbb{G}=\mathrm{O}_\vartheta(\mathbb{G})$. In particular, $\mathbb{H}$ is a nilpotent hypergroup if and only if $\mathbb{H}=\mathrm{O}_\vartheta(\mathbb{H})$.
\end{cor}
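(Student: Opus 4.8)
The plan is to compute the commutator series $\mathbb{G}^{(r)}$ explicitly using the structural decomposition $\mathbb{G}=\mathrm{O}^\vartheta(\mathbb{G})\mathrm{O}_\vartheta(\mathbb{G})$ from Lemma \ref{L;Lemma3.8} and Theorem \ref{T;TheoremA}, and then read off the nilpotency criterion. First I would establish that $\mathbb{G}^{(2)}=[\mathbb{G},\mathbb{G}]=\mathrm{O}^\vartheta(\mathbb{G})$. Since $\mathbb{H}$ is a commutative hypergroup (so $\mathbb{G}$ is a commutative closed subset), for $b,c\in\mathbb{G}$ we have $b^*c^*bc=b^*b\,c^*c$ by commutativity and associativity; hence the generating set $\{a: b,c\in\mathbb{G},\ a\in b^*c^*bc\}$ is contained in $\bigcup_{a\in\mathbb{G}}a^*a$, and conversely taking $c=b$ shows it contains each $b^*b$. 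Therefore $[\mathbb{G},\mathbb{G}]=\langle\bigcup_{a\in\mathbb{G}}a^*a\rangle=\mathrm{O}^\vartheta(\mathbb{G})$ by Lemma \ref{L;Lemma2.9}, which equals $\mathrm{Thick}(\mathbb{G})$ by Lemma \ref{L;Lemma3.6}.

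Next I would show the series stabilizes, i.e.\ $\mathbb{G}^{(r)}=\mathrm{O}^\vartheta(\mathbb{G})$ for all $r\geq 2$, by induction. Given $\mathbb{G}^{(r)}=\mathrm{O}^\vartheta(\mathbb{G})=\langle s\rangle$ for the unique $s\in\mathrm{Thick}(\mathbb{G})$ of Lemma \ref{L;Lemma3.5}, I need $[\langle s\rangle,\mathbb{G}]=\langle s\rangle$. The inclusion $[\langle s\rangle,\mathbb{G}]\subseteq\mathbb{G}^{(2)}=\langle s\rangle$ is immediate; for the reverse, note that by Lemma \ref{L;Lemma3.1} we have $s\in s^*s=s^2$, and since $s\in\langle s\rangle\cap\mathbb{G}$, the element $s$ itself lies in $\{a: b\in\langle s\rangle,\ c\in\mathbb{G},\ a\in b^*c^*bc\}$ by taking $b=c=s$ (using $s^*s^*ss=s^2s^2=s^2\ni s$ via Lemma \ref{L;Lemma3.1} and (H1)). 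Hence $\langle s\rangle=\langle s\rangle^2\subseteq[\langle s\rangle,\mathbb{G}]$, giving equality. This proves the first assertion.

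For the nilpotency criterion: $\mathbb{G}$ is nilpotent iff $\mathbb{G}^{(r)}=\{e\}$ for some $r\in\mathbb{N}$. If $r=1$ this says $\mathbb{G}=\{e\}=\mathrm{O}_\vartheta(\mathbb{G})$; if $r\geq 2$, by the first assertion this says $\mathrm{O}^\vartheta(\mathbb{G})=\{e\}$, equivalently $\mathrm{Thick}(\mathbb{G})=\{e\}$, equivalently $\mathrm{s}(\mathbb{G})=0$, which by Lemma \ref{L;Lemma3.8} and Lemma \ref{L;Lemma3.7} is equivalent to $\mathbb{G}=\mathrm{O}_\vartheta(\mathbb{G})$. (Conversely, if $\mathbb{G}=\mathrm{O}_\vartheta(\mathbb{G})$ is thin then it is commutative thin, hence nilpotent as remarked in Section 2.) The particular case for $\mathbb{H}$ follows by taking $\mathbb{G}=\mathbb{H}$. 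I expect the only mild subtlety is bookkeeping the base case $r=1$ separately in the definition of the commutator series (since $\mathbb{G}^{(1)}=\mathbb{G}$), and being careful that all the commutator-bracket manipulations are legitimate in the hypergroup setting — but commutativity of $\mathbb{H}$ makes these routine, so there is no real obstacle here; the corollary is essentially a direct consequence of Theorem \ref{T;TheoremA} and Lemma \ref{L;Lemma2.9} together with the idempotence $s^2s^2=s^2$ of thick generators.
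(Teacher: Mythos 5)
Your proof is correct and follows essentially the same route as the paper: identify $[\mathbb{G},\mathbb{G}]$ with $\mathrm{O}^\vartheta(\mathbb{G})=\langle\bigcup_{a\in\mathbb{G}}a^*a\rangle$ via commutativity and Lemma \ref{L;Lemma2.9}, show the derived series stabilizes there, and read off nilpotency from the decomposition $\mathbb{G}=\mathrm{O}^\vartheta(\mathbb{G})\mathrm{O}_\vartheta(\mathbb{G})$ of Lemma \ref{L;Lemma3.8}. The only cosmetic difference is in the reverse inclusion $\mathrm{O}^\vartheta(\mathbb{G})\subseteq\mathbb{G}^{(r)}$, where the paper takes $b=e$ and arbitrary $c\in\mathbb{G}$ in $b^*c^*bc$ (using (H2)) to capture the whole generating set $\bigcup_{c\in\mathbb{G}}c^*c$ at once, whereas you take $b=c=s$ for the generator $s$ of $\mathrm{Thick}(\mathbb{G})$ from Lemma \ref{L;Lemma3.5}; both are valid.
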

\begin{proof}
If $r\in\mathbb{N}\setminus\{1\}$, notice that $\mathbb{G}^{(r)}=[\mathbb{G}^{(r-1)}, \mathbb{G}]\subseteq [\mathbb{G}, \mathbb{G}]=\mathrm{O}^\vartheta(\mathbb{G})$ by combining Lemmas \ref{L;Lemma2.7}, \ref{L;Lemma2.9}, and (H2). If $r\!\in\!\mathbb{N}\setminus\{1\}$, notice that $\mathrm{O}^\vartheta(\mathbb{G})\subseteq[\mathbb{G}^{(r-1)}, \mathbb{G}]=\mathbb{G}^{(r)}$ by Lemma \ref{L;Lemma2.9} and (H2). Hence $\mathbb{G}^{(r)}=\mathrm{O}^\vartheta(\mathbb{G})$ for any
$r\in\mathbb{N}\setminus\{1\}$. The desired corollary thus follows from combining the first statement,
Theorem \ref{T;TheoremA}, and (H2).
\end{proof}
\begin{rem}\label{R;Remark3.25}
\em A hypergroup of two elements is an elementary abelian 2-hypergroup. If a hypergroup of two elements has a thick element of this hypergroup, it is neither a residually thin hypergroup nor a nilpotent hypergroup by Corollaries \ref{C;Corollary3.23} and \ref{C;Corollary3.24}.
\end{rem}
\begin{cor}\label{C;Corollary3.26}
Assume that $\mathbb{G}$ is a closed subset of $\mathbb{H}$. Then $\mathbb{G}$ is a residually thin closed subset of $\mathbb{H}$ if and only if $\mathbb{G}$ is a nilpotent closed subset of $\mathbb{H}$. In particular, $\mathbb{H}$ is a residually thin hypergroup if and only if $\mathbb{H}$ is a nilpotent hypergroup.
\end{cor}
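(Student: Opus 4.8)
The plan is to deduce the statement immediately from the two preceding corollaries, since each of the two properties in question has already been characterized by the \emph{same} internal condition. First I would recall Corollary \ref{C;Corollary3.23}, which asserts that $\mathbb{G}$ is a residually thin closed subset of $\mathbb{H}$ if and only if $\mathbb{G}=\mathrm{O}_\vartheta(\mathbb{G})$, i.e.\ if and only if $\mathbb{G}$ is a thin closed subset of $\mathbb{H}$. Then I would recall Corollary \ref{C;Corollary3.24}, which asserts that $\mathbb{G}$ is a nilpotent closed subset of $\mathbb{H}$ if and only if $\mathbb{G}=\mathrm{O}_\vartheta(\mathbb{G})$.

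Chaining these two biconditionals yields the first assertion: $\mathbb{G}$ is residually thin $\iff$ $\mathbb{G}=\mathrm{O}_\vartheta(\mathbb{G})$ $\iff$ $\mathbb{G}$ is nilpotent. For the ``in particular'' clause, I would simply specialize to the closed subset $\mathbb{G}=\mathbb{H}$, recalling that $\mathbb{H}$ being a residually thin (respectively nilpotent) hypergroup means precisely that $\mathbb{H}$ itself is a residually thin (respectively nilpotent) closed subset of $\mathbb{H}$; the first assertion applied to $\mathbb{G}=\mathbb{H}$ then gives the equivalence, which may also be phrased as $\mathbb{H}=\mathrm{O}_\vartheta(\mathbb{H})$, equivalently $p^\sharp=0$ in view of Lemma \ref{L;Lemma3.8}.

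There is essentially no obstacle here: all of the substantive work — showing that both ``residually thin'' and ``nilpotent'' collapse to the condition $\mathbb{G}=\mathrm{O}_\vartheta(\mathbb{G})$ for closed subsets of an elementary abelian $2$-hypergroup — was carried out in Corollaries \ref{C;Corollary3.23} and \ref{C;Corollary3.24}, which in turn rested on the structural description in Theorem \ref{T;TheoremA}. Accordingly, I expect the write-up to be a one- or two-line argument that simply invokes those two corollaries; the only thing to be careful about is stating explicitly that the ``in particular'' part is the instance $\mathbb{G}=\mathbb{H}$ of the general equivalence.
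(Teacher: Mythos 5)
Your proposal is correct and matches the paper's own argument exactly: the paper's proof simply cites Corollaries \ref{C;Corollary3.23} and \ref{C;Corollary3.24}, chaining the two characterizations via the common condition $\mathbb{G}=\mathrm{O}_\vartheta(\mathbb{G})$, with the ``in particular'' clause being the case $\mathbb{G}=\mathbb{H}$. No gaps.
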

\begin{proof}
The desired corollary follows from an application of Corollaries \ref{C;Corollary3.23}, \ref{C;Corollary3.24}.
\end{proof}
We conclude this section by giving an example of the main results of this section.
\begin{eg}\label{E;Example3.27}
\em Assume that $p=2$, $\mathbb{H}=\{e, q_1, q_2, r\}$, $q_1\in\mathrm{O}_\vartheta(\mathbb{H})$, and $q_2\notin\mathrm{O}_\vartheta(\mathbb{H})$. The hypermultiplication table of $\mathbb{H}$ with respect to the fixed operation $\circ$ is as follows:
\[
\begin{array}{c|cccc}
\mathbb{\circ} & e & q_1 & q_2 & r \\ \hline
e & \{e\} & \{q_1\} & \{q_2\} &\{r\} \\
q_1 & \{q_1\} & \{e\} &\{r\}  & \{q_2\}\\
q_2 & \{q_2\} & \{r\} & \{e, q_2\} & \{q_1, r\} \\
r & \{r\} & \{q_2\} & \{q_1, r\}& \{e, q_2\}\\
\end{array}
.\]
Then $p^\sharp=\mathrm{s}(\mathbb{H})=\mathrm{r}_2(\mathbb{H})=1$ by Lemma \ref{L;Lemma3.8}. Therefore $\mathrm{Thick}(\mathbb{H})=\mathrm{O}^\vartheta(\mathbb{H})=\{e, q_2\}$ and $\mathrm{Thin}(\mathbb{H})=\mathrm{O}_\vartheta(\mathbb{H})=\{e, q_1\}$ by combining Lemmas \ref{L;Lemma3.6},
\ref{L;Lemma3.7}, and the above data. According to Theorem \ref{T;TheoremA} and the above data, notice that all closed subsets
of $\mathbb{H}$ are precisely $\{e\}$, $\{e, q_1\}$, $\{e, q_2\}$, and $\mathbb{H}$. Moreover, Theorem \ref{T;TheoremB} and the above data imply that all strongly normal closed subsets of $\mathbb{H}$ are precisely $\{e, q_2\}$ and $\mathbb{H}$.
\end{eg}
\section{Isomorphisms and automorphism groups of closed subsets}
In this present section, we display a criterion for the isomorphic closed subsets of $\mathbb{H}$. As the other theme of this section, we present the automorphism groups
of all closed subsets of $\mathbb{H}$. As a preparation, we first introduce a sequence of six lemmas:
\begin{lem}\label{L;Lemma4.1}
Assume that $r, s\in\mathrm{Thick}(\mathbb{H})$. Then $\mathrm{s}(r)=\mathrm{s}(s)$ if and only if $\langle r\rangle\simeq\langle s\rangle$.
\end{lem}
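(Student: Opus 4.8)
The plan is to prove the two implications separately. The implication $\langle r\rangle\simeq\langle s\rangle\Rightarrow\mathrm{s}(r)=\mathrm{s}(s)$ is immediate: an isomorphism of closed subsets is in particular a bijection, so $|\langle r\rangle|=|\langle s\rangle|$, and since $r,s\in\mathrm{Thick}(\mathbb{H})$ the second part of Lemma \ref{L;Lemma3.1} gives $2^{\mathrm{s}(r)}=|\langle r\rangle|=|\langle s\rangle|=2^{\mathrm{s}(s)}$, whence $\mathrm{s}(r)=\mathrm{s}(s)$.

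For the converse, set $k=\mathrm{s}(r)=\mathrm{s}(s)$. If $k=0$ then $r=e=s$ and $\langle r\rangle=\{e\}=\langle s\rangle$, so assume $k\geq 1$. Since $r$ and $s$ are externally thick, every member of their supports is a thick element of $\mathbb{H}$, so the support of $r$ is a set $\{q_{i_1},\dots,q_{i_k}\}$ of $k$ thick involutions among $q_1,\dots,q_p$, and similarly the support of $s$ is a set $\{q_{j_1},\dots,q_{j_k}\}$ of $k$ thick involutions. By Lemma \ref{L;Lemma3.1}, $\langle r\rangle=\{a\in\mathbb{H}:a\preceq r\}$ and $\langle s\rangle=\{a\in\mathbb{H}:a\preceq s\}$, so by Lemma \ref{L;Lemma2.14} the support map identifies $\langle r\rangle$ with the power set of $\{q_{i_1},\dots,q_{i_k}\}$ and $\langle s\rangle$ with the power set of $\{q_{j_1},\dots,q_{j_k}\}$. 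I would then define $\alpha\colon\langle r\rangle\to\langle s\rangle$ by letting $a^\alpha$ be the unique element of $\mathbb{H}$ whose support is $\{q_{j_l}:q_{i_l}\text{ lies in the support of }a\}$; this is a well-defined bijection with $e^\alpha=e$, whose inverse is the analogous relabelling with $r$ and $s$ interchanged.

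It remains to check that $\alpha$ is a homomorphism, which is where the real work lies. Using that $\mathbb{H}$ is the constrained direct product of the $\langle q_l\rangle$, Lemma \ref{L;Lemma2.14} attaches to each element a unique tuple of components, one in each $\langle q_l\rangle$ (write $a_l,b_l,c_l$ for the $\langle q_l\rangle$-components of $a,b,c$); because every element of $\mathbb{H}$ is constrained one has $a_1a_2\cdots a_p=\{a\}$, so for $a,b\in\langle r\rangle$ the commutativity of elements lying in distinct direct factors (Lemma \ref{L;Lemma2.13}) gives $ab=(a_1b_1)(a_2b_2)\cdots(a_pb_p)$, and the uniqueness clause of Lemma \ref{L;Lemma2.14} shows that $c\in ab$ precisely when $c_l\in a_lb_l$ for every $l$. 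Each factor $a_lb_l$ equals $\{e\}$, $\{q_l\}$, or $q_l^2=\{e,q_l\}$ according as $q_l$ lies in neither, exactly one, or both of the supports of $a$ and $b$, the identity $q_l^2=\{e,q_l\}$ for the thick involution $q_l$ being again a consequence of Lemma \ref{L;Lemma3.1}. Thus the family of supports of the elements of $ab$ is determined by the pair of supports of $a$ and $b$ through a rule built only from set operations on index sets, and is in particular invariant under the relabelling $q_{i_l}\leftrightarrow q_{j_l}$; running the identical computation inside $\langle s\rangle$ (whose generators $q_{j_1},\dots,q_{j_k}$ are likewise thick) yields the matching description of $a^\alpha b^\alpha$, and comparison gives $(ab)^\alpha=a^\alpha b^\alpha$. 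Hence $\alpha\in\mathrm{Iso}(\langle r\rangle,\langle s\rangle)$ and $\langle r\rangle\simeq\langle s\rangle$.

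I expect the main obstacle to be precisely this homomorphism verification: one must justify that $ab$ is the \emph{entire} product $(a_1b_1)\cdots(a_pb_p)$ of the component products, not merely a subset of it, which is exactly where the fact that $\mathbb{H}$ is a constrained direct product and the uniqueness part of Lemma \ref{L;Lemma2.14} are indispensable, and then one must confirm that the resulting combinatorial formula for products is genuinely symmetric under the index relabelling. Everything else in the argument is routine bookkeeping.
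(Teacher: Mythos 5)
Your proposal is correct and follows essentially the same route as the paper: the direction $\langle r\rangle\simeq\langle s\rangle\Rightarrow\mathrm{s}(r)=\mathrm{s}(s)$ via $|\langle r\rangle|=2^{\mathrm{s}(r)}$ from Lemma \ref{L;Lemma3.1}, and the converse by choosing a bijection between the supports and letting it induce the relabelling isomorphism on elements via their supports. The only difference is that you spell out the componentwise homomorphism verification (using Lemmas \ref{L;Lemma2.13} and \ref{L;Lemma2.14} and $q_l^2=\{e,q_l\}$ for thick involutions), which the paper compresses into a single citation of (E1), (E2), and Lemma \ref{L;Lemma3.1}.
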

\begin{proof}
As $\mathrm{s}(r)=0$ if and only if $r=e$, there is no loss to assume that $\mathrm{s}(r)\in\mathbb{N}$. For one direction, the hypotheses imply that the cardinalities of the supports of $r$ and $s$ are identical. Let $\alpha$ be a bijection from the support of $r$ to the support of $s$. By combining (E1), (E2), and Lemma \ref{L;Lemma3.1}, $\alpha$ induces an isomorphism from $\langle r\rangle$ to $\langle s\rangle$ that sends the element with the support $\mathbb{G}$ to the element with the support $\mathbb{G}^\alpha$ for any a subset $\mathbb{G}$ of the support of $r$. The desired lemma follows from Lemma \ref{L;Lemma3.1}.
\end{proof}
\begin{lem}\label{L;Lemma4.2}
Assume that $\mathbb{F}$ and $\mathbb{G}$ are thin closed subsets of $\mathbb{H}$. Then $\mathrm{r}_2(\mathbb{F})=\mathrm{r}_2(\mathbb{G})$ if and only if $\mathbb{F}\simeq\mathbb{G}$.
\end{lem}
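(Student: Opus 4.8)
The plan is to move back and forth between each thin closed subset and the elementary abelian $2$-group attached to it by $\gamma$. Note first that, since $\mathbb{H}=\mathrm{Sym}(\mathbb{H})$, we automatically have $\mathbb{F}=\mathrm{Sym}(\mathbb{F})$ and $\mathbb{G}=\mathrm{Sym}(\mathbb{G})$, and both sets are finite because $|\mathbb{H}|=2^p$. Hence Lemmas \ref{L;Lemma2.10} and \ref{L;Lemma2.11} apply directly: $\mathbb{F}^\gamma$ and $\mathbb{G}^\gamma$ are elementary abelian $2$-groups, $\mathrm{r}_2(\mathbb{F})$ and $\mathrm{r}_2(\mathbb{G})$ are their respective $2$-ranks, and $|\mathbb{F}|=2^{\mathrm{r}_2(\mathbb{F})}$, $|\mathbb{G}|=2^{\mathrm{r}_2(\mathbb{G})}$.

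For the implication $\mathbb{F}\simeq\mathbb{G}\Rightarrow\mathrm{r}_2(\mathbb{F})=\mathrm{r}_2(\mathbb{G})$ I would simply observe that an isomorphism from $\mathbb{F}$ to $\mathbb{G}$ is in particular a bijection, so $|\mathbb{F}|=|\mathbb{G}|$, and then the two displayed equalities for the cardinalities force $2^{\mathrm{r}_2(\mathbb{F})}=2^{\mathrm{r}_2(\mathbb{G})}$, hence $\mathrm{r}_2(\mathbb{F})=\mathrm{r}_2(\mathbb{G})$.

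For the converse, suppose $\mathrm{r}_2(\mathbb{F})=\mathrm{r}_2(\mathbb{G})$. Then $\mathbb{F}^\gamma$ and $\mathbb{G}^\gamma$ are elementary abelian $2$-groups of the same $2$-rank, so there is a group isomorphism $\phi$ from $\mathbb{F}^\gamma$ to $\mathbb{G}^\gamma$. Mimicking the construction in the proof of Lemma \ref{L;Lemma2.12}, and using that $\gamma$ restricts to a bijection from $\mathbb{G}$ onto the underlying set of $\mathbb{G}^\gamma$, define $\alpha\colon\mathbb{F}\to\mathbb{G}$ by letting $p^\alpha$ be the unique element of $\mathbb{G}$ with $\{p^\alpha\}=\{p\}^\phi$. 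Because $\bullet$ sends $\{p\},\{q\}$ to the singleton $pq$ for thin $p,q$ (Lemma \ref{L;Lemma2.1}) and $\phi$ is a group homomorphism, one checks $e^\alpha=e$ and $(pq)^\alpha=p^\alpha q^\alpha$ for all $p,q\in\mathbb{F}$, so $\alpha$ is a homomorphism; it is bijective since $\phi$ is, whence $\alpha\in\mathrm{Iso}(\mathbb{F},\mathbb{G})$ and $\mathbb{F}\simeq\mathbb{G}$.

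The only point that is not pure bookkeeping is the verification that the $\alpha$ built from $\phi$ is genuinely a hypergroup homomorphism and not merely a set bijection; but this is immediate from the definition of $\bullet$ together with Lemma \ref{L;Lemma2.1}, so I anticipate no real obstacle here.
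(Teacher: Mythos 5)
Your proposal is correct and follows essentially the same route as the paper: both directions rest on identifying $\mathbb{F}^\gamma$ and $\mathbb{G}^\gamma$ as elementary abelian $2$-groups whose $2$-ranks are $\mathrm{r}_2(\mathbb{F})$ and $\mathrm{r}_2(\mathbb{G})$, lifting a group isomorphism to a hypergroup isomorphism for the ``if'' direction, and comparing cardinalities via $|\mathbb{F}|=2^{\mathrm{r}_2(\mathbb{F})}$ for the ``only if'' direction. The only cosmetic difference is that you invoke Lemmas \ref{L;Lemma2.10} and \ref{L;Lemma2.11} directly (via $\mathbb{H}=\mathrm{Sym}(\mathbb{H})$) where the paper routes the same facts through Lemma \ref{L;Lemma3.7}.
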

\begin{proof}
For one direction, the equality $\mathrm{r}_2(\mathbb{F})\!=\!\mathrm{r}_2(\mathbb{G})$ and Lemma \ref{L;Lemma3.7} imply that there is a group isomorphism $\alpha$ from $\mathbb{F}^\gamma$ to $\mathbb{G}^\gamma$. For any $r\in \mathbb{F}$, let $r_\alpha$ be the element $s$ in $\mathbb{G}$ that satisfies the equality $\{s\}=\{r\}^\alpha$. So $\alpha$ induces an isomorphism from $\mathbb{F}$ to $\mathbb{G}$ that sends $r$ to $r_\alpha$ for any $r\in \mathbb{F}$. The
desired lemma follows from Lemma \ref{L;Lemma2.11}.
\end{proof}
\begin{lem}\label{L;Lemma4.3}
Assume that $\mathbb{F}$ and $\mathbb{G}$ are closed subsets of $\mathbb{H}$. Assume that $\mathrm{s}(\mathbb{F})\!=\!\mathrm{s}(\mathbb{G})$ and $\mathrm{r}_2(\mathbb{F})=\mathrm{r}_2(\mathbb{G})$. Then $\mathbb{F}\simeq\mathbb{G}$.
\end{lem}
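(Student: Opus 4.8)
The plan is to split each of $\mathbb{F}$ and $\mathbb{G}$ into its thick part and its thin part, to construct an isomorphism between the thick parts and an isomorphism between the thin parts, and then to glue these into one isomorphism $\mathbb{F}\to\mathbb{G}$. First I would invoke Theorem \ref{T;TheoremA} together with Lemmas \ref{L;Lemma3.3}, \ref{L;Lemma3.5}, \ref{L;Lemma3.6}, \ref{L;Lemma3.7}, and \ref{L;Lemma3.8} to fix $r\in\mathrm{Thick}(\mathbb{F})$ with $\mathrm{Thick}(\mathbb{F})=\mathrm{O}^\vartheta(\mathbb{F})=\langle r\rangle$ and $\mathrm{s}(r)=\mathrm{s}(\mathbb{F})$, and likewise $s\in\mathrm{Thick}(\mathbb{G})$ with $\mathrm{Thick}(\mathbb{G})=\mathrm{O}^\vartheta(\mathbb{G})=\langle s\rangle$ and $\mathrm{s}(s)=\mathrm{s}(\mathbb{G})$; moreover $\mathbb{F}=\langle r\rangle\mathrm{O}_\vartheta(\mathbb{F})$, $\mathbb{G}=\langle s\rangle\mathrm{O}_\vartheta(\mathbb{G})$, with $\langle r\rangle\cap\mathrm{O}_\vartheta(\mathbb{F})=\langle s\rangle\cap\mathrm{O}_\vartheta(\mathbb{G})=\{e\}$, so that by Lemma \ref{L;Lemma2.4} every $a\in\mathbb{F}$ has a unique decomposition $a^+a^-=\{a\}$ with $a^+\in\langle r\rangle=\mathrm{Thick}(\mathbb{F})$ and $a^-\in\mathrm{O}_\vartheta(\mathbb{F})=\mathrm{Thin}(\mathbb{F})$, and similarly in $\mathbb{G}$. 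Since $\mathrm{s}(r)=\mathrm{s}(\mathbb{F})=\mathrm{s}(\mathbb{G})=\mathrm{s}(s)$, Lemma \ref{L;Lemma4.1} supplies $\beta\in\mathrm{Iso}(\langle r\rangle,\langle s\rangle)$; since $\mathrm{r}_2(\mathbb{F})=\mathrm{r}_2(\mathbb{G})$, Lemma \ref{L;Lemma3.7} gives $\mathrm{r}_2(\mathrm{O}_\vartheta(\mathbb{F}))=\mathrm{r}_2(\mathrm{O}_\vartheta(\mathbb{G}))$ and then Lemma \ref{L;Lemma4.2} supplies $\delta\in\mathrm{Iso}(\mathrm{O}_\vartheta(\mathbb{F}),\mathrm{O}_\vartheta(\mathbb{G}))$.

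I would then define $\alpha\colon\mathbb{F}\to\mathbb{G}$ by $\{a^\alpha\}=(a^+)^\beta(a^-)^\delta$ for each $a\in\mathbb{F}$. This is single-valued because $(a^+)^\beta\in\langle s\rangle$ and $(a^-)^\delta\in\mathrm{O}_\vartheta(\mathbb{H})$, whence $|(a^+)^\beta(a^-)^\delta|=1$ by Lemma \ref{L;Lemma2.1}; it is bijective because $\beta$ and $\delta$ are bijective and each element of $\mathbb{G}$ has a unique thick-thin decomposition; and $e^\alpha=e$ since $e^+=e^-=e$. The remaining point, $(ab)^\alpha=a^\alpha b^\alpha$ for all $a,b\in\mathbb{F}$, I would verify as follows. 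By commutativity of $\mathbb{H}$, $ab=a^+a^-b^+b^-=(a^+b^+)(a^-b^-)$, where $a^-b^-=\{c^-\}$ is a singleton with $c^-\in\mathrm{O}_\vartheta(\mathbb{F})$; and for each $d^+\in a^+b^+\subseteq\langle r\rangle$ the set $d^+c^-$ is a singleton whose unique member has thick part $d^+$ and thin part $c^-$. Hence $(ab)^\alpha=\bigcup_{d^+\in a^+b^+}(d^+)^\beta(c^-)^\delta=(a^+b^+)^\beta(a^-b^-)^\delta=(a^+)^\beta(b^+)^\beta(a^-)^\delta(b^-)^\delta$, using that $\beta$ and $\delta$ are homomorphisms. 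On the other hand $a^\alpha b^\alpha=\bigl((a^+)^\beta(a^-)^\delta\bigr)\bigl((b^+)^\beta(b^-)^\delta\bigr)$, and commutativity of $\mathbb{H}$ rearranges this into the same product; therefore $\alpha\in\mathrm{Iso}(\mathbb{F},\mathbb{G})$ and $\mathbb{F}\simeq\mathbb{G}$.

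I expect the one genuinely delicate point to be the bookkeeping in the last step: deciding which hyperproducts collapse to singletons, and matching the thick-thin decomposition of a product $ab$ against the products of the thick and thin parts of $a$ and $b$. Everything else is a direct assembly of Theorem \ref{T;TheoremA} and Lemmas \ref{L;Lemma4.1} and \ref{L;Lemma4.2}.
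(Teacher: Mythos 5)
Your proposal is correct and follows essentially the same route as the paper: an isomorphism of the thick parts via Lemma \ref{L;Lemma4.1}, an isomorphism of the thin parts via Lemma \ref{L;Lemma4.2}, and a gluing through the unique thick--thin decomposition $\{a\}=a^+a^-$ of Lemma \ref{L;Lemma3.8}, which is exactly the paper's map $r\mapsto r_{(\alpha,\beta)}$. The only difference is that you write out the homomorphism check $(ab)^\alpha=a^\alpha b^\alpha$ explicitly, which the paper compresses into a citation of Lemmas \ref{L;Lemma3.3}, \ref{L;Lemma3.4}, \ref{L;Lemma3.6}, \ref{L;Lemma3.7}, \ref{L;Lemma3.8}, and \ref{L;Lemma2.4}.
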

\begin{proof}
By combining Theorem \ref{T;TheoremA}, Lemmas \ref{L;Lemma3.5}, \ref{L;Lemma4.1}, the equality $\mathrm{s}(\mathbb{F})\!=\!\mathrm{s}(\mathbb{G})$ shows that there is an isomorphism $\alpha$ from $\mathrm{O}^\vartheta(\mathbb{F})$ to $\mathrm{O}^\vartheta(\mathbb{G})$. By combining Lemmas \ref{L;Lemma3.4}, \ref{L;Lemma3.7},  \ref{L;Lemma4.2}, the equality $\mathrm{r}_2(\mathbb{F})=\mathrm{r}_2(\mathbb{G})$ shows that there is an isomorphism $\beta$ from the thin closed subset $\mathrm{O}_\vartheta(\mathbb{F})$ of $\mathbb{H}$ to the thin closed subset $\mathrm{O}_\vartheta(\mathbb{G})$ of $\mathbb{H}$. For any  $r\in \mathbb{F}$, Lemma \ref{L;Lemma3.6} lets $r_{(\alpha, \beta)}$ be the element $s$ in $\mathbb{G}$ satisfying $s^+=(r^+)^\alpha$ and $s^-\!=\!(r^-)^\beta$. By combining Lemmas \ref{L;Lemma3.3}, \ref{L;Lemma3.4}, \ref{L;Lemma3.6}, \ref{L;Lemma3.7}, \ref{L;Lemma3.8}, \ref{L;Lemma2.4}, $\alpha$ and $\beta$ induce an isomorphism from $\mathbb{F}$ to $\mathbb{G}$ that sends $r$ to $r_{(\alpha, \beta)}$ for any $r\in\mathbb{F}$. The desired lemma thus follows.
\end{proof}
\begin{lem}\label{L;Lemma4.4}
Assume that $\mathbb{F}$ and $\mathbb{G}$ are closed subsets of $\mathbb{H}$. Then $\mathbb{F}\simeq \mathbb{G}$ if and only if
$\mathrm{s}(\mathbb{F})=\mathrm{s}(\mathbb{G})$ and $\mathrm{r}_2(\mathbb{F})=\mathrm{r}_2(\mathbb{G})$.
\end{lem}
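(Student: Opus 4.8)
The plan is to prove the biconditional by treating the two implications separately. The reverse implication --- that $\mathrm{s}(\mathbb{F})=\mathrm{s}(\mathbb{G})$ and $\mathrm{r}_2(\mathbb{F})=\mathrm{r}_2(\mathbb{G})$ together force $\mathbb{F}\simeq\mathbb{G}$ --- is exactly the content of Lemma \ref{L;Lemma4.3}, so nothing new is required there. All the work lies in the forward implication: an isomorphism between $\mathbb{F}$ and $\mathbb{G}$ must preserve both numerical invariants.

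For the forward direction I would fix $\alpha\in\mathrm{Iso}(\mathbb{F},\mathbb{G})$. Recall from Section 2 that any homomorphism of closed subsets carries thin elements to thin elements, so $\mathrm{O}_\vartheta(\mathbb{F})^\alpha\subseteq\mathrm{O}_\vartheta(\mathbb{G})$. Applying the same fact to the isomorphism $\alpha^{-1}\in\mathrm{Iso}(\mathbb{G},\mathbb{F})$ gives $\mathrm{O}_\vartheta(\mathbb{G})^{\alpha^{-1}}\subseteq\mathrm{O}_\vartheta(\mathbb{F})$, hence $\mathrm{O}_\vartheta(\mathbb{G})\subseteq\mathrm{O}_\vartheta(\mathbb{F})^\alpha$, and the two inclusions show that $\alpha$ restricts to a bijection from $\mathrm{O}_\vartheta(\mathbb{F})$ onto $\mathrm{O}_\vartheta(\mathbb{G})$. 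In particular $|\mathrm{O}_\vartheta(\mathbb{F})|=|\mathrm{O}_\vartheta(\mathbb{G})|$, and Lemma \ref{L;Lemma3.7} identifies these cardinalities as $2^{\mathrm{r}_2(\mathbb{F})}$ and $2^{\mathrm{r}_2(\mathbb{G})}$; hence $\mathrm{r}_2(\mathbb{F})=\mathrm{r}_2(\mathbb{G})$.

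It then remains to deduce $\mathrm{s}(\mathbb{F})=\mathrm{s}(\mathbb{G})$. Since $\alpha$ is a bijection, $|\mathbb{F}|=|\mathbb{G}|$, while Lemma \ref{L;Lemma3.8} gives $|\mathbb{F}|=2^{\mathrm{s}(\mathbb{F})+\mathrm{r}_2(\mathbb{F})}$ and $|\mathbb{G}|=2^{\mathrm{s}(\mathbb{G})+\mathrm{r}_2(\mathbb{G})}$; combining these with the equality $\mathrm{r}_2(\mathbb{F})=\mathrm{r}_2(\mathbb{G})$ already obtained yields $\mathrm{s}(\mathbb{F})=\mathrm{s}(\mathbb{G})$, which finishes the proof. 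An alternative to this last step would be to observe that $\alpha$ maps $\mathrm{O}^\vartheta(\mathbb{F})$ onto $\mathrm{O}^\vartheta(\mathbb{G})$ --- isomorphisms send strongly normal closed subsets to strongly normal closed subsets and respect the intersection defining $\mathrm{O}^\vartheta$ --- and then invoke $|\mathrm{O}^\vartheta(\mathbb{G})|=2^{\mathrm{s}(\mathbb{G})}$ from Lemma \ref{L;Lemma3.6}; I expect the cardinality count via Lemma \ref{L;Lemma3.8} to be the shorter route. There is no real obstacle in this argument: the only place needing care is the invariance of the thin part under isomorphism, and that has already been recorded in the preliminaries.
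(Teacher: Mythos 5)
Your proposal is correct and follows essentially the same route as the paper: the converse is delegated to Lemma \ref{L;Lemma4.3}, and the forward direction uses the inclusions $\mathrm{O}_\vartheta(\mathbb{F})^\alpha\subseteq\mathrm{O}_\vartheta(\mathbb{G})$ and $\mathrm{O}_\vartheta(\mathbb{G})^{\alpha^{-1}}\subseteq\mathrm{O}_\vartheta(\mathbb{F})$ together with Lemma \ref{L;Lemma3.7} to get $\mathrm{r}_2(\mathbb{F})=\mathrm{r}_2(\mathbb{G})$, then $|\mathbb{F}|=|\mathbb{G}|$ and Lemma \ref{L;Lemma3.8} to get $\mathrm{s}(\mathbb{F})=\mathrm{s}(\mathbb{G})$. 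This matches the paper's argument step for step; the alternative ending you sketch via $\mathrm{O}^\vartheta$ is also viable but, as you suspected, not the route taken.
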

\begin{proof}
For one direction, let $\alpha\!\in\!\mathrm{Iso}(\mathbb{F}, \mathbb{G})$. As $\mathrm{O}_\vartheta(\mathbb{F})^\alpha\!\!\subseteq\!\! \mathrm{O}_\vartheta(\mathbb{G})$ and $\mathrm{O}_\vartheta(\mathbb{G})^{\alpha^{-1}}\!\!\subseteq\!\!\mathrm{O}_\vartheta(\mathbb{F})$,
notice that $|\mathrm{O}_\vartheta(\mathbb{F})|=|\mathrm{O}_\vartheta(\mathbb{G})|$ and $\mathrm{r}_2(\mathbb{F})=\mathrm{r}_2(\mathbb{G})$ by Lemma \ref{L;Lemma3.7}. As $|\mathbb{F}|=|\mathbb{G}|$, notice that
$\mathrm{s}(\mathbb{F})=\mathrm{s}(\mathbb{G})$ by Lemma \ref{L;Lemma3.8}. The desired lemma follows from Lemma \ref{L;Lemma4.3}.
\end{proof}
\begin{lem}\label{L;Lemma4.5}
Assume that $\mathbb{G}$ is a closed subset of $\mathbb{H}$. Assume that $s\in[ 0,\mathrm{s}(\mathbb{G})]$ and $r\in [ 0,\mathrm{r}_2(\mathbb{G})]$. Then there is a closed subset $\mathbb{F}$ of $\mathbb{H}$ such that $\mathbb{F}\subseteq\mathbb{G}$,
$\mathrm{s}(\mathbb{F})=s$, and $\mathrm{r}_2(\mathbb{F})=r$. Moreover, the number of all pairwise nonisomorphic closed subsets of $\mathbb{H}$ contained in $\mathbb{G}$ is equal to $\mathrm{s}(\mathbb{G})\mathrm{r}_2(\mathbb{G})+\mathrm{s}(\mathbb{G})+\mathrm{r}_2(\mathbb{G})+1$.
\end{lem}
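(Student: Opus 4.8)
The plan is to derive both assertions from the structure theorem (Theorem~\ref{T;TheoremA}) and the isomorphism criterion (Lemma~\ref{L;Lemma4.4}). For the existence claim I would first apply Lemma~\ref{L;Lemma3.5} to fix the unique $t\in\mathrm{Thick}(\mathbb{G})$ with $\mathrm{Thick}(\mathbb{G})=\langle t\rangle$ and $\mathrm{s}(\mathbb{G})=\mathrm{s}(t)$. Since $s\le\mathrm{s}(\mathbb{G})=\mathrm{s}(t)$, Lemma~\ref{L;Lemma3.1} identifies $\langle t\rangle$ with $\{a:a\preceq t\}$, so I can pick $t'\preceq t$ whose support is a set of exactly $s$ of the thick involutions occurring in the support of $t$; then $t'\in\langle t\rangle=\mathrm{Thick}(\mathbb{G})$ and $\mathrm{s}(t')=s$. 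On the thin side, Lemma~\ref{L;Lemma3.7} gives that $\mathrm{O}_\vartheta(\mathbb{G})^\gamma$ is an elementary abelian $2$-group of $2$-rank $\mathrm{r}_2(\mathbb{G})$; choosing a subgroup of $2$-rank $r$ and letting $\mathbb{E}$ be the corresponding thin closed subset, I get $\mathbb{E}\subseteq\mathrm{O}_\vartheta(\mathbb{G})\subseteq\mathbb{G}$ with $\mathbb{E}^\gamma$ a subgroup of $\mathrm{O}_\vartheta(\mathbb{G})^\gamma$ and $\mathrm{r}_2(\mathbb{E})=r$.

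Next I would set $\mathbb{F}=\langle t'\rangle\mathbb{E}$. By Lemma~\ref{L;Lemma3.9} (applied with $t'$ in the role of $r$ and $\mathbb{E}$ in the role of $\mathbb{F}$) this is a closed subset of $\mathbb{H}$ with $\mathbb{F}\subseteq\mathbb{G}$, $\mathrm{O}^\vartheta(\mathbb{F})=\langle t'\rangle$ and $\mathrm{O}_\vartheta(\mathbb{F})=\mathbb{E}$. Applying Lemma~\ref{L;Lemma3.6} to $\mathbb{F}$ then gives $2^{\mathrm{s}(\mathbb{F})}=|\mathrm{Thick}(\mathbb{F})|=|\langle t'\rangle|=2^{\mathrm{s}(t')}$ (the last equality by Lemma~\ref{L;Lemma3.1}), so $\mathrm{s}(\mathbb{F})=s$; and applying Lemma~\ref{L;Lemma3.7} to $\mathbb{F}$ gives $\mathrm{r}_2(\mathbb{F})=\mathrm{r}_2(\mathbb{E})=r$. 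This proves the first statement.

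For the counting claim, I would observe that any closed subset $\mathbb{F}$ of $\mathbb{H}$ with $\mathbb{F}\subseteq\mathbb{G}$ satisfies $\mathrm{s}(\mathbb{F})\le\mathrm{s}(\mathbb{G})$ and $\mathrm{r}_2(\mathbb{F})\le\mathrm{r}_2(\mathbb{G})$ directly from the definitions of $\mathrm{s}(\cdot)$ and $\mathrm{r}_2(\cdot)$. Hence the assignment $\mathbb{F}\mapsto(\mathrm{s}(\mathbb{F}),\mathrm{r}_2(\mathbb{F}))$ sends the closed subsets of $\mathbb{H}$ contained in $\mathbb{G}$ into $[0,\mathrm{s}(\mathbb{G})]\times[0,\mathrm{r}_2(\mathbb{G})]$, and the existence part just proved shows that this assignment is surjective. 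By Lemma~\ref{L;Lemma4.4}, two such closed subsets are isomorphic exactly when they have the same image, so the number of isomorphism classes equals $|[0,\mathrm{s}(\mathbb{G})]\times[0,\mathrm{r}_2(\mathbb{G})]|=(\mathrm{s}(\mathbb{G})+1)(\mathrm{r}_2(\mathbb{G})+1)=\mathrm{s}(\mathbb{G})\mathrm{r}_2(\mathbb{G})+\mathrm{s}(\mathbb{G})+\mathrm{r}_2(\mathbb{G})+1$, which (using finiteness of $\mathbb{H}$, hence of these two parameters) is the claimed value.

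The routine ingredients — finiteness of $\mathrm{s}(\mathbb{G})$ and $\mathrm{r}_2(\mathbb{G})$, and the standard fact that an elementary abelian $2$-group of $2$-rank $n$ has subgroups of every $2$-rank at most $n$ — are immediate. The only mildly delicate point, and the step I expect to require the most care, is the construction of $t'$ together with the verification that $\langle t'\rangle$ and $\mathbb{E}$ satisfy the hypotheses of Lemma~\ref{L;Lemma3.9} relative to $\mathbb{G}$ (in particular that $t'\in\mathrm{Thick}(\mathbb{G})$, which follows because every $\preceq$-subelement of $t$ lies in $\langle t\rangle=\mathrm{Thick}(\mathbb{G})$); once this is set up, everything else is a direct application of the cited lemmas.
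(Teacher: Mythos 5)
Your proposal is correct and follows essentially the same route as the paper: both construct $\mathbb{F}=\langle t'\rangle\mathbb{E}$ from a thick element of the right support size and a subgroup of $\mathrm{O}_\vartheta(\mathbb{G})^\gamma$ of the right rank (via Lemma~\ref{L;Lemma3.9} and Theorem~\ref{T;TheoremA}), and both reduce the count to Lemma~\ref{L;Lemma4.4} plus the observation that the pair $(\mathrm{s}(\cdot),\mathrm{r}_2(\cdot))$ ranges surjectively over $[0,\mathrm{s}(\mathbb{G})]\times[0,\mathrm{r}_2(\mathbb{G})]$.
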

\begin{proof}
By combining Theorem \ref{T;TheoremA}, Lemma \ref{L;Lemma3.1}, (E1), and (E2), there is $t\!\in\!\mathrm{Thick}(\mathbb{G})$ such that
$\mathrm{s}(t)=s$. According to Lemmas \ref{L;Lemma3.4} and \ref{L;Lemma3.7}, there is $\mathbb{E}\subseteq\mathbb{G}$ such that
$\mathbb{E}^\gamma$ is an elementary abelian 2-group of 2-rank $r$. Set $\mathbb{F}=\langle t\rangle\mathbb{E}$. The first statement is thus from combining Theorem \ref{T;TheoremA}, Lemmas \ref{L;Lemma3.6}, \ref{L;Lemma3.5}, \ref{L;Lemma3.2}, \ref{L;Lemma3.7}. The desired lemma follows from combining the first statement, Lemma \ref{L;Lemma4.4}, and a direct computation.
\end{proof}
\begin{lem}\label{L;Lemma4.6}
Assume that $\mathbb{G}$ is a closed subset of $\mathbb{H}$ and $\simeq_\mathbb{G}$ is the restriction of $\simeq$ to the set of
all closed subsets of $\mathbb{H}$ contained in $\mathbb{G}$. Then $\simeq_\mathbb{G}$ is an equivalence relation on the set of all
closed subsets of $\mathbb{H}$ contained in $\mathbb{G}$. Moreover, if $\mathbb{F}\subseteq\mathbb{G}$ and $\mathbb{F}$ is a closed subset of $\mathbb{H}$, then $\mathrm{s}(\mathbb{F})\in[0, \mathrm{s}(\mathbb{G})]$, $\mathrm{r}_2(\mathbb{F})\in [0,\mathrm{r}_2(\mathbb{G})]$, and the cardinality of the $\simeq_\mathbb{G}$-equivalence class containing $\mathbb{F}$ is equal to
$$\binom{\mathrm{s}(\mathbb{G})}{\mathrm{s}(\mathbb{F})}\binom{\mathrm{r}_2(\mathbb{G})}{\mathrm{r}_2(\mathbb{F})}_2.$$
\end{lem}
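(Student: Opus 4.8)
The plan is to deduce the entire statement from the structural results already in hand, leaving only an elementary count. For the first assertion I would simply recall that $\simeq$ is an equivalence relation on the set of all closed subsets of $\mathbb{H}$, and that the restriction of an equivalence relation to any subclass is again an equivalence relation; hence $\simeq_\mathbb{G}$ is an equivalence relation. For the containments $\mathrm{s}(\mathbb{F})\in[0,\mathrm{s}(\mathbb{G})]$ and $\mathrm{r}_2(\mathbb{F})\in[0,\mathrm{r}_2(\mathbb{G})]$, I would use $\mathbb{F}\subseteq\mathbb{G}$ to get $\mathrm{Thick}(\mathbb{F})\subseteq\mathrm{Thick}(\mathbb{G})$ and $\mathrm{O}_\vartheta(\mathbb{F})\subseteq\mathrm{O}_\vartheta(\mathbb{G})$ (the latter from Lemma \ref{L;Lemma3.7}), and then read off $2^{\mathrm{s}(\mathbb{F})}\leq 2^{\mathrm{s}(\mathbb{G})}$ from Lemma \ref{L;Lemma3.6} and $\mathrm{r}_2(\mathbb{F})\leq\mathrm{r}_2(\mathbb{G})$ from Lemma \ref{L;Lemma3.7} (the $2$-rank of the subgroup $\mathrm{O}_\vartheta(\mathbb{F})^\gamma$ of $\mathrm{O}_\vartheta(\mathbb{G})^\gamma$ cannot exceed that of $\mathrm{O}_\vartheta(\mathbb{G})^\gamma$); the lower bounds are trivial since $\{e\}\subseteq\mathbb{F}$.

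For the cardinality formula, by Lemma \ref{L;Lemma4.4} the $\simeq_\mathbb{G}$-equivalence class of $\mathbb{F}$ is exactly the collection of closed subsets $\mathbb{E}$ of $\mathbb{H}$ with $\mathbb{E}\subseteq\mathbb{G}$, $\mathrm{s}(\mathbb{E})=\mathrm{s}(\mathbb{F})$, and $\mathrm{r}_2(\mathbb{E})=\mathrm{r}_2(\mathbb{F})$, so it suffices to count these. By Lemma \ref{L;Lemma3.8}, each such $\mathbb{E}$ is determined uniquely by the pair $(\mathrm{O}^\vartheta(\mathbb{E}),\mathrm{O}_\vartheta(\mathbb{E}))$, and by Lemma \ref{L;Lemma3.9} every compatible pair is realised by a closed subset contained in $\mathbb{G}$. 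I would therefore set up a bijection between the equivalence class and the set of pairs $(t,\mathbb{U})$ where $t\in\mathrm{Thick}(\mathbb{G})$ with $\mathrm{s}(t)=\mathrm{s}(\mathbb{F})$ and $\mathbb{U}$ is a subgroup of $\mathrm{O}_\vartheta(\mathbb{G})^\gamma$ of $2$-rank $\mathrm{r}_2(\mathbb{F})$, via $\mathbb{E}\mapsto(t_\mathbb{E},\mathrm{O}_\vartheta(\mathbb{E})^\gamma)$ with $\langle t_\mathbb{E}\rangle=\mathrm{O}^\vartheta(\mathbb{E})=\mathrm{Thick}(\mathbb{E})$; here Lemmas \ref{L;Lemma3.5} and \ref{L;Lemma3.6} control the thick part, Lemma \ref{L;Lemma3.7} the thin part, and Lemma \ref{L;Lemma3.2} gives injectivity in the first coordinate.

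It remains to count the two factors. By Lemma \ref{L;Lemma3.5} we have $\mathrm{Thick}(\mathbb{G})=\langle r\rangle$ for a unique $r$ with $\mathrm{s}(r)=\mathrm{s}(\mathbb{G})$, and by Lemma \ref{L;Lemma3.1} $\langle r\rangle=\{a:a\preceq r\}$ while $r\in\mathrm{Thick}(\mathbb{H})$ has a support consisting of $\mathrm{s}(\mathbb{G})$ thick elements; the elements $t\preceq r$ with $\mathrm{s}(t)=\mathrm{s}(\mathbb{F})$ then correspond bijectively to the $\mathrm{s}(\mathbb{F})$-element subsets of that support, so there are $\binom{\mathrm{s}(\mathbb{G})}{\mathrm{s}(\mathbb{F})}$ of them. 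By Lemma \ref{L;Lemma3.7} the group $\mathrm{O}_\vartheta(\mathbb{G})^\gamma$ is elementary abelian of $2$-rank $\mathrm{r}_2(\mathbb{G})$, so the number of its subgroups of $2$-rank $\mathrm{r}_2(\mathbb{F})$ is the Gauss coefficient $\binom{\mathrm{r}_2(\mathbb{G})}{\mathrm{r}_2(\mathbb{F})}_2$. Multiplying the two factors gives the asserted cardinality. The one point that is not purely formal is verifying that the map $\mathbb{E}\mapsto(t_\mathbb{E},\mathrm{O}_\vartheta(\mathbb{E})^\gamma)$ is genuinely a bijection onto the stated set of pairs: surjectivity needs the realisation statement of Lemma \ref{L;Lemma3.9} and injectivity needs Lemmas \ref{L;Lemma3.8} and \ref{L;Lemma3.2}, but once these are invoked the remainder is a direct computation.
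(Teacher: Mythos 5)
Your proposal is correct and follows essentially the same route as the paper: restrict the equivalence relation, obtain the range constraints from the containments of the thick and thin parts, reduce the class to the set of closed subsets of $\mathbb{G}$ with matching $\mathrm{s}$ and $\mathrm{r}_2$ via Lemma \ref{L;Lemma4.4}, and then count via the decomposition of Lemma \ref{L;Lemma3.8} together with Lemmas \ref{L;Lemma3.9}, \ref{L;Lemma3.5}, \ref{L;Lemma3.2}, \ref{L;Lemma3.1}, and \ref{L;Lemma3.7}. The only difference is that you spell out the ``direct computation'' (subsets of the support for the thick factor, Gauss coefficient for the thin factor) that the paper leaves implicit.
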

\begin{proof}
The first statement follows as $\simeq$ is an equivalence relation on the set of all closed subsets of $\mathbb{H}$. Since $\mathrm{O}^\vartheta(\mathbb{F})\subseteq\mathrm{O}^\vartheta(\mathbb{G})$ and
$\mathrm{O}_\vartheta(\mathbb{F})\subseteq\mathrm{O}_\vartheta(\mathbb{G})$ by Lemma \ref{L;Lemma2.9},
$\mathrm{s}(\mathbb{F})\in[0, \mathrm{s}(\mathbb{G})]$ and $\mathrm{r}_2(\mathbb{F})\in[0, \mathrm{r}_2(\mathbb{G})]$ by combining Lemmas
\ref{L;Lemma3.5}, \ref{L;Lemma3.6}, \ref{L;Lemma3.7}. By Lemma \ref{L;Lemma4.4}, it is enough to find all choices of a closed subset $\mathbb{E}$ of $\mathbb{H}$ that satisfies $\mathbb{E}\subseteq\mathbb{G}$, $\mathrm{s}(\mathbb{E})=\mathrm{s}(\mathbb{F})$, and $\mathrm{r}_2(\mathbb{E})=\mathrm{r}_2(\mathbb{F})$. The desired lemma follows from combining Theorem \ref{T;TheoremA}, (E1), (E2), Lemmas \ref{L;Lemma3.6}, \ref{L;Lemma3.5}, \ref{L;Lemma3.1}, \ref{L;Lemma3.2}, \ref{L;Lemma3.7}, and a direct computation.
\end{proof}
We are now ready to list the first main result of this section as the next theorem:
\begin{thm}\label{T;TheoremD}
Assume that $\mathbb{F}$ and $\mathbb{G}$ are closed subsets of $\mathbb{H}$. Then $\mathbb{F}\simeq\mathbb{G}$ if and only if
at least two equalities among $\mathrm{s}(\mathbb{F})=\mathrm{s}(\mathbb{G})$, $\mathrm{r}_2(\mathbb{F})=\mathrm{r}_2(\mathbb{G})$,
and $|\mathbb{F}|=|\mathbb{G}|$ hold. Moreover, there are exactly $pp^\sharp-(p^\sharp)^2+p+1$ pairwise distinct $\simeq$-equivalence classes. The cardinality of the $\simeq$-equivalence class containing $\mathbb{F}$ is equal to
$$\binom{p^\sharp}{\mathrm{s}(\mathbb{F})}\binom{p-p^\sharp}{\mathrm{r}_2(\mathbb{F})}_2.$$
\end{thm}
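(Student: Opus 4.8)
The plan is to deduce the theorem from the isomorphism criterion of Lemma \ref{L;Lemma4.4}, the counting statements of Lemmas \ref{L;Lemma4.5} and \ref{L;Lemma4.6}, and the order formula of Lemma \ref{L;Lemma3.8}. The starting observation is that, by Lemma \ref{L;Lemma3.8}, $|\mathbb{F}|=2^{\mathrm{s}(\mathbb{F})+\mathrm{r}_2(\mathbb{F})}$ and $|\mathbb{G}|=2^{\mathrm{s}(\mathbb{G})+\mathrm{r}_2(\mathbb{G})}$, so that $|\mathbb{F}|=|\mathbb{G}|$ is equivalent to $\mathrm{s}(\mathbb{F})+\mathrm{r}_2(\mathbb{F})=\mathrm{s}(\mathbb{G})+\mathrm{r}_2(\mathbb{G})$. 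This turns the three putative equalities into a linear system in which any two of them force the third; in particular ``at least two of the three equalities hold'' is the same as ``all three equalities hold'', and in either formulation this is equivalent to the conjunction of $\mathrm{s}(\mathbb{F})=\mathrm{s}(\mathbb{G})$ and $\mathrm{r}_2(\mathbb{F})=\mathrm{r}_2(\mathbb{G})$.

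For the first assertion I would then argue as follows. If $\mathbb{F}\simeq\mathbb{G}$, Lemma \ref{L;Lemma4.4} gives $\mathrm{s}(\mathbb{F})=\mathrm{s}(\mathbb{G})$ and $\mathrm{r}_2(\mathbb{F})=\mathrm{r}_2(\mathbb{G})$, whence $|\mathbb{F}|=|\mathbb{G}|$ by the preceding observation, so all three equalities hold and a fortiori at least two do. Conversely, if at least two of the three equalities hold, the observation promotes them to all three; in particular $\mathrm{s}(\mathbb{F})=\mathrm{s}(\mathbb{G})$ and $\mathrm{r}_2(\mathbb{F})=\mathrm{r}_2(\mathbb{G})$, so Lemma \ref{L;Lemma4.4} yields $\mathbb{F}\simeq\mathbb{G}$.

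For the remaining two assertions I would specialise Lemmas \ref{L;Lemma4.5} and \ref{L;Lemma4.6} to the case $\mathbb{G}=\mathbb{H}$, using that every closed subset of $\mathbb{H}$ is contained in $\mathbb{H}$, so that $\simeq_{\mathbb{H}}$ coincides with $\simeq$, together with the identities $\mathrm{s}(\mathbb{H})=p^\sharp$ and $\mathrm{r}_2(\mathbb{H})=p-p^\sharp$ from Lemma \ref{L;Lemma3.8}. Lemma \ref{L;Lemma4.5} then gives that the number of $\simeq$-equivalence classes equals $\mathrm{s}(\mathbb{H})\mathrm{r}_2(\mathbb{H})+\mathrm{s}(\mathbb{H})+\mathrm{r}_2(\mathbb{H})+1=p^\sharp(p-p^\sharp)+p^\sharp+(p-p^\sharp)+1=pp^\sharp-(p^\sharp)^2+p+1$, and Lemma \ref{L;Lemma4.6} gives that the class containing $\mathbb{F}$ has cardinality $\binom{\mathrm{s}(\mathbb{H})}{\mathrm{s}(\mathbb{F})}\binom{\mathrm{r}_2(\mathbb{H})}{\mathrm{r}_2(\mathbb{F})}_2=\binom{p^\sharp}{\mathrm{s}(\mathbb{F})}\binom{p-p^\sharp}{\mathrm{r}_2(\mathbb{F})}_2$.

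Since every ingredient is already in place, there is no genuine obstacle here; the only points that require a little care are the bookkeeping that the size condition is exactly the sum condition on the two invariants (so that ``at least two of the three equalities'' is the right hypothesis), and the elementary algebra rewriting $(p^\sharp+1)(p-p^\sharp+1)$ in the stated form.
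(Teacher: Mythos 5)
Your proposal is correct and follows exactly the paper's route: the paper's proof is a one-line citation of Lemmas \ref{L;Lemma4.4}, \ref{L;Lemma4.5}, \ref{L;Lemma4.6}, and \ref{L;Lemma3.8}, and you have simply made explicit how those lemmas combine (in particular the observation that $|\mathbb{F}|=|\mathbb{G}|$ is, via Lemma \ref{L;Lemma3.8}, the sum condition on the two invariants, so that any two of the three equalities force the third). Nothing further is needed.
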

\begin{proof}
The desired theorem follows from combining Lemmas \ref{L;Lemma4.4}, \ref{L;Lemma4.5}, \ref{L;Lemma4.6}, \ref{L;Lemma3.8}.
\end{proof}
For some corollaries of Theorem \ref{T;TheoremD}, it is necessary to list the following lemmas:
\begin{lem}\label{L;Lemma4.8}
Assume that $\mathbb{G}$ is a closed subset of $\mathbb{H}$ and $r\in [0, \min\{\mathrm{s}(\mathbb{G}), \mathrm{r}_2(\mathbb{G})\}]$.
Then the number of all closed subsets of $\mathbb{H}$ of $2^r$ elements contained in $\mathbb{G}$ is equal to
$$\sum_{s=0}^{r}\binom{\mathrm{s}(\mathbb{G})}{s}\binom{\mathrm{r}_2(\mathbb{G})}{r-s}_2.$$
\end{lem}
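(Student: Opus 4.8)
The plan is to reduce the count to a disjoint union of $\simeq_{\mathbb{G}}$-equivalence classes, whose cardinalities are already determined by Lemma \ref{L;Lemma4.6}. First I would note, applying Lemma \ref{L;Lemma3.8} to an arbitrary closed subset $\mathbb{E}$ of $\mathbb{H}$ with $\mathbb{E}\subseteq\mathbb{G}$, that $|\mathbb{E}|=2^{\mathrm{s}(\mathbb{E})+\mathrm{r}_2(\mathbb{E})}$; hence $|\mathbb{E}|=2^r$ holds precisely when $\mathrm{s}(\mathbb{E})+\mathrm{r}_2(\mathbb{E})=r$. By Lemma \ref{L;Lemma4.6} we also have $\mathrm{s}(\mathbb{E})\in[0,\mathrm{s}(\mathbb{G})]$ and $\mathrm{r}_2(\mathbb{E})\in[0,\mathrm{r}_2(\mathbb{G})]$, so writing $s:=\mathrm{s}(\mathbb{E})$ the constraint forces $\mathrm{r}_2(\mathbb{E})=r-s$ with $s$ ranging over $[0,r]$; the hypothesis $r\in[0,\min\{\mathrm{s}(\mathbb{G}),\mathrm{r}_2(\mathbb{G})\}]$ is exactly what guarantees that every such pair $(s,r-s)$ satisfies $s\le\mathrm{s}(\mathbb{G})$ and $r-s\le\mathrm{r}_2(\mathbb{G})$, i.e.\ is genuinely attained.

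Next I would partition the set of closed subsets of $\mathbb{H}$ of $2^r$ elements contained in $\mathbb{G}$ according to the value $s=\mathrm{s}(\mathbb{E})\in[0,r]$. By Lemma \ref{L;Lemma4.5} there exists, for each such $s$, a closed subset $\mathbb{F}\subseteq\mathbb{G}$ with $\mathrm{s}(\mathbb{F})=s$ and $\mathrm{r}_2(\mathbb{F})=r-s$, and by Lemma \ref{L;Lemma4.4} the block indexed by $s$ is exactly the set of closed subsets of $\mathbb{H}$ contained in $\mathbb{G}$ that are $\simeq_{\mathbb{G}}$-equivalent to this $\mathbb{F}$, that is, the $\simeq_{\mathbb{G}}$-equivalence class of $\mathbb{F}$. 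Lemma \ref{L;Lemma4.6} then gives the cardinality of this class as $\binom{\mathrm{s}(\mathbb{G})}{s}\binom{\mathrm{r}_2(\mathbb{G})}{r-s}_2$. Since the $r+1$ blocks are pairwise disjoint and exhaust the set in question, summing over $s\in[0,r]$ yields the asserted formula.

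I do not anticipate a real obstacle: the structural content — the size identity $|\mathbb{E}|=2^{\mathrm{s}(\mathbb{E})+\mathrm{r}_2(\mathbb{E})}$, the classification of closed subsets inside $\mathbb{G}$ up to isomorphism by the pair $(\mathrm{s},\mathrm{r}_2)$, and the count of each such isomorphism block — is already packaged in Lemmas \ref{L;Lemma3.8}, \ref{L;Lemma4.4}, \ref{L;Lemma4.5}, and \ref{L;Lemma4.6}. The only point needing a moment of care is verifying that the summation index runs exactly over $[0,r]$, neither missing a nonempty block nor including an empty one, which is precisely what the bound $r\le\min\{\mathrm{s}(\mathbb{G}),\mathrm{r}_2(\mathbb{G})\}$ secures; the remainder is the routine re-indexing of a disjoint union.
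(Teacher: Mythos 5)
Your argument is correct and follows essentially the same route as the paper: both proofs use Lemma \ref{L;Lemma3.8} to translate $|\mathbb{E}|=2^r$ into $\mathrm{s}(\mathbb{E})+\mathrm{r}_2(\mathbb{E})=r$, observe via Lemmas \ref{L;Lemma4.5} and \ref{L;Lemma4.6} that the attainable pairs are exactly $(0,r),(1,r-1),\ldots,(r,0)$, and then sum the cardinalities of the corresponding $\simeq_\mathbb{G}$-classes given by Lemma \ref{L;Lemma4.6}. The only cosmetic difference is that you invoke Lemma \ref{L;Lemma4.4} directly where the paper cites Theorem \ref{T;TheoremD}, which packages the same isomorphism criterion.
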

\begin{proof}
By Lemmas \ref{L;Lemma3.8} and \ref{L;Lemma4.5}, let $\mathbb{F}$ be a closed subset of $\mathbb{H}$ of $2^r$ elements contained in $\mathbb{G}$. As $r\!\in\![0, \min\{\mathrm{s}(\mathbb{G}), \mathrm{r}_2(\mathbb{G})\}]$, Lemmas \ref{L;Lemma3.8} and \ref{L;Lemma4.6} imply that all possible choices of the pair $(\mathrm{s}(\mathbb{F}), \mathrm{r}_2(\mathbb{F}))$ are exactly $(0, r), (1, r-1), \ldots, (r, 0)$. The desired lemma follows from combining Lemma \ref{L;Lemma4.6}, Theorem \ref{T;TheoremD}, and a direct computation.
\end{proof}
\begin{lem}\label{L;Lemma4.9}
Assume that $\mathbb{G}$ is a closed subset of $\mathbb{H}$ and $r\in [\mathrm{s}(\mathbb{G})+1, \mathrm{r}_2(\mathbb{G})]$.
Then the number of all closed subsets of $\mathbb{H}$ of $2^r$ elements contained in $\mathbb{G}$ is equal to
$$\sum_{s=0}^{\mathrm{s}(\mathbb{G})}\binom{\mathrm{s}(\mathbb{G})}{s}\binom{\mathrm{r}_2(\mathbb{G})}{r-s}_2.$$
\end{lem}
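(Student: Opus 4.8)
The plan is to mimic the proof of Lemma \ref{L;Lemma4.8} almost verbatim, the only change being the range of the auxiliary parameter. First I would invoke Lemmas \ref{L;Lemma3.8} and \ref{L;Lemma4.5} to guarantee that at least one closed subset $\mathbb{F}$ of $\mathbb{H}$ of $2^r$ elements contained in $\mathbb{G}$ actually exists, and to record that any such $\mathbb{F}$ satisfies $|\mathbb{F}|=2^{\mathrm{s}(\mathbb{F})+\mathrm{r}_2(\mathbb{F})}$, hence $\mathrm{s}(\mathbb{F})+\mathrm{r}_2(\mathbb{F})=r$.

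Next I would pin down the set of admissible pairs $(\mathrm{s}(\mathbb{F}),\mathrm{r}_2(\mathbb{F}))$. By Lemma \ref{L;Lemma4.6} we have $\mathrm{s}(\mathbb{F})\in[0,\mathrm{s}(\mathbb{G})]$ and $\mathrm{r}_2(\mathbb{F})\in[0,\mathrm{r}_2(\mathbb{G})]$; combined with $\mathrm{r}_2(\mathbb{F})=r-\mathrm{s}(\mathbb{F})$, the second constraint reads $\mathrm{s}(\mathbb{F})\ge r-\mathrm{r}_2(\mathbb{G})$, which is vacuous here because $r\le\mathrm{r}_2(\mathbb{G})$. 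Hence in the present range $r\in[\mathrm{s}(\mathbb{G})+1,\mathrm{r}_2(\mathbb{G})]$ the only surviving restriction is $\mathrm{s}(\mathbb{F})\le\mathrm{s}(\mathbb{G})$, so the admissible pairs are exactly $(0,r),(1,r-1),\ldots,(\mathrm{s}(\mathbb{G}),r-\mathrm{s}(\mathbb{G}))$. This is the one place where the hypothesis differs from that of Lemma \ref{L;Lemma4.8}, in which $r\le\mathrm{s}(\mathbb{G})$ instead forced the list to stop at $(r,0)$.

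Finally I would partition the closed subsets of $\mathbb{H}$ of $2^r$ elements contained in $\mathbb{G}$ according to their value of $\mathrm{s}(\mathbb{F})$, equivalently, by Lemma \ref{L;Lemma4.4} (or Theorem \ref{T;TheoremD}), into $\simeq_\mathbb{G}$-equivalence classes, since two such subsets of equal order are $\simeq$-equivalent precisely when their $\mathrm{s}$-values agree. Lemma \ref{L;Lemma4.6} supplies the cardinality $\binom{\mathrm{s}(\mathbb{G})}{s}\binom{\mathrm{r}_2(\mathbb{G})}{r-s}_2$ of the class with $\mathrm{s}(\mathbb{F})=s$, and summing over $s\in[0,\mathrm{s}(\mathbb{G})]$ yields the stated formula. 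I do not expect a genuine obstacle: the argument is routine bookkeeping, and the only care needed is the case analysis above confirming that the $\mathrm{r}_2$ constraint becomes inactive exactly when $r\le\mathrm{r}_2(\mathbb{G})$, so that the summation index runs up to $\mathrm{s}(\mathbb{G})$ rather than to $r$.
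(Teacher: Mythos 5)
Your proposal is correct and follows essentially the same route as the paper's own proof: existence and the constraint $\mathrm{s}(\mathbb{F})+\mathrm{r}_2(\mathbb{F})=r$ via Lemmas \ref{L;Lemma3.8} and \ref{L;Lemma4.5}, identification of the admissible pairs $(0,r),\ldots,(\mathrm{s}(\mathbb{G}),r-\mathrm{s}(\mathbb{G}))$ via Lemmas \ref{L;Lemma3.8} and \ref{L;Lemma4.6}, and the final count by summing the class cardinalities from Lemma \ref{L;Lemma4.6} and Theorem \ref{T;TheoremD}. The case analysis showing the $\mathrm{r}_2$-constraint is inactive when $r\le\mathrm{r}_2(\mathbb{G})$ is exactly the point where this lemma diverges from Lemma \ref{L;Lemma4.8}, and you have handled it correctly.
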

\begin{proof}
By Lemmas \ref{L;Lemma3.8} and \ref{L;Lemma4.5}, let $\mathbb{F}$ be a closed subset of $\mathbb{H}$ of $2^r$ elements contained in $\mathbb{G}$. As $r\!\in\![\mathrm{s}(\mathbb{G})\!+\!1, \mathrm{r}_2(\mathbb{G})]$, Lemmas \ref{L;Lemma3.8} and \ref{L;Lemma4.6} imply that all possible choices of the pair $(\mathrm{s}(\mathbb{F}), \mathrm{r}_2(\mathbb{F}))$ are exactly $(0, r), (1, r-1), \ldots, (\mathrm{s}(\mathbb{G}), r\!-\!\mathrm{s}(\mathbb{G}))$.
The desired lemma follows from combining Lemma \ref{L;Lemma4.6}, Theorem \ref{T;TheoremD}, and a direct computation.
\end{proof}
\begin{lem}\label{L;Lemma4.10}
Assume that $\mathbb{G}$ is a closed subset of $\mathbb{H}$ and $r\in [\mathrm{r}_2(\mathbb{G})+1, \mathrm{s}(\mathbb{G})]$.
Then the number of all closed subsets of $\mathbb{H}$ of $2^r$ elements contained in $\mathbb{G}$ is equal to
$$\sum_{s=r-\mathrm{r}_2(\mathbb{G})}^{r}\binom{\mathrm{s}(\mathbb{G})}{s}\binom{\mathrm{r}_2(\mathbb{G})}{r-s}_2.$$
\end{lem}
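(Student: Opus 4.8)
The plan is to mimic the proofs of Lemmas \ref{L;Lemma4.8} and \ref{L;Lemma4.9}, the only difference being the range of the summation index, which shifts because $r$ now lies strictly above $\mathrm{r}_2(\mathbb{G})$ but does not exceed $\mathrm{s}(\mathbb{G})$. First I would invoke Lemmas \ref{L;Lemma3.8} and \ref{L;Lemma4.5} to guarantee that a closed subset $\mathbb{F}$ of $\mathbb{H}$ with $\mathbb{F}\subseteq\mathbb{G}$ and $|\mathbb{F}|=2^r$ exists, and to record that every such $\mathbb{F}$ satisfies $\mathrm{s}(\mathbb{F})+\mathrm{r}_2(\mathbb{F})=r$ by Lemma \ref{L;Lemma3.8}.

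Next, by Lemma \ref{L;Lemma4.6}, such an $\mathbb{F}$ has $\mathrm{s}(\mathbb{F})\in[0,\mathrm{s}(\mathbb{G})]$ and $\mathrm{r}_2(\mathbb{F})\in[0,\mathrm{r}_2(\mathbb{G})]$. Writing $s=\mathrm{s}(\mathbb{F})$, the two constraints $s\in[0,\mathrm{s}(\mathbb{G})]$ and $r-s=\mathrm{r}_2(\mathbb{F})\in[0,\mathrm{r}_2(\mathbb{G})]$ force $\max\{0,\,r-\mathrm{r}_2(\mathbb{G})\}\le s\le\min\{r,\,\mathrm{s}(\mathbb{G})\}$. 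Here I would use the hypothesis $r\in[\mathrm{r}_2(\mathbb{G})+1,\,\mathrm{s}(\mathbb{G})]$ to simplify both endpoints: $r-\mathrm{r}_2(\mathbb{G})\ge 1>0$, so the lower bound is $r-\mathrm{r}_2(\mathbb{G})$, and $r\le\mathrm{s}(\mathbb{G})$, so the upper bound is $r$. Thus the admissible pairs $(\mathrm{s}(\mathbb{F}),\mathrm{r}_2(\mathbb{F}))$ are exactly $(r-\mathrm{r}_2(\mathbb{G}),\mathrm{r}_2(\mathbb{G})),\,(r-\mathrm{r}_2(\mathbb{G})+1,\mathrm{r}_2(\mathbb{G})-1),\ldots,(r,0)$.

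To finish, I would note that for each admissible value $s$ the closed subsets $\mathbb{F}$ of $\mathbb{H}$ with $\mathbb{F}\subseteq\mathbb{G}$, $|\mathbb{F}|=2^r$ and $\mathrm{s}(\mathbb{F})=s$ form precisely one $\simeq_\mathbb{G}$-equivalence class: by Lemma \ref{L;Lemma3.8} the equalities $|\mathbb{F}|=|\mathbb{F}'|$ and $\mathrm{s}(\mathbb{F})=\mathrm{s}(\mathbb{F}')$ already imply $\mathrm{r}_2(\mathbb{F})=\mathrm{r}_2(\mathbb{F}')$, so Lemma \ref{L;Lemma4.4} (equivalently Theorem \ref{T;TheoremD}) identifies this family with a $\simeq_\mathbb{G}$-class, whose cardinality Lemma \ref{L;Lemma4.6} gives as $\binom{\mathrm{s}(\mathbb{G})}{s}\binom{\mathrm{r}_2(\mathbb{G})}{r-s}_2$. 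Summing these cardinalities over $s$ from $r-\mathrm{r}_2(\mathbb{G})$ to $r$ yields the claimed formula. The only step requiring any care is the bookkeeping of the summation range; everything else is a direct appeal to the already-established Theorems \ref{T;TheoremA}, \ref{T;TheoremD} and Lemmas \ref{L;Lemma3.8}, \ref{L;Lemma4.4}, \ref{L;Lemma4.5}, \ref{L;Lemma4.6}, exactly as in the proofs of Lemmas \ref{L;Lemma4.8} and \ref{L;Lemma4.9}.
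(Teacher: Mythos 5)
Your proposal is correct and follows essentially the same route as the paper: both arguments use Lemmas \ref{L;Lemma3.8} and \ref{L;Lemma4.5} to produce and parametrize the closed subsets of $2^r$ elements, Lemma \ref{L;Lemma4.6} together with the hypothesis $r\in[\mathrm{r}_2(\mathbb{G})+1,\mathrm{s}(\mathbb{G})]$ to pin down the admissible pairs $(\mathrm{s}(\mathbb{F}),\mathrm{r}_2(\mathbb{F}))$ as $(r,0),\ldots,(r-\mathrm{r}_2(\mathbb{G}),\mathrm{r}_2(\mathbb{G}))$, and Theorem \ref{T;TheoremD} with Lemma \ref{L;Lemma4.6} to count each isomorphism class and sum. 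You have merely spelled out the endpoint bookkeeping that the paper leaves as a direct computation.
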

\begin{proof}
By Lemmas \ref{L;Lemma3.8} and \ref{L;Lemma4.5}, let $\mathbb{F}$ be a closed subset of $\mathbb{H}$ of $2^r$ elements contained in $\mathbb{G}$. As $r\!\in\![\mathrm{r}_2(\mathbb{G})\!+\!1, \mathrm{s}(\mathbb{G})]$, Lemmas \ref{L;Lemma3.8} and \ref{L;Lemma4.6} imply that all possible choices of the pair $(\mathrm{s}(\mathbb{F}), \mathrm{r}_2(\mathbb{F}))$ are exactly $(r, 0), (r\!-\!1, 1),\ldots, (r-\mathrm{r}_2(\mathbb{G}), \mathrm{r}_2(\mathbb{G}))$. The desired lemma follows from combining Lemma \ref{L;Lemma4.6}, Theorem \ref{T;TheoremD}, and a direct computation.
\end{proof}
\begin{lem}\label{L;Lemma4.11}
Assume that $\mathbb{G}$ is a closed subset of $\mathbb{H}$ and $r\!\!\!\in\!\!\mathbb{N}_0\!\setminus\![0, \max\{\mathrm{s}(\mathbb{G}), \mathrm{r}_2(\mathbb{G})\}]$. Then the number of all closed subsets of $\mathbb{H}$ of $2^r$ elements contained in $\mathbb{G}$ is equal to $$\sum_{s=\min\{\mathrm{s}(\mathbb{G}), r-\mathrm{r}_2(\mathbb{G})\}}^{\max\{\mathrm{s}(\mathbb{G}), r-\mathrm{r}_2(\mathbb{G})\}}\binom{\mathrm{s}(\mathbb{G})}{s}\binom{\mathrm{r}_2(\mathbb{G})}{r-s}_2.$$
\end{lem}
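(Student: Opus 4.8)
The plan is to mimic the proofs of Lemmas~\ref{L;Lemma4.8}, \ref{L;Lemma4.9}, and~\ref{L;Lemma4.10}: classify the closed subsets of $\mathbb{H}$ of $2^r$ elements contained in $\mathbb{G}$ by the pair $(\mathrm{s}(\mathbb{F}),\mathrm{r}_2(\mathbb{F}))$, pin down exactly which such pairs can occur under the hypothesis $r\in\mathbb{N}_0\setminus[0,\max\{\mathrm{s}(\mathbb{G}),\mathrm{r}_2(\mathbb{G})\}]$, and then sum the cardinalities of the corresponding $\simeq_\mathbb{G}$-equivalence classes. The key inputs are Lemma~\ref{L;Lemma3.8} (which gives $|\mathbb{F}|=2^{\mathrm{s}(\mathbb{F})+\mathrm{r}_2(\mathbb{F})}$ for every closed subset $\mathbb{F}$ of $\mathbb{H}$), Lemma~\ref{L;Lemma4.5} (existence of a closed subset of $\mathbb{G}$ with prescribed admissible parameters), Lemma~\ref{L;Lemma4.4} (isomorphism of closed subsets of $\mathbb{H}$ is detected by the pair $(\mathrm{s},\mathrm{r}_2)$), and Lemma~\ref{L;Lemma4.6} (the cardinality of the $\simeq_\mathbb{G}$-class of $\mathbb{F}$ is $\binom{\mathrm{s}(\mathbb{G})}{\mathrm{s}(\mathbb{F})}\binom{\mathrm{r}_2(\mathbb{G})}{\mathrm{r}_2(\mathbb{F})}_2$).

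First I would note that, by Lemma~\ref{L;Lemma3.8}, a closed subset $\mathbb{F}\subseteq\mathbb{G}$ has $2^r$ elements if and only if $\mathrm{s}(\mathbb{F})+\mathrm{r}_2(\mathbb{F})=r$, while by Lemma~\ref{L;Lemma4.6} any such $\mathbb{F}$ satisfies $\mathrm{s}(\mathbb{F})\in[0,\mathrm{s}(\mathbb{G})]$ and $\mathrm{r}_2(\mathbb{F})\in[0,\mathrm{r}_2(\mathbb{G})]$. Writing $s=\mathrm{s}(\mathbb{F})$ and $r-s=\mathrm{r}_2(\mathbb{F})$ and combining these two range constraints forces $s\in[\max\{0,r-\mathrm{r}_2(\mathbb{G})\},\min\{\mathrm{s}(\mathbb{G}),r\}]$; since the hypothesis gives $r>\mathrm{s}(\mathbb{G})$ and $r>\mathrm{r}_2(\mathbb{G})$, this simplifies to $s\in[r-\mathrm{r}_2(\mathbb{G}),\mathrm{s}(\mathbb{G})]$. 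Conversely, Lemma~\ref{L;Lemma4.5} shows each $s$ in this interval is realized by some closed subset of $\mathbb{G}$, and Lemma~\ref{L;Lemma4.4} shows two $2^r$-element closed subsets of $\mathbb{G}$ are $\simeq_\mathbb{G}$-equivalent exactly when their values of $s$ agree (note $|\mathbb{F}|=2^r$ is an isomorphism invariant once $\mathrm{s}$ and $\mathrm{r}_2$ are). Hence the $2^r$-element closed subsets of $\mathbb{G}$ partition into one $\simeq_\mathbb{G}$-class for each admissible $s$, and Lemma~\ref{L;Lemma4.6} gives the size $\binom{\mathrm{s}(\mathbb{G})}{s}\binom{\mathrm{r}_2(\mathbb{G})}{r-s}_2$ of each; summing over $s$ from $r-\mathrm{r}_2(\mathbb{G})$ to $\mathrm{s}(\mathbb{G})$ yields the asserted formula, these being the $\min$ and $\max$ appearing in the statement.

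The one point requiring a little care — the step I expect to be the main, though minor, obstacle — is the degenerate subcase $r>\mathrm{s}(\mathbb{G})+\mathrm{r}_2(\mathbb{G})$, where the interval $[r-\mathrm{r}_2(\mathbb{G}),\mathrm{s}(\mathbb{G})]$ is empty and $\mathbb{G}$ contains no $2^r$-element closed subset at all. Here the two endpoints swap, so the displayed sum runs from $\mathrm{s}(\mathbb{G})$ up to $r-\mathrm{r}_2(\mathbb{G})$, and one must check it still evaluates to $0$: for $s=\mathrm{s}(\mathbb{G})$ one has $r-s=r-\mathrm{s}(\mathbb{G})>\mathrm{r}_2(\mathbb{G})$, so $\binom{\mathrm{r}_2(\mathbb{G})}{r-s}_2=0$, while for every index $s>\mathrm{s}(\mathbb{G})$ one has $\binom{\mathrm{s}(\mathbb{G})}{s}=0$. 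With the standard convention that an ordinary or Gaussian binomial coefficient vanishes when its lower argument exceeds its upper argument, the right-hand side is thus $0$ in this case too, matching the count. All remaining manipulations are the same routine bookkeeping (``a direct computation'') used in Lemmas~\ref{L;Lemma4.8}--\ref{L;Lemma4.10}.
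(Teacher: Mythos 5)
Your proposal is correct and follows essentially the same route as the paper's proof: classify the $2^r$-element closed subsets of $\mathbb{G}$ by the pair $(\mathrm{s}(\mathbb{F}),\mathrm{r}_2(\mathbb{F}))$ via Lemmas \ref{L;Lemma3.8}, \ref{L;Lemma4.5}, and \ref{L;Lemma4.6}, identify the admissible values of $s$ as the interval $[r-\mathrm{r}_2(\mathbb{G}),\mathrm{s}(\mathbb{G})]$, and sum the class sizes. The only cosmetic difference is that the paper disposes of the degenerate case $r>\mathrm{s}(\mathbb{G})+\mathrm{r}_2(\mathbb{G})$ up front by reducing to $r\le\mathrm{s}(\mathbb{G})+\mathrm{r}_2(\mathbb{G})$ via Lemma \ref{L;Lemma3.8}, whereas you verify directly that the displayed sum vanishes there; both are fine.
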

\begin{proof}
By Lemma \ref{L;Lemma3.8}, there is no loss to let $r\in[\max\{\mathrm{s}(\mathbb{G}), \mathrm{r}_2(\mathbb{G})\}+1, \mathrm{s}(\mathbb{G})+\mathrm{r}_2(\mathbb{G})]$. By Lemma \ref{L;Lemma4.5}, let $\mathbb{F}$ be a closed subset of $\mathbb{H}$ of $2^r$ elements contained in $\mathbb{G}$. Lemmas \ref{L;Lemma3.8} and \ref{L;Lemma4.6} thus imply that all possible choices of the pair $(\mathrm{s}(\mathbb{F}), \mathrm{r}_2(\mathbb{F}))$ are exactly $(\mathrm{s}(\mathbb{G}), r-\mathrm{s}(\mathbb{G})), (\mathrm{s}(\mathbb{G})-1, r-\mathrm{s}(\mathbb{G})+1),\ldots, (r-\mathrm{r}_2(\mathbb{G}), \mathrm{r}_2(\mathbb{G}))$.
The desired lemma follows from combining Lemma \ref{L;Lemma4.6}, Theorem \ref{T;TheoremD}, and a direct computation.
\end{proof}
\begin{cor}\label{C;Corollary4.12}
Assume that $\mathbb{G}$ is a closed subset of $\mathbb{H}$. Assume that $r\in\mathbb{N}_0$.
Then the number of all closed subsets of $\mathbb{H}$ of $2^r$ elements contained in $\mathbb{G}$ is equal to
\[\begin{cases}
\sum_{s=0}^{r}\binom{\mathrm{s}(\mathbb{G})}{s}\binom{\mathrm{r}_2(\mathbb{G})}{r-s}_2, &\text{if}\ r\in [0, \min\{\mathrm{s}(\mathbb{G}), \mathrm{r}_2(\mathbb{G})\}],\\
\sum_{s=0}^{\mathrm{s}(\mathbb{G})}\binom{\mathrm{s}(\mathbb{G})}{s}\binom{\mathrm{r}_2(\mathbb{G})}{r-s}_2, &\text{if}\ r\in [\mathrm{s}(\mathbb{G})+1, \mathrm{r}_2(\mathbb{G})],\\
\sum_{s=r-\mathrm{r}_2(\mathbb{G})}^{r}\binom{\mathrm{s}(\mathbb{G})}{s}\binom{\mathrm{r}_2(\mathbb{G})}{r-s}_2, &\text{if}\ r\in [\mathrm{r}_2(\mathbb{G})+1, \mathrm{s}(\mathbb{G})],\\
\sum_{s=\min\{\mathrm{s}(\mathbb{G}), r-\mathrm{r}_2(\mathbb{G})\}}^{\max\{\mathrm{s}(\mathbb{G}), r-\mathrm{r}_2(\mathbb{G})\}}\binom{\mathrm{s}(\mathbb{G})}{s}\binom{\mathrm{r}_2(\mathbb{G})}{r-s}_2, &\text{if}\ r\in \mathbb{N}_0\setminus[0, \max\{\mathrm{s}(\mathbb{G}), \mathrm{r}_2(\mathbb{G})\}].
\end{cases}\]
As a particular case, the number of all closed subsets of $\mathbb{H}$ of $2^r$ elements is equal to
\[\begin{cases}
\sum_{s=0}^{r}\binom{p^\sharp}{s}\binom{p-p^\sharp}{r-s}_2, &\text{if}\ r\in [0, \min\{p^\sharp, p-p^\sharp\}],\\
\sum_{s=0}^{p^\sharp}\binom{p^\sharp}{s}\binom{p-p^\sharp}{r-s}_2, &\text{if}\ r\in [p^\sharp+1, p-p^\sharp],\\
\sum_{s=r-p+p^\sharp}^{r}\binom{p^\sharp}{s}\binom{p-p^\sharp}{r-s}_2, &\text{if}\ r\in [p-p^\sharp+1, p^\sharp],\\
\sum_{s=\min\{p^\sharp, r-p+p^\sharp\}}^{\max\{p^\sharp, r-p+p^\sharp\}}\binom{p^\sharp}{s}\binom{p-p^\sharp}{r-s}_2, &\text{if}\ r\in \mathbb{N}_0\setminus[0, \max\{p^\sharp, p-p^\sharp\}].
\end{cases}\]
\end{cor}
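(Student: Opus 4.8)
The plan is to deduce the first displayed formula directly from Lemmas \ref{L;Lemma4.8}, \ref{L;Lemma4.9}, \ref{L;Lemma4.10}, and \ref{L;Lemma4.11}, and then to obtain the ``particular case'' by specializing $\mathbb{G}=\mathbb{H}$ via Lemma \ref{L;Lemma3.8}. No new combinatorial input is needed beyond these lemmas; the whole argument is a bookkeeping matter of checking that the four index ranges occurring in the statement exhaust $\mathbb{N}_0$ and then quoting the matching lemma in each range.

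First I would abbreviate $a=\mathrm{s}(\mathbb{G})$ and $b=\mathrm{r}_2(\mathbb{G})$ and verify that the four index sets $[0,\min\{a,b\}]$, $[a+1,b]$, $[b+1,a]$, and $\mathbb{N}_0\setminus[0,\max\{a,b\}]$ are pairwise disjoint and cover $\mathbb{N}_0$. This reduces to a short comparison of $a$ with $b$: if $a\leq b$ then the third set is empty and the remaining three are $[0,a]$, $[a+1,b]$, and $\mathbb{N}_0\setminus[0,b]$; if $a>b$ then the second set is empty and the remaining three are $[0,b]$, $[b+1,a]$, and $\mathbb{N}_0\setminus[0,a]$. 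In both situations their union is $\mathbb{N}_0$ and the overlap is empty, so every $r\in\mathbb{N}_0$ lies in exactly one of the four ranges. Then, for a fixed $r\in\mathbb{N}_0$, I would locate the range containing $r$ and apply the corresponding result: Lemma \ref{L;Lemma4.8} for $r\in[0,\min\{a,b\}]$, Lemma \ref{L;Lemma4.9} for $r\in[a+1,b]$, Lemma \ref{L;Lemma4.10} for $r\in[b+1,a]$, and Lemma \ref{L;Lemma4.11} for $r\in\mathbb{N}_0\setminus[0,\max\{a,b\}]$. Each of these lemmas produces precisely the summation displayed in the corresponding branch, so assembling them yields the first displayed formula.

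For the particular case, I would invoke Lemma \ref{L;Lemma3.8}, which gives $\mathrm{s}(\mathbb{H})=p^\sharp$ and $\mathrm{r}_2(\mathbb{H})=p-p^\sharp$, and substitute these values into the general formula with $\mathbb{G}=\mathbb{H}$; the four branches then become the four branches of the second displayed formula verbatim (with $\min\{a,b\}=\min\{p^\sharp,p-p^\sharp\}$ and so on). The only step that requires any genuine attention is the exhaustiveness and disjointness of the case split, and since that collapses to comparing $\mathrm{s}(\mathbb{G})$ with $\mathrm{r}_2(\mathbb{G})$, I do not expect a real obstacle; the rest is an immediate synthesis of already-established lemmas.
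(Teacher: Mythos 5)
Your proposal is correct and matches the paper's own proof: the first formula is obtained by combining Lemmas \ref{L;Lemma4.8}, \ref{L;Lemma4.9}, \ref{L;Lemma4.10}, and \ref{L;Lemma4.11}, and the particular case follows by substituting $\mathrm{s}(\mathbb{H})=p^\sharp$ and $\mathrm{r}_2(\mathbb{H})=p-p^\sharp$ from Lemma \ref{L;Lemma3.8}. Your added verification that the four index ranges partition $\mathbb{N}_0$ is a harmless elaboration of the same argument.
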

\begin{proof}
The first statement follows from combining Lemmas \ref{L;Lemma4.8}, \ref{L;Lemma4.9}, \ref{L;Lemma4.10}, and \ref{L;Lemma4.11}.
The desired corollary thus follows from Lemma \ref{L;Lemma3.8} and the above discussion.
\end{proof}
\begin{cor}\label{C;Corollary4.13}
Assume that $\mathbb{G}$ is a closed subset of $\mathbb{H}$ and $\mathbb{F}$ is a strongly normal closed subset of $\mathbb{G}$. Assume that $\mathbb{E}\subseteq\mathbb{G}$ and $\mathbb{E}$ is a closed subset of $\mathbb{H}$.
Then $\mathbb{E}\simeq\mathbb{F}$ if and only if $\mathbb{E}$ is a strongly normal closed subset of $\mathbb{G}$ of $|\mathbb{F}|$ elements. In particular, the $\simeq$-equivalence class containing a strongly normal
closed subset $\mathbb{D}$ of $\mathbb{H}$ contains precisely all strongly normal closed subsets of $\mathbb{H}$ of $|\mathbb{D}|$ elements.
\end{cor}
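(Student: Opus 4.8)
The plan is to reduce everything to the numerical invariants $\mathrm{s}(\cdot)$ and $\mathrm{r}_2(\cdot)$ and then feed the result into Theorem \ref{T;TheoremD}. The crucial first step is the observation that, for a closed subset $\mathbb{E}$ of $\mathbb{H}$ with $\mathbb{E}\subseteq\mathbb{G}$, being a strongly normal closed subset of $\mathbb{G}$ is equivalent to the single equality $\mathrm{s}(\mathbb{E})=\mathrm{s}(\mathbb{G})$. To see this I would start from Lemma \ref{L;Lemma3.10}: the condition $\mathrm{O}^\vartheta(\mathbb{E})=\mathrm{O}^\vartheta(\mathbb{G})$ there can be replaced by $\mathrm{s}(\mathbb{E})=\mathrm{s}(\mathbb{G})$, because the inclusion $\mathrm{O}^\vartheta(\mathbb{E})\subseteq\mathrm{O}^\vartheta(\mathbb{G})$ is automatic from Lemma \ref{L;Lemma2.9}, while $|\mathrm{O}^\vartheta(\mathbb{E})|=2^{\mathrm{s}(\mathbb{E})}$ and $|\mathrm{O}^\vartheta(\mathbb{G})|=2^{\mathrm{s}(\mathbb{G})}$ by Lemma \ref{L;Lemma3.6}, so equal cardinalities force equality of the two thick parts (and hence $\mathrm{O}^\vartheta(\mathbb{E})=\langle r\rangle$ in the notation of Lemma \ref{L;Lemma3.10}); the remaining clauses of Lemma \ref{L;Lemma3.10} (the factorization $\mathbb{E}=\langle r\rangle\mathbb{E}'$ with $\langle r\rangle\cap\mathbb{E}'=\{e\}$ and $\mathrm{O}_\vartheta(\mathbb{E})=\mathbb{E}'$) are then provided automatically by Theorem \ref{T;TheoremA}.

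With that observation in hand the argument is bookkeeping. Since $\mathbb{F}$ is a strongly normal closed subset of $\mathbb{G}$, the observation gives $\mathrm{s}(\mathbb{F})=\mathrm{s}(\mathbb{G})$. For the forward implication, assume $\mathbb{E}\simeq\mathbb{F}$; Lemma \ref{L;Lemma4.4} yields $\mathrm{s}(\mathbb{E})=\mathrm{s}(\mathbb{F})$ and $\mathrm{r}_2(\mathbb{E})=\mathrm{r}_2(\mathbb{F})$, hence $|\mathbb{E}|=|\mathbb{F}|$ by Lemma \ref{L;Lemma3.8}, and $\mathrm{s}(\mathbb{E})=\mathrm{s}(\mathbb{F})=\mathrm{s}(\mathbb{G})$; the observation then shows $\mathbb{E}$ is a strongly normal closed subset of $\mathbb{G}$ of $|\mathbb{F}|$ elements. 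For the converse, assume $\mathbb{E}$ is a strongly normal closed subset of $\mathbb{G}$ with $|\mathbb{E}|=|\mathbb{F}|$; the observation gives $\mathrm{s}(\mathbb{E})=\mathrm{s}(\mathbb{G})=\mathrm{s}(\mathbb{F})$, so at least two of the three equalities in Theorem \ref{T;TheoremD} hold (the one for $\mathrm{s}$ and the one for cardinalities), whence $\mathbb{E}\simeq\mathbb{F}$.

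Finally the "in particular" clause is just the special case $\mathbb{G}=\mathbb{H}$: every closed subset of $\mathbb{H}$ is contained in $\mathbb{H}$, so taking $\mathbb{F}=\mathbb{D}$ the main equivalence says that a closed subset $\mathbb{E}$ of $\mathbb{H}$ is isomorphic to $\mathbb{D}$ precisely when it is a strongly normal closed subset of $\mathbb{H}$ with $|\mathbb{E}|=|\mathbb{D}|$, which is exactly the assertion. The step I expect to be the main obstacle is the initial observation: Lemma \ref{L;Lemma3.10} packages strong normality in $\mathbb{G}$ into a cluster of conditions, and one must verify that, once attention is restricted to closed subsets of $\mathbb{H}$ lying inside $\mathbb{G}$, this cluster collapses to the single numerical condition $\mathrm{s}(\mathbb{E})=\mathrm{s}(\mathbb{G})$; after that, nothing beyond Theorem \ref{T;TheoremD}, Lemma \ref{L;Lemma4.4}, and Lemma \ref{L;Lemma3.8} is needed.
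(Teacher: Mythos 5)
Your proposal is correct and follows essentially the same route as the paper: the paper's one-line proof combines Lemma \ref{L;Lemma3.10} (to translate strong normality in $\mathbb{G}$ into the condition $\mathrm{O}^\vartheta(\mathbb{E})=\mathrm{O}^\vartheta(\mathbb{G})$, equivalently $\mathrm{s}(\mathbb{E})=\mathrm{s}(\mathbb{G})$ via the cardinality count of Lemma \ref{L;Lemma3.6} and the automatic inclusion from Lemma \ref{L;Lemma2.9}) with Theorem \ref{T;TheoremD}, which is exactly your reduction. Your key observation and the bookkeeping via Lemmas \ref{L;Lemma4.4} and \ref{L;Lemma3.8} are sound, so nothing further is needed.
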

\begin{proof}
The desired corollary is from combining Lemmas \ref{L;Lemma3.10}, \ref{L;Lemma4.6}, Theorem \ref{T;TheoremD}.
\end{proof}
\begin{cor}\label{C;Corollary4.14}
Assume that $\mathbb{G}$ is a closed subset of $\mathbb{H}$. Assume that $r\in\mathbb{N}_0$. Then the number of all pairwise nonisomorphic strongly normal closed subsets of $\mathbb{G}$ is equal to $\mathrm{r}_2(\mathbb{G})+1$. Moreover, the number of all strongly normal closed subsets of $\mathbb{G}$ of $2^r$ elements is equal to
\[\begin{cases} 0, &\text{if}\ r\in [0, \mathrm{s}(\mathbb{G})-1],\\
\binom{\mathrm{r}_2(\mathbb{G})}{r-\mathrm{s}(\mathbb{G})}_2, &\text{if}\ r\in \mathbb{N}_0\setminus [0, \mathrm{s}(\mathbb{G})-1].
\end{cases}\]
As two particular cases of the above statements, there are exactly $p-p^\sharp+1$ pairwise distinct $\simeq$-equivalence classes containing the strongly normal closed subsets of $\mathbb{H}$. The number of all strongly normal closed subsets of $\mathbb{H}$ of $2^r$ elements is equal to
\[\begin{cases} 0, &\text{if}\ r\in [0, p^\sharp-1],\\
\binom{p-p^\sharp}{r-p^\sharp}_2, &\text{if}\ r\in \mathbb{N}_0\setminus [0, p^\sharp-1].
\end{cases}\]
\end{cor}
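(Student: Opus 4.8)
The plan is to feed the classification of strongly normal closed subsets from Lemma~\ref{L;Lemma3.10} into the isomorphism criterion of Lemma~\ref{L;Lemma4.4}, reducing all the counting to counting subgroups of an elementary abelian $2$-group. First I would fix the unique element $r$ of $\mathbb{G}$ with $\mathrm{Thick}(\mathbb{G})=\langle r\rangle$ and $\mathrm{s}(\mathbb{G})=\mathrm{s}(r)$ furnished by Lemma~\ref{L;Lemma3.5}. By Lemma~\ref{L;Lemma3.10}, a subset $\mathbb{F}\subseteq\mathbb{G}$ is a strongly normal closed subset of $\mathbb{G}$ if and only if $\mathbb{F}=\langle r\rangle\mathbb{E}$ for some $\mathbb{E}\subseteq\mathbb{G}$ such that $\mathbb{E}^\gamma$ is a subgroup of $\mathrm{O}_\vartheta(\mathbb{G})^\gamma$, $\langle r\rangle\cap\mathbb{E}=\{e\}$, and $\mathrm{O}_\vartheta(\mathbb{F})=\mathbb{E}$; and in that case $\mathrm{O}^\vartheta(\mathbb{F})=\langle r\rangle=\mathrm{O}^\vartheta(\mathbb{G})$. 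Since by Lemma~\ref{L;Lemma3.8} a closed subset of $\mathbb{H}$ is recovered from the pair $(\mathrm{O}^\vartheta,\mathrm{O}_\vartheta)$, the map $\mathbb{F}\mapsto\mathrm{O}_\vartheta(\mathbb{F})$ is a bijection from the set of strongly normal closed subsets of $\mathbb{G}$ onto the set of subgroups of $\mathrm{O}_\vartheta(\mathbb{G})^\gamma$; moreover any such $\mathbb{F}$ satisfies $\mathrm{s}(\mathbb{F})=\mathrm{s}(\mathbb{G})$ by Lemma~\ref{L;Lemma3.6}, and $\mathrm{r}_2(\mathbb{F})$ is the $2$-rank of the associated subgroup by Lemma~\ref{L;Lemma3.7}, this $2$-rank ranging over all of $[0,\mathrm{r}_2(\mathbb{G})]$.

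Next I would read off the isomorphism count. By Lemma~\ref{L;Lemma4.4}, two strongly normal closed subsets of $\mathbb{G}$ are isomorphic if and only if their $\mathrm{s}$-values agree and their $\mathrm{r}_2$-values agree; the first agreement is automatic by the previous step, so the isomorphism class is governed solely by $\mathrm{r}_2(\mathbb{F})\in[0,\mathrm{r}_2(\mathbb{G})]$, and hence there are exactly $\mathrm{r}_2(\mathbb{G})+1$ pairwise nonisomorphic strongly normal closed subsets of $\mathbb{G}$.

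For the count by cardinality, Lemma~\ref{L;Lemma3.8} gives $|\mathbb{F}|=2^{\mathrm{s}(\mathbb{G})+\mathrm{r}_2(\mathbb{F})}$ for every strongly normal closed subset $\mathbb{F}$ of $\mathbb{G}$. Thus when $r\in[0,\mathrm{s}(\mathbb{G})-1]$ there is no such $\mathbb{F}$ of $2^r$ elements, while for $r\in\mathbb{N}_0\setminus[0,\mathrm{s}(\mathbb{G})-1]$ the equation $|\mathbb{F}|=2^r$ amounts to $\mathrm{r}_2(\mathbb{F})=r-\mathrm{s}(\mathbb{G})$, so by the bijection of the first step the number of such $\mathbb{F}$ equals the number of subgroups of $2$-rank $r-\mathrm{s}(\mathbb{G})$ in an elementary abelian $2$-group of $2$-rank $\mathrm{r}_2(\mathbb{G})$, namely the Gauss coefficient $\binom{\mathrm{r}_2(\mathbb{G})}{r-\mathrm{s}(\mathbb{G})}_2$ (which is automatically $0$ once $r-\mathrm{s}(\mathbb{G})>\mathrm{r}_2(\mathbb{G})$). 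The two particular cases are obtained by taking $\mathbb{G}=\mathbb{H}$ and substituting $\mathrm{s}(\mathbb{H})=p^\sharp$ and $\mathrm{r}_2(\mathbb{H})=p-p^\sharp$ from Lemma~\ref{L;Lemma3.8}; and the assertion about $\simeq$-equivalence classes follows from Corollary~\ref{C;Corollary4.13}, by which the $\simeq$-class of a strongly normal closed subset $\mathbb{D}$ of $\mathbb{H}$ consists precisely of all strongly normal closed subsets of $\mathbb{H}$ of $|\mathbb{D}|$ elements, so the number of $\simeq$-classes meeting the strongly normal closed subsets of $\mathbb{H}$ equals the number of attained cardinalities $2^{p^\sharp},2^{p^\sharp+1},\dots,2^{p}$, i.e.\ $p-p^\sharp+1$.

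I do not expect a genuine obstacle; the one delicate point is that strong normality forces $\mathrm{s}(\mathbb{F})=\mathrm{s}(\mathbb{G})$, which is exactly the clause $\mathrm{O}^\vartheta(\mathbb{F})=\mathrm{O}^\vartheta(\mathbb{G})$ in Lemma~\ref{L;Lemma3.10} combined with Lemma~\ref{L;Lemma3.6}. Everything else is routine bookkeeping with Gauss coefficients.
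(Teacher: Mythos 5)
Your proposal is correct and follows essentially the same route as the paper: both reduce via Lemma \ref{L;Lemma3.10} and Lemma \ref{L;Lemma3.8} to the observation that strongly normal closed subsets of $\mathbb{G}$ correspond bijectively to subgroups of $\mathrm{O}_\vartheta(\mathbb{G})^\gamma$ with $\mathrm{s}$-value frozen at $\mathrm{s}(\mathbb{G})$, and then count isomorphism classes by $\mathrm{r}_2$ and cardinalities by Gauss coefficients (the paper packages the last step through Lemma \ref{L;Lemma4.6}, Lemma \ref{L;Lemma4.5}, Theorem \ref{T;TheoremD}, and Corollary \ref{C;Corollary4.13}, where you invoke Lemma \ref{L;Lemma4.4} and the subgroup-counting convention directly, an immaterial difference).
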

\begin{proof}
The first statement follows from combining Lemmas \ref{L;Lemma3.10}, \ref{L;Lemma4.6}, \ref{L;Lemma4.5}, Theorem \ref{T;TheoremD}.
By Lemmas \ref{L;Lemma3.8} and \ref{L;Lemma3.10}, there is no loss to let $r\in [\mathrm{s}(\mathbb{G}), \mathrm{s}(\mathbb{G})+\mathrm{r}_2(\mathbb{G})]$. The second statement follows from combining Corollary \ref{C;Corollary4.13}, Theorem \ref{T;TheoremD}, Lemmas \ref{L;Lemma3.10}, \ref{L;Lemma4.6}, \ref{L;Lemma3.8}. The desired corollary follows from Lemma \ref{L;Lemma3.8} and the above discussion.
\end{proof}
For the other theme of this section, we list the following lemmas as a preparation:
\begin{lem}\label{L;Lemma4.15}
Assume that $\mathbb{F}$ and $\mathbb{G}$ are closed subsets of $\mathbb{H}$. Assume that $\alpha$ is an injective homomorphism from $\mathbb{F}$ to $\mathbb{G}$. Then $\mathrm{O}^\vartheta(\mathbb{F})^\alpha\subseteq\mathrm{O}^\vartheta(\mathbb{G})$ and $\mathrm{O}_\vartheta(\mathbb{F})^\alpha\subseteq\mathrm{O}_\vartheta(\mathbb{G})$.
\end{lem}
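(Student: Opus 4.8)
The plan is to reduce the statement to the description $\mathrm{O}^\vartheta(\mathbb{F})=\langle\bigcup_{p\in\mathbb{F}}p^*p\rangle$ provided by Lemma~\ref{L;Lemma2.9}, together with the two elementary facts about a homomorphism $\alpha$ recorded just before Lemma~\ref{L;Lemma2.12}: $(p^*)^\alpha=(p^\alpha)^*$ for $p\in\mathbb{F}$, and $(pq)^\alpha=p^\alpha q^\alpha$ for $p,q\in\mathbb{F}$ (both make sense because $\mathbb{F}$ is closed, so $p^*\in\mathbb{F}$ and $pq\subseteq\mathbb{F}$). The inclusion $\mathrm{O}_\vartheta(\mathbb{F})^\alpha\subseteq\mathrm{O}_\vartheta(\mathbb{G})$ is exactly one of those recorded facts, valid for an arbitrary homomorphism, so the second assertion is immediate and the injectivity hypothesis plays no role in it.

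For the first assertion I would set $\mathbb{K}=\bigcup_{p\in\mathbb{F}}p^*p$. Since $\mathbb{F}$ is closed, $\mathbb{K}\subseteq\mathbb{F}$, and $(p^*p)^*=p^*p$ gives $\mathbb{K}^*=\mathbb{K}$, so Lemma~\ref{L;Lemma2.7} yields $\mathrm{O}^\vartheta(\mathbb{F})=\langle\mathbb{K}\rangle=\bigcup_{n\in\mathbb{N}_0}\mathbb{K}^n$, with each $\mathbb{K}^n\subseteq\mathbb{F}$. The first step is to observe $\mathbb{K}^\alpha\subseteq\mathrm{O}^\vartheta(\mathbb{G})$: for $p\in\mathbb{F}$ the two homomorphism identities give $(p^*p)^\alpha=(p^\alpha)^*p^\alpha$, and since $p^\alpha\in\mathbb{G}$ this set lies in $\bigcup_{q\in\mathbb{G}}q^*q\subseteq\mathrm{O}^\vartheta(\mathbb{G})$ by Lemma~\ref{L;Lemma2.9}.

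Next I would show $(\mathbb{K}^n)^\alpha\subseteq\mathrm{O}^\vartheta(\mathbb{G})$ for every $n\in\mathbb{N}_0$ by induction on $n$, the case $n=0$ being $\{e\}^\alpha=\{e\}\subseteq\mathrm{O}^\vartheta(\mathbb{G})$. For the step, note that applying $\alpha$ to a hyperproduct of two subsets of $\mathbb{F}$ respects the product up to inclusion: if $x\in ab$ with $a,b\in\mathbb{F}$ then $x^\alpha\in(ab)^\alpha=a^\alpha b^\alpha$, whence $(\mathbb{K}^n\mathbb{K})^\alpha\subseteq(\mathbb{K}^n)^\alpha\mathbb{K}^\alpha$. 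Combining this with the inductive hypothesis and the first step gives $(\mathbb{K}^{n+1})^\alpha\subseteq\mathrm{O}^\vartheta(\mathbb{G})\mathrm{O}^\vartheta(\mathbb{G})\subseteq\mathrm{O}^\vartheta(\mathbb{G})$, the last inclusion holding because $\mathrm{O}^\vartheta(\mathbb{G})$ is a closed subset of $\mathbb{H}$. Taking the union over $n$ then yields $\mathrm{O}^\vartheta(\mathbb{F})^\alpha=\langle\mathbb{K}\rangle^\alpha=\bigcup_{n\in\mathbb{N}_0}(\mathbb{K}^n)^\alpha\subseteq\mathrm{O}^\vartheta(\mathbb{G})$, as desired.

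I do not expect any genuine difficulty here; the argument is a routine application of Lemmas~\ref{L;Lemma2.7} and~\ref{L;Lemma2.9}. The only points requiring attention are bookkeeping ones — checking that every set on which $\alpha$ is evaluated ($p^*p$, the powers $\mathbb{K}^n$, and the two-element products $ab$) actually sits inside $\mathbb{F}$, which is where closedness of $\mathbb{F}$ is used, and being careful that the homomorphism identity is invoked only for products of two elements, the passage to arbitrary subsets being handled by the inclusion noted above.
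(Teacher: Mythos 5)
Your proposal is correct, but it takes a genuinely different route from the paper's. The paper proves the first inclusion through the structure theory of Section 3: by Theorem \ref{T;TheoremA} and Lemma \ref{L;Lemma3.1} it writes $\mathrm{O}^\vartheta(\mathbb{F})=\langle r\rangle=\langle q_1, q_2,\ldots, q_{\mathrm{s}(r)}\rangle$ with each generator a thick involution, uses the injectivity of $\alpha$ together with Lemmas \ref{L;Lemma2.5}, \ref{L;Lemma2.10}, and \ref{L;Lemma3.1} to show that the image of each generator is again an externally thick involution, hence lies in $\mathrm{Thick}(\mathbb{G})=\mathrm{O}^\vartheta(\mathbb{G})$, and lets generation finish the argument; the second inclusion is, as you say, one of the facts recorded before Lemma \ref{L;Lemma2.12}. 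Your argument instead works directly from the description $\mathrm{O}^\vartheta(\mathbb{F})=\langle\bigcup_{p\in\mathbb{F}}p^*p\rangle$ of Lemma \ref{L;Lemma2.9} together with Lemma \ref{L;Lemma2.7}, and it is sound: the bookkeeping points you flag (that $p^*p$, the powers $\mathbb{K}^n$, and the two-element products all sit inside the closed subset $\mathbb{F}$, and that $\mathrm{O}^\vartheta(\mathbb{G})^2\subseteq\mathrm{O}^\vartheta(\mathbb{G})$ because $\mathrm{O}^\vartheta(\mathbb{G})$ is a closed subset of $\mathbb{H}$) are exactly the ones that need checking, and they all hold. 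What your route buys is generality and economy: it establishes both inclusions for an arbitrary homomorphism between closed subsets of an arbitrary hypergroup, with injectivity playing no role, whereas the paper's proof is specific to elementary abelian $2$-hypergroups and does use injectivity (to know $s^\alpha\neq e$). What the paper's route buys is uniformity with the rest of Section 4, which reasons throughout in terms of supports, involutions, and the $\mathrm{Thick}$/$\mathrm{Thin}$ decomposition.
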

\begin{proof}
Theorem \ref{T;TheoremA} implies that $\mathrm{O}^\vartheta(\mathbb{F})=\langle r\rangle$ for some $r\in\mathrm{Thick}(\mathbb{F})$. There is no loss to require that $\mathrm{s}(r)\in\mathbb{N}$ and $\{q_1, q_2, \ldots, q_{\mathrm{s}(r)}\}$ is the support of $r$. Lemma \ref{L;Lemma3.1} thus implies that $\langle r\rangle=\langle q_1, q_2, \ldots, q_{\mathrm{s}(r)}\rangle$. Pick $s\in\{q_1, q_2, \ldots, q_{\mathrm{s}(r)}\}$. Notice that $s^\alpha$ is an involution of $\mathbb{H}$ as $s^\alpha\neq e$ and $\{e, s^\alpha\}$ is a closed subset of $\mathbb{H}$. Hence the combination of Theorem \ref{T;TheoremA}, Lemmas \ref{L;Lemma3.1}, \ref{L;Lemma2.10}, \ref{L;Lemma2.5}, (H2) implies that $s^\alpha\in\mathrm{O}^\vartheta(\mathbb{G})$. The desired lemma thus follows as $s$ is chosen from $\{q_1, q_2, \ldots, q_{\mathrm{s}(r)}\}$ arbitrarily.
\end{proof}
For presenting the remaining lemmas, it is necessary to list the following notation:
\begin{nota}\label{N;Notation4.16}
\em Assume that $\mathbb{F}$ and  $\mathbb{G}$ are closed subsets of $\mathbb{H}$. Assume that $\alpha$ is an injective homomorphism
from $\mathbb{F}$ to $\mathbb{G}$. Let $\alpha^+$ be the restriction of $\alpha$ to $\mathrm{O}^\vartheta(\mathbb{F})$. Let $\alpha^-$ be the restriction of $\alpha$ to $\mathrm{O}_\vartheta(\mathbb{F})$. As $\mathrm{Thick}(\mathbb{F})=\mathrm{O}^\vartheta(\mathbb{F})$ and $\mathrm{Thin}(\mathbb{F})=\mathrm{O}_\vartheta(\mathbb{F})$ by Lemmas \ref{L;Lemma3.6} and \ref{L;Lemma3.7}, Lemma \ref{L;Lemma3.8} implies that $(r^+)^{\alpha^+}(r^-)^{\alpha^-}\!=\!\{r^\alpha\}$ for any $r\in\mathbb{F}$. As Lemmas \ref{L;Lemma3.4} and \ref{L;Lemma3.7} imply that $\mathrm{O}_\vartheta(\mathbb{F})$ is a thin closed subset of $\mathbb{H}$, Lemma \ref{L;Lemma4.15} thus implies that  $\alpha^+\in\mathrm{Aut}(\mathrm{O}^\vartheta(\mathbb{F}))$ and $\alpha^-\in\mathrm{Aut}(\mathrm{O}_\vartheta(\mathbb{F}))$ if $\mathbb{F}=\mathbb{G}$ and $\alpha\in\mathrm{Aut}(\mathbb{F})$.
\end{nota}
\begin{lem}\label{L;Lemma4.17}
Assume that $\mathbb{E}$, $\mathbb{F}$, $\mathbb{G}$ are closed subsets of $\mathbb{H}$. Assume that $\alpha$ is an injective homomorphism from $\mathbb{E}$ to $\mathbb{F}$. Assume that $\beta$ is an injective homomorphism from $\mathbb{F}$ to $\mathbb{G}$.
Then $\alpha\beta$ is an injective homomorphism from $\mathbb{E}$ to $\mathbb{G}$, $(\alpha\beta)^+=\alpha^+\beta^+$, and $(\alpha\beta)^-=\alpha^-\beta^-$.
\end{lem}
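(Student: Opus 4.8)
The plan is to verify the three assertions in order, each simply by unwinding the relevant definitions; no substantial idea is required, and the only point needing care is to make sure the restricted maps $\alpha^+,\beta^+$ (and $\alpha^-,\beta^-$) from Notation~\ref{N;Notation4.16} can actually be composed, which is precisely what Lemma~\ref{L;Lemma4.15} supplies. First I would show that $\alpha\beta$ is an injective homomorphism from $\mathbb{E}$ to $\mathbb{G}$. Injectivity is immediate, since a composite of injections is an injection. For the homomorphism property, $e^{\alpha\beta}=(e^\alpha)^\beta=e^\beta=e$ because $\alpha$ and $\beta$ are homomorphisms; and for $p,q\in\mathbb{E}$ one computes $(pq)^{\alpha\beta}=\big((pq)^\alpha\big)^\beta=(p^\alpha q^\alpha)^\beta=(p^\alpha)^\beta(q^\alpha)^\beta=p^{\alpha\beta}q^{\alpha\beta}$, where the second equality uses that $\alpha$ is a homomorphism and the third uses that $\beta$ is a homomorphism applied to the elements $p^\alpha,q^\alpha\in\mathbb{F}$, so that it carries the subset $p^\alpha q^\alpha$ of $\mathbb{F}$ to the subset $(p^\alpha)^\beta(q^\alpha)^\beta$ of $\mathbb{G}$.

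Next I would establish $(\alpha\beta)^+=\alpha^+\beta^+$. By Notation~\ref{N;Notation4.16}, $(\alpha\beta)^+$ is the restriction of $\alpha\beta$ to $\mathrm{O}^\vartheta(\mathbb{E})$, while $\alpha^+$ is the restriction of $\alpha$ to $\mathrm{O}^\vartheta(\mathbb{E})$ and $\beta^+$ the restriction of $\beta$ to $\mathrm{O}^\vartheta(\mathbb{F})$. Lemma~\ref{L;Lemma4.15} gives $\mathrm{O}^\vartheta(\mathbb{E})^{\alpha}\subseteq\mathrm{O}^\vartheta(\mathbb{F})$, so the image of $\alpha^+$ lies in the domain of $\beta^+$ and the composite $\alpha^+\beta^+$ is a well-defined map on $\mathrm{O}^\vartheta(\mathbb{E})$. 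Then for every $r\in\mathrm{O}^\vartheta(\mathbb{E})$ we have $r^{\alpha^+\beta^+}=(r^{\alpha})^{\beta^+}=(r^\alpha)^\beta=r^{\alpha\beta}=r^{(\alpha\beta)^+}$, where the second equality holds because $r^\alpha\in\mathrm{O}^\vartheta(\mathbb{F})$. Since both $\alpha^+\beta^+$ and $(\alpha\beta)^+$ have domain $\mathrm{O}^\vartheta(\mathbb{E})$ and agree at every point, they are equal.

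Finally, the identity $(\alpha\beta)^-=\alpha^-\beta^-$ follows by the identical argument, this time using the inclusion $\mathrm{O}_\vartheta(\mathbb{E})^{\alpha}\subseteq\mathrm{O}_\vartheta(\mathbb{F})$ from Lemma~\ref{L;Lemma4.15} to legitimise the composition of $\alpha^-$ with $\beta^-$ and to conclude $(r^\alpha)^{\beta^-}=(r^\alpha)^\beta$ for $r\in\mathrm{O}_\vartheta(\mathbb{E})$. The whole argument is essentially bookkeeping: the \emph{only} thing that could go wrong is a mismatch between the codomain of a restricted map and the domain of the next one, and Lemma~\ref{L;Lemma4.15} is exactly what rules that out; once it is invoked, all three claims drop out of the definitions.
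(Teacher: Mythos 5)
Your proposal is correct and follows essentially the same route as the paper: the first statement by direct computation, and the identities $(\alpha\beta)^+=\alpha^+\beta^+$, $(\alpha\beta)^-=\alpha^-\beta^-$ by invoking Lemma~\ref{L;Lemma4.15} to ensure the restricted maps compose, then checking pointwise agreement. You have merely written out explicitly the ``direct computation'' that the paper leaves implicit.
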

\begin{proof}
The first statement is from the above hypotheses and a direct computation. Notice that  $\mathrm{O}^\vartheta(\mathbb{E})^\alpha\subseteq\mathrm{O}^\vartheta(\mathbb{F})$ and $\mathrm{O}_\vartheta(\mathbb{E})^\alpha\subseteq\mathrm{O}_\vartheta(\mathbb{F})$ by Lemma \ref{L;Lemma4.15}.
The desired lemma thus follows from the above discussion and a direct computation.
\end{proof}
\begin{lem}\label{L;Lemma4.18}
Assume that $\mathbb{F}$ and $\mathbb{G}$ are closed subsets of $\mathbb{H}$. Assume that $\alpha$ and $\beta$ are injective homomorphisms from $\mathbb{F}$ to $\mathbb{G}$. Then $\alpha^+$ is an injective homomorphism from $\mathrm{O}^\vartheta(\mathbb{F})$ to $\mathrm{O}^\vartheta(\mathbb{G})$ and $\alpha^-$ is an injective homomorphism from the thin closed subset $\mathrm{O}_\vartheta(\mathbb{F})$ of $\mathbb{H}$ to the thin closed subset $\mathrm{O}_\vartheta(\mathbb{G})$ of $\mathbb{H}$. Moreover, $\alpha=\beta$ if and only if $\alpha^+=\beta^+$ and $\alpha^-=\beta^-$.
\end{lem}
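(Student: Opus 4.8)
The plan is to first verify the two structural claims about $\alpha^+$ and $\alpha^-$, and then derive the equivalence from the constrained decomposition $r=r^+r^-$ together with the multiplicativity identity recorded in Notation \ref{N;Notation4.16}.

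For the first part I would argue as follows. Lemma \ref{L;Lemma4.15} gives $\mathrm{O}^\vartheta(\mathbb{F})^\alpha\subseteq\mathrm{O}^\vartheta(\mathbb{G})$ and $\mathrm{O}_\vartheta(\mathbb{F})^\alpha\subseteq\mathrm{O}_\vartheta(\mathbb{G})$, so $\alpha^+$ is a well-defined map from $\mathrm{O}^\vartheta(\mathbb{F})$ to $\mathrm{O}^\vartheta(\mathbb{G})$ and $\alpha^-$ a well-defined map from $\mathrm{O}_\vartheta(\mathbb{F})$ to $\mathrm{O}_\vartheta(\mathbb{G})$. Now $\mathrm{O}^\vartheta(\mathbb{F})$ is a closed subset of $\mathbb{H}$ (being a strongly normal closed subset of $\mathbb{F}$), so it is closed under the hypermultiplication; since $\alpha$ fixes $e$ and is multiplicative on all of $\mathbb{F}$, its restriction $\alpha^+$ satisfies $e^{\alpha^+}=e$ and $(pq)^{\alpha^+}=p^{\alpha^+}q^{\alpha^+}$ for all $p,q\in\mathrm{O}^\vartheta(\mathbb{F})$, and it is injective because $\alpha$ is. The same reasoning applies to $\alpha^-$, with Lemmas \ref{L;Lemma3.4} and \ref{L;Lemma3.7} ensuring that $\mathrm{O}_\vartheta(\mathbb{F})$ and $\mathrm{O}_\vartheta(\mathbb{G})$ are thin closed subsets of $\mathbb{H}$.

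For the equivalence, the implication $\alpha=\beta\Rightarrow(\alpha^+=\beta^+\text{ and }\alpha^-=\beta^-)$ is immediate from the definition of the two restrictions. Conversely, suppose $\alpha^+=\beta^+$ and $\alpha^-=\beta^-$, and fix $r\in\mathbb{F}$. By Lemmas \ref{L;Lemma3.3}, \ref{L;Lemma3.6}, and \ref{L;Lemma3.7} we have $r^+\in\mathrm{O}^\vartheta(\mathbb{F})$ and $r^-\in\mathrm{O}_\vartheta(\mathbb{F})$, and $r^+r^-=\{r\}$ since $\mathbb{H}$ is constrained. Notation \ref{N;Notation4.16} then gives $\{r^\alpha\}=(r^+)^{\alpha^+}(r^-)^{\alpha^-}=(r^+)^{\beta^+}(r^-)^{\beta^-}=\{r^\beta\}$, so $r^\alpha=r^\beta$; as $r$ was arbitrary, $\alpha=\beta$. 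The argument is essentially bookkeeping, and I expect no genuine obstacle beyond correctly invoking the Section 3 structure results that identify $\mathrm{O}^\vartheta(\mathbb{F})$ and $\mathrm{O}_\vartheta(\mathbb{F})$ as the closed subsets on which $\alpha$ restricts.
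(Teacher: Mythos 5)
Your proposal is correct and follows essentially the same route as the paper: Lemma \ref{L;Lemma4.15} (together with Lemmas \ref{L;Lemma3.4} and \ref{L;Lemma3.7}) for the first statement, and the identity $(r^+)^{\alpha^+}(r^-)^{\alpha^-}=\{r^\alpha\}$ from Notation \ref{N;Notation4.16} for the equivalence. The paper's proof is just a more compressed version of the same bookkeeping.
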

\begin{proof}
As Lemmas \ref{L;Lemma3.4} and \ref{L;Lemma3.7} imply that $\mathrm{O}_\vartheta(\mathbb{F})$ and $\mathrm{O}_\vartheta(\mathbb{G})$ are thin closed subsets of $\mathbb{H}$, the first statement follows from Lemma \ref{L;Lemma4.15} and the above hypotheses. The desired lemma follows as $\!(r^+)^{\alpha^+}(r^-)^{\alpha^-}\!=\!\{r^\alpha\}$ and $\!(r^+)^{\beta^+}(r^-)^{\beta^-}\!=\!\{r^\beta\}$ if $r\in\mathbb{F}$.
\end{proof}
\begin{lem}\label{L;Lemma4.19}
Assume that $\mathbb{F}$ and $\mathbb{G}$ are closed subsets of $\mathbb{H}$. Assume that $\alpha$ is an injective homomorphism from $\mathrm{O}^\vartheta(\mathbb{F})$ to $\mathrm{O}^\vartheta(\mathbb{G})$. Assume that $\beta$ is an injective homomorphism from the thin
closed subset $\mathrm{O}_\vartheta(\mathbb{F})$ of $\mathbb{H}$ to the thin closed subset $\mathrm{O}_\vartheta(\mathbb{G})$ of $\mathbb{H}$. Then there is a unique injective homomorphism from $\mathbb{F}$ to $\mathbb{G}$ such that its restriction to $\mathrm{O}^\vartheta(\mathbb{G})$ is $\alpha$ and its restriction to $\mathrm{O}_\vartheta(\mathbb{G})$ is $\beta$.
\end{lem}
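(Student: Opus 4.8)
The plan is to build the required homomorphism explicitly from its ``thick'' and ``thin'' coordinates and then verify it works. By Lemmas \ref{L;Lemma3.6} and \ref{L;Lemma3.7} we have $\mathrm{Thick}(\mathbb{F})=\mathrm{O}^\vartheta(\mathbb{F})$ and $\mathrm{Thin}(\mathbb{F})=\mathrm{O}_\vartheta(\mathbb{F})$, so for $r\in\mathbb{F}$ the thick part $r^+$ lies in the domain of $\alpha$ and the thin part $r^-$ lies in the domain of $\beta$. Define $\delta$ from $\mathbb{F}$ to $\mathbb{G}$ by letting $r^\delta$ be the unique element of $\mathbb{G}$ with $(r^+)^\alpha(r^-)^\beta=\{r^\delta\}$. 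This is legitimate because $(r^+)^\alpha\in\mathrm{O}^\vartheta(\mathbb{G})=\mathrm{Thick}(\mathbb{G})$ and $(r^-)^\beta\in\mathrm{O}_\vartheta(\mathbb{G})=\mathrm{Thin}(\mathbb{G})$, so by Lemmas \ref{L;Lemma2.1}, \ref{L;Lemma2.14}, and \ref{L;Lemma3.8} their product is a singleton inside $\mathbb{G}$ whose thick and thin parts are exactly $(r^+)^\alpha$ and $(r^-)^\beta$. In particular $(r^\delta)^+=(r^+)^\alpha$ and $(r^\delta)^-=(r^-)^\beta$; specialising to $r\in\mathrm{O}^\vartheta(\mathbb{F})$ (resp.\ $r\in\mathrm{O}_\vartheta(\mathbb{F})$) shows that the restriction of $\delta$ to $\mathrm{O}^\vartheta(\mathbb{F})$ is $\alpha$ (resp.\ the restriction of $\delta$ to $\mathrm{O}_\vartheta(\mathbb{F})$ is $\beta$), i.e.\ $\delta^+=\alpha$ and $\delta^-=\beta$ in the notation of Notation \ref{N;Notation4.16}. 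Since $\alpha$ and $\beta$ are injective and an element of $\mathbb{G}$ is determined by its thick and thin parts, $\delta$ is injective.

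The main work is to check that $\delta$ is a homomorphism. That $e^\delta=e$ is clear. For $r,s\in\mathbb{F}$, using commutativity of $\mathbb{H}$ together with $\{r\}=r^+r^-$ and $\{s\}=s^+s^-$ one gets $rs=r^+r^-s^+s^-=(r^+s^+)(r^-s^-)$. Here $r^-s^-=\{u\}$ is a singleton lying in the closed subset $\mathrm{O}_\vartheta(\mathbb{F})$ (Lemmas \ref{L;Lemma2.1}, \ref{L;Lemma2.2}, \ref{L;Lemma3.4}, \ref{L;Lemma3.7}) and $r^+s^+\subseteq\mathrm{O}^\vartheta(\mathbb{F})$ (Lemmas \ref{L;Lemma3.4}, \ref{L;Lemma3.6}), so that (again by the thick/thin decomposition of Lemma \ref{L;Lemma3.8}) $rs$ is precisely the set of $w\in\mathbb{F}$ with $w^-=u$ and $w^+\in r^+s^+$, and the map $w\mapsto w^+$ is a bijection from $rs$ onto $r^+s^+$. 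Applying $\delta$ and using that $\alpha$ is a homomorphism, $(rs)^\delta$ becomes the set of $z\in\mathbb{G}$ with $z^-=u^\beta$ and $z^+\in(r^+s^+)^\alpha=(r^+)^\alpha(s^+)^\alpha$. On the other hand, $r^\delta s^\delta=(r^+)^\alpha(r^-)^\beta(s^+)^\alpha(s^-)^\beta=(r^+)^\alpha(s^+)^\alpha\cdot(r^-)^\beta(s^-)^\beta$, and since $\beta$ is a homomorphism $(r^-)^\beta(s^-)^\beta=(r^-s^-)^\beta=\{u^\beta\}$, so $r^\delta s^\delta$ is described by the very same pair of conditions. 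Hence $(rs)^\delta=r^\delta s^\delta$, and $\delta$ is an injective homomorphism from $\mathbb{F}$ to $\mathbb{G}$ with $\delta^+=\alpha$ and $\delta^-=\beta$.

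Uniqueness is then immediate from the machinery already in place: if $\delta'$ is any injective homomorphism from $\mathbb{F}$ to $\mathbb{G}$ whose restrictions to $\mathrm{O}^\vartheta(\mathbb{F})$ and $\mathrm{O}_\vartheta(\mathbb{F})$ are $\alpha$ and $\beta$, then $(\delta')^+=\alpha=\delta^+$ and $(\delta')^-=\beta=\delta^-$, whence $\delta'=\delta$ by Lemma \ref{L;Lemma4.18}. I expect the only genuinely computational step to be the set-level identity $(rs)^\delta=r^\delta s^\delta$: it rests on decomposing products of elements of $\mathbb{H}$ into their thick and thin parts, for which the commutativity of $\mathbb{H}$ and the structure results of Section 3 (especially Lemmas \ref{L;Lemma3.3}, \ref{L;Lemma3.4}, \ref{L;Lemma3.6}, \ref{L;Lemma3.7}, and \ref{L;Lemma3.8}) do essentially all of the work, while everything else is bookkeeping. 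Here the restriction conditions in the statement are to be understood as imposed on $\mathrm{O}^\vartheta(\mathbb{F})$ and $\mathrm{O}_\vartheta(\mathbb{F})$, the domains of $\alpha$ and $\beta$.
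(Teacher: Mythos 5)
Your proposal is correct and follows essentially the same route as the paper: define the map on $\mathbb{F}$ by sending $r$ to the unique element of $\mathbb{G}$ with thick part $(r^+)^\alpha$ and thin part $(r^-)^\beta$, check it is an injective homomorphism via the thick/thin decomposition of Lemma \ref{L;Lemma3.8}, and derive uniqueness from Lemma \ref{L;Lemma4.18}. The only difference is that you spell out the ``direct computation'' $(rs)^\delta=r^\delta s^\delta$ that the paper leaves implicit (and you correctly note that the restrictions in the statement should be read as restrictions to $\mathrm{O}^\vartheta(\mathbb{F})$ and $\mathrm{O}_\vartheta(\mathbb{F})$).
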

\begin{proof}
If $r\in \mathbb{F}$, Lemma \ref{L;Lemma3.6} lets $r_{(\alpha, \beta)}$ be the element $s$ in $\mathbb{G}$ satisfying $s^+=(r^+)^\alpha$ and $s^-\!=\!(r^-)^\beta$. As Lemmas \ref{L;Lemma3.4} and \ref{L;Lemma3.7} imply that both $\mathrm{O}_\vartheta(\mathbb{F})$ and $\mathrm{O}_\vartheta(\mathbb{G})$ are thin closed subsets of $\mathbb{H}$, Lemma \ref{L;Lemma3.8} gives a map $\alpha\otimes\beta$ from $\mathbb{F}$ to $\mathbb{G}$ that sends $r$ to $r_{(\alpha,\beta)}$ for any $r\in\mathbb{F}$. The combination of Lemmas \ref{L;Lemma3.3}, \ref{L;Lemma3.4}, \ref{L;Lemma3.6}, \ref{L;Lemma3.7}, a direct computation shows that
$\alpha\otimes\beta$ is a homomorphism from $\mathbb{F}$ to $\mathbb{G}$. By Lemmas \ref{L;Lemma3.8} and \ref{L;Lemma2.4},
notice that $\alpha\otimes\beta$ is also an injective homomorphism from $\mathbb{F}$ to $\mathbb{G}$. Moreover, notice that
$(\alpha\otimes\beta)^+=\alpha$ and $(\alpha\otimes\beta)^-=\beta$. The desired lemma follows from Lemma \ref{L;Lemma4.18}.
\end{proof}
\begin{lem}\label{L;Lemma4.20}
Assume that $\mathbb{G}$ is a closed subset of $\mathbb{H}$. Assume that $r\in\mathrm{Thick}(\mathbb{G})$. Then
$\mathrm{Aut}(\langle r\rangle)\cong\mathbb{S}_{\mathrm{s}(r)}$ and $\mathrm{Aut}(\mathrm{O}^\vartheta(\mathbb{G}))\cong\mathbb{S}_{\mathrm{s}(\mathbb{G})}$. In particular, $\mathrm{Aut}(\mathrm{O}^\vartheta(\mathbb{H}))\cong\mathbb{S}_{p^\sharp}$.
\end{lem}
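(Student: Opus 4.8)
The plan is to reduce the whole statement to the single claim that $\mathrm{Aut}(\langle r\rangle)\cong\mathbb{S}_{\mathrm{s}(r)}$ for every $r\in\mathrm{Thick}(\mathbb{H})$. Granting this, the first assertion is immediate since $\mathrm{Thick}(\mathbb{G})\subseteq\mathrm{Thick}(\mathbb{H})$ for any closed subset $\mathbb{G}$ of $\mathbb{H}$; for the second, Lemmas \ref{L;Lemma3.5} and \ref{L;Lemma3.6} supply a (unique) $r_0\in\mathbb{G}$ with $\mathrm{O}^\vartheta(\mathbb{G})=\mathrm{Thick}(\mathbb{G})=\langle r_0\rangle$ and $\mathrm{s}(r_0)=\mathrm{s}(\mathbb{G})$, so $\mathrm{Aut}(\mathrm{O}^\vartheta(\mathbb{G}))=\mathrm{Aut}(\langle r_0\rangle)\cong\mathbb{S}_{\mathrm{s}(\mathbb{G})}$; and the final clause is the case $\mathbb{G}=\mathbb{H}$ together with $\mathrm{s}(\mathbb{H})=p^\sharp$ from Lemma \ref{L;Lemma3.8}.

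To prove the claim I would first dispose of the degenerate case $\mathrm{s}(r)=0$, where $r=e$, $\langle r\rangle=\{e\}$, and $\mathrm{Aut}(\{e\})=\mathbb{S}_0$. Assume then $\mathrm{s}(r)=k\in\mathbb{N}$, let $S=\{s_1,\dots,s_k\}$ be the support of $r$ (each $s_i$ a thick involution by (E2)), and use Lemma \ref{L;Lemma3.1} to identify $\langle r\rangle=\{a:a\preceq r\}$ with the power set of $S$, writing $a_A$ for the element with support $A\subseteq S$; thus $a_\varnothing=e$, $a_{\{s_i\}}=s_i$, $a_S=r$. From $a_A^2=\{a_C:C\subseteq A\}$ (again Lemma \ref{L;Lemma3.1}) one reads off that $\{e,a_A\}$ is a closed subset of $\mathbb{H}$ precisely when $|A|\le 1$, so the involutions of $\langle r\rangle$ are exactly $s_1,\dots,s_k$. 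Since an isomorphism of closed subsets of $\mathbb{H}$ commutes with $*$ and with products, it carries involutions to involutions; hence restriction to $S$ defines a group homomorphism $\Phi\colon\mathrm{Aut}(\langle r\rangle)\to\mathbb{S}_k$, where $\mathbb{S}_k$ is identified with the symmetric group on $S$.

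It then remains to show $\Phi$ is bijective. For surjectivity, any $\sigma\in\mathbb{S}_k$ is a bijection of $S$, and the construction in the proof of Lemma \ref{L;Lemma4.1}, taken with $s=r$, yields an automorphism of $\langle r\rangle$ sending $a_A$ to $a_{A^\sigma}$ for all $A\subseteq S$, which is a $\Phi$-preimage of $\sigma$. For injectivity, observe that for $A=\{s_{i_1},\dots,s_{i_m}\}$ the elements $s_{i_1},\dots,s_{i_m}$ have pairwise disjoint supports, so by (E1) the product $s_{i_1}\cdots s_{i_m}$ is the singleton $\{a_A\}$; thus an $\alpha\in\mathrm{Aut}(\langle r\rangle)$ fixing every $s_i$ satisfies $\{a_A^\alpha\}=(s_{i_1}\cdots s_{i_m})^\alpha=s_{i_1}^\alpha\cdots s_{i_m}^\alpha=\{a_A\}$ for all $A$, i.e. $\alpha=\epsilon_{\langle r\rangle}$. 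Consequently $\mathrm{Aut}(\langle r\rangle)\cong\mathbb{S}_k=\mathbb{S}_{\mathrm{s}(r)}$, which proves the claim and hence the lemma.

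I expect the only genuinely delicate point to be the injectivity of $\Phi$: a priori an automorphism could fix all involutions of $\langle r\rangle$ while permuting elements supported on two or more of the $s_i$, and excluding this is exactly where the single-valuedness of products of disjointly supported elements — that is, the constrainedness hypothesis (E1) — is used. The remaining steps are routine bookkeeping with Lemmas \ref{L;Lemma3.1}, \ref{L;Lemma3.5}, \ref{L;Lemma3.6}, \ref{L;Lemma3.8}, and \ref{L;Lemma4.1}.
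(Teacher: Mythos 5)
Your proposal is correct and follows essentially the same route as the paper's proof: identify $\langle r\rangle$ with the subsets of the support of $r$ via Lemma \ref{L;Lemma3.1}, show that $\mathrm{Aut}(\langle r\rangle)$ acts faithfully on the involutions $q_1,\dots,q_{\mathrm{s}(r)}$ (faithfulness coming from the constrainedness in (E1)), realize every permutation by the construction of Lemma \ref{L;Lemma4.1}, and then deduce the remaining assertions from Lemmas \ref{L;Lemma3.5}, \ref{L;Lemma3.6}, and \ref{L;Lemma3.8}. Your write-up is merely more explicit than the paper's at the injectivity step, which is indeed the one point where (E1) is essential.
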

\begin{proof}
There is no loss to require that $\mathrm{s}(r)\in\mathbb{N}$ and $\{q_1, q_2, \ldots, q_{\mathrm{s}(r)}\}$ is the support of $r$. Lemma \ref{L;Lemma3.1} thus implies that $\langle r\rangle=\langle q_1, q_2, \ldots, q_{\mathrm{s}(r)}\rangle$. Pick $\alpha\in\mathrm{Aut}(\langle r\rangle)$ and $s\in\{q_1, q_2, \ldots, q_{\mathrm{s}(r)}\}$. Notice that $s^\alpha$ is an involution of $\mathbb{H}$ as $s^\alpha\neq e$ and $\{e, s^\alpha\}$ is a closed subset of $\mathbb{H}$. Moreover, notice that $s^\alpha\in\{q_1, q_2, \ldots, q_{\mathrm{s}(r)}\}$ by Lemma \ref{L;Lemma3.1}. As $\alpha$ and $s$ are arbitrarily chosen from $\mathrm{Aut}(\langle r\rangle)$
and $\{q_1, q_2, \ldots, q_{\mathrm{s}(r)}\}$, the combination of Lemma \ref{L;Lemma3.1}, (E1), (E2) implies that $\mathrm{Aut}(\langle r\rangle)$ acts on $\{q_1, q_2, \ldots, q_{\mathrm{s}(r)}\}$ faithfully.

Let $\beta$ be a bijection from $\{q_1, q_2, \ldots, q_{\mathrm{s}(r)}\}$ to $\{q_1, q_2, \ldots, q_{\mathrm{s}(r)}\}$. By combining (E1), (E2), Lemma \ref{L;Lemma3.1}, $\beta$ induces an isomorphism from $\langle r\rangle$ to $\langle r\rangle$ that sends the element with the support $\mathbb{F}$ to the element with the support $\mathbb{F}^\beta$ for any $\mathbb{F}\subseteq\{ q_1, q_2, \ldots, q_{\mathrm{s}(r)}\}$. As $\beta$ is an arbitrarily chosen bijection from $\{q_1, q_2, \ldots, q_{\mathrm{s}(r)}\}$ to $\{q_1, q_2, \ldots, q_{\mathrm{s}(r)}\}$, the first formula follows. The second formula thus follows from combining Lemmas
\ref{L;Lemma3.6}, \ref{L;Lemma3.5}, and the first formula. The desired lemma thus follows from Lemma \ref{L;Lemma3.8}.
\end{proof}
We are now ready to give the final main result of this section as the next theorem:
\begin{thm}\label{T;TheoremE}
Assume that $\mathbb{G}$ is a closed subset of $\mathbb{H}$. Then
$$\mathrm{Aut}(\mathbb{G})\cong\mathbb{S}_{\mathrm{s}(\mathbb{G})}\times \mathbb{GL}(\mathrm{r}_2(\mathbb{G}), 2)\ \text{and}\ \mathrm{Aut}(\mathbb{H})\cong\mathbb{S}_{p^\sharp}\times \mathbb{GL}(p-p^\sharp, 2).$$
\end{thm}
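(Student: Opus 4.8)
The plan is to show that restriction to the thick and thin parts gives a group isomorphism
\[\Phi\colon\mathrm{Aut}(\mathbb{G})\longrightarrow\mathrm{Aut}(\mathrm{O}^\vartheta(\mathbb{G}))\times\mathrm{Aut}(\mathrm{O}_\vartheta(\mathbb{G})),\qquad \alpha\mapsto(\alpha^+,\alpha^-),\]
and then to identify the two factors. First I would check that $\Phi$ is well defined and multiplicative: by Notation \ref{N;Notation4.16} we have $\alpha^+\in\mathrm{Aut}(\mathrm{O}^\vartheta(\mathbb{G}))$ and $\alpha^-\in\mathrm{Aut}(\mathrm{O}_\vartheta(\mathbb{G}))$ for every $\alpha\in\mathrm{Aut}(\mathbb{G})$, while Lemma \ref{L;Lemma4.17} (applied with $\mathbb{E}=\mathbb{F}=\mathbb{G}$) gives $(\alpha\beta)^+=\alpha^+\beta^+$ and $(\alpha\beta)^-=\alpha^-\beta^-$; since $\epsilon_\mathbb{G}$ restricts to the identity maps, $\Phi$ is a group homomorphism.

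Next I would establish bijectivity. Injectivity of $\Phi$ is exactly Lemma \ref{L;Lemma4.18}: an element of $\mathrm{Aut}(\mathbb{G})$ is determined by its pair of restrictions. For surjectivity, given $\alpha\in\mathrm{Aut}(\mathrm{O}^\vartheta(\mathbb{G}))$ and $\beta\in\mathrm{Aut}(\mathrm{O}_\vartheta(\mathbb{G}))$, Lemma \ref{L;Lemma4.19} (with $\mathbb{F}=\mathbb{G}$) produces the injective homomorphism $\alpha\otimes\beta$ of $\mathbb{G}$ with $(\alpha\otimes\beta)^+=\alpha$ and $(\alpha\otimes\beta)^-=\beta$; because $|\mathbb{G}|=2^{\mathrm{s}(\mathbb{G})+\mathrm{r}_2(\mathbb{G})}$ is finite by Lemma \ref{L;Lemma3.8}, this injective homomorphism is bijective, hence lies in $\mathrm{Aut}(\mathbb{G})$ and is a $\Phi$-preimage of $(\alpha,\beta)$. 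Thus $\Phi$ is a group isomorphism.

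It remains to recognise the two factors. Lemma \ref{L;Lemma4.20} gives $\mathrm{Aut}(\mathrm{O}^\vartheta(\mathbb{G}))\cong\mathbb{S}_{\mathrm{s}(\mathbb{G})}$. By Lemma \ref{L;Lemma3.7}, $\mathrm{O}_\vartheta(\mathbb{G})$ is a thin closed subset of $\mathbb{H}$ and $\mathrm{O}_\vartheta(\mathbb{G})^\gamma$ is an elementary abelian $2$-group of $2$-rank $\mathrm{r}_2(\mathbb{G})$; hence Lemma \ref{L;Lemma2.12} together with the quoted fact that the automorphism group of an elementary abelian $2$-group of $2$-rank $\mathrm{r}_2(\mathbb{G})$ is $\mathbb{GL}(\mathrm{r}_2(\mathbb{G}),2)$ gives $\mathrm{Aut}(\mathrm{O}_\vartheta(\mathbb{G}))\cong\mathbb{GL}(\mathrm{r}_2(\mathbb{G}),2)$. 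Combining these with the isomorphism $\Phi$ yields $\mathrm{Aut}(\mathbb{G})\cong\mathbb{S}_{\mathrm{s}(\mathbb{G})}\times\mathbb{GL}(\mathrm{r}_2(\mathbb{G}),2)$, and specialising to $\mathbb{G}=\mathbb{H}$ with $\mathrm{s}(\mathbb{H})=p^\sharp$ and $\mathrm{r}_2(\mathbb{H})=p-p^\sharp$ from Lemma \ref{L;Lemma3.8} gives the statement for $\mathbb{H}$. The only point beyond bookkeeping is the surjectivity of $\Phi$, namely that $\alpha\otimes\beta$ is an \emph{automorphism} rather than merely an injective endomorphism; Lemma \ref{L;Lemma4.19} already verifies it is an injective homomorphism, so finiteness of $\mathbb{G}$ closes the gap, and the rest of the argument is an assembly of Lemmas \ref{L;Lemma4.17}--\ref{L;Lemma4.20}.
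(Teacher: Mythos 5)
Your proposal is correct and follows essentially the same route as the paper: both establish the isomorphism $\alpha\mapsto(\alpha^+,\alpha^-)$ onto $\mathrm{Aut}(\mathrm{O}^\vartheta(\mathbb{G}))\times\mathrm{Aut}(\mathrm{O}_\vartheta(\mathbb{G}))$ via Lemmas \ref{L;Lemma4.17}, \ref{L;Lemma4.18}, \ref{L;Lemma4.19}, and then identify the factors via Lemmas \ref{L;Lemma4.20}, \ref{L;Lemma2.12}, \ref{L;Lemma3.7}, and \ref{L;Lemma3.8}. Your explicit finiteness remark for surjectivity just spells out a detail the paper leaves implicit.
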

\begin{proof}
By Lemmas \ref{L;Lemma3.4} and \ref{L;Lemma3.7}, $\mathrm{O}_\vartheta(\mathbb{G})$ is a thin closed subset of $\mathbb{H}$. By combining Lemmas \ref{L;Lemma4.17}, \ref{L;Lemma4.18}, \ref{L;Lemma4.19}, the map from $\mathrm{Aut}(\mathbb{G})$ to $\mathrm{Aut}(\mathrm{O}^\vartheta(\mathbb{G}))\times\mathrm{Aut}(\mathrm{O}_\vartheta(\mathbb{G}))$ that sends
$\alpha$ to $(\alpha^+, \alpha^-)$ for any $\alpha\in\mathrm{Aut}(\mathbb{G})$ is an obvious group isomorphism. The desired theorem thus follows from combining Lemmas \ref{L;Lemma4.20}, \ref{L;Lemma2.12}, \ref{L;Lemma2.10}, \ref{L;Lemma3.7}, \ref{L;Lemma3.8}.
\end{proof}
For some corollaries of Theorem \ref{T;TheoremE}, it is necessary to offer the following lemmas:
\begin{lem}\label{L;Lemma4.22}
Assume that $\mathbb{G}$ is a closed subset of $\mathbb{H}$. Then $\mathrm{Aut}(\mathbb{G})$ is isomorphic to the trivial group if and only if $(\mathrm{s}(\mathbb{G}), \mathrm{r}_2(\mathbb{G}))\in\{(0,0), (0,1), (1, 0), (1,1)\}$.
\end{lem}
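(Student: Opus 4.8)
The plan is to read the lemma straight off Theorem \ref{T;TheoremE}. That theorem gives $\mathrm{Aut}(\mathbb{G})\cong\mathbb{S}_{\mathrm{s}(\mathbb{G})}\times\mathbb{GL}(\mathrm{r}_2(\mathbb{G}),2)$, and a direct product of two groups is isomorphic to the trivial group if and only if each of the two factors is. Hence the whole problem reduces to deciding, for $n\in\mathbb{N}_0$, when $\mathbb{S}_n$ is trivial and when $\mathbb{GL}(n,2)$ is trivial.

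For the second reduction I would simply recall the conventions of Section 2 together with elementary facts about small groups. By convention $\mathbb{S}_0$ and the general linear group of degree zero are the trivial group, and $\mathbb{S}_1$ and $\mathbb{GL}(1,2)$ (the unit group of the field of two elements) are likewise trivial, so both $\mathbb{S}_n$ and $\mathbb{GL}(n,2)$ collapse to a point for $n\in\{0,1\}$. On the other hand, for $n\geq 2$ one has $|\mathbb{S}_n|\geq 2$ and $|\mathbb{GL}(n,2)|=\prod_{i=0}^{n-1}(2^n-2^i)\geq(2^2-1)(2^2-2)=6$, so neither factor is trivial in that range. Therefore $\mathbb{S}_{\mathrm{s}(\mathbb{G})}$ and $\mathbb{GL}(\mathrm{r}_2(\mathbb{G}),2)$ are simultaneously trivial exactly when $\mathrm{s}(\mathbb{G})\in\{0,1\}$ and $\mathrm{r}_2(\mathbb{G})\in\{0,1\}$, which is precisely the condition $(\mathrm{s}(\mathbb{G}),\mathrm{r}_2(\mathbb{G}))\in\{(0,0),(0,1),(1,0),(1,1)\}$; combining this with the first reduction proves the lemma.

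There is no genuine obstacle: the substantive work is already contained in Theorem \ref{T;TheoremE}, and what remains is only the elementary bookkeeping of which small symmetric and linear groups are trivial. The one place deserving a moment's attention is to cite the degree-zero conventions explicitly, so that the boundary cases $\mathrm{s}(\mathbb{G})=0$ and $\mathrm{r}_2(\mathbb{G})=0$ are treated on the same footing as the rest.
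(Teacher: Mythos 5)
Your proposal is correct and follows exactly the paper's route: the paper's proof of this lemma is simply ``Theorem \ref{T;TheoremE} and a direct computation,'' and your argument is precisely that computation spelled out (triviality of a direct product forces both factors trivial, and $\mathbb{S}_n$, $\mathbb{GL}(n,2)$ are trivial exactly for $n\in\{0,1\}$). Your explicit attention to the degree-zero conventions is a reasonable amplification of what the paper leaves implicit.
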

\begin{proof}
The desired lemma follows from Theorem \ref{T;TheoremE} and a direct computation.
\end{proof}
\begin{lem}\label{L;Lemma4.23}
Assume that $\mathbb{G}$ is a closed subset of $\mathbb{H}$. Then $\mathrm{Aut}(\mathbb{G})\cong \mathbb{S}_3$ if and only if
$(\mathrm{s}(\mathbb{G}), \mathrm{r}_2(\mathbb{G}))\in\{(0,2), (1,2), (3, 0), (3,1)\}$.
\end{lem}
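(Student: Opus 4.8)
The plan is to apply Theorem~\ref{T;TheoremE}, which identifies $\mathrm{Aut}(\mathbb{G})$ with $\mathbb{S}_{\mathrm{s}(\mathbb{G})}\times\mathbb{GL}(\mathrm{r}_2(\mathbb{G}),2)$, and then to settle by a short group-theoretic computation exactly when this direct product is isomorphic to $\mathbb{S}_3$.

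First I would record the orders involved. We have $|\mathbb{S}_n|=n!$, so $\mathbb{S}_0\cong\mathbb{S}_1$ is trivial, $|\mathbb{S}_2|=2$, $|\mathbb{S}_3|=6$, and $|\mathbb{S}_n|\ge 24$ for $n\ge 4$. We also have $\mathbb{GL}(0,2)\cong\mathbb{GL}(1,2)$ trivial, $\mathbb{GL}(2,2)\cong\mathbb{S}_3$ of order $6$, and $|\mathbb{GL}(n,2)|\ge 168$ for $n\ge 3$. In particular, among the groups $\mathbb{S}_n$ the only one isomorphic to $\mathbb{S}_3$ is $\mathbb{S}_3$ itself (since $n!=6$ forces $n=3$), and among the groups $\mathbb{GL}(n,2)$ the only one isomorphic to $\mathbb{S}_3$ is $\mathbb{GL}(2,2)$.

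Next I would use the observation that $\mathbb{S}_3$ admits no decomposition as a direct product of two nontrivial groups: any order-$6$ group that is such a product is isomorphic to $\mathbb{Z}/2\times\mathbb{Z}/3\cong\mathbb{Z}/6$, which is abelian, while $\mathbb{S}_3$ is not. Hence $\mathbb{S}_{\mathrm{s}(\mathbb{G})}\times\mathbb{GL}(\mathrm{r}_2(\mathbb{G}),2)\cong\mathbb{S}_3$ if and only if one factor is trivial and the other is isomorphic to $\mathbb{S}_3$. Combining this with the order bookkeeping of the previous step yields the two possibilities: either $\mathrm{s}(\mathbb{G})\in\{0,1\}$ and $\mathrm{r}_2(\mathbb{G})=2$, giving the pairs $(0,2)$ and $(1,2)$; or $\mathrm{s}(\mathbb{G})=3$ and $\mathrm{r}_2(\mathbb{G})\in\{0,1\}$, giving $(3,0)$ and $(3,1)$. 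The converse direction is immediate, since for each of these four pairs one factor is trivial and the other is $\cong\mathbb{S}_3$.

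No step here is genuinely hard; the only care needed is a complete and non-overlapping case analysis, in particular excluding $\mathrm{s}(\mathbb{G})=2$ (which would require a $\mathbb{GL}(\mathrm{r}_2(\mathbb{G}),2)$ of order $3$, impossible), excluding $\mathrm{s}(\mathbb{G})\ge 4$ and $\mathrm{r}_2(\mathbb{G})\ge 3$ by the order bounds, and noting that $(3,2)$ gives $\mathbb{S}_3\times\mathbb{S}_3$ rather than $\mathbb{S}_3$. Thus the lemma follows from Theorem~\ref{T;TheoremE} together with this bookkeeping, in the same style as the proof of the preceding lemma.
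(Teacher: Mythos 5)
Your proposal is correct and follows exactly the route the paper intends: the paper's proof of this lemma is just ``Theorem~\ref{T;TheoremE} and a direct computation,'' and your argument (direct indecomposability of $\mathbb{S}_3$, hence one factor trivial and the other isomorphic to $\mathbb{S}_3$, plus the order bookkeeping for $\mathbb{S}_n$ and $\mathbb{GL}(n,2)$) is precisely that computation carried out in full.
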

\begin{proof}
The desired lemma follows from Theorem \ref{T;TheoremE} and a direct computation.
\end{proof}
\begin{lem}\label{L;Lemma4.24}
Assume that $\mathbb{G}$ is a closed subset of $\mathbb{H}$. Then there are subgroups $\mathbb{E}$ and $\mathbb{F}$ of $\mathrm{Aut}(\mathbb{G})$ such that $\mathrm{Aut}(\mathbb{G})\cong\mathbb{E}\times\mathbb{F}$ and neither $\mathbb{E}$ nor $\mathbb{F}$ is a direct product of its nontrivial proper subgroups.
\end{lem}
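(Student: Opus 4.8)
The plan is to extract the statement from Theorem \ref{T;TheoremE}. That theorem already writes $\mathrm{Aut}(\mathbb{G})$ as a direct product $\mathbb{S}_{\mathrm{s}(\mathbb{G})}\times\mathbb{GL}(\mathrm{r}_2(\mathbb{G}),2)$, and inspection of its proof shows that this isomorphism is effected by the map $\alpha\mapsto(\alpha^+,\alpha^-)$; hence the two factors are realized inside $\mathrm{Aut}(\mathbb{G})$ by the subgroups $\mathbb{E}=\{\alpha\in\mathrm{Aut}(\mathbb{G}):\alpha\ \text{restricts to the identity on}\ \mathrm{O}_\vartheta(\mathbb{G})\}$ and $\mathbb{F}=\{\alpha\in\mathrm{Aut}(\mathbb{G}):\alpha\ \text{restricts to the identity on}\ \mathrm{O}^\vartheta(\mathbb{G})\}$, for which $\mathbb{E}\cong\mathbb{S}_{\mathrm{s}(\mathbb{G})}$, $\mathbb{F}\cong\mathbb{GL}(\mathrm{r}_2(\mathbb{G}),2)$, and $\mathrm{Aut}(\mathbb{G})=\mathbb{E}\times\mathbb{F}$ as an internal direct product. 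So all that remains is the purely group-theoretic fact that neither a symmetric group $\mathbb{S}_n$ nor a group $\mathbb{GL}(m,2)$ is an internal direct product of two of its nontrivial proper subgroups.

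To establish that, first I would dispose of the small cases: when $\mathrm{s}(\mathbb{G})\leq 1$ the group $\mathbb{E}$ is trivial, when $\mathrm{s}(\mathbb{G})=2$ it has order $2$, and $\mathbb{F}$ is trivial when $\mathrm{r}_2(\mathbb{G})\leq 1$; in each of these cases the group has no pair of nontrivial proper subgroups at all, so the indecomposability condition holds vacuously. For $\mathrm{s}(\mathbb{G})\geq 3$ I would argue that $\mathbb{S}_{\mathrm{s}(\mathbb{G})}$ is directly indecomposable: in a putative decomposition $\mathbb{S}_n\cong A\times B$ both $A$ and $B$ would be normal subgroups of $\mathbb{S}_n$, and from the classification of normal subgroups of $\mathbb{S}_n$ (the trivial subgroup, the alternating group, $\mathbb{S}_n$ itself, and, when $n=4$, the Klein four-group) together with the triviality of the centre of $\mathbb{S}_n$ for $n\geq 3$, one checks that no proper nontrivial normal subgroup of $\mathbb{S}_n$ admits a normal complement, which rules out such a decomposition. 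For $\mathrm{r}_2(\mathbb{G})\geq 2$ I would use that $\mathbb{GL}(m,2)$ is isomorphic to $\mathrm{PSL}(m,2)$: when $m=2$ it is isomorphic to $\mathbb{S}_3$, already covered above, and when $m\geq 3$ it is a non-abelian simple group, hence directly indecomposable since it has no proper nontrivial normal subgroup. Taking $\mathbb{E}$ and $\mathbb{F}$ as above then gives the conclusion.

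I do not anticipate a genuine obstacle. The substantive input is already contained in Theorem \ref{T;TheoremE}, and the remaining ingredients (the classification of normal subgroups of $\mathbb{S}_n$ and the simplicity of $\mathrm{PSL}(m,2)$ for $m\geq 3$) are standard; the only point that demands a little care is the bookkeeping over the small cases $n\leq 2$ and $m\leq 2$, where a factor is simply too small to admit any nontrivial direct decomposition. Structurally this mirrors Lemmas \ref{L;Lemma4.22} and \ref{L;Lemma4.23}: read off the group from Theorem \ref{T;TheoremE}, then finish by a direct computation.
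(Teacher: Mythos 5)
Your proposal is correct and follows essentially the same route as the paper: reduce via Theorem \ref{T;TheoremE} to the direct indecomposability of $\mathbb{S}_{\mathrm{s}(\mathbb{G})}$ and $\mathbb{GL}(\mathrm{r}_2(\mathbb{G}),2)$, dispose of the small cases, and then invoke the triviality of the centre of $\mathbb{S}_n$ together with the simplicity of $A_n$ (respectively of $\mathbb{GL}(m,2)$ for $m\geq 3$, with $\mathbb{GL}(2,2)\cong\mathbb{S}_3$). The only cosmetic difference is that you cite the full classification of normal subgroups of $\mathbb{S}_n$ for $n\geq 3$, whereas the paper treats $n\in[1,4]$ as a direct computation and argues only for $n\geq 5$.
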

\begin{proof}
By Theorem \ref{T;TheoremE}, it is enough to check that neither $\mathbb{S}_{\mathrm{s}(\mathbb{G})}$ nor $\mathbb{GL}(\mathrm{r}_2(\mathbb{G}), 2)$ is a direct product of its nontrivial proper subgroups. There is no loss to assume that  $\mathrm{s}(\mathbb{G})\in\mathbb{N}\setminus[1, 4]$. As the center of $\mathbb{S}_{\mathrm{s}(\mathbb{G})}$ is its trivial subgroup,  $\mathbb{S}_{\mathrm{s}(\mathbb{G})}$ is not a direct product of its alternating subgroup and a subgroup of two elements. Assume that $\mathbb{S}_{\mathrm{s}(\mathbb{G})}$ is a direct product of its nontrivial proper subgroups. The simplicity of the alternating subgroup of $\mathbb{S}_{\mathrm{s}(\mathbb{G})}$ shows that $\mathbb{S}_{\mathrm{s}(\mathbb{G})}$ is an elementary abelian  2-group. This is absurd. Notice that $\mathbb{GL}(\mathrm{r}_2(\mathbb{G}), 2)$ is a simple group if and only if $\mathrm{r}_2(\mathbb{G})\!\notin\![0,2]$. Notice that $\mathbb{GL}(2, 2)\!\cong\!\mathbb{S}_3$. The desired lemma follows from the above discussion.
\end{proof}
\begin{lem}\label{L;Lemma4.25}
Assume that $\mathbb{G}$ is a closed subset of $\mathbb{H}$. Then $\mathrm{Aut}(\mathbb{G})$ is isomorphic to a direct product of the symmetric groups if and only if $\mathrm{r}_2(\mathbb{G})\in[0,2]$.
\end{lem}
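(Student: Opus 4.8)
The plan is to invoke Theorem~\ref{T;TheoremE}, which reduces the statement to the purely group-theoretic assertion that $\mathbb{S}_{\mathrm{s}(\mathbb{G})}\times\mathbb{GL}(\mathrm{r}_2(\mathbb{G}), 2)$ is isomorphic to a direct product of symmetric groups exactly when $\mathrm{r}_2(\mathbb{G})\in[0,2]$. The ``if'' direction is immediate: if $\mathrm{r}_2(\mathbb{G})\in\{0,1\}$ then $\mathbb{GL}(\mathrm{r}_2(\mathbb{G}), 2)$ is trivial, and if $\mathrm{r}_2(\mathbb{G})=2$ then $\mathbb{GL}(2,2)\cong\mathbb{S}_3$ (as recorded in the proof of Lemma~\ref{L;Lemma4.24}), so $\mathrm{Aut}(\mathbb{G})$ is isomorphic to $\mathbb{S}_{\mathrm{s}(\mathbb{G})}$ or to $\mathbb{S}_{\mathrm{s}(\mathbb{G})}\times\mathbb{S}_3$, which is visibly a direct product of symmetric groups.

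For the ``only if'' direction I would argue by contraposition, assuming $\mathrm{r}_2(\mathbb{G})\geq 3$ and supposing for a contradiction that $\mathrm{Aut}(\mathbb{G})\cong\mathbb{S}_{n_1}\times\cdots\times\mathbb{S}_{n_k}$ for some $n_1,\dots,n_k$. The invariant I would use is the abelianization. By the proof of Lemma~\ref{L;Lemma4.24} the group $\mathbb{GL}(\mathrm{r}_2(\mathbb{G}), 2)$ is nonabelian simple, hence perfect; so by Theorem~\ref{T;TheoremE} the abelianization of $\mathrm{Aut}(\mathbb{G})$ equals that of $\mathbb{S}_{\mathrm{s}(\mathbb{G})}$, which has order at most $2$. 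On the other hand the abelianization of $\mathbb{S}_{n_1}\times\cdots\times\mathbb{S}_{n_k}$ is $(\mathbb{Z}/2\mathbb{Z})^{j}$, where $j$ is the number of indices $i$ with $n_i\geq 2$. Comparing, $j\leq 1$.

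If $j=0$, every $\mathbb{S}_{n_i}$ is trivial, so $\mathrm{Aut}(\mathbb{G})$ is trivial, contradicting $|\mathrm{Aut}(\mathbb{G})|\geq|\mathbb{GL}(\mathrm{r}_2(\mathbb{G}), 2)|\geq 168$. If $j=1$, then the abelianization of $\mathrm{Aut}(\mathbb{G})$ has order exactly $2$, so $\mathrm{s}(\mathbb{G})\geq 2$, and $\mathrm{Aut}(\mathbb{G})\cong\mathbb{S}_n$ for a single $n\geq 2$; but then Theorem~\ref{T;TheoremE} exhibits $\mathbb{S}_n$ as a direct product $\mathbb{S}_{\mathrm{s}(\mathbb{G})}\times\mathbb{GL}(\mathrm{r}_2(\mathbb{G}), 2)$ of two nontrivial proper subgroups, contradicting the fact (established in the proof of Lemma~\ref{L;Lemma4.24}) that no symmetric group is a direct product of its nontrivial proper subgroups. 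Either alternative is absurd, so $\mathrm{Aut}(\mathbb{G})$ is not a direct product of symmetric groups, as required.

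All the nonroutine group-theoretic inputs ($\mathbb{GL}(2,2)\cong\mathbb{S}_3$; simplicity, hence perfectness, of $\mathbb{GL}(n,2)$ for $n\geq 3$; indecomposability of symmetric groups) are already available inside the proof of Lemma~\ref{L;Lemma4.24}, so no new machinery is needed. The one point I expect to require a little care is recognizing that passing to abelianizations circumvents any need to determine for which $(m,n)$ one has $\mathbb{GL}(m,2)\cong A_n$ --- in particular the exceptional isomorphism $\mathbb{GL}(4,2)\cong A_8$ --- which a more naive comparison of composition factors would otherwise have to confront.
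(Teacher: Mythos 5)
Your proof is correct, and for the harder (``only if'') direction it takes a genuinely different route from the paper. The paper's proof is a one-liner: it combines Theorem~\ref{T;TheoremE} with the Krull--Schmidt Theorem and the indecomposability statements from Lemma~\ref{L;Lemma4.24}, so that in any isomorphism $\mathbb{S}_{\mathrm{s}(\mathbb{G})}\times\mathbb{GL}(\mathrm{r}_2(\mathbb{G}),2)\cong\mathbb{S}_{n_1}\times\cdots\times\mathbb{S}_{n_k}$ the indecomposable factor $\mathbb{GL}(\mathrm{r}_2(\mathbb{G}),2)$ would have to be isomorphic to a single symmetric group, which is impossible for $\mathrm{r}_2(\mathbb{G})\geq 3$ since the group is then nonabelian simple of order at least $168$. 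You instead compare abelianizations: perfectness of $\mathbb{GL}(\mathrm{r}_2(\mathbb{G}),2)$ forces the abelianization of the product to have order at most $2$, which caps the number of nontrivial symmetric factors at one, and the two resulting cases are killed by an order count and by the indecomposability of $\mathbb{S}_n$ (the only input you still need from Lemma~\ref{L;Lemma4.24}, and your $\mathbb{S}_n$ has $n\geq 6$, squarely in the range that proof treats). What your approach buys is the avoidance of Krull--Schmidt altogether --- the abelianization is a cruder but sufficient invariant here, and as you note it sidesteps any worry about exceptional isomorphisms such as $\mathbb{GL}(4,2)\cong A_8$; what the paper's approach buys is uniformity, since the same Krull--Schmidt argument is reused immediately afterwards in Corollary~\ref{C;Corollary4.26}, where a finer matching of factors really is needed. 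Your ``if'' direction is identical to the paper's.
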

\begin{proof}
As $\mathbb{GL}(0, 2)\!\cong\!\mathbb{GL}(1, 2)\!\cong\!\mathbb{S}_1$ and $\mathbb{GL}(2, 2)\!\cong\!\mathbb{S}_3$, the desired lemma thus follows from combining Theorem \ref{T;TheoremE}, Lemma \ref{L;Lemma4.24}, and the Krull-Schmidt Theorem.
\end{proof}
\begin{cor}\label{C;Corollary4.26}
Assume that $\mathbb{F}$ and $\mathbb{G}$ are closed subsets of $\mathbb{H}$. Assume that neither the trivial group nor $\mathbb{S}_3$ is isomorphic to $\mathrm{Aut}(\mathbb{F})$. Then $\mathrm{Aut}(\mathbb{F})\!\cong\!\mathrm{Aut}(\mathbb{G})$ if and only if $\mathbb{F}\simeq\mathbb{G}$.
\end{cor}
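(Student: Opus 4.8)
The plan is to reduce the statement to Theorem~\ref{T;TheoremE} and then to a comparison of direct-product decompositions of finite groups. The backward implication $\mathbb{F}\simeq\mathbb{G}\Rightarrow\mathrm{Aut}(\mathbb{F})\cong\mathrm{Aut}(\mathbb{G})$ is the remark already recorded in Section~2, so I would only need to treat $\mathrm{Aut}(\mathbb{F})\cong\mathrm{Aut}(\mathbb{G})\Rightarrow\mathbb{F}\simeq\mathbb{G}$. By Theorem~\ref{T;TheoremE} this amounts to showing that an isomorphism $\mathbb{S}_{\mathrm{s}(\mathbb{F})}\times\mathbb{GL}(\mathrm{r}_2(\mathbb{F}),2)\cong\mathbb{S}_{\mathrm{s}(\mathbb{G})}\times\mathbb{GL}(\mathrm{r}_2(\mathbb{G}),2)$ forces $\mathrm{s}(\mathbb{F})=\mathrm{s}(\mathbb{G})$ and $\mathrm{r}_2(\mathbb{F})=\mathrm{r}_2(\mathbb{G})$; Lemma~\ref{L;Lemma4.4} (equivalently Theorem~\ref{T;TheoremD}) would then give $\mathbb{F}\simeq\mathbb{G}$.

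First I would use the hypothesis together with Lemmas~\ref{L;Lemma4.22} and~\ref{L;Lemma4.23} to discard the eight degenerate pairs $(\mathrm{s}(\mathbb{F}),\mathrm{r}_2(\mathbb{F}))$ for which $\mathrm{Aut}(\mathbb{F})$ is trivial or isomorphic to $\mathbb{S}_3$; in every surviving case $\mathrm{Aut}(\mathbb{F})$ is a nontrivial group not isomorphic to $\mathbb{S}_3$. Next I would invoke the Krull--Schmidt theorem: by Lemma~\ref{L;Lemma4.24} each of $\mathbb{S}_{\mathrm{s}(\mathbb{F})}$, $\mathbb{GL}(\mathrm{r}_2(\mathbb{F}),2)$, $\mathbb{S}_{\mathrm{s}(\mathbb{G})}$, $\mathbb{GL}(\mathrm{r}_2(\mathbb{G}),2)$ is either trivial or directly indecomposable, so the multiset of nontrivial indecomposable direct factors of $\mathrm{Aut}(\mathbb{F})$ coincides with that of $\mathrm{Aut}(\mathbb{G})$. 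I would then read the two parameters off this multiset by a case analysis: when $\mathrm{r}_2\geq 3$ the factor $\mathbb{GL}(\mathrm{r}_2,2)$ is the unique nonabelian simple indecomposable direct factor, so it recovers $\mathrm{r}_2$, after which $\mathbb{S}_{\mathrm{s}}$ is the remaining factor and recovers $\mathrm{s}$; when $\mathrm{r}_2\leq 2$ (the situation in which, by Lemma~\ref{L;Lemma4.25}, $\mathrm{Aut}(\mathbb{F})$ is a direct product of symmetric groups) I would argue directly with $\mathbb{S}_{\mathrm{s}}\times\mathbb{GL}(\mathrm{r}_2,2)$, using that $\mathbb{GL}(0,2)\cong\mathbb{GL}(1,2)$ is trivial and $\mathbb{GL}(2,2)\cong\mathbb{S}_3$. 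To separate an $\mathbb{S}$-factor from a $\mathbb{GL}$-factor I would use simplicity: $\mathbb{GL}(b,2)$ is simple for $b\geq 3$, whereas $\mathbb{S}_a$ has a proper nontrivial normal alternating subgroup for $a\geq 3$, and $\mathbb{S}_2$ has order $2$ while no $\mathbb{GL}(b,2)$ with $b\geq 2$ does.

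The hard part will be exactly this small-group bookkeeping. One has to keep track of the coincidences $\mathbb{S}_0\cong\mathbb{S}_1\cong\mathbb{GL}(0,2)\cong\mathbb{GL}(1,2)$ (all equal to the trivial group) and $\mathbb{S}_3\cong\mathbb{GL}(2,2)$, and verify that, once the pairs flagged by Lemmas~\ref{L;Lemma4.22} and~\ref{L;Lemma4.23} have been removed, these are the only obstructions, i.e.\ that the assignment $(\mathrm{s},\mathrm{r}_2)\mapsto\mathbb{S}_{\mathrm{s}}\times\mathbb{GL}(\mathrm{r}_2,2)$ is injective on the remaining pairs; this is the step where the choice of hypothesis on $\mathrm{Aut}(\mathbb{F})$ in the statement is forced. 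Granting that case check, $\mathrm{s}(\mathbb{F})=\mathrm{s}(\mathbb{G})$ and $\mathrm{r}_2(\mathbb{F})=\mathrm{r}_2(\mathbb{G})$, and Lemma~\ref{L;Lemma4.4} completes the argument.
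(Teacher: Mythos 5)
Your strategy is the same as the paper's: reduce via Theorem~\ref{T;TheoremE} to comparing $\mathbb{S}_{\mathrm{s}}\times\mathbb{GL}(\mathrm{r}_2,2)$, discard the pairs flagged by Lemmas~\ref{L;Lemma4.22} and~\ref{L;Lemma4.23}, invoke the Krull--Schmidt theorem through Lemmas~\ref{L;Lemma4.24} and~\ref{L;Lemma4.25}, and close with Theorem~\ref{T;TheoremD}. The difficulty sits exactly in the step you defer with ``granting that case check'': that check fails. Krull--Schmidt controls only the multiset of \emph{nontrivial} indecomposable direct factors, so it cannot distinguish $\mathbb{S}_0\cong\mathbb{S}_1$, nor $\mathbb{GL}(0,2)\cong\mathbb{GL}(1,2)$, and these coincidences are \emph{not} confined to the pairs excluded by the hypothesis. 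Concretely, whenever $p^{\sharp}\geq 1$ and $p-p^{\sharp}\geq 3$, Lemma~\ref{L;Lemma4.5} supplies closed subsets $\mathbb{F}$ and $\mathbb{G}$ with $(\mathrm{s}(\mathbb{F}),\mathrm{r}_2(\mathbb{F}))=(0,3)$ and $(\mathrm{s}(\mathbb{G}),\mathrm{r}_2(\mathbb{G}))=(1,3)$; by Theorem~\ref{T;TheoremE} both automorphism groups are isomorphic to $\mathbb{GL}(3,2)$, which is neither trivial nor $\mathbb{S}_3$, yet $\mathbb{F}\not\simeq\mathbb{G}$ by Theorem~\ref{T;TheoremD}, since only one of the three equalities holds. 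The pairs $(2,0)$ versus $(2,1)$, both yielding $\mathbb{S}_2$, and $(4,0)$ versus $(4,1)$, both yielding $\mathbb{S}_4$, behave the same way.

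So the assignment $(\mathrm{s},\mathrm{r}_2)\mapsto\mathbb{S}_{\mathrm{s}}\times\mathbb{GL}(\mathrm{r}_2,2)$ is not injective on the surviving pairs, and no amount of small-group bookkeeping can repair this, because in the cases above the automorphism group genuinely fails to determine $(\mathrm{s},\mathrm{r}_2)$. You should know that the paper's own proof makes the identical leap: it asserts that once $\mathrm{s}(\mathbb{F})\in\mathbb{N}\setminus\{1,3\}$ or $\mathrm{r}_2(\mathbb{F})\in\mathbb{N}\setminus[1,2]$ is known, Krull--Schmidt forces $\mathrm{s}(\mathbb{F})=\mathrm{s}(\mathbb{G})$ and $\mathrm{r}_2(\mathbb{F})=\mathrm{r}_2(\mathbb{G})$. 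The defect therefore lies in the statement of the corollary rather than in your choice of route: the hypothesis would additionally have to exclude every $\mathbb{F}$ for which exactly one of the two factors in Theorem~\ref{T;TheoremE} is trivial, i.e.\ every $\mathbb{F}$ with $\min\{\mathrm{s}(\mathbb{F}),\mathrm{r}_2(\mathbb{F})\}\leq 1$ and $\mathrm{Aut}(\mathbb{F})$ nontrivial. Your backward implication and the case in which both factors are nontrivial (where the only coincidence is $\mathbb{S}_3\cong\mathbb{GL}(2,2)$, correctly absorbed by the $(3,2)$ analysis) are sound.
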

\begin{proof}
It is enough to check that $\mathrm{Aut}(\mathbb{F})\cong\mathrm{Aut}(\mathbb{G})$ only if $\mathbb{F}\simeq\mathbb{G}$. The combination of Theorem \ref{T;TheoremE}, Lemmas \ref{L;Lemma4.24}, \ref{L;Lemma4.25}, and the Krull-Schmidt Theorem implies that $\mathrm{Aut}(\mathbb{F})\cong\mathbb{S}_3\times\mathbb{S}_3$ if and only if $(\mathrm{s}(\mathbb{F}), \mathrm{r}_2(\mathbb{F}))\!=\!(3, 2)$. By Theorem \ref{T;TheoremD}, there is no loss to assume further that $(\mathrm{s}(\mathbb{F}), \mathrm{r}_2(\mathbb{F}))\!\neq\!(3, 2)$. As neither the trivial group nor $\mathbb{S}_3$ is isomorphic to $\mathrm{Aut}(\mathbb{F})$, the combination of Lemmas \ref{L;Lemma4.22}, \ref{L;Lemma4.23}, and the above discussion thus implies that $\mathrm{s}(\mathbb{F})\in\mathbb{N}\setminus\{1, 3\}$ or $\mathrm{r}_2(\mathbb{F})\in\mathbb{N}\setminus[1, 2]$. So the combination of Theorem \ref{T;TheoremE}, Lemmas \ref{L;Lemma4.24}, \ref{L;Lemma4.25}, and the Krull-Schmidt Theorem implies that $\mathrm{s}(\mathbb{F})=\mathrm{s}(\mathbb{G})$ and $\mathrm{r}_2(\mathbb{F})=\mathrm{r}_2(\mathbb{G})$. The desired corollary follows from Theorem \ref{T;TheoremD}.
\end{proof}
\begin{cor}\label{C;Corollary4.27}
Assume that $\mathbb{G}$ is a closed subset of $\mathbb{H}$. Assume that $\mathbb{F}$ is a strongly normal closed subset of $\mathbb{G}$. Then $\mathrm{Aut}(\mathbb{F})\!\cong\!\mathbb{S}_{\mathrm{s}(\mathbb{G})}\times \mathbb{GL}(\mathrm{r}_2(\mathbb{F}), 2)$. In particular, assume that $\mathbb{E}$ is a strongly normal closed subset of $\mathbb{H}$. Then $\mathrm{Aut}(\mathbb{E})\cong\mathbb{S}_{p^\sharp}\times \mathbb{GL}(\mathrm{r}_2(\mathbb{E}), 2)$.
\end{cor}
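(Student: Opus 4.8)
The plan is to reduce the statement to Theorem \ref{T;TheoremE} together with the structural description of strongly normal closed subsets furnished by Lemma \ref{L;Lemma3.10}. The only substantive point is that a strongly normal closed subset inherits the ``thick part'' of its ambient closed subset, hence inherits its $\mathrm{s}$-invariant; everything else is a direct substitution into the automorphism formula already proved.

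First I would record the key equality $\mathrm{s}(\mathbb{F})=\mathrm{s}(\mathbb{G})$. Since $\mathbb{F}$ is a strongly normal closed subset of $\mathbb{G}$, Lemma \ref{L;Lemma3.10}, applied with $r$ the distinguished element of $\mathbb{G}$ satisfying $\mathrm{Thick}(\mathbb{G})=\langle r\rangle$ and $\mathrm{s}(\mathbb{G})=\mathrm{s}(r)$ (whose existence and uniqueness come from Lemma \ref{L;Lemma3.5}), gives $\mathrm{O}^\vartheta(\mathbb{F})=\mathrm{O}^\vartheta(\mathbb{G})=\langle r\rangle$. Applying Lemma \ref{L;Lemma3.6} to $\mathbb{F}$ then yields $\mathrm{Thick}(\mathbb{F})=\mathrm{O}^\vartheta(\mathbb{F})=\langle r\rangle$, so the unique element associated to $\mathbb{F}$ by Lemma \ref{L;Lemma3.5} is again $r$, and hence $\mathrm{s}(\mathbb{F})=\mathrm{s}(r)=\mathrm{s}(\mathbb{G})$.

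Next I would apply Theorem \ref{T;TheoremE} to the closed subset $\mathbb{F}$ of $\mathbb{H}$, obtaining $\mathrm{Aut}(\mathbb{F})\cong\mathbb{S}_{\mathrm{s}(\mathbb{F})}\times\mathbb{GL}(\mathrm{r}_2(\mathbb{F}),2)$, and then substitute $\mathrm{s}(\mathbb{F})=\mathrm{s}(\mathbb{G})$ from the previous step to conclude $\mathrm{Aut}(\mathbb{F})\cong\mathbb{S}_{\mathrm{s}(\mathbb{G})}\times\mathbb{GL}(\mathrm{r}_2(\mathbb{F}),2)$. For the particular case, I would take $\mathbb{G}=\mathbb{H}$: a strongly normal closed subset $\mathbb{E}$ of $\mathbb{H}$ then satisfies $\mathrm{s}(\mathbb{E})=\mathrm{s}(\mathbb{H})=p^\sharp$ by the first part combined with the identity $\mathrm{s}(\mathbb{H})=p^\sharp$ recorded in Lemma \ref{L;Lemma3.8}, so $\mathrm{Aut}(\mathbb{E})\cong\mathbb{S}_{p^\sharp}\times\mathbb{GL}(\mathrm{r}_2(\mathbb{E}),2)$.

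I do not anticipate a genuine obstacle; the one delicate point is to invoke Lemma \ref{L;Lemma3.10} with precisely the distinguished element $r$ it singles out, so that the equality of thick parts (and hence of $\mathrm{s}$-invariants) is exactly the one supplied by that lemma rather than something requiring a separate argument. Note also that the free hypothesis $r\in\mathbb{N}_0$ in the statement plays no role and can simply be ignored in the proof.
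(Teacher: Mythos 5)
Your proposal is correct and follows essentially the same route as the paper: the paper's proof likewise combines Lemma \ref{L;Lemma3.10} (to pin down $\mathrm{O}^\vartheta(\mathbb{F})=\mathrm{O}^\vartheta(\mathbb{G})$, hence $\mathrm{s}(\mathbb{F})=\mathrm{s}(\mathbb{G})$) with Theorem \ref{T;TheoremE}, and then uses $\mathrm{s}(\mathbb{H})=p^\sharp$ from Lemma \ref{L;Lemma3.8} for the particular case. One trivial slip: the statement contains no hypothesis ``$r\in\mathbb{N}_0$'' (you may be thinking of Corollary \ref{C;Corollary4.14}), so your closing remark about ignoring it is moot.
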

\begin{proof}
The first statement follows from combining Lemmas \ref{L;Lemma3.10}, \ref{L;Lemma4.6}, and Theorem \ref{T;TheoremE}.
The desired corollary thus follows from the first statement and Lemma \ref{L;Lemma3.8}.
\end{proof}
\begin{cor}\label{C;Corollary4.28}
Assume that $\mathbb{F}$ and $\mathbb{G}$ are closed subsets of $\mathbb{H}$. Assume that $\mathbb{F}\subseteq\mathbb{G}$.
Assume that $\mathbb{E}$ is a strongly normal closed subset of $\mathbb{G}$ and neither the trivial group nor $\mathbb{S}_3$ is isomorphic to $\mathrm{Aut}(\mathbb{E})$. Then $\mathrm{Aut}(\mathbb{E})\!\cong\!\mathrm{Aut}(\mathbb{F})$ if and only if $\mathbb{F}$ is a strongly normal closed subset of $\mathbb{G}$ and $\mathbb{E}\!\simeq\! \mathbb{F}$. In particular,
if $\mathbb{D}$ is a strongly normal closed subset of $\mathbb{H}$ and neither the trivial group nor $\mathbb{S}_3$ is isomorphic to $\mathrm{Aut}(\mathbb{D})$, then $\mathrm{Aut}(\mathbb{D})\!\!\cong\!\!\mathrm{Aut}(\mathbb{G})$ if and only if
$\mathbb{G}$ is a strongly normal closed subset of $\mathbb{H}$ and $\mathbb{D}\!\simeq\! \mathbb{G}$.
\end{cor}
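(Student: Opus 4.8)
The plan is to split the biconditional and then specialise to $\mathbb{G}=\mathbb{H}$. The ``if'' direction is immediate: if $\mathbb{E}\simeq\mathbb{F}$, then $\mathrm{Aut}(\mathbb{E})\cong\mathrm{Aut}(\mathbb{F})$ by the observation recorded in Section~2 that isomorphic closed subsets of $\mathbb{H}$ have isomorphic automorphism groups, so the hypothesis that $\mathbb{F}$ be strongly normal in $\mathbb{G}$ plays no role here; it appears in the statement only so that the converse can recover it.

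For the ``only if'' direction, suppose $\mathrm{Aut}(\mathbb{E})\cong\mathrm{Aut}(\mathbb{F})$. Since neither the trivial group nor $\mathbb{S}_3$ is isomorphic to $\mathrm{Aut}(\mathbb{E})$, I would invoke Corollary~\ref{C;Corollary4.26} with $\mathbb{E}$ playing the role of ``$\mathbb{F}$'' and $\mathbb{F}$ playing the role of ``$\mathbb{G}$'' there, obtaining $\mathbb{E}\simeq\mathbb{F}$, which is one half of the desired conclusion; Lemma~\ref{L;Lemma4.4} turns it into the equalities $\mathrm{s}(\mathbb{F})=\mathrm{s}(\mathbb{E})$ and $\mathrm{r}_2(\mathbb{F})=\mathrm{r}_2(\mathbb{E})$. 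Because $\mathbb{E}$ is strongly normal in $\mathbb{G}$, Lemma~\ref{L;Lemma3.10} forces $\mathrm{O}^\vartheta(\mathbb{E})=\mathrm{O}^\vartheta(\mathbb{G})$, whence $\mathrm{s}(\mathbb{E})=\mathrm{s}(\mathbb{G})$ by Lemmas~\ref{L;Lemma3.5} and~\ref{L;Lemma3.6}; combining the two gives $\mathrm{s}(\mathbb{F})=\mathrm{s}(\mathbb{G})$.

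It remains to upgrade this size equality to ``$\mathbb{F}$ is strongly normal in $\mathbb{G}$''. From $\mathbb{F}\subseteq\mathbb{G}$ one has $\mathrm{Thick}(\mathbb{F})\subseteq\mathrm{Thick}(\mathbb{G})$, and Lemma~\ref{L;Lemma3.6} identifies these with $\mathrm{O}^\vartheta(\mathbb{F})$ and $\mathrm{O}^\vartheta(\mathbb{G})$, of sizes $2^{\mathrm{s}(\mathbb{F})}$ and $2^{\mathrm{s}(\mathbb{G})}$; since these sizes agree, the inclusion $\mathrm{O}^\vartheta(\mathbb{F})\subseteq\mathrm{O}^\vartheta(\mathbb{G})$ is an equality, say $\mathrm{O}^\vartheta(\mathbb{F})=\mathrm{O}^\vartheta(\mathbb{G})=\langle r\rangle$ with $r\in\mathrm{Thick}(\mathbb{H})$. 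Now Theorem~\ref{T;TheoremA} applied to $\mathbb{F}$, together with Lemma~\ref{L;Lemma3.2}, gives $\mathbb{F}=\langle r\rangle\,\mathrm{O}_\vartheta(\mathbb{F})$ with $\langle r\rangle\cap\mathrm{O}_\vartheta(\mathbb{F})=\{e\}$, while Lemma~\ref{L;Lemma3.7} shows $\mathrm{O}_\vartheta(\mathbb{F})^\gamma$ is a subgroup of the elementary abelian $2$-group $\mathrm{O}_\vartheta(\mathbb{G})^\gamma$ (as $\mathrm{O}_\vartheta(\mathbb{F})\subseteq\mathrm{O}_\vartheta(\mathbb{G})$). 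These are exactly the conditions appearing in the criterion of Lemma~\ref{L;Lemma3.10}, with $\mathbb{E}$ there taken to be $\mathrm{O}_\vartheta(\mathbb{F})$, so $\mathbb{F}$ is a strongly normal closed subset of $\mathbb{G}$, completing this direction. The ``in particular'' clause then follows by specialising the statement just proved to $\mathbb{G}=\mathbb{H}$, with $\mathbb{D}$ in the role of $\mathbb{E}$ and $\mathbb{G}$ in the role of $\mathbb{F}$.

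The only delicate point I anticipate is verifying the legitimacy of the application of Corollary~\ref{C;Corollary4.26}: the exclusion hypothesis there concerns the automorphism group of its first argument, which is $\mathrm{Aut}(\mathbb{E})$ in our instantiation, and this is precisely what we have assumed to be neither the trivial group nor $\mathbb{S}_3$. Beyond that, everything is bookkeeping with the decomposition $\mathbb{F}=\mathrm{O}^\vartheta(\mathbb{F})\,\mathrm{O}_\vartheta(\mathbb{F})$ and the count $|\mathrm{O}^\vartheta(\cdot)|=2^{\mathrm{s}(\cdot)}$, so no genuine obstacle is expected.
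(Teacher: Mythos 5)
Your proposal is correct, and its first half coincides with the paper's: the paper proves this corollary by combining Corollary \ref{C;Corollary4.26} with Corollary \ref{C;Corollary4.13}, and you invoke Corollary \ref{C;Corollary4.26} in exactly the same way to pass from $\mathrm{Aut}(\mathbb{E})\cong\mathrm{Aut}(\mathbb{F})$ to $\mathbb{E}\simeq\mathbb{F}$ (and your check that the exclusion hypothesis lands on the right argument of that corollary is the right thing to verify). Where you diverge is the upgrade to strong normality: the paper simply cites Corollary \ref{C;Corollary4.13}, which already states that a closed subset of $\mathbb{G}$ isomorphic to a strongly normal closed subset of $\mathbb{G}$ is itself strongly normal in $\mathbb{G}$, whereas you re-derive that implication from scratch via $\mathrm{s}(\mathbb{F})=\mathrm{s}(\mathbb{G})$, the inclusion $\mathrm{Thick}(\mathbb{F})\subseteq\mathrm{Thick}(\mathbb{G})$ together with the count $|\mathrm{O}^\vartheta(\cdot)|=2^{\mathrm{s}(\cdot)}$, and the criterion of Lemma \ref{L;Lemma3.10}. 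Your unpacked argument is sound (each step checks out: $\mathrm{Thick}(\mathbb{F})=\mathbb{F}\cap\mathrm{Thick}(\mathbb{H})$ gives the inclusion, finiteness gives the equality of the thick parts, and $\mathrm{O}_\vartheta(\mathbb{F})^\gamma$ is indeed a subgroup of $\mathrm{O}_\vartheta(\mathbb{G})^\gamma$), and it buys a self-contained verification at the cost of repeating work the paper has already packaged; the paper's route is shorter and makes the logical dependence on Corollary \ref{C;Corollary4.13} explicit.
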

\begin{proof}
The desired corollary follows from an application of Corollaries \ref{C;Corollary4.26}, \ref{C;Corollary4.13}.
\end{proof}
We close this paper by presenting an example of the main results of this section.
\begin{eg}\label{E;Example4.29}
\em Assume that $p=2$, $\mathbb{H}=\{e, q_1, q_2, r\}$, $q_1\in\mathrm{O}_\vartheta(\mathbb{H})$, and $q_2\notin\mathrm{O}_\vartheta(\mathbb{H})$. Example \ref{E;Example3.27} shows that $\{e\}$, $\{e, q_1\}$, $\{e, q_2\}$, $\mathbb{H}$ are exactly all closed subsets of $\mathbb{H}$. By Theorem \ref{T;TheoremD}, notice that $\{e\}$, $\{e, q_1\}$, $\{e, q_2\}$, $\mathbb{H}$ are pairwise nonisomorphic closed subsets of $\mathbb{H}$. Moreover, Theorem \ref{T;TheoremE} implies that all automorphism groups of the closed subsets $\{e\}$, $\{e, q_1\}$, $\{e, q_2\}$, $\mathbb{H}$ of $\mathbb{H}$ are isomorphic to the trivial group.
\end{eg}
\subsection*{Disclosure statement} No relevant financial or nonfinancial interests are reported.
\subsection*{Data availability statement} All used data are contained in this written paper.

\end{document}